
\documentclass[11pt]{amsart}

\usepackage[utf8]{inputenc}
\usepackage[T1]{fontenc}

\usepackage[bookmarks,bookmarksnumbered,pdfstartview=FitBH,backref=page]{hyperref}
\hypersetup{
colorlinks=true,
citecolor= blue,
linktoc= all,
linkcolor= blue,
urlcolor= green,
}

\usepackage{amsfonts, amssymb, amsmath, amsthm, enumerate,appendix,todo}

\usepackage{tikz}
\usetikzlibrary{decorations.pathreplacing,decorations.markings}

\usepackage[margin=1.5in]{geometry}

\theoremstyle{definition}
\newtheorem{thmintro}{Theorem}

\newtheorem{definition}{Definition}[section]
\newtheorem{remark}[definition]{Remark}
\newtheorem{example}[definition]{Example}
\theoremstyle{plain}
\newtheorem{theorem}[definition]{Theorem}
\newtheorem{thm}[definition]{Theorem}
\newtheorem{conj}[definition]{Conjecture}
\newtheorem{proposition}[definition]{Proposition}
\newtheorem*{proposition*}{Proposition}

\newtheorem{lemma}[definition]{Lemma}
\newtheorem{corollary}[definition]{Corollary}
\newtheorem{question}[definition]{Question}

\newtheorem*{claim}{Claim}

\newcommand{\actson}{\curvearrowright}

\newcommand{\from}{\colon}

\newcommand{\mc}[1]{\mathcal{#1}}

\newcommand{\ra}{\rightarrow}

\newcommand{\suchthat}{\colon}

\newcommand{\FF}{\mathbf{F}}

\newcommand{\subrel}{\subseteq}

\newcommand{\inters}{\cap}

\newcommand{\disjointunion}{\sqcup}
\newcommand{\bigunion}{\bigcup}
\newcommand{\restrict}{\restriction}

\DeclareMathOperator{\Isom}{Isom}
\DeclareMathOperator{\SL}{SL}
\DeclareMathOperator{\PSL}{PSL}

\DeclareMathOperator{\dom}{dom}

\DeclareMathOperator{\Aut}{Aut}

\newcommand{\boundary}{\partial}

\usepackage{bm}
\newcommand{\R}{\bm{\mathrm{R}}}
\newcommand{\N}{\bm{\mathrm{N}}}
\newcommand{\Z}{\bm{\mathrm{Z}}}
\newcommand{\F}{\bm{\mathrm{F}}}

\newcommand{\HH}{\bm{\mathrm{H}}}
\newcommand{\C}{\bm{\mathrm{C}}}

\renewcommand{\emptyset}{\varnothing}

\newcommand{\ol}[1]{\overline{#1}}
\newcommand{\wh}[1]{\widehat{#1}}

\newcommand{\Ggraph}{\mathcal{G}}
\newcommand{\Hgraph}{\mathcal{H}}
\newcommand{\Kgraph}{\mathcal{K}}
\newcommand{\Tgraph}{\mathcal{T}}

\usepackage[mathscr]{eucal}
\newcommand{\BSTreeable}{\bm{\mathscr{BST}}}
\newcommand{\MSTreeable}{\bm{\mathscr{MST}}}
\newcommand{\STreeable}{\bm{\mathscr{ST}}}
\newcommand{\Treeable}{\bm{\mathscr{T}}}
\newcommand{\ClassTreeable}{\bm{\mathscr{C}}}

\usepackage{csquotes}
\MakeOuterQuote{"}

\newcommand\Hmath{\mathbf{H}}
\newcommand{\define}[1]{\textbf{#1}}
\newcommand{\pmp}{\text{p.m.p.}}

\newcommand\RR{\mathscr{R}}

\newcommand\GGG{\mathtt{G}}
\newcommand\VVV{\mathtt{V}}
\newcommand\EEE{\mathtt{E}}

\newcommand\GG{\mathcal{G}}

\newcommand\TT{\mathcal{T}}
\newcommand\TTT{\mathtt{T}}
\newcommand\CC{\mathcal{C}}
\newcommand{\Dual}{D}

\tikzset{every picture/.style={line width=0.11mm}}
\newcommand{\oPerpStar}{\begin{tikzpicture}[scale=0.134]
\draw[thick,line width=0.7pt](-0.6,-0.2)--(0.6,-0.2) ;
    \draw (0,0.1) node[scale=0.6] {\rm{*}};
  \draw[line width=0.7pt] (0,0) circle [radius=1];
\end{tikzpicture}}

\newcommand{\oPerpstar}{\mathbin{\raisebox{-1pt}{\oPerpStar}}}

\newcommand{\mysetminusD}{\hbox{\tikz{\draw[line width=0.6pt,line cap=round] (3pt,0) -- (0,6pt);}}}
\newcommand{\mysetminusT}{\mysetminusD}
\newcommand{\mysetminusS}{\hbox{\tikz{\draw[line width=0.45pt,line cap=round] (2pt,0) -- (0,4pt);}}}
\newcommand{\mysetminusSS}{\hbox{\tikz{\draw[line width=0.4pt,line cap=round] (1.5pt,0) -- (0,3pt);}}}
\newcommand{\mysetminus}{\mathbin{\mathchoice{\mysetminusD}{\mysetminusT}{\mysetminusS}{\mysetminusSS}}}

\begin{document}

\title{One-ended spanning subforests and treeability of groups}

\author[Conley \and Gaboriau \and Marks \and Tucker-Drob]{Clinton T.~Conley \and Damien Gaboriau \and Andrew S.~Marks \and Robin D.~Tucker-Drob}
\date{March 12, 2026}
\thanks{}
\begin{abstract}
  We show that several new classes of groups are measure strongly treeable. In
  particular, finitely generated groups admitting planar Cayley graphs,
  elementarily free groups, and $\Isom(\HH^2)$ and all its closed subgroups. This provides the first examples of one-ended nonamenable groups which are measure strongly treeable. In
  higher dimensions, we also prove a dichotomy that the
  fundamental group of a closed aspherical $3$-manifold is either amenable or
  has strong ergodic dimension $2$. Our main technical tool is a method
  for finding measurable treeings of Borel planar graphs by constructing
  one-ended spanning subforests in their planar dual.
  Our techniques for constructing one-ended spanning subforests also give a
  complete classification of the locally finite \pmp{} graphs which
  admit Borel a.e.\ one-ended spanning subforests.
\end{abstract}

\maketitle

\tableofcontents

\noindent
\textbf{MSC:}
37A20 (03E15, 22F10, 05C10)

\noindent
\textbf{Keywords:}
{ergodic and Borel equivalence relations, probability measure preserving and Borel actions of groups, locally compact groups, trees, treeability, planar graphs, elementarily free groups, cost of groups, ergodic dimension, measure equivalence}

\addtocounter{section}{-1}
\section{Introduction}

This article is a contribution to the study of measured and Borel equivalence relations, in terms of their graphed structures, with applications in the measured group theory of countable and locally compact groups.

Dramatic progress has been realized in the study of discrete groups in relation with topological and geometric ideas over the course of the 20th century, from the early works of Klein, Poincar\'e, Dehn, Nielsen, Reidemeister and Schreier for instance, to Bass-Serre theory and Thurston's Geometrization program as well as hyperbolic groups and the emergence of geometric group theory as a distinct area of mathematics under the impulse of Gromov.
In his monograph \cite{Gro93}, Gromov outlined his program of understanding countable discrete groups up to quasi-isometry (e.g., cocompact lattices in the same locally compact second countable group $G$).
In the same text Gromov also introduced the parallel notion of measure equivalence (\define{ME}) between countable discrete groups \cite[0.5.E]{Gro93}, the most emblematic example being lattices in $G$.
Two groups are ME if they admit commuting, free, measure-preserving actions on a nonzero Lebesgue measure space with finite measure fundamental domains.
This concept is strongly connected with orbit equivalence (\define{OE}) in ergodic theory (\cite{Furman1999}, \cite[Th. 2.3]{Gab02b}; see \cite{Ga05,Furman2011} for surveys on ME and OE).

The history of orbit equivalence itself can be traced back to the work of Dye \cite{Dye59,Dye63} stemming from the group-measure-space von Neumann algebra of Murray and von Neumann \cite{MvN36}. The abstract and basic objects connecting this turn out to be the standard measure-class preserving equivalence relations, as axiomatized by Feldman-Moore \cite{FM77}.
A major milestone is the elucidation of the connections between five properties (see \cite{Connes-1976,Connes-Krieger-1977,OW80, CFW81}):
{\em the following are equivalent: (1)  hyperfiniteness of the group-measure-space von Neumann algebra, (2)  hyperfiniteness of the equivalence relation, (3) amenability of the equivalence relation, (4) orbit equivalence with a $\Z$-action, and (5) when the action is assumed to be probability measure preserving (\define{\pmp}) and free, amenability of the acting group.}
As a consequence, the measure equivalence class of $\Z$ consists exactly in all infinite amenable groups. Thus such a useful geometric invariant as the growth becomes apparently irrelevant in measured group theory insofar as amenability is concerned, although our Theorem~\ref{thm:quad} leads us to reconsider this observation.

Much of progress in orbit equivalence has been realized since the 80's following a suggestion of A.~Connes at a conference in Santa Barbara in 1978 (see  \cite{Ada90}) of studying equivalence relations $\RR$ with an additional piece of data: a measurably-varying simplicial complex structure on each equivalence class (aka a \define{complexing} \cite{Abert-Gab}). The $1$-dimensional complexings are known as \define{graphings}. Their acyclic version (\define{treeings}) were originally studied by S. Adams \cite{Ada90,Ada88}. Both are constitutive of the theory of \define{cost} \cite{Le95,Gab00a}, since this is defined in terms of graphings, and treeings allow it to be computed since treeings of $\RR$ attain the cost of $\RR$ \cite[Th\'eor\`eme 1]{Gab00a}. Graphings and treeings have also played a crucial role in the theory of structurings on countable Borel equivalence relations \cite{JKL02}.

Amenability, seen from the perspective of orbit equivalence, can be rephrased as the capability of embellishing almost every orbit (equivalence class) with a measurably-varying oriented line structure \cite{Dye59,OW80,CFW81}.
Alternatively, it is easy to equip the classes of any hyperfinite equivalence relations with a \define{one-ended} tree structure. As a kind of converse, in the {\pmp} context (which will be our context in the introduction through Theorem \ref{thm:lastpmp}) any treeing of an amenable equivalence relation is (class-wise) at most two-ended (\cite{Ada90}).

Beyond amenability, the simplest groups from the measured theoretic point of view are the \define{treeable} ones: those admitting a free {\pmp} action whose orbit equivalence relation can be equipped with a treeing\footnote{For precise definitions of the various notions of treeability, see Appendices~\ref{sect: Treeability for locally compact groups} and \ref{sec:permanence}}.
This is an extremely rich and still mysterious class of groups (see the survey part of \cite{Ga05}).
By a theorem of Hjorth \cite{Hjo-cost-att}, this is precisely the class of groups $\Gamma$ that are ME with a free group $\FF_n$. This family splits into four ME-classes: $n=0$ when $\Gamma$ is finite, $n=1$ when $\Gamma$ is infinite amenable, and $n=2$ or $n=\infty$ according to their cost belonging to $(1,\infty)$ or $\{\infty\}$ \cite{Gab00a}.

The first substantial example of a treeable group apart from free products of amenable groups is the fundamental group $\pi_1(\Sigma)$ of a closed hyperbolic surface $\Sigma$.
Indeed, both $\pi _1 (\Sigma )$ and $\FF_2$ share the property of being isomorphic to lattices in $G=\mathrm{SL}(2, \R)$.
It follows that $\pi_1(\Sigma)$ admits at least one treeable free action, namely the natural action by multiplication $\pi_1(\Sigma)\actson G/\FF _2$ (with Haar measure).
It is a longstanding question of  \cite[Question VI.2]{Gab00a} whether treeable groups are \define{strongly treeable}\footnote{Observe that unfortunately for consistency of terminology, in \cite{Gab00a} the terms ``arborable'' and ``anti-arborable''
are used instead of the current better terms ``strongly treeable'' and ``non-treeable'' respectively, that we will adopt here.} ($\STreeable$), i.e., whether \textbf{all} their free {\pmp} actions are treeable. This question has been open for twenty years, even for $\pi_1(\Sigma)$ (see \cite[Question p. 176]{Gab02b}), and we solve it in this case. This is our first main result:
\begin{thmintro}
\label{th-intro surface + planar Cayley graphs}
Surface groups are strongly treeable.
More generally, finitely generated groups admitting a planar Cayley graph are strongly treeable.
\end{thmintro}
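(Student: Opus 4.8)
The plan is to reduce strong treeability to a single statement about treeing an arbitrary free \pmp{} action, and then to exploit planar duality together with the one-ended spanning subforest machinery advertised in the abstract. Fix a finitely generated group $\Gamma$ equipped with a planar Cayley graph $\Cay(\Gamma)$, and let $\Gamma \actson (X,\mu)$ be an arbitrary free \pmp{} action with orbit equivalence relation $\RR$. Transporting the Cayley graph structure along orbits equips $\RR$ with a \pmp{} planar graphing $\GG$ whose components are all isomorphic to $\Cay(\Gamma)$; showing that $\Gamma$ is strongly treeable amounts to producing a Borel treeing of $\RR$ for every such action.

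First I would dispose of the cases in which $\Gamma$ is not one-ended. If $\Gamma$ is finite the statement is trivial; if $\Gamma$ is two-ended it is virtually cyclic, hence amenable, and amenable groups are strongly treeable since all their \pmp{} actions are hyperfinite; and if $\Gamma$ has infinitely many ends then by Stallings' theorem it is virtually free, and virtually free groups are strongly treeable by the known permanence properties for free and virtually free groups. This leaves the essential case in which $\Gamma$, and therefore $\Cay(\Gamma)$, is one-ended --- the case that contains all the surface groups of interest.

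For the one-ended case I would fix, equivariantly and Borel-measurably, a planar embedding of $\GG$ and pass to the dual graphing $\GG^*$, whose vertices are the faces of $\GG$ and whose edges are in canonical bijection $e \leftrightarrow e^*$ with those of $\GG$. The heart of the argument is then to invoke the construction highlighted in the abstract: the dual graphing $\GG^*$ admits a Borel a.e.\ one-ended spanning subforest $F^*$. Planar duality converts $F^*$ into a treeing of the primal relation: the set $\TT$ of primal edges $e$ with $e^* \notin F^*$ is a Borel subgraphing of $\GG$, and one-endedness of every tree of $F^*$ is exactly the condition guaranteeing that $\TT$ is simultaneously acyclic and spanning within each orbit, i.e.\ a treeing of $\RR$. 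This correspondence between one-ended spanning forests of the dual and spanning trees of the primal is the combinatorial core, and it must be arranged so that the embedding is carried along orbits measurably and all choices remain Borel.

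The main obstacle I anticipate lies in the third step: verifying that $\GG^*$ genuinely satisfies the hypotheses of the one-ended spanning subforest construction, and checking the duality carefully in the infinite rather than finite planar setting. One must confirm that the dual of a one-ended planar \pmp{} graphing is again a \pmp{} graphing of the admissible type --- in particular, that it is not one of the exceptional amenable configurations for which one-ended spanning subforests can fail to exist --- and that "one-endedness of the dual forest" translates precisely into "acyclic and connected" for the primal complement. This is where the classification of locally finite \pmp{} graphs admitting Borel a.e.\ one-ended spanning subforests does the real work.
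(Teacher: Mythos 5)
Your overall strategy---reduce to the one-ended case, then tree the orbit relation there by finding a one-ended spanning subforest in a planar dual---is indeed the engine of the paper, but both halves of your argument have genuine gaps. First, the reduction. Stallings' theorem does not say that a finitely generated group with infinitely many ends is virtually free; it only gives a splitting over a finite subgroup, and the factors can be arbitrary. In fact the class of groups with planar Cayley graphs contains, for instance, the free product of two surface groups (the blocks of its Cayley graph are blocks of the factors' Cayley graphs, so planarity follows from the Dirac--Schuster theorem), which has infinitely many ends and is nowhere near virtually free. The correct reduction, which is the paper's, is: such $\Gamma$ is finitely presented (Droms), hence accessible (Dunwoody), hence the Bass--Serre fundamental group of a finite graph of groups with finite edge groups and vertex groups that are finite or one-ended with planar Cayley graphs (Babai); one then invokes closure of (measure) strong treeability under such decompositions (Theorem~\ref{thm:closure}). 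Note also that even for honestly virtually free groups your appeal to ``known permanence properties'' is not available: strong treeability is \emph{not} known to pass to finite-index supergroups (the paper stresses exactly this for Fuchsian triangle groups), so one cannot go up from the free subgroup; virtually free groups are handled through their decomposition as graphs of finite groups.

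Second, the one-ended case. Your plan needs the graphing $\GG$ on $X$ to carry a Borel planar structure---in the paper's language, a Borel $2$-basis---so that the dual graphing and the duality dictionary (Proposition~\ref{prop:duality}) make sense measurably. You propose to ``fix, equivariantly and Borel-measurably, a planar embedding,'' but this is precisely the nontrivial point: a one-ended planar Cayley graph is guaranteed only to be $2$-vertex-connected, and $2$-connected planar graphs can admit inequivalent embeddings, so a chosen facial structure need not be invariant under the group action; no equivariant embedding or invariant $2$-basis is exhibited. The paper sidesteps this entirely: in the one-ended non-amenable case it cites Lyndon--Schupp to realize $\Gamma$ as a discrete subgroup of $\Isom(\HH^2)$, proves that $\Isom(\HH^2)$ is measure strongly treeable by taking a cross section of a free action and using the Dirichlet tessellation of the hyperbolic-plane structure on orbits (whose facial cycles form a canonical, hence Borel, $2$-basis for the dual graph on the cross section, to which Theorem~\ref{thm:planar} applies), and concludes by closure under closed subgroups; the amenable one-ended case is dispatched by \cite{CFW81}. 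Finally, the verification you defer to the end---that the dual graphing is a \pmp{} graph that is nowhere two-ended, or else falls into a hyperfinite regime where a treeing exists anyway---is exactly the content of the proof of Theorem~\ref{thm:planar}, so as written your third step assumes what has to be proved.
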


A more general statement can be found in Theorem \ref{thm:planargroup}.

The introduction of the notion of "measurable free factor" in \cite{Ga05} led to the production of some new examples of treeable groups, such as branched surface groups. These are examples from a family that we will discuss now. The \define{elementarily free groups} are those groups with the same first-order theory as the free group $\FF_2$.
We shall use their description (when finitely generated) as fundamental groups of certain tower spaces (according to the results of \cite{SeVI} and \cite{KhM1},
made utterly complete in \cite{Guirardel-Levitt-Sklinos-2020} -- see \S\ref{sec:elemfree} for more details).
A careful analysis of their virtual structure allowed \cite{BTW-07} to apply results from \cite{Ga05} to a finite index subgroup in order to achieve their treeability. The question of their strong treeability has remained open since then; we resolve it:
\begin{thmintro}
\label{th-intro lement free groups}
Finitely generated elementarily free groups are strongly treeable.
\end{thmintro}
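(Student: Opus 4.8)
The plan is to reduce the theorem to two ingredients: the hierarchical (tower) description of finitely generated elementarily free groups, and a permanence property of strong treeability under amalgamation over amenable subgroups. By the structure theory of \cite{SeVI} and \cite{KhM1}, in the definitive form of \cite{Guirardel-Levitt-Sklinos-2020} recalled in \S\ref{sec:elemfree}, every finitely generated elementarily free group $G$ is the fundamental group of a tower built over a free base using only surface flats (no abelian flats occur, since $G$ has the first-order theory of $\FF_2$). Concretely this exhibits a finite chain $G_0 \leq G_1 \leq \cdots \leq G_n = G$, where $G_0$ is a finitely generated free group and each $G_{i+1}$ is obtained from $G_i$ by gluing surface pieces along their boundary curves; unwinding these surface flats one handle at a time expresses each step as a finite sequence of free products, amalgamated products $A \ast_{\Z} B$, and HNN extensions $A \ast_{\Z}$, whose edge groups are exactly the infinite cyclic subgroups carried by the boundary curves.

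First I would record the base cases. Free groups are strongly treeable, since for a free {\pmp} action of a free group the Schreier graph of a free generating set is already a treeing of the orbit equivalence relation; and closed surface groups are strongly treeable by Theorem~\ref{thm:planargroup}, their Cayley graphs being planar (surfaces with boundary contribute only free vertex groups). Hence every vertex group occurring in the tower is strongly treeable, and every edge group is infinite cyclic, hence amenable.

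The inductive engine, and the main obstacle, is the permanence statement proved in Appendix~\ref{sec:permanence}: if $A$ and $B$ are strongly treeable and $C$ is amenable, then $A \ast_C B$ and the corresponding HNN extensions are again strongly treeable. Granting this, a finite induction on the height $n$ of the tower shows that $G$ is strongly treeable. The difficulty is that, in contrast to the weak treeability obtained in \cite{BTW-07} by passing to a well-chosen finite-index subgroup, strong treeability must be certified for \emph{every} free {\pmp} action $G \actson (X,\mu)$ simultaneously. For such an action one restricts the orbit equivalence relation $\RR$ to $A$, $B$, and $C$; strong treeability of the vertex groups furnishes Borel treeings of $\RR|_A$ and $\RR|_B$, while amenability of $C$ makes $\RR|_C$ hyperfinite. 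The heart of the matter is to splice these treeings together along the $C$-orbits into a single acyclic Borel graphing of all of $\RR$, exploiting the amenability of $C$ to collapse each $C$-class without introducing cycles, and to do so uniformly in the action; this gluing step, rather than the structure theory, is where I expect the real work to lie.
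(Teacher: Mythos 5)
Your reduction of the theorem to the tower description together with a permanence property of strong treeability under amalgamation over amenable (infinite cyclic) subgroups contains a fatal gap: that permanence property is false, and the paper itself records the counterexample. The appendix results (Theorem~\ref{thm:closure}, items (8)--(10)) only give closure of $\STreeable$ under amalgamated products, HNN extensions, and graphs of groups whose edge groups are \emph{finite}; nothing there covers edge group $\Z$. Indeed, in the remark following Corollary~\ref{cor: a one-rel amalg product}, the paper points out that $\Gamma=\langle t_1,t_2 \,\vert\, t_1^{d_1}t_2^{d_2}=1\rangle \cong \Z *_{\Z} \Z$ has cost $1$ and is non-amenable when $|d_i|\geq 3$, hence is non-treeable by \cite{Gab00a}. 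Here both vertex groups are $\Z$ (strongly treeable) and the edge group is $\Z$ (amenable), so your proposed inductive engine would "prove" that a non-treeable group is strongly treeable. Consequently the splicing step you flag as "where the real work lies" cannot be carried out in the generality you need: hyperfiniteness of $\RR_{\restriction C}$ does not allow one to glue treeings of $\RR_{\restriction A}$ and $\RR_{\restriction B}$ into an acyclic graphing of $\RR$, and this obstruction is precisely why strong treeability of surface groups (themselves amalgams of free groups over $\Z$) remained open for twenty years.

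The paper's actual argument never treats a tower level as an abstract splitting over $\Z$; it exploits the surface geometry that the splitting data alone forgets. The inductive step is Theorem~\ref{th: pi1 V meas str. treeable over U}: if $V$ is an IFL over $U$ (a surface $\Sigma$ with $\chi(\Sigma)\leq -1$ glued along its boundary circles to infinite-order loops in $U$), then $\pi_1(U)$ is a \emph{measure strong free factor} of $\pi_1(V)$ with treeable complement. Its proof passes to the universal cover: inside each copy of $\widetilde{\Sigma}$ one takes the dual graph of a cellulation, which is Borel planar and nowhere two-ended across the orbits of any free action, applies Corollary~\ref{cor:planarforest} to get a Borel a.e.\ one-ended spanning subforest of the dual, and deletes the crossed edges; this converts each surface piece into a forest connecting its boundary lines, yielding $\RR_{\pi_1(V)}=\RR_{q_*(\pi_1(U))} * \TT_0$ a.e.\ with $\TT_0$ a treeing. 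Theorem~\ref{elemfreetreeable} then follows by iterating this up the IFL tower furnished by Theorem~\ref{th: elem free gps as large towers}. So the correct substitute for your false amalgamation lemma is the one-ended-subforest-in-the-planar-dual mechanism, which uses that the relator is a surface relation, not merely that the edge groups are cyclic.
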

 Strong treeability has a number of consequences which do not follow from treeability. In particular, if $\Gamma$ is a strongly treeable group, then by \cite[Prop. VI.21]{Gab00a}
$\Gamma$ satisfies the fixed price conjecture \cite[Question I.8]{Gab00a}. The groups having a planar Cayley graph appearing in Theorem~\ref{th-intro surface + planar Cayley graphs}, such as the cocompact Fuchsian triangle groups, give the first new examples of groups of fixed price greater than $1$ since \cite{Gab00a}.
(The cocompact Fuchsian triangle groups admit finite index subgroups which are surfaces, but both strong treeability and fixed price are not known to pass to finite index super-groups.)

The arguments developed for the above theorem gave us as a by-product the following interesting claim (Corollary~\ref{cor: a one-rel amalg product}).
Let $r\geq 3$ and $\Gamma_1, \Gamma_2, \cdots, \Gamma_r$ be countable groups and let $\gamma_i\in \Gamma_i$ be an infinite order element for each $i=1, 2, \cdots, r$.
If the $\Gamma_i$ are all treeable or strongly treeable, then the same holds not only for their free product, but also for its quotient by the normal subgroup generated by the product of the $\gamma_i$:
$\left(\Gamma_1*\Gamma_2*\cdots *\Gamma_r\right)/\langle\langle  \prod_{i=1}^r \gamma_i\rangle\rangle$.

It is worth mentioning that treeability has also had an impact in the theory of von Neumann algebras. Popa's discovery \cite{Pop06a} of the first II$_1$ factor with trivial fundamental group (namely the group von Neumann algebra $L(\mathrm{SL}(2,\Z)\ltimes \Z^2)$)
 used his rigidity-deformation theory to establish uniqueness of the HT Cartan subalgebra, thereby reducing the problem to the study of the orbit equivalence relation of the treeable action of $\mathrm{SL}(2,\Z)$ on the $2$-torus. Later on, Popa and Vaes extended drastically the class of groups whose free \pmp{} actions lead to uniqueness of the Cartan subalgebra \cite{Popa-Vaes-2014-unique-Cartan-Fn, Popa-Vaes-2014-unique-Cartan-hyperbolic-gp}, and thus the study of the group-measure space von Neumann algebra of these actions boils down to that of the action up to OE. This class contains free groups, non-elementary hyperbolic groups, their direct products and all groups that are ME with these groups.
 The class of countable groups satisfying 2-cohomology vanishing for cocycle actions on II$_1$ factors is speculated to coincide with the class of  treeable groups
 \cite[Remarks 4.5]{Popa2018-vanishing}.

 However, it is far from the case that all countable groups are treeable. The first examples of non-treeable groups are the infinite Kazhdan property (T) groups \cite{AS90} and the non-amenable cost $1$ groups \cite[Th. 4]{Gab00a}, and more generally all non-amenable groups with $\beta_1^{(2)}=0$ \cite[Proposition 6.10]{Gab02}.
The random graph formulation of treeability is the existence of an invariant probability measure supported on the set of spanning trees on the group; in \cite{PP00}, Pemantle and Peres prove that a non-amenable direct product of infinite groups can have no such probability measure. The non treeability of non-amenable direct products also follows from the theory of cost \cite{Gab00a}.

 The study of non-treeable groups must involve higher dimensional geometric objects.
Recall that the \define{geometric dimension} of a countable group $\Gamma$ is the smallest dimension of a contractible complex on which $\Gamma$ acts freely. Analogously,  the second author introduced in \cite[D\'ef. 3.18]{Gab02} the \define{geometric dimension} of a \pmp{} standard equivalence relation $\RR$ on a probability measure space $(X,\mu)$ as the smallest dimension of a measurable bundle of contractible simplicial complexes over $X$ on which $\RR$ acts \define{smoothly}, i.e., for which there exists a Borel fundamental domain for the action of $\RR$.
The \define{ergodic dimension of a group $\Gamma$} \cite[D\'ef. 6.4]{Gab02} is the smallest geometric dimension among all of its free \pmp{} actions  (see \cite{Gab-erg-dim} for more on this notion).
Being infinite treeable is thus a synonym of having ergodic dimension~$1$.
The ergodic dimension of a group is an ME-invariant, it is bounded above by its virtual geometric dimension, and it is non-increasing when taking subgroups.

We say that $\Gamma$ has \define{strong ergodic dimension $d$} if all its free \pmp{} actions
have geometric dimension $d$. Honesty and humility force us to admit our ignorance: not a single group is known with two free \pmp{} actions
having different geometric dimensions.

By Ornstein-Weiss \cite{OW80}, all infinite amenable groups have strong ergodic dimension $1$.
Recall that non vanishing of $\ell^2$-Betti numbers produces lower bounds for the ergodic dimension (see \cite[Prop. 5.8, Cor. 3.17]{Gab02}). It follows for instance that (with $p_i\geq 2$)  $\Gamma=\FF_{p_1}\times \FF_{p_2}\times \cdots \times \FF_{p_d}$ or $\Gamma=(\FF_{p_1}\times \FF_{p_2}\times \cdots \times \FF_{p_d})*  \FF_{k}$ have strong ergodic dimension $d$  \cite[pp. 126-127]{Gab02} while $\Gamma\times \Z$ has strong ergodic dimension $d+1$  and $\mathrm{Out}(\FF_n)$ has strong ergodic dimension $2n-3$
\cite[Theorem 1.6, Theorem 1.1]{Gab-top-dim}.

It is not hard to check that the $3$-dimensional manifolds with one of the eight geometric structures of Thurston have ergodic dimension at most $2$.
For instance the fundamental group of a closed hyperbolic $3$-dimensional manifold is a cocompact lattice in $\mathrm{SO}(3,1)$. It is ME with non-compact lattices in $\mathrm{SO}(3,1)$, which have geometric dimension $2$ and zero first $\ell^2$-Betti number, and thus they have ergodic dimension at most $2$.
We prove a strong dichotomy theorem for the ergodic dimension of fundamental
groups of aspherical manifolds of dimension $3$.
\begin{thmintro}[Theorem~\ref{erg dim 3-dim mfld}]
\label{th intro: closed 3-dim dichotomy}
 Suppose $\Gamma$ is the fundamental group of a closed (i.e., compact without boundary) aspherical (possibly non-orientable) manifold of
  dimension $3$. Then either
  \begin{enumerate}
  \item $\Gamma$ is amenable, or
  \item $\Gamma$ has strong ergodic dimension $2$.
  \end{enumerate}
\end{thmintro}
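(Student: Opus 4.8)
The plan is to run the dichotomy through the Geometrization Theorem and to treat the two alternatives by two genuinely different mechanisms: an $\ell^2$-Betti number obstruction for the lower bound, and the geometric decomposition of $M$ for the upper bound. First I would recall that a closed aspherical $3$-manifold $M$ is, by geometrization (Thurston--Perelman), either modeled on one of the three solvable geometries $\mathbf{E}^3$, $\mathrm{Nil}$, $\mathrm{Sol}$---in which case $\Gamma=\pi_1(M)$ is virtually polycyclic, hence amenable and in case (1)---or else $\Gamma$ is non-amenable (by the Tits alternative for $3$-manifold groups it then contains a nonabelian free subgroup), and $M$ either is modeled on $\HH^3$, $\widetilde{\mathrm{SL}}_2$, or $\HH^2\times\R$, or carries a nontrivial JSJ decomposition into Seifert-fibered and finite-volume hyperbolic pieces glued along incompressible tori. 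So it suffices to prove that every non-amenable such $\Gamma$ has strong ergodic dimension exactly $2$, i.e.\ that every free \pmp{} action has geometric dimension both $\geq 2$ and $\leq 2$.

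For the lower bound I would argue that no such $\Gamma$ is treeable, which by definition forces ergodic dimension $\geq 2$, i.e.\ forces every free action to have dimension $\geq 2$. The key input is that all $\ell^2$-Betti numbers of a closed aspherical $3$-manifold vanish (Lott--L\"uck); in particular $\beta_1^{(2)}(\Gamma)=0$. If $\Gamma$ were treeable, then by Hjorth's theorem it would be ME to a free group $\FF_n$, and non-amenability rules out $n\leq 1$, so $n\geq 2$ and $\beta_1^{(2)}(\FF_n)=n-1>0$. Since $\ell^2$-Betti numbers are ME-invariants up to scaling by the coupling constant (Gaboriau), this would give $\beta_1^{(2)}(\Gamma)>0$, a contradiction. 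Hence $\Gamma$ is not treeable and every free action has geometric dimension $\geq 2$.

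For the upper bound I would use the elementary but crucial observation that the strong ergodic dimension of a torsion-free group is at most its geometric dimension: given a free cellular $\Gamma$-action on a contractible complex $\wt{K}$ of dimension $n$ and any free \pmp{} action $\Gamma\actson(X,\mu)$, the trivial bundle $X\times\wt{K}$ is a bundle of contractible $n$-complexes on which the orbit equivalence relation acts smoothly, with Borel fundamental domain $X\times F$ for a fundamental domain $F$ of $\Gamma\actson\wt{K}$. Now in a nontrivial JSJ decomposition every vertex piece is a compact aspherical $3$-manifold with nonempty toral boundary, hence homotopy equivalent to a $2$-complex, so each vertex group has geometric dimension $2$ and therefore strong ergodic dimension $\leq 2$; since the edge groups are the amenable groups $\Z^2$, I would then invoke the permanence of the bound ``strong ergodic dimension $\leq 2$'' under graphs of groups with amenable edge groups. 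The remaining closed geometric cases I would handle separately: for $\widetilde{\mathrm{SL}}_2$ and $\HH^2\times\R$ the group is a central extension $1\to\Z\to\Gamma\to Q\to 1$ of a cocompact Fuchsian group $Q$, which is strongly treeable by Theorem~\ref{th-intro surface + planar Cayley graphs}, so the amenable-by-strongly-treeable permanence gives strong ergodic dimension $\leq 2$; and for the closed hyperbolic case I would pass, via Agol's virtual fibering theorem, to a finite-index subgroup of the form $\pi_1(\Sigma)\rtimes\Z$ with $\pi_1(\Sigma)$ a surface group (strongly treeable, again by Theorem~\ref{th-intro surface + planar Cayley graphs}), deduce the bound for it, and transfer it back up using that geometric dimension is unchanged under passing to finite-index subrelations.

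The main obstacle is the upper bound in the strong (every action) form, and it is concentrated in two places. First, the closed hyperbolic case has no infinite amenable normal subgroup and $\mathrm{cd}(\Gamma)=3$, so the measurable reduction from dimension $3$ to $2$ cannot come from collapsing an amenable direction; the ME argument to a cusped lattice of geometric dimension $2$ only controls the minimal (ordinary) ergodic dimension, and upgrading to ``all actions'' is precisely why I would route through Agol's fibration and a commensurability-invariance argument rather than through ME, since strong ergodic dimension is not known to be an ME-invariant. Second, the entire scheme rests on strong versions of the permanence properties---extensions by amenable groups, amalgams and HNN extensions over amenable subgroups, and invariance under finite-index subrelations---each asserted for every free action rather than merely for the infimum; establishing these in the strong form, presumably by building explicit measurable complexings that glue one-ended spanning forests along the amenable ($\Z$ and $\Z^2$) pieces, is where I expect the real work to lie.
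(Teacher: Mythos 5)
You have the lower bound essentially right: non-amenability together with $\beta_1^{(2)}(\Gamma)=0$ rules out treeability and hence forces every free \pmp{} action to have geometric dimension $\geq 2$; your route (Lott--L\"uck vanishing, Hjorth's theorem \cite{Hjo-cost-att}, ME-invariance of $\ell^2$-Betti numbers) works, although the paper gets the same conclusion without geometrization: if some action had geometric dimension $\leq 1$, then $\beta_2^{(2)}(\Gamma)=0$ by \cite[Cor. 3.17]{Gab02}, Poincar\'e duality \cite{CG86} (applied to the orientation cover when $M$ is non-orientable) gives $\beta_1^{(2)}(\Gamma)=0$, and \cite[Prop. 6.10]{Gab02} yields amenability. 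The genuine gap is the upper bound in the strong form, and it is concentrated in exactly the three permanence statements you defer. (i) In the closed hyperbolic case, your claim that "geometric dimension is unchanged under passing to finite-index subrelations" is not a known fact: going \emph{up} a finite-index inclusion is precisely the delicate open question -- flagged explicitly in the paper -- of whether strong ergodic dimension is a commensurability invariant. For a finite-index subgroup $\Lambda\leq\Gamma$, the Serre-style product construction only gives geometric dimension $\leq 2\,[\Gamma:\Lambda]$ for the ambient relation, and there is no measured analogue of Serre's cohomological descent to lower this back to $2$. (ii) Permanence of "strong ergodic dimension $\leq 2$" under graphs of groups with $\Z^2$ edge groups (the JSJ case), and (iii) under central $\Z$-extensions of strongly treeable groups (the Seifert cases), are likewise not in the literature; you correctly call these "where the real work lies," but in those cases they \emph{are} the theorem, so the proposal reduces the statement to open or unproven assertions rather than proving it.

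By contrast, the paper's proof needs none of this structure theory: the strong upper bound is Theorem~\ref{erg dim d-mfld}, proved uniformly for every compact aspherical $d$-manifold and every free \pmp{} action, with no geometrization and no case analysis. One triangulates the universal cover, forms over the given action the \pmp{} Borel graph whose components are copies of the dual graph of the triangulation, notes that this graph is nowhere two-ended unless $\Gamma$ is two-ended or finite, hence amenable (Lemma~\ref{lem:growth cond for dual graph}, plus Ornstein--Weiss \cite{OW80} in that degenerate case), extracts a Borel a.e.\ one-ended spanning subforest of the dual via Theorem~\ref{thm:pmp}, and removes the $d$-cells and the $(d-1)$-cells dual to forest edges. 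Proposition~\ref{Sigma-F is contractible} (Whitehead's theorem plus retractions along the finite back-orbits of the forest) shows that the resulting equivariant field of $(d-1)$-dimensional complexes is contractible, so every action has geometric dimension $\leq d-1=2$. This direct construction is exactly what your proposal is missing: it is the mechanism that makes the bound hold for all actions simultaneously, which no ME, virtual-fibering, or finite-index transfer argument currently provides.
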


If one removes the assumption of asphericity, the Kneser-Milnor theorem  \cite{Kneser-1929,Milnor-1962} decomposes the fundamental group of an orientable closed $3$-dimensional
manifold as a free product of amenable groups and groups to which Theorem~\ref{th intro: closed 3-dim dichotomy} applies. Since the geometric dimension of a free p.m.p.\ action of a free product of infinite groups is the maximum of the geometric dimensions of the actions of the factors, 
it follows that:
\begin{thmintro}
  If $M$ is an orientable closed $3$-dimensional manifold, then $\Gamma=\pi_1(M)$ has strong ergodic dimension $\in\{0,1,2\}$.
\end{thmintro}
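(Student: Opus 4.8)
The plan is to deduce this from the dichotomy Theorem~\ref{th intro: closed 3-dim dichotomy}, the Kneser--Milnor prime decomposition, and the behaviour of geometric dimension under free products. First I would apply prime decomposition \cite{Kneser-1929, Milnor-1962} together with van Kampen's theorem to write $\Gamma = \pi_1(M) = \pi_1(M_1) * \cdots * \pi_1(M_k)$, where each $M_i$ is a closed orientable prime $3$-manifold. Each prime summand is either $S^2\times S^1$ (so $\pi_1 = \Z$), a spherical space form (so $\pi_1$ is finite), or irreducible with infinite $\pi_1$; in the last case the sphere theorem forces $\pi_2 = 0$, so $M_i$ is aspherical. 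Thus every free factor is either amenable or the fundamental group of a closed aspherical $3$-manifold, which is exactly the input required by the preceding dichotomy.

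Second, I would record that each free factor $P = \pi_1(M_i)$ has a \emph{well-defined} strong ergodic dimension $d_P \in \{0,1,2\}$. If $P$ is finite then every free \pmp{} action admits a Borel transversal and $d_P = 0$; if $P$ is infinite amenable then by Ornstein--Weiss \cite{OW80} every free \pmp{} action is hyperfinite and carries a one-ended treeing, so $d_P = 1$; and if $P$ is a non-amenable closed aspherical $3$-manifold group then Theorem~\ref{th intro: closed 3-dim dichotomy} gives $d_P = 2$. In each case the geometric dimension is the same across all free \pmp{} actions of $P$ and lies in $\{0,1,2\}$.

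Third, and this is the technical heart, I would invoke permanence of geometric dimension under free products (Appendix~\ref{sec:permanence}; cf.\ \cite{Gab02}). For any free \pmp{} action $\Gamma \actson (X,\mu)$ the orbit equivalence relation $\mathcal{R}$ splits as the free product of the sub-relations $\mathcal{R}_i$ generated by the factors, and
\[ \dim_{\mathrm{geo}}(\mathcal{R}) = \max\bigl(\varepsilon,\, \dim_{\mathrm{geo}}(\mathcal{R}_1), \dots, \dim_{\mathrm{geo}}(\mathcal{R}_k)\bigr), \]
where $\varepsilon = 1$ as soon as $\Gamma$ is a nontrivial free product (the Bass--Serre tree between the pieces contributes a one-dimensional, non-smooth layer) and $\varepsilon = 0$ otherwise. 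Since $\mathcal{R}_i$ is the orbit relation of a free action of $P_i$ and $P_i$ has strong ergodic dimension $d_{P_i}$, we get $\dim_{\mathrm{geo}}(\mathcal{R}_i) = d_{P_i}$ independently of $X$. Hence the right-hand side equals the fixed integer $\max(\varepsilon, d_{P_1}, \dots, d_{P_k}) \in \{0,1,2\}$, independent of the action. This exhibits a single $d \in \{0,1,2\}$ with every free \pmp{} action of $\Gamma$ having geometric dimension $d$, which is exactly the claim.

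I expect the main obstacle to be the lower-bound half of the free-product formula, namely that the geometric dimension cannot drop below $\max_i d_{P_i}$. For a hyperbolic (or otherwise aspherical) summand this cannot be detected through $\ell^2$-Betti numbers, which all vanish for closed aspherical $3$-manifolds, so the obstruction must be extracted from the free-product structure itself rather than from a numerical invariant of $\Gamma$ — a group such as $(\text{hyperbolic } 3\text{-manifold group}) * \Z$ has $\beta_1^{(2)} > 0$ yet must still have geometric dimension $2$. This is precisely where the dimension machinery behind Theorem~\ref{th intro: closed 3-dim dichotomy} and the permanence results carry the argument.
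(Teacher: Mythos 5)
Your proof is correct and is essentially the paper's own argument: the paper likewise obtains this statement by combining the Kneser--Milnor decomposition \cite{Kneser-1929,Milnor-1962} with Theorem~\ref{th intro: closed 3-dim dichotomy}, leaving the free-product permanence step as an implicit ``it follows.'' Your third step---the max formula, with the upper bound coming from the free-product construction for fields of contractible complexes in \cite{Gab02} and the lower bound from monotonicity of geometric dimension under subrelations together with the strongness of each factor---is exactly the content of that implicit step (two small points: this machinery lives in \cite{Gab02}, not in Appendix~\ref{sec:permanence}, which concerns treeability; and ``irreducible with finite fundamental group'' suffices in place of ``spherical space form,'' so no appeal to elliptization is needed).
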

Considering the orientation covering, one deduces that the ergodic dimension is at most $2$ if $M^3$ is non-orientable.
However, strongness also holds in this case, but this shall be treated elsewhere \cite{Gab-erg-dim}. This touches the delicate open question whether strong ergodic dimension is an invariant of commensurability.
If one allows $M$ to have boundary components, then we lose a priori the strongness but we still obtain that every \pmp{} free action of $\Gamma=\pi_1(M)$ has geometric dimension at most $2$.

It is worth noting that all our results are proved without appealing to the recent progress in Thurston's geometrization theorem.
In higher dimensions, we also obtain a non-trivial bound :
\begin{thmintro}[Theorem~\ref{erg dim d-mfld}]
  Suppose $\Gamma = \pi_1(M)$ is the fundamental group of a compact aspherical manifold
  $M$ (possibly with boundary) of
  dimension at least $2$. Then all free \pmp{} actions of $\Gamma$ have
  ergodic dimension at most $\dim(M) - 1$.
\end{thmintro}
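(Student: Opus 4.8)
The plan is to set $n=\dim M$ and treat separately the case where $M$ has nonempty boundary and the closed case, reducing everything to the construction of measurable one-ended spanning subforests provided by our main technical tool. First I would dispose of the bounded case directly. A compact $n$-manifold with nonempty boundary admits a handle decomposition with no handles of index $n$, hence is homotopy equivalent to a CW complex $K$ of dimension $\le n-1$; since $M$ is aspherical, $K$ is a $K(\Gamma,1)$, so the geometric dimension of $\Gamma$ is at most $n-1$. Writing $\wt K$ for the universal cover and $\Gamma\actson(X,\mu)$ for an arbitrary free \pmp{} action with orbit equivalence relation $\RR$, the diagonal action on $X\times\wt K$ is a smooth bundle of contractible $(n-1)$-complexes with Borel fundamental domain, so $\RR$ has geometric dimension $\le n-1$. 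Thus only the closed case needs the measurable input.

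So suppose $M$ is closed. I would write $\wt M$ for the universal cover, a contractible $n$-manifold without boundary on which $\Gamma$ acts freely and cocompactly, and fix a $\Gamma$-invariant cell structure with finitely many orbits of cells. Since $\Gamma$ is the fundamental group of a closed aspherical $n$-manifold with $n\ge 2$, it is a $PD_n$ group and hence one-ended, so $\wt M$ is one-ended. Next I would form the dual graph $\Dual$ whose vertices are the $n$-cells of $\wt M$ and whose edges are the $(n-1)$-cells, each joining the two $n$-cells it bounds; here I use that $\wt M$ has empty boundary, so every $(n-1)$-cell bounds exactly two $n$-cells. This $\Dual$ is locally finite, quasi-isometric to $\wt M$ and therefore one-ended, and carries a free cocompact $\Gamma$-action. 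For a free \pmp{} action $\Gamma\actson(X,\mu)$ the diagonal action on $X\times\wt M$ turns the fiberwise copies of $\Dual$ into a locally finite \pmp{} graph $\GG$ over $X$ all of whose components are one-ended.

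If $\Gamma$ is amenable then $\RR$ is hyperfinite, so by Ornstein--Weiss it has geometric dimension $1\le n-1$. Otherwise $\GG$ is a non-amenable one-ended \pmp{} graph, to which our classification of locally finite \pmp{} graphs admitting Borel a.e.\ one-ended spanning subforests applies: I obtain a Borel assignment $x\mapsto F_x$ of a spanning subforest of $\Dual$ all of whose components are one-ended, compatible with the $\RR$-structure. The key geometric step is then to convert $F_x$ into a dimension-lowering collapse of $\wt M$. Orienting each one-ended tree of $F_x$ toward its unique end matches every $n$-cell (vertex of $\Dual$) to the unique incident $(n-1)$-cell (edge of $F_x$) pointing toward that end; this matching is acyclic, pairs off every $n$-cell, and leaves as critical cells only cells of dimension $\le n-1$. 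By discrete Morse theory the induced gradient collapse deformation retracts each fiber $\wt M$ onto a subcomplex $Z_x$ of dimension $\le n-1$, and since $\wt M$ is contractible, so is $Z_x$. The family $\{(x,z):z\in Z_x\}$ is then a Borel bundle of contractible $(n-1)$-complexes on which $\RR$ acts smoothly, with Borel fundamental domain inherited from $\wt M$, witnessing that $\RR$ has geometric dimension $\le n-1$.

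The one-endedness of the trees of $F_x$ is exactly what makes the matching acyclic with no critical top cell: a finite tree would leave its $n$-cell unmatched as a critical $n$-cell, and a tree with two or more ends would obstruct a consistent choice of gradient direction. Consequently the main obstacle is the existence of the measurable one-ended spanning subforest, which is precisely the content of our main technical tool; granting it, the remaining work is routine but must be verified carefully, namely that the induced matching is genuinely acyclic and measurable in $x$, and that the discrete Morse collapse can be carried out uniformly over the fibers so that the resulting bundle of $(n-1)$-complexes is Borel and admits a Borel fundamental domain.
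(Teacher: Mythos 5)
Your proposal is correct and, in the closed case, follows essentially the same route as the paper: form the dual graph of an equivariant triangulation of $\wt{M}$, apply Theorem~\ref{thm:pmp} to the induced \pmp{} graph to extract a measurable one-ended spanning subforest, and delete all $n$-cells together with the $(n-1)$-cells dual to forest edges. Two differences are worth recording. First, you dispose of the case $\partial M\neq\emptyset$ separately, via a spine of dimension $\leq n-1$, so that $\Gamma$ itself has geometric dimension $\leq n-1$ and no measurable input is needed there; the paper instead treats both cases uniformly, the boundary being absorbed into the distinction between regular and singular $(n-1)$-cells, so the forest argument covers manifolds with boundary as well. Your split is legitimate and has the virtue of isolating exactly where the measure theory is required (similarly, your use of the $PD_n$ property to get one-endedness replaces the paper's case analysis on the dual graph being finite or two-ended). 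Second --- and this is the only step needing real care --- you establish contractibility of the collapsed fiber $Z_x$ by quoting ``discrete Morse theory'' for the matching pairing each $n$-cell with its outgoing forest edge. Forman's theorem is a statement about finite complexes; on the infinite complex $\wt{M}$ no $(n-1)$-cell dual to a forest edge is ever a free face (both of its cofaces are present), so no elementary collapse can even begin, and a global gradient deformation retraction is not automatic. What saves the argument is precisely one-endedness: every gradient path descends into the back-orbit of a vertex of the forest, and back-orbits in a locally finite one-ended tree are finite, so the matching is acyclic with all gradient paths finite, and the version of discrete Morse theory valid for infinite complexes with rayless acyclic matchings then yields the homotopy equivalence onto the critical subcomplex $Z_x$. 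The paper's Proposition~\ref{Sigma-F is contractible} is exactly the self-contained substitute for that citation: it exhausts $\wt{M}$ by compact back-orbit-saturated subcomplexes, performs the collapses there one $n$-cell at a time (starting from the cell of the compact piece furthest toward the end, whose outgoing face is free \emph{inside that piece}), and concludes by killing all $\pi_k$ and invoking Whitehead's theorem. So your outline is sound, but the sentence asserting the gradient collapse should either cite an infinite/rayless form of discrete Morse theory explicitly or be replaced by this compact-exhaustion argument; with only the finite theory in hand it is the one non-routine point, exactly as you flagged.
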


The theory of measure equivalence has been extended beyond countable discrete groups to include all unimodular locally compact second countable (\define{lcsc}) groups (see the nice survey \cite{Furman2011} and see \cite{KKR-2017} for basic invariance properties). The investigation about their treeability began with a result of Hjorth \cite[Theorem 0.5]{Hj08b} stating that the products $G_1\times G_2$ of infinite lcsc groups are non treeable unless both are amenable.
He observed that amenable groups are strongly treeable and asks which other lcsc groups satisfy this property  \cite[p. 387]{Hj08b}.
We produce the first progress in this study since then:
\begin{thmintro}[See Corollary~\ref{cor:subgroup}]
\label{th-intro Isom H2, PSL 2 are str. treeable}
$\Isom(\HH ^2)$, $\PSL_2(\R )$, and $\SL_2(\R )$ are all strongly treeable, as are all of their closed subgroups.
\end{thmintro}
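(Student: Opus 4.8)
The plan is to realize all three groups through their action on the hyperbolic plane $\HH^2$, which is a planar surface, and to produce treeings of their cross-section equivalence relations by feeding the resulting Borel planar graphs into the paper's construction of one-ended spanning subforests in a planar dual. Everything reduces to the core assertion that \emph{every closed subgroup $G\le\Isom(\HH^2)$ is strongly treeable}. This already delivers $\Isom(\HH^2)$, its orientation-preserving index-two subgroup $\PSL_2(\R)=\Isom^+(\HH^2)$, and all of their closed subgroups (including the Fuchsian groups, where it meshes with Theorem~\ref{thm:planargroup}). The double cover $\SL_2(\R)$ is not a subgroup of $\Isom(\HH^2)$, but it sits in a central extension $1\to\Z/2\Z\to\SL_2(\R)\to\PSL_2(\R)\to1$ with finite, hence compact, kernel; I would obtain its strong treeability---and that of each of its closed subgroups, which is in turn a compact extension of a closed subgroup of $\PSL_2(\R)$---from the permanence of strong treeability under compact-kernel extensions in Appendix~\ref{sec:permanence}.

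To prove the core assertion, fix a free \pmp{} action $G\actson(X,\mu)$; I must treeify its cross-section relation. For each $G$-orbit, use the geometric action to develop the orbit into $\HH^2$: having fixed a basepoint $o\in\HH^2$, the (free) orbit $G\cdot x$ is identified with $G$, and $g\cdot x\mapsto g\cdot o$ is a $G$-equivariant map into $\HH^2$. Choosing such a development Borel-measurably across $X$ and taking a lacunary complete section $Y\subseteq X$, the section points of each orbit map to a uniformly discrete subset of $\HH^2$; equipping it with its hyperbolic Delaunay graph gives a locally finite graph that, being embedded in $\HH^2\cong\R^2$, is planar with a Borel rotation system inherited from the hyperbolic structure. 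Since the Delaunay graph of a discrete subset of $\HH^2$ is connected, the resulting Borel planar graph $\Ggraph$ generates the cross-section relation, and the main technical tool applies directly: a one-ended spanning subforest of the planar dual of $\Ggraph$ yields a Borel treeing of the cross-section relation. As the action was arbitrary, $G$ is strongly treeable.

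The main obstacle is the presence of compact point-stabilizers, which is exactly what can spoil the construction. The developing map $g\cdot x\mapsto g\cdot o$ is injective on an orbit only modulo the stabilizer $K=\{g\in G:g\cdot o=o\}$, a compact subgroup of the point-stabilizer $\mathrm{O}(2)\le\Isom(\HH^2)$; when $G$ is discrete a generic basepoint $o$ makes $K$ trivial, but for positive-dimensional $G$---already $\PSL_2(\R)$, where $K\cong\mathrm{SO}(2)$---every basepoint has nontrivial stabilizer, and distinct section points lying in a common $K$-coset collapse to one point of $\HH^2$, so the Delaunay graph no longer transfers to a well-defined graphing on $Y$. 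I would resolve this by passing to the compact quotient $X/K$: the restricted action $K\actson X$ is free and \pmp{}, and on $X/K$ the orbit data descends to a \pmp{} equivalence relation whose classes inject into $\HH^2$, so the Delaunay construction proceeds cleanly. It then remains to transfer treeability back across the compact quotient $X\to X/K$, which is where compactness of $K$ is essential; I would isolate this as a lemma in the lcsc framework of Appendix~\ref{sect: Treeability for locally compact groups}, stating that quotienting a free \pmp{} action by a compact subgroup preserves treeability of the associated cross-section relation. The remaining work---verifying connectivity of the Delaunay graph so that $\Ggraph$ genuinely generates the relation, and checking that all choices are uniformly Borel across orbits---is routine bookkeeping within the cross-section formalism.
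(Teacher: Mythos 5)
Your treatment of the full group coincides with the paper's proof of Theorem~\ref{thm: Isom H2 meas tree}: quotient by the compact stabilizer $K$ of a basepoint, take a separated cocompact cross section (Theorem~\ref{thm:cocomp}), form the Dirichlet decomposition of each class developed in $\HH^2$, observe that its dual graph (your Delaunay graph) carries a Borel $2$-basis, apply Theorem~\ref{thm:planar}, and lift back across $K$ because the cross section meets each $K$-orbit at most once; your handling of $\SL_2(\R)$ by compact-kernel permanence is also the paper's. The genuine gap is in your ``core assertion,'' where you run this construction directly for an \emph{arbitrary} closed subgroup $G\le\Isom(\HH^2)$: the claim that the Delaunay graph of the developed section points is locally finite is false at that level of generality. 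Local finiteness is guaranteed when the point set is relatively dense in $\HH^2$ (then every Voronoi cell is a bounded polygon with finitely many sides), and that is exactly what a cocompact cross section provides when the orbits mod $K$ are all of $\HH^2$, i.e.\ for $G=\Isom(\HH^2)$ itself. For a proper closed subgroup the developed points are relatively dense only in the orbit $G\cdot o$, and cells computed in $\HH^2$ can be unbounded with infinitely many sides. This really happens: take $G$ an infinitely generated Fuchsian group (discrete, hence closed). The Voronoi cell of $o$ relative to the orbit $G\cdot o$ is the Dirichlet fundamental domain of $G$, which must have infinitely many sides, since a finite-sided Dirichlet domain would yield a finite generating set by side pairings; so $o$ has infinitely many Delaunay neighbors. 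Since Theorem~\ref{thm:planar} and Corollary~\ref{cor:planarforest} (and the whole $2$-basis machinery) require locally finite graphs, your argument breaks precisely for such subgroups. Uniform discreteness alone never rescues it: one can even place points in $\HH^2$ at rapidly increasing radii in slowly separating directions so that a single vertex acquires infinitely many Delaunay neighbors.

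The natural repair -- run the construction only for the ambient group and then descend -- is blocked by a second, structural feature of your setup: you work exclusively with \pmp{} actions, and \emph{strong} treeability is not known to pass to closed subgroups of infinite covolume (inducing a \pmp{} action of $H$ up to $G$ produces an infinite invariant measure; the paper records closure of $\STreeable$ only under \emph{co-finite} closed subgroups, Theorem~\ref{thm:closure}(6)). This is exactly why the paper proves the stronger statement that $\Isom(\HH^2)$ is \emph{measure} strongly treeable: the geometric construction is run only for the full group, where relative density is automatic, but for all free Borel actions and arbitrary measures; then every closed subgroup of $\Isom(\HH^2)$ and of $\PSL_2(\R)$, and the compact extension $\SL_2(\R)$ with its closed subgroups, are obtained from the permanence properties of $\MSTreeable$ (Theorem~\ref{thm:closure}(4),(5)), and strong treeability is read off from the containment $\MSTreeable\subseteq\STreeable$ (Corollary~\ref{cor:subgroup}). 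So the fix is to upgrade your quantifier from \pmp{} actions to all free Borel actions with arbitrary measures in the full-group construction, and to replace the per-subgroup Delaunay argument by this permanence step.
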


While the notion of treeability extends in the natural way to orbit equivalence relations of actions $G\actson (X,\mu)$ of lcsc groups, an equivalent way of conceiving a treeing in this context is by introducing a cross section $B\subseteq X$ (see section~\ref{sect: Treeability for locally compact groups}) to which the restriction $\RR_{\restriction B}$ of the orbit equivalence relation $\RR_{G\actson (X,\mu)}$
has countable classes and is treeable.

This result gives as a by-product the first examples of non trivial fixed price for connected lcsc groups (Definition \ref{def:lcsccost}).  In contrast, fixed price $1$ for the direct product of some lcsc groups with the integers is obtained in \cite{AM-21}.  Once a Haar measure is prescribed on $G$, the quantity $\frac{\mathrm{cost}(\RR_{\restrict B})-1}{\mathrm{covolume}(B)}$ does not depend on the cross section $B$, since the restrictions are pairwise stably orbit equivalent (Proposition \ref{prop:covol}).  A remarkable consequence of Theorem \ref{th-intro Isom H2, PSL 2 are str. treeable} is that this quantity is also independent of the free \pmp{} action $G\actson (X,\mu)$, for each of these groups:
\begin{thmintro}[See Corollary \ref{cor:subgroup} and Remark \ref{rem:lcsccost}]
The groups $G=\Isom(\HH ^2)$, $\PSL_2(\R )$, and $\SL_2(\R )$, and their closed subgroups have fixed price.
\end{thmintro}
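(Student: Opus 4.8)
The plan is to deduce the fixed price property from the strong treeability obtained in Corollary~\ref{cor:subgroup}, imitating at the level of cross sections Gaboriau's proof that strongly treeable discrete groups have fixed price. Fix a Haar measure on $G$. By Definition~\ref{def:lcsccost} and Proposition~\ref{prop:covol}, for every free \pmp{} action $G \actson (X,\mu)$ the quantity $\mathrm{C}(G \actson X) := \frac{\mathrm{cost}(\RR_{\restrict B}) - 1}{\mathrm{covolume}(B)}$ is independent of the cross section $B \subseteq X$; the task is to prove it is independent of the action as well. It suffices to treat $G$ itself, since by Corollary~\ref{cor:subgroup} every closed subgroup is again strongly treeable and the same argument applies to it with its inherited Haar measure.

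First I would note that for any free \pmp{} action the cross-section relation $\RR_{\restrict B}$ is a countable \pmp{} equivalence relation, and that strong treeability (in the lcsc sense of Section~\ref{sect: Treeability for locally compact groups}) says precisely that it is treeable. By Gaboriau's theorem that any treeing of a treeable relation realizes its cost, the cost of $\RR_{\restrict B}$ --- and hence $\mathrm{C}(G \actson X)$ --- is computed by any treeing.

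Next, given two free \pmp{} actions $G \actson (X,\mu)$ and $G \actson (Y,\nu)$, I would compare them through the diagonal action $G \actson (X \times Y, \mu \times \nu)$, which is again free and \pmp. The key structural feature is that the factor projection $X \times Y \to X$ is $G$-equivariant and, because the actions are free, restricts to a bijection on each $G$-orbit; this class-bijectivity descends to the cross-section relations, allowing a treeing of a cross-section relation of $G \actson X$ to be pulled back to a treeing of a cross-section relation of $G \actson X \times Y$ without changing the covolume-normalized edge measure. Since both relations are treeable, these treeings compute their respective costs, giving $\mathrm{C}(G \actson X \times Y) = \mathrm{C}(G \actson X)$; the symmetric argument through the projection to $Y$ gives $\mathrm{C}(G \actson X \times Y) = \mathrm{C}(G \actson Y)$, whence $\mathrm{C}(G \actson X) = \mathrm{C}(G \actson Y)$.

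The main obstacle is the cross-section bookkeeping in this last step: one must choose the cross sections for the product action and the factor actions compatibly, check that the class-bijective factor map genuinely descends to a stable isomorphism of the corresponding cross-section relations, and verify that the covolume normalization exactly absorbs the resulting amplification so that the pulled-back treeing has the same normalized cost. Once this is in place, treeability supplies the matching upper and lower bounds automatically, and independence from the action --- that is, fixed price for $G$ and for each of its closed subgroups --- follows.
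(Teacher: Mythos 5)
Your proposal is correct and takes essentially the same route as the paper: Corollary~\ref{cor:subgroup} supplies (measure) strong treeability of these groups and of their closed subgroups, and the paper's Proposition~\ref{prop:stronglytreeablefixedprice} deduces fixed price precisely by running Gaboriau's argument \cite[Prop. VI.21]{Gab00a} at the level of cross sections, using that the pull-back of a cross section under the factor maps of the diagonal product action is again a cross section. Your sketch merely spells out the steps (class-bijectivity of the projections, pull-back of treeings, treeings attaining the cost, covolume bookkeeping) that the paper compresses into that one-line citation.
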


A central fascination in our study is, given a graphing, the hunt for a subgraphing all of whose connected components are acyclic and have exactly one end (hereafter named a \define{one-ended spanning subforest}, since several connected components are usually necessary for covering a single class). Besides their intrinsic interest, one-ended spanning subforests prove to be extremely useful in our applications, e.g., Theorems \ref{thm:planar}, \ref{elemfreetreeable}, \ref{erg dim d-mfld}, and \ref{erg dim 3-dim mfld}.

Much of the technical work in the paper consists of finding new techniques for constructing Borel a.e.\ one-ended spanning subforests of locally finite Borel graphs.
In particular, in the case of locally finite \pmp{} graphs, we give a
complete characterization of what graphs admit Borel a.e.\ one-ended
spanning subforests.
\begin{thmintro}[Theorem~\ref{thm:pmp}]
\label{th: pmp: 1-ended subforest iff now 2-ended - intro}
  Suppose that ${\Ggraph}$ is a measure preserving aperiodic locally finite
  Borel graph on a standard probability space $(X,\mu)$. Then ${\Ggraph}$ has a Borel
  $\mu$-a.e.\ one-ended spanning subforest $\Tgraph\subseteq \Ggraph$ if and only if ${\Ggraph}$ is $\mu$-nowhere
  two-ended.
\end{thmintro}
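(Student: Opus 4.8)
The plan is to prove the two implications separately, with the forward direction (existence of a one-ended spanning subforest forces nowhere-two-endedness) a short obstruction argument, and the reverse direction (the construction) carrying essentially all of the content.

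\medskip

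For the forward direction, suppose $\Tgraph\subseteq\Ggraph$ is a Borel $\mu$-a.e.\ one-ended spanning subforest and, toward a contradiction, that there is a $\Ggraph$-invariant Borel set $A$ with $\mu(A)>0$ on which $\Ggraph$ is two-ended; restrict to $A$. Orienting each tree of $\Tgraph$ toward its unique end gives a Borel "parent" map $\pi\from X\to X$ that is finite-to-one and whose forward orbits are the rays escaping to the ends of the trees. A two-ended component carries a canonical \emph{unordered} pair of ends $\{e^+,e^-\}$, and I would split into two sub-cases according to $\pi$. If both ends occur as ends of trees, the $\Ggraph$-edges along which $e^+$-trees meet $e^-$-trees form a canonical nonempty \emph{finite} separating cut, Borel-assigned and invariant per component; uniformizing (Luzin--Novikov) then yields a Borel transversal, contradicting aperiodicity of an aperiodic pmp graph. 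If instead every tree points to the same end, then $\pi$ produces a uniform drift across any invariant family of separating cuts, which is forbidden by the Mass-Transport Principle. Either way we reach a contradiction, so $\Ggraph$ must be nowhere two-ended. This is the easy direction.

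\medskip

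For the reverse direction assume $\Ggraph$ is $\mu$-nowhere two-ended. First I would invoke the structural trichotomy for measure preserving graphs: the a.e.\ number of ends of a component is $1$, $2$, or $\infty$, so (using aperiodicity to exclude the finite case and the hypothesis to exclude $2$) we may split $X$ into invariant Borel pieces $X_1$, where components are one-ended, and $X_\infty$, where they have infinitely many ends. On both pieces the skeleton of the construction is the same large-scale scheme: build a nested Borel exhaustion of each component by finite connected "blocks" (for instance balls about the points of a Borel maximal $r$-net for $r=1,2,\dots$, refined to be connected and nested), fix an arbitrary Borel spanning tree inside each block, and connect each block to a single successor block at the next scale by one edge. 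Since scale is a proper function strictly increasing along every successor chain, each resulting tree escapes to infinity and is automatically one-ended, while the exhaustion makes the forest spanning and acyclic. The place where the hypothesis is indispensable is the Borel choice of successors. On $X_1$ the complement of a finite block has a unique infinite component, so the successor is canonical and the construction is measurable with no symmetry to break, already yielding a one-ended spanning tree. On $X_\infty$ there is no canonical successor, but the large-scale "tree of blocks" is itself branching, so one expects to cut it into one-ended pieces by a Borel selection without ever choosing a single end globally. The forbidden intermediate regime is exactly two ends: there the tree of blocks would be a bi-infinite path, and turning it into a one-ended spanning subforest would require an invariant orientation or a bounded cut of that path --- the very smooth selection excluded in the forward direction.

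\medskip

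I expect the main obstacle to be making the $X_\infty$ construction genuinely Borel and $\mu$-a.e.\ correct: one must produce the successor assignment (equivalently, cut the tree of blocks into one-ended subforests) measurably and uniformly across scales, while controlling that no accidental second end or cycle is created in the inverse limit, and while patching the $X_1$ and $X_\infty$ constructions along their common combinatorial interface. This is precisely the Borel-graph technology for one-ended spanning subforests developed earlier in the paper; the pmp hypothesis enters only through the ends trichotomy and the Mass-Transport Principle that excludes the two-ended case, after which the construction is carried out purely at the level of locally finite Borel graphs.
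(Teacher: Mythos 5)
Your forward direction is correct and is essentially the paper's obstruction argument (Lemma~\ref{lem:obstruct}): in a two-ended component, any a.e.\ one-ended spanning subforest consists of finitely many trees, each having only finitely many vertices beyond any finite cut separating the two ends, and this yields a canonical finite nonempty separating set per component, hence smoothness, contradicting aperiodicity plus measure preservation. Note that your second case (all trees pointing to the same end) is in fact vacuous for the same reason --- the far side of a separating cut could not then be covered --- so no Mass-Transport argument is needed.

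The reverse direction has a genuine gap, located exactly where you assert there is "no symmetry to break", namely on $X_1$. Your multi-scale block scheme is the paper's Lemma~\ref{abstract_clinton}, and that lemma produces a forest only under the summability hypothesis $\sum_n \mu(\dom(f_n))<\infty$: Borel--Cantelli then ensures that a.e.\ vertex lies in only finitely many domains, so the "last" assignment $f(x)=f_{n(x)}(x)$ is well defined. Your proposal has no such mechanism: every vertex lies in blocks at every scale, the spanning trees chosen at different scales conflict on common vertices, and the "limit" of the scheme is simply not defined. The paper obtains summability from measure preservation together with a uniform growth bound: a maximal $r_n$-net $A_n$ has $\mu(A_n)\leq 1/(cr_n^2)$ when every $r_n$-ball has at least $cr_n^2$ points (Theorem~\ref{thm:quad}), and such a bound is available precisely on pieces with positive isoperimetric constant. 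General $\mu$-nowhere two-ended graphs --- in particular $\mu$-hyperfinite one-ended graphs of slow or non-uniform growth --- need not satisfy it, and for them the paper uses an entirely different argument, which is where most of the work lies: decompose via Elek's theorem (Theorem~\ref{thm:Elek}) into a $\mu$-hyperfinite part and positive-isoperimetric-constant parts; on the hyperfinite part take an acyclic Borel subgraph with the same connectedness relation, apply the treeing theorem of \cite{CMT-D} (Theorem~\ref{thm:acyclic}) where that subgraph has at least three ends, and handle the remaining case --- a one-ended graph carrying a two-ended treeing --- by the line-merging construction of Lemmas~\ref{lem:2in1lem} and~\ref{lem:2in1}; finally extend from complete sections by Lemma~\ref{lem:extend}. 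Relatedly, two of your framing claims are incorrect: the \pmp{} hypothesis does not enter only through the ends trichotomy and mass transport (it is what bounds the net measures and drives the hyperfinite analysis, e.g.\ forcing a hyperfinite treeing to have at most two ends a.e.), and there is no prior "Borel-graph technology" to defer to for non-acyclic graphs --- that technology is the theorem you are proving, and its non-\pmp{} analogue is precisely the paper's open Conjecture~\ref{conj:pmp}. Your $X_\infty$ discussion inherits the same problem: the block-adjacency structure is not acyclic, so Theorem~\ref{thm:acyclic} cannot be applied to it directly.
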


Here, \define{$\mu$-a.e.\ one-ended spanning subforest} means that the set of vertices of the one-ended trees of $\Tgraph$ has full $\mu$-measure in $X$;
while \define{$\mu$-nowhere two-ended} means that the set of vertices of the two-ended connected components of $\Ggraph$ has measure zero in $X$.

The search for subtrees or subforests has attracted enormous attention in another but related mathematical field: the theory of random graphs and percolation (already alluded to in our introduction to non-treeable groups).
Thus, for example Pemantle \cite{Pemantle-1991-WSF} introduced the spanning forest FUSF for $\Z^d$, obtained as the
 limiting measure of the uniform spanning tree on large finite pieces of the lattice. He proved that it is connected if and only if $d\leq 4$. The use of various subforests such as the (wired and free) minimal spanning forests (WMSF, FMSF) or the (wired and free) uniform spanning (WUSF, FUSF) forests are of crucial significance in the theory of percolation on graphs \cite{BLPS01,LPS06}.
The WMSF is an instance of a random one-ended spanning subforest.
The authors of \cite{LPS06} have shown the equivalence of WMSF$\not=$FMSF with the famous conjecture of Benjamini-Schramm \cite{Benjamini-Schramm-96} whether $p_c\not=p_u$. The mean valency of the FUSF equals two plus twice the first $\ell^2$-Betti number \cite[Corollary 4.12]{Lyons-2009-random-complexes}.
If we knew that adding a random graph of arbitrarily small mean valency could make the FUSF forest connected, then it would solve the cost vs first $\ell^2$-Betti number question of  \cite[p. 129]{Gab02}. See also \cite{GL} and \cite{Ti19} for further connections between treeability and percolation.

Theorem \ref{th: pmp: 1-ended subforest iff now 2-ended - intro} relies on Elek and Kaimanovich's characterization of when a locally finite \pmp{} graph $\Ggraph$ is  \define{$\mu$-hyperfinite} (i.e., when there exists a $\mu$-conull subset $X_0$ of $X$ such that the connectedness equivalence relation $\RR_{\Ggraph_{\restriction{X_0}}}$ of the induced subgraph $\Ggraph_{\restriction{X_0}}$ is hyperfinite). On the contrary, we say $\Ggraph$ is \define{$\mu$-nowhere hyperfinite} if there does not exist a positive measure subset $A$ of $X$ such that $(\RR_{\Ggraph})_{\restriction A}$ is hyperfinite.

We also use Theorem~\ref{th: pmp: 1-ended subforest iff now 2-ended - intro} to give an interesting dual statement to
the well-known part (1) of the following theorem that a graph is
$\mu$-hyperfinite if and only if it has complete sections of arbitrarily large measure whose induced subgraphs are finite.
\begin{thmintro}[Theorem~\ref{thm:1percent}]\label{thm:lastpmp}
Let $\Ggraph$ be a locally finite p.m.p\ graph on a standard
probability space $(X,\mu )$.
\begin{enumerate}
  \item $\Ggraph$ is $\mu$-hyperfinite if and only if for
every $\epsilon > 0$, there exists a Borel complete section $A \subseteq X$ for
$\RR_\Ggraph$ with $\mu(A) > 1 - \epsilon$ so that $\Ggraph_{\restriction A}$ has
finite connected components.
  \item $\Ggraph$ is $\mu$-nowhere hyperfinite if and only if for every
  $\epsilon >0$ there exists a Borel complete section $A\subseteq X$ for
  $\RR_{\Ggraph}$ with $\mu (A)<\epsilon$ such that $\Ggraph_{\restriction A}$ is $\mu _{\restriction A}$-nowhere hyperfinite.
\end{enumerate}
\end{thmintro}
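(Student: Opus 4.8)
The plan is to treat the two parts separately. I would regard part (1) as essentially the standard Elek--Kaimanovich characterization of $\mu$-hyperfiniteness already invoked above: from an exhausting increasing sequence of finite Borel subequivalence relations one reads off complete sections of measure approaching $1$ with finite induced components, and conversely such sections assemble into such a sequence. The real effort goes into the dual part (2), which I would prove by establishing its two implications independently, with all the substantive content --- and the one-ended forest technology --- concentrated in the forward direction.

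For the forward direction, assume $\Ggraph$ is $\mu$-nowhere hyperfinite. First I would check that the hypothesis lets us apply Theorem~\ref{thm:pmp}: after discarding a null set the graph is aperiodic (finite components are hyperfinite), and it is $\mu$-nowhere two-ended, since any positive-measure invariant set of two-ended components would carry a hyperfinite restriction of $\RR_\Ggraph$, contradicting the hypothesis. Theorem~\ref{thm:pmp} then furnishes a Borel $\mu$-a.e.\ one-ended spanning subforest $\Tgraph \subseteq \Ggraph$. On the one-ended part I would define the Borel \emph{parent map} $f \from X \to X$ sending each vertex to its unique $\Tgraph$-neighbor lying in the single infinite component obtained by deleting that vertex (the neighbor toward the end); $f$ is finite-to-one, so by Lusin--Novikov each set $A_n := \ran(f^n)$ is Borel. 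Since a one-ended tree admits no infinite descending ray, $\bigcap_n A_n$ is $\mu$-null; as the $A_n$ decrease, continuity of measure gives $\mu(A_n) \to 0$, and I fix $n$ with $\mu(A_n) < \epsilon$.

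The crucial point is that $A_n$ is a complete section whose induced subgraph witnesses the \emph{full} restricted relation. Within each one-ended tree $T$ the set $A_n \cap T$ consists of the vertices whose downward subtree has depth at least $n$; this set is closed under $f$ (passing to a parent only increases subtree depth), hence it is nonempty --- it contains the entire tail of the ray to the end --- and connected in $T$. Thus $A_n$ meets every class, and the $\Tgraph$-edges, which are $\Ggraph$-edges, already connect $A_n \cap T$, so that $\RR_{\Ggraph_{\restriction A_n}} = (\RR_\Ggraph)_{\restriction A_n}$. Because $\mu$-nowhere hyperfiniteness transfers between a p.m.p.\ relation and its restriction to a Borel complete section (a hyperfinite restriction to a complete section of an invariant piece saturates to a hyperfinite invariant piece, and saturations of null sets are null), $(\RR_\Ggraph)_{\restriction A_n}$ is $\mu_{\restriction A_n}$-nowhere hyperfinite, giving the desired $A = A_n$.

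For the converse I would argue by contraposition: if $\RR_\Ggraph$ were hyperfinite on some positive-measure invariant set $C$, then for the complete section $A$ supplied by the hypothesis the intersection $A \cap C$ has positive measure (it is a complete section of $\RR_{\restriction C}$, and saturations of null sets are null), while $\RR_{\Ggraph_{\restriction A}} \restriction (A \cap C)$ is a subrelation of the hyperfinite relation $(\RR_\Ggraph)_{\restriction A \cap C}$, hence itself hyperfinite --- contradicting that $\Ggraph_{\restriction A}$ is nowhere hyperfinite. The main obstacle, and the place where Theorem~\ref{thm:pmp} is indispensable, is the forward direction's requirement that thinning down to a section of measure below $\epsilon$ not accidentally sever connectivity and create hyperfinite (or finite) pieces; the upward-closedness of $A_n \cap T$ in the one-ended forest is precisely what guarantees that the induced subgraph retains the entire restricted equivalence relation, so that non-hyperfiniteness is preserved rather than destroyed.
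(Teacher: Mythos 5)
Your part (1) and the backward direction of part (2) are fine and match the paper, which likewise treats part (1) as already contained in the proof of Theorem~\ref{thm:Elek} and regards the forward direction of (2) as the only new content. The gap is in your forward direction of (2), at the step where you claim $\RR_{\Ggraph_{\restriction A_n}} = (\RR_\Ggraph)_{\restriction A_n}$. Your argument only shows that the $\Tgraph$-edges connect $A_n \cap T$ \emph{within each tree} $T$ of the forest, i.e., $(\RR_{\Tgraph})_{\restriction A_n} \subseteq \RR_{\Ggraph_{\restriction A_n}}$. But $\Tgraph$ is a spanning sub\emph{forest}: its one-ended trees are proper subsets of the $\Ggraph$-classes, and necessarily so, because a one-ended forest generates a $\mu$-hyperfinite relation; under your hypothesis of $\mu$-nowhere hyperfiniteness, $\RR_\Tgraph$ is therefore a proper subrelation of $\RR_\Ggraph$ on every positive measure set, and each $\Ggraph$-class splits into infinitely many $\Tgraph$-trees. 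Two points of $A_n$ lying in distinct trees of the same $\Ggraph$-class are joined in $\Ggraph$ only through paths whose vertices (in particular the endpoints of the tree-crossing edges) need not lie in $A_n$, and as $\mu(A_n)\to 0$ there is no reason any cross-tree edge survives in $\Ggraph_{\restriction A_n}$. So all your argument actually yields inside $\Ggraph_{\restriction A_n}$ is the hyperfinite relation $(\RR_\Tgraph)_{\restriction A_n}$, which is useless for concluding nowhere hyperfiniteness; the ``upward-closedness'' of $A_n$ protects within-tree connectivity but does nothing about the relation across trees, and that is where all the difficulty of the theorem sits.

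Repairing this is exactly where the paper's work lies, and it needs two ingredients absent from your proposal. First, the forest must come from Corollary~\ref{cor:maximal} rather than Theorem~\ref{thm:pmp}: one needs $\RR_\Tgraph$ to be $\mu$-maximal among the $\Ggraph$-connected $\mu$-hyperfinite subrelations of $\RR_\Ggraph$. Second, cross-tree connectivity must be restored by hand: since $\Ggraph$ is nowhere hyperfinite and $\RR_\Tgraph$ is hyperfinite, one can choose a set $\Ggraph_0 \subseteq \Ggraph \mysetminus \RR_\Tgraph$ of edges meeting almost every tree whose vertex set $B$ satisfies $\mu(B) < \epsilon/2n$, and then add to $A_n$ the set $C$ of vertices on the $\Tgraph$-paths (of length at most $n-1$) from points of $B$ into $A_n$, taking $A = A_n \cup C$, so that $\RR_{\Ggraph_{\restriction A}} \supseteq (\RR_{\Tgraph \cup (\Ggraph_{\restriction B})})_{\restriction A}$. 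Even then, nowhere hyperfiniteness of $\Tgraph \cup (\Ggraph_{\restriction B})$ is not automatic: gluing hyperfinite trees along a sparse set of edges can perfectly well produce a hyperfinite relation. It is precisely the maximality of $\RR_\Tgraph$ that rules this out (the paper's Claim: a positive-measure hyperfinite invariant piece of $\RR_{\Tgraph \cup (\Ggraph_{\restriction B})}$ would yield a $\Ggraph$-connected $\mu$-hyperfinite subrelation properly extending $\RR_\Tgraph$). Without both ingredients, the induced graph on your small section $A_n$ may well be $\mu_{\restriction A_n}$-hyperfinite, so the forward implication does not follow from your argument.
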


By significantly relaxing the measure preserving hypothesis, we arrived at the
study of Borel graphings and their behavior with respect to various Borel
probability measures that are not necessarily measure preserving. Although the
measure $\mu$ is not a priori related to the Borel graph $\Ggraph$ under consideration, some properties may hold up to discarding a set of $\mu$-measure $0$. In this non \pmp{} context, we obtain one-ended spanning subforests out of Borel planar graphs (see Definition~\ref{def:BorelPlanar}).
\begin{thmintro}[See Corollary~\ref{cor:planarforest}]
\label{th-intro: universal one-ended subforest}
Let ${\Ggraph}$ be a locally finite Borel graph on $X$ whose connected components are planar.
Let $\mu$ be a Borel probability measure on $X$. If
${\Ggraph}$ is $\mu$-nowhere two-ended, then $\Ggraph$ has a Borel a.e.\ one-ended spanning subforest.
\end{thmintro}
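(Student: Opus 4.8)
The plan is to construct the subforest by a direct Borel procedure on $\Ggraph$ itself, using the planar embedding to supply the control that measure preservation supplies in the p.m.p.\ case (Theorem~\ref{th: pmp: 1-ended subforest iff now 2-ended - intro}), since here $\mu$ is an arbitrary Borel measure and the Mass Transport Principle is unavailable.

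First I would record the reductions. The number of ends of a component is a $\Ggraph$-invariant Borel quantity, so $X$ splits, modulo $\mu$-null sets, into the $\Ggraph$-invariant Borel pieces on which components are finite, one-ended, two-ended, or infinitely-ended. By hypothesis the two-ended piece is $\mu$-null, and since a one-ended tree covers only infinite components I may restrict to the invariant set of infinite components, where the conclusion has content. Invoking the Borel planarity structure (Definition~\ref{def:BorelPlanar}), I fix in a Borel way a proper planar embedding on each component---equivalently a Borel rotation system with its induced faces---together with the planar dual graph $\Dual$ and the edge bijection $e \leftrightarrow e^{*}$.

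For the construction itself I would fix a Borel injective labelling of the edges (a Borel total order, available since the edge set is standard Borel) and let $\Tgraph$ be the associated wired minimal spanning forest, the deterministic analogue of the WMSF highlighted in the introduction, obtained by removing from each cycle and, in the wired sense, from each bi-infinite path its label-maximal edge. This is a Borel spanning subforest, and it handles the one-ended and infinitely-ended invariant parts uniformly, since on an infinitely-ended component it simply breaks into many trees. The point of planarity enters through duality: removing label-maximal edges from the cycles of $\Ggraph$ is dual to a minimal-spanning construction on the dual $\Dual$, so that the complement of $\Tgraph$ dualizes to the free minimal spanning forest of $\Dual$, and the end structure of the trees of $\Tgraph$ is governed by the connectivity of this dual forest.

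The hard part will be to prove that, for $\mu$-a.e.\ vertex, the tree of $\Tgraph$ containing it has exactly one end. A tree has at least two ends iff it contains a bi-infinite path, so it suffices to show that $\mu$-a.e.\ $\Tgraph$ contains no bi-infinite path. Here I would replace the mass-transport argument of the unimodular theory by the planar embedding: a bi-infinite path in a planar graph separates the surface, and together with the dual connectivity produced above it confines $\Ggraph$, along that path, to a two-ended local structure. I would then argue that the set of vertices lying on such paths is carried by the two-ended part of $\Ggraph$, hence is $\mu$-null by the nowhere-two-ended hypothesis. Making this "bi-infinite path forces a two-ended substructure" step precise---turning the planar separation and the duality between $\Tgraph$ and the free minimal spanning forest of $\Dual$ into an honest reduction to $\mu$-nowhere-two-endedness---is the crux of the argument and the step most in need of care. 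Once it is in place, the one-ended and infinitely-ended invariant parts recombine to give the desired Borel $\mu$-a.e.\ one-ended spanning subforest on all of $X$.
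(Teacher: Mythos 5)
Your instinct to work with the planar dual in place of mass transport is sound---it is the paper's starting point too---but the crux of your argument rests on a claim that is false, and the gap it leaves is exactly the hard part of the theorem. A bi-infinite path in a spanning subforest of a planar graph does \emph{not} confine the ambient graph to a two-ended structure: the ``comb'' spanning tree of $\Z^2$ (the $x$-axis together with all vertical lines) contains bi-infinite paths, yet $\Z^2$ is planar and one-ended. What planar separation actually yields (this is the content of Proposition~\ref{prop:duality}) is that a bi-infinite line in a primal subgraph disconnects the \emph{complementary dual} subgraph; it says nothing about the number of ends of $\Ggraph$, so your reduction of ``no bi-infinite paths in $\Tgraph$ a.e.'' to $\mu$-nowhere-two-endedness does not go through. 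Worse, the duality you invoke makes the argument circular: by Propositions~\ref{prop:ddual} and \ref{prop:duality}, the trees of your primal forest are one-ended with the right connectivity if and only if the complementary dual forest is acyclic \emph{and connected on each component}, so ``connectivity of the dual FMSF'' is literally the same problem in dual form. In the probabilistic theory both statements are proved via the Mass Transport Principle, which you correctly note is unavailable here; a deterministic Borel edge-ordering gives no control at all. Concretely, ordering the edges of $\Z^2$ so that vertical edges get labels in $(0,\tfrac12)$, $x$-axis edges labels increasing along the axis in $(\tfrac12,\tfrac34)$ with unattained supremum, and the remaining horizontal edges labels in $(\tfrac34,1)$ increasing with $|y|$, the wired MSF becomes the disjoint union of the vertical columns: every tree is a bi-infinite line, in a one-ended planar graph. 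Nothing in your proposal rules out this behaviour on a positive measure set, and the general non-p.m.p.\ statement without planarity is precisely the paper's open Conjecture~\ref{conj:pmp}.

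The paper closes this gap by a very different and necessarily longer route. Theorem~\ref{thm:planar} first produces an acyclic Borel subgraph of $\Ggraph$ with the same connectedness relation on a conull set, via a case analysis rather than one uniform construction: one-ended subforests of the dual are obtained from Theorem~\ref{thm:acyclic} (the result of \cite{CMT-D} for acyclic graphs) and Lemma~\ref{lem:2in1} wherever some auxiliary hyperfinite forest is not two-ended, and in the residual case---where both the auxiliary forest in $\Ggraph$ and its counterpart in $\Ggraph^*$ consist of bi-infinite lines---it is shown that $\RR_\Ggraph$ is $\mu$-hyperfinite (via $2$-amenability in the sense of \cite{JKL02}), so a treeing exists for free. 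Corollary~\ref{cor:planarforest} then follows by feeding this treeing into Corollary~\ref{cor:subtree}, which combines Theorem~\ref{thm:acyclic}, Lemma~\ref{lem:2in1}, and the nowhere-two-endedness hypothesis. Some replacement for this case analysis and hyperfiniteness endgame would be needed to make your MSF construction work; the construction by itself does not supply it.
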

Without the "$\mu$-nowhere two-ended" assumption, one still gets the existence of a spanning subforest on a $\mu$-conull set (see Theorem \ref{thm:planar}).

We say that a Borel equivalence relation $\RR$ on $X$ is \define{measure
treeable} if for each Borel probability measure $\mu$ on $X$, there is a
$\mu$-conull subset $X_0$ of $X$ such that the restriction of $\RR$ to $X_0$ is
Borel treeable (Definition \ref{def:BorelTreeable}). Observe that when the
classes of $\RR$ are countable then each such $\mu$ is dominated by a
quasi-invariant measure $\mu ' = \sum_{i=1}^{\infty} 2^{-i} g_{i*}\mu$, where
$\{g_i\}_{i=1}^{\infty}$ is an enumeration of some countable group $G$ that
generates $\RR$ (see \cite{FM77}). Note that $\mu$ and $\mu '$ have the same
$\RR$-invariant null sets. See also Proposition \ref{prop:qinv}. Hence, we may assume without loss of generality
that our measures are quasi-invariant throughout the paper. For example, a
countable Borel equivalence relation
$\RR$ is measure treeable if and only if it is $\mu$-treeable for
every $\RR$-quasi-invariant measure $\mu$.

Perhaps surprisingly, by enlarging our perspective to the more general possibly non-\pmp{} setting, we obtain consequences in the purely \pmp setting whose proof uses non-\pmp{} techniques in an apparently essential way. Specifically, the more general setting provides access to the induced action construction even for subgroups of infinite index.
It is an easy observation (Lemma \ref{lem:ABC}) that: {\em If $A$, $B$, and $C$ are countable groups with $A\leq B\leq C$ and if $A$ is a measure strong free factor of $C$ (see Definition in Section~\ref{sect:Meas free fact}), then $A$ is a measure strong free factor of $B$}. Combining this with our techniques, we obtain in \S\ref{sec:cyclicmff} new measure strong free factors in free products of groups. This generalizes results of Alonso~\cite{Alonso-2014} (see Corollary \ref{cor:freegroupmsff}).

\medskip
An lcsc group $G$ is called \define{measure strongly treeable} (${\MSTreeable}$) if all orbit equivalence relations generated by all free Borel actions of $G$ are measure treeable (Definition~\ref{def:various treeab, groups}).

In this context, using Theorem~\ref{th-intro: universal one-ended subforest},
our Theorems \ref{th-intro surface + planar Cayley graphs},
\ref{th-intro lement free groups} and
\ref{th-intro Isom H2, PSL 2 are str. treeable} take indeed a much stronger non \pmp{} form (see
 Corollary~\ref{cor:subgroup}, Corollary~\ref{thm:planargroup},
and Theorem~\ref{elemfreetreeable}):
\begin{thmintro}
  The following groups are measure strongly treeable
  \begin{enumerate}
    \item $\Isom(\Hmath^2)$, $\PSL_2(\R )$, and $\SL_2(\R )$, and all of their closed subgroups.
    \item Finitely generated groups admitting a planar Cayley graph.
    \item Finitely generated elementarily free groups.
  \end{enumerate}
\end{thmintro}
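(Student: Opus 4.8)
The plan is to deduce all three cases from the single planar-duality mechanism advertised in the abstract: a measurable treeing of a Borel planar graph can be read off from a one-ended spanning subforest of its planar dual. The common reduction is as follows. Given a free Borel action $G \actson (X,\mu)$ of one of the groups in question, I would first produce a locally finite Borel graph $\Ggraph$ on (a cross section of) $X$ whose connected components are planar and which generates the orbit equivalence relation up to a $\mu$-null set. For the finitely generated groups in (2) and (3), this graph is simply the Cayley/Schreier graph of the action with respect to a generating set whose Cayley graph is planar; since the action is free, $\mu$-almost every component is isomorphic to the planar Cayley graph. For the connected groups in (1), I would pass to a cross section $B \subseteq X$ as in \S\ref{sect: Treeability for locally compact groups}, so that $(\RR_{\Ggraph})_{\restriction B}$ is a countable Borel equivalence relation, and use the fact that every closed subgroup of $\Isom(\HH^2)$ acts on the hyperbolic plane to equip each class with a Voronoi-type tessellation of $\HH^2$; the point is that $\HH^2$ is topologically a disk, so the combinatorial graphs arising this way are planar.

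Next I would pass to the planar dual. Building the dual $\Ggraph^*$ as a Borel graph requires a Borel choice of planar embedding, which is available because the combinatorial planar structure is transported from the fixed Cayley graph (or from the fixed tessellation of $\HH^2$) by the action. The crucial duality statement --- which I expect to be the main technical content --- is that a subset $\Tgraph \subseteq \Ggraph$ is an a.e.\ treeing of $\Ggraph$ precisely when its dual complement $\Tgraph^* = \{e^* : e \in \Ggraph \mysetminus \Tgraph\}$ is an a.e.\ one-ended spanning subforest of $\Ggraph^*$. Granting this correspondence, it suffices to produce a Borel a.e.\ one-ended spanning subforest of $\Ggraph^*$, which is exactly the output of Corollary~\ref{cor:planarforest} (Theorem~\ref{th-intro: universal one-ended subforest}), valid for an arbitrary Borel probability measure and hence adapted to the non-\pmp{} setting of measure treeability, provided $\Ggraph^*$ is $\mu$-nowhere two-ended.

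To invoke that theorem I must verify the $\mu$-nowhere two-ended hypothesis for the dual. A two-ended component of $\Ggraph^*$ forces the corresponding primal component to be two-ended as well, i.e.\ quasi-isometric to a line, which for a free action can only occur when the acting group is virtually $\Z$, hence amenable. I would therefore discard the amenable case outright: amenable groups are strongly treeable by Ornstein--Weiss \cite{OW80}, so all such actions are measure treeable and there is nothing to prove. On the complementary non-amenable part the primal components have one or infinitely many ends, the dual is $\mu$-nowhere two-ended, and the machinery applies, yielding an a.e.\ one-ended spanning subforest of $\Ggraph^*$ and hence, by duality, an a.e.\ treeing of $\Ggraph$ on a $\mu$-conull set. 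This establishes measure strong treeability for (1) and (2) directly.

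For the elementarily free groups in (3) the remaining step is structural rather than measure-theoretic. Using the description of finitely generated elementarily free groups as fundamental groups of towers (per \cite{Guirardel-Levitt-Sklinos-2020}, see \S\ref{sec:elemfree}), I would induct up the tower: the ground-floor pieces are surface groups and free groups, already covered by (2), and each stage is obtained by a one-relator amalgamation operation of the form $(\Gamma_1 * \cdots * \Gamma_r)/\langle\langle \prod_i \gamma_i \rangle\rangle$, whose treeability-preservation is recorded in Corollary~\ref{cor: a one-rel amalg product}. Combining this permanence property with the measure treeability of the building blocks propagates measure strong treeability through the entire tower. The main obstacle throughout is the measurable planar duality of the second paragraph: making the classical finite-planar fact that spanning trees correspond to complementary dual spanning trees work $\mu$-a.e.\ for infinite, one-ended duals in a Borel-definable way. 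Everything else is either a reduction to the planar graph $\Ggraph$ or an appeal to the already-cited permanence, amenable, and one-ended-subforest results.
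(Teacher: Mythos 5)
Your planar-duality mechanism is indeed the paper's engine, and for part (1) your sketch essentially matches the paper's proof of Theorem~\ref{thm: Isom H2 meas tree}: a cocompact cross section, Dirichlet (Voronoi) regions in $\HH^2$, and the Borel planar machinery applied to the resulting graph; the paper then gets $\PSL_2(\R)$, $\SL_2(\R)$ and closed subgroups from the closure properties of Theorem~\ref{thm:closure} (compact kernels and closed subgroups, via induction of actions), rather than tessellating per subgroup --- a necessary detour, since for a non-discrete proper closed subgroup the orbit maps do not fill $\HH^2$. The genuine gap is in part (2). You propose to dualize the Schreier graph of the action directly, but the paper's proof of Theorem~\ref{thm:planargroup} does not (and cannot) do this: a planar Cayley graph of a multi-ended group need not be $2$-vertex-connected, need not admit an accumulation-free embedding or a $2$-basis, and even when a $2$-basis exists there need not be a $\Gamma$-invariant one --- and invariance is exactly what you need to transport it to a \emph{Borel} $2$-basis on the Schreier graph. (Canonicity of the facial structure is available only in the one-ended, $2$-connected case, where the embedding is essentially unique.) Consequently Proposition~\ref{prop:duality}, which requires $2$-vertex-connectivity and every edge lying on two facial cycles, does not apply, and your nowhere-two-ended check (which only excludes the virtually $\Z$ case) says nothing about infinitely-ended groups. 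The paper instead invokes Droms' finite presentability and Dunwoody's accessibility theorem to split $\Gamma$ over finite edge groups into finite and one-ended planar vertex groups, identifies the one-ended pieces with discrete subgroups of $\Isom(\HH^2)$ via Lyndon--Schupp (so part (2) is reduced to part (1)), and reassembles using Theorem~\ref{thm:closure}(10).

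Part (3) also has a gap: the levels of an IFL tower are not the one-relator amalgamations of Corollary~\ref{cor: a one-rel amalg product}. A level glues a compact surface $\Sigma$ with $\chi(\Sigma)\leq -1$ of \emph{arbitrary} genus (possibly non-orientable) along all of its boundary circles to infinite-order loops in one connected space, giving $\pi_1(V)=\langle \pi_1(U), a_i, t_j \mid R(a_i,t_j^{-1}\gamma_j t_j)=t_1=1\rangle$ where $R$ is the surface relation involving the handle/crosscap generators $a_i$; the quotient $(\Gamma_1*\cdots*\Gamma_r)/\langle\langle \prod_i\gamma_i\rangle\rangle$ is only the genus-zero case, and that corollary is itself deduced from the actual engine, Theorem~\ref{th: pi1 V meas str. treeable over U} (that $\pi_1(U)$ is a measure strong free factor of $\pi_1(V)$ with treeable complement), whose proof is a separate planar-duality argument carried out in the universal cover of $\Sigma$ --- a one-ended spanning subforest of the dual of an equivariant cellulation, combined with Bass--Serre theory. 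So the permanence property you cite cannot power the induction through positive-genus levels; one also needs Theorem~\ref{th: elem free gps as large towers} to convert the extended towers of Guirardel--Levitt--Sklinos (with disconnected ground) into genuine IFL towers, which is a further nontrivial step. Finally, a mis-calibration worth flagging even where your plan works: you locate the main technical content in the duality correspondence, but Proposition~\ref{prop:duality} is the classical, easy part; the hard result is Theorem~\ref{thm:planar} (equivalently Corollary~\ref{cor:planarforest}) in the non-\pmp{} setting, where the general one-ended-subforest statement (Conjecture~\ref{conj:pmp}) remains open and the paper must argue through hyperfinite and acyclic cases --- you are entitled to cite it as a black box, but your proposal should not present it as a routine input.
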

Our proof follows an idea from \cite{BLPS01} (also used in \cite{Ga05}) of finding an a.e.\ one-ended spanning subforest
in the planar dual (see \S\ref{sec:planar}). Suppose $\Ggraph$ is a locally finite graph admitting an accumulation-point
free planar embedding into $\R^2$. Then it is easy to see that subtreeings of $\Ggraph$
are in one-to-one correspondence with one-ended subforests in the planar
dual of $\Ggraph$. We use this correspondence to show the measure
treeability of the above groups $G$ by converting the problem of treeing an
action of $G$ into finding an a.e.\ one-ended spanning subforest of the planar dual
of graphings of the action which are Borel planar.

\bigskip
\noindent
{\bf Acknowledgments:}
The authors would like to thank
Agelos Georgakopoulos for helpful conversations about planar Cayley graphs and
Vincent Guirardel for useful discussions about elementarily free groups.
We are grateful to Gilbert Levitt for pointing out the reference \cite{Guirardel-Levitt-Sklinos-2020}, for explaining its content, and for discussing with us the proof of Theorem~\ref{th: elem free gps as large towers}, after a preliminary version of our article has circulated.
We also thank the referee for their detailed and helpful comments.

CC was supported by NSF grants DMS-1855579 and DMS-2154160. DG was supported by the ANR project
GAMME (ANR-14-CE25-0004) and by the LABEX MILYON (ANR-10-LABX-0070) of
Université de Lyon, within the program "Investissements d'Avenir"
(ANR-11-IDEX-0007) operated by the French National Research Agency (ANR). AM was
supported by NSF grant DMS-2054182. RTD was supported by NSF grants
DMS-1855825 and DMS-2246684.

\section{Elek's refinement of Kaimanovich's Theorem for measured graphs}
If ${\Ggraph}$ is a locally finite Borel graph on a standard
probability space $(X,\mu)$, then the \define{(vertex) isoperimetric
constant of ${\Ggraph}$} is the
infimum of $\mu(\boundary _{\Ggraph} A) / \mu(A)$ over all Borel subsets $A \subseteq X$ of
positive measure such that ${\Ggraph}_{\restriction A}$ has finite
connected components. Here, $\boundary _{\Ggraph} A$ denotes the set of vertices in $X\mysetminus A$ which are ${\Ggraph}$-adjacent to a vertex in $A$.  If the measure $\mu$ is $\RR_{\Ggraph}$-quasi-invariant then the isoperimetric constant of ${\Ggraph}$ can be equivalently phrased in terms of the associated Radon-Nikodym cocycle (see \cite[\S 8]{KM04}).

In \cite{Kai97}, Kaimanovich established the equivalence between $\mu$-hyperfiniteness of a measured equivalence relation $\RR$ and vanishing of the isoperimetric constant of all bounded graph structures on $\RR$. In \cite{El12}, Elek sharpened Kaimanovich's Theorem by establishing the following characterization of hyperfiniteness for a fixed measured graph ${\Ggraph}$.

\begin{thm}[Elek \cite{El12}]\label{thm:Elek}
  Let ${\Ggraph}$ be a locally finite Borel graph on a standard probability
  space $(X,\mu)$. Then ${\Ggraph}$ is $\mu$-hyperfinite if and only if for every positive measure Borel subset $X_0\subseteq X$, the isoperimetric constant of ${\Ggraph}_{\restriction X_0}$ is $0$.
\end{thm}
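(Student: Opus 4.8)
The statement is an equivalence; the plan is to treat the two implications separately, the forward one being routine and the reverse being the heart of the matter.

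\textbf{Forward direction} ($\mu$-hyperfinite $\implies$ all restricted isoperimetric constants vanish). Fix a positive-measure Borel $X_0\subseteq X$. Since hyperfiniteness of $\RR_{\Ggraph}$ is inherited by subrelations and by restrictions, the connectedness relation $\RR_{\Ggraph_{\restriction X_0}}$ --- being a subrelation of $(\RR_{\Ggraph})_{\restriction X_0}$ --- is $\mu_{\restriction X_0}$-hyperfinite. It therefore suffices to prove, for an arbitrary locally finite Borel graph $\Hgraph$ with $\RR_{\Hgraph}$ hyperfinite, that the isoperimetric constant of $\Hgraph$ is $0$, and then apply this to $\Hgraph=\Ggraph_{\restriction X_0}$. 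Writing $\RR_{\Hgraph}=\bigcup_n F_n$ as an increasing union of finite Borel equivalence relations, each edge of $\Hgraph$ joins two $\RR_{\Hgraph}$-equivalent vertices, so it becomes \emph{internal} (its endpoints $F_n$-equivalent) for all large $n$; hence the cocycle-weighted measure of the set of edges crossing distinct $F_n$-classes tends to $0$. Choosing $n$ large and taking $A$ to be a union of $F_n$-classes (each a finite-component set for $\Hgraph$), while discarding a small set of vertices incident to crossing edges, produces sets of positive measure with $\mu(\boundary_{\Hgraph}A)/\mu(A)$ arbitrarily small. This is just the easy half of Kaimanovich's theorem localized to the single graph $\Hgraph$.

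\textbf{Reverse direction} (all restricted isoperimetric constants vanish $\implies$ $\mu$-hyperfinite). This is the crux. The plan is to build an increasing sequence $E_0\subseteq E_1\subseteq\cdots$ of finite Borel subequivalence relations of $\RR_{\Ggraph}$ whose union is $\mu$-conull in $\RR_{\Ggraph}$; since $\RR_{\Ggraph}$ is generated by its edges, it is enough to arrange that $\mu$-a.e.\ edge of $\Ggraph$ is eventually internal, i.e.\ has its endpoints $E_n$-equivalent for all large $n$. Starting from $E_0$ trivial, given $E_n$ I would use the hypothesis --- applied on the positive-measure region where $\RR_{\Ggraph}$ is not yet captured --- to extract a Borel set $A$ with finite $\Ggraph$-components and cocycle-weighted relative boundary below a prescribed $\epsilon_{n+1}$, and let $E_{n+1}$ be the finite equivalence relation generated by $E_n$ together with the $\Ggraph_{\restriction A}$-components. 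The small boundary guarantees that only an $\epsilon_{n+1}$-fraction (in edge measure) of edges remains crossing. Choosing $\sum_n\epsilon_n<\infty$, the Borel--Cantelli lemma shows that $\mu$-a.e.\ edge is crossing for only finitely many $n$, hence eventually internal, so $\bigcup_n E_n=\RR_{\Ggraph}$ modulo a null set and $\RR_{\Ggraph}$ is hyperfinite. A cleaner packaging replaces the explicit exhaustion by a contradiction: split $X$ into the largest $\Ggraph$-invariant $\mu$-hyperfinite part $H$ and its nowhere-hyperfinite complement $N$; if $\mu(N)>0$, run the construction inside $X_0=N$ to manufacture a positive-measure subset on which $\RR_{\Ggraph}$ is hyperfinite, contradicting nowhere-hyperfiniteness of $N$.

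\textbf{Main obstacle.} The delicate point is the inductive extension step: from a finite $E_n$ one must produce a \emph{finite} $E_{n+1}\supseteq E_n$ whose set of crossing edges has small measure, and this must be done inside a region where $\Ggraph$ is nowhere hyperfinite, so that vanishing of the isoperimetric constant is available but the finite-component sets $A$ it provides genuinely grow the classes rather than stalling. Keeping the $E_n$ simultaneously finite, increasing, and boundary-shrinking is exactly where finiteness of the $\Ggraph$-components (as opposed to mere local finiteness) and the full strength of the quantifier ``for every positive-measure $X_0$'' are used. A secondary but pervasive burden is that $\mu$ is only quasi-invariant, so every occurrence of ``boundary measure'' must be read through the Radon--Nikodym cocycle as in \cite[\S8]{KM04}; one must check that the cocycle-weighted F\o lner estimates compose correctly along the exhaustion and that the discarded boundary sets remain summable in the weighted sense.
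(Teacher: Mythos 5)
Your forward direction is correct and is essentially the paper's own argument (restrict, pass to the subrelation, write it as an increasing union of finite relations, and take the vertices all of whose neighbors are in the same class). The genuine gap is in the reverse direction, and it sits exactly at the step you yourself flag as the main obstacle: the equivalence relation generated by a finite Borel equivalence relation $E_n$ together with the component relation of $\Ggraph_{\restriction A}$ need not be finite, and you give no mechanism for repairing this. Classes can chain through alternating $E_n$-classes and $\Ggraph_{\restriction A}$-components; in fact the join of two finite Borel equivalence relations can fail even to be $\mu$-hyperfinite (for a free \pmp{} action of $(\Z/3\Z)*(\Z/3\Z)$, the two cyclic factors generate finite Borel subequivalence relations whose join is the whole orbit equivalence relation, which is non-hyperfinite since the group contains $\FF_2$), so no care in choosing $A$ alone makes ``the finite equivalence relation generated by $E_n$ and the components'' meaningful. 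There is a second, related gap: vanishing of the isoperimetric constant of $\Ggraph_{\restriction X_0}$ only produces a set $A\subseteq X_0$ of some positive --- possibly tiny --- measure with small relative boundary, so a single application of the hypothesis need not shrink the uncaptured region appreciably; your induction would have to contain an inner exhaustion over families of such sets, and you would then need to explain why the union of the accumulated pieces still has finite $\Ggraph$-components, which is false without extra precautions.

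The paper resolves both issues by abandoning nestedness altogether. For each $\epsilon>0$ it constructs, by Zorn's lemma, a maximal family $\mc{A}$ of pairwise disjoint nonnull Borel sets such that (i) $\Ggraph$ restricted to $\bigcup\mc{A}$ has finite components, (ii) $\mu(\boundary_{\Ggraph}(\bigcup\mc{A}))\leq\epsilon\,\mu(\bigcup\mc{A})$, and (iii) distinct members of $\mc{A}$ are non-adjacent; condition (iii) is precisely what keeps the union $Y=\bigcup\mc{A}$ finite-component when a new piece is adjoined, and maximality together with the hypothesis applied to $X_0=X\mysetminus(Y\cup\boundary_{\Ggraph}Y)$ forces that set to be null, giving $\mu(Y)\geq 1-\epsilon$. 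Hyperfiniteness then comes not from nesting but from Borel--Cantelli: choosing $Y_n$ with $\mu(Y_n)\geq 1-2^{-n}$, almost every finite $\Ggraph$-path lies in $Y_n$ for all large $n$, so a.e.\ $\RR_{\Ggraph}=\liminf_n \RR_{\Ggraph_{\restriction Y_n}}=\bigcup_N\bigcap_{n\geq N}\RR_{\Ggraph_{\restriction Y_n}}$, which is an increasing union of finite Borel equivalence relations because an intersection of finite equivalence relations is finite. This liminf device, which lets non-nested approximations suffice, is the idea missing from your proposal; without it (or a genuine substitute) the inductive construction you describe cannot be completed.
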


While the theorem in \cite{El12} is stated for measure preserving bounded degree graphs, it can easily be extended to all locally finite graphs which are not necessarily measure preserving. For the convenience of the reader we indicate the proof.

\begin{proof}[Proof of Theorem \ref{thm:Elek}]
Suppose first that ${\Ggraph}$ is $\mu$-hyperfinite. Let $X_0\subseteq X$ be a Borel set of positive measure and let ${\Hgraph} = {\Ggraph}_{\restriction X_0}$. Then ${\Hgraph}$ is $\mu$-hyperfinite, so after ignoring a null set we can find finite Borel subequivalence relations $\RR_0\subrel  \RR_1\subrel  \cdots$ with $\RR_{{\Hgraph}} = \bigcup _n \RR_n$. Since ${\Hgraph}$ is locally finite,
given $\epsilon >0$, we may find $n$ large enough so that $\mu (A_n)>\mu (X_0)(1-\epsilon )$, where $A_n= \{ x\in X_0 \, : \, {\Hgraph}_x\subseteq [x]_{\RR_n} \}$ and ${\Hgraph}_x$ denotes the set of ${\Hgraph}$-neighbors of $x$. Then ${\Hgraph}_{\restriction A_n} \subseteq \RR_n$, so ${\Hgraph}_{\restriction A_n}$ has finite connected components. In addition, $\mu (\boundary _{\Hgraph} A_n)/\mu (A_n) < \epsilon /(1-\epsilon )$, so as $\epsilon >0$ was arbitrary this shows the isoperimetric constant of ${\Hgraph}$ is $0$.

Assume now that for every positive measure Borel subset $X_0\subseteq X$ the isoperimetric constant of ${\Ggraph}_{\restriction X_0}$ is $0$. To show that ${\Ggraph}$ is $\mu$-hyperfinite it suffices to show that for any $\epsilon >0$ there exists a Borel set $Y\subseteq X$ with $\mu (Y)\geq 1-\epsilon$ such that ${\Ggraph}_{\restriction Y}$ has finite connected components (since then we can find a sequence of such sets $Y_n$, $n\in \N$, with $\mu (Y_n)\geq 1-2^{-n}$, so by Borel-Cantelli $\RR_{\Ggraph}=\liminf _n \RR_{\Ggraph_{\restriction Y_n}}$ is $\mu$-hyperfinite). Given $\epsilon >0$, by Zorn's Lemma we can find a maximal collection $\mc{A}$ of pairwise disjoint nonnull Borel subsets of $X$ subject to
\begin{enumerate}
\item[(i)] ${\Ggraph}_{\restriction \bigcup \mc{A}}$ has finite connected components;
\item[(ii)] $\mu (\boundary _{\Ggraph} (\bigcup \mc{A})) \leq \epsilon \mu (\bigcup \mc{A} )$;
\item[(iii)] If $A,B\in \mc{A}$ are distinct, then no vertex in $A$ is adjacent to a vertex in $B$.
\end{enumerate}
Let $Y= \bigcup \mc{A}$. We now claim that the set $X_0 = X \mysetminus (Y \cup \boundary _{\Ggraph} Y)$ is null. Otherwise, by hypothesis we may find a Borel set $A_0\subseteq X_0$ of positive measure such that ${\Ggraph}_{\restriction A_0}$ has finite connected components and $\mu (\boundary _{{\Ggraph}_{\restriction X_0}} A_0 ) <\epsilon \mu (A_0)$. But then the collection $\mc{A}_0 = \mc{A}\cup \{ A_0 \}$ satisfies (i)-(iii) in place of $\mc{A}$, contradicting maximality of $\mc{A}$. Thus, $\mu (Y) = 1- \mu (\boundary _{\Ggraph} Y ) \geq 1- \epsilon \mu (Y) \geq 1-\epsilon$, and ${\Ggraph}_{\restriction Y}$ has finite connected components, which finishes the proof.
\end{proof}

\begin{remark} 
A Borel graph $\Ggraph$ is hyperfinite if there is an increasing sequence of
Borel graphs $(\Ggraph_n)_{n \in \N}$ with finite connected components so that
$\bigcup_n \Ggraph_n = \Ggraph$. If $\Ggraph$ is a locally finite Borel graph on a standard
Borel space $X$ whose connectedness relation is $\RR_\Ggraph$, then $\RR_\Ggraph$ is
hyperfinite if and only if $\Ggraph$ is hyperfinite. More generally, if $Y
\subseteq X$ is an $\RR_\Ggraph$-invariant Borel set, then $\Ggraph_{\restriction Y}$ is
hyperfinite if and only if $(\RR_\Ggraph)_{\restriction Y}$ is hyperfinite. So if $\mu$
is a $\Ggraph$-quasi-invariant measure, then since every $\mu$-null set is contained in
a Borel invariant $\mu$-null set, $\Ggraph$ is $\mu$-hyperfinite if and only if
$\RR_\Ggraph$ is $\mu$-hyperfinite.

In contrast, there exist a locally finite graph $\Ggraph$ on $X$ and a
non-$\Ggraph$-quasi-invariant Borel probability measure $\mu$ on $X$ so that $\Ggraph$ is
$\mu$-hyperfinite, but $\RR_\Ggraph$ is not
$\mu$-hyperfinite. One such example arises when $\Ggraph$ is any ergodic
quasi-invariant locally finite Borel graph on a standard probability space
$(X,\nu)$ so that $\RR_{\Ggraph}$ is
not $\nu$-hyperfinite. Then by \cite[Proposition 4.2]{KST99} we can find a Borel maximal
independent set $A \subseteq X$ for $\Ggraph$. Since $A$ intersects every
connected component of $\Ggraph$ and $\nu$ is $\Ggraph$-quasi-invariant, $A$ has positive
measure. Let $\mu$ be the measure on $X$ defined by
$\mu(Y) = \frac{\nu(Y \inters A)}{\nu(A)}$. Then $\Ggraph$ is
$\mu$-hyperfinite, since $A$ is $\mu$-conull and $\Ggraph_{\restriction A}$ has finite connected
components. However $\RR_\Ggraph$ is not $\mu$-hyperfinite, since for any Borel
set $Y \subseteq X$, $(\RR_\Ggraph)_{\restriction
Y}$ is hyperfinite if and only
if $(\RR_\Ggraph)_{\restriction [Y]_{\RR_\Ggraph}}$ is hyperfinite, and if $Y$ is $\mu$-conull,
then $[Y]_{\RR_\Ggraph}$ is $\nu$-conull since $\Ggraph$ is $\nu$-ergodic. So
$\RR_\Ggraph$ is not
$\mu$-hyperfinite since $\Ggraph$ is not $\nu$-hyperfinite.
\end{remark}

\begin{remark} We have used the vertex isoperimetric constant, whereas
\cite{El12} uses the edge isoperimetric constant. The relationship is as follows. Let ${\Ggraph}$ be a graph on $(X,\mu )$ and let $M_r$ be the
Borel $\sigma$-finite measures on ${\Ggraph}$ given by $M_r(D)= \int _X
|D^x| \, d\mu$. The \define{edge isoperimetric constant} of ${\Ggraph}$ is the infimum of $M_r (\boundary _{\Ggraph}^{e} A) / \mu(A)$ over all Borel subsets $A \subseteq X$ of positive measure such that ${\Ggraph}_{\restriction A}$ has finite connected components. Here, $\boundary _{\Ggraph} ^e A$ is the set of all edges of ${\Ggraph}$ having one endpoint in $A$ and one in $X\mysetminus A$. (Note that since $\boundary _{\Ggraph} ^e A$ is symmetric, one obtains the same definition if in place of $M_r$ one uses the measure $M_l(D)=\int _X |D_x|\, d\mu$.) It is then easy to see that if $\mu$ is ${\Ggraph}$-quasi-invariant, and if ${\Ggraph}$ is bounded (meaning that ${\Ggraph}$ is bounded degree and the Radon-Nikodym cocycle $\rho : \RR_{\Ggraph} \ra \R ^+$ associated to $\mu$ is essentially bounded on ${\Ggraph}$) then for any positive measure Borel subset $X_0\subseteq X$, the edge isoperimetric constant of ${\Ggraph}_{\restriction X_0}$ vanishes if and only if the vertex isoperimetric constant of ${\Ggraph}_{\restriction X_0}$ vanishes.
\end{remark}

\section{One-ended spanning subforests}

In this section, we characterize exactly when a locally finite probability
measure preserving Borel graph has a one-ended spanning subforest.

\begin{thm}[For \pmp{} graphings] \label{thm:pmp}
  Suppose that ${\Ggraph}$ is a measure preserving aperiodic
  locally finite
  Borel graph on a standard probability space $(X,\mu )$. Then ${\Ggraph}$ has a Borel
  a.e.\ one-ended spanning subforest if and only if ${\Ggraph}$ is $\mu$-nowhere
  two-ended.
\end{thm}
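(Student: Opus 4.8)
The plan is to prove both implications, treating the forward direction (existence of a one-ended spanning subforest forces $\mu$-nowhere two-endedness) as the easier one and concentrating on the construction in the reverse direction. For the forward direction, suppose $\Tgraph \subseteq \Ggraph$ is a Borel $\mu$-a.e.\ one-ended spanning subforest, and suppose toward a contradiction that the $\Ggraph$-invariant set $Y$ of vertices lying in two-ended components has $\mu(Y)>0$. Restricting to $Y$, every $\Ggraph$-component has exactly two ends. Each tree of $\Tgraph$ is one-ended, so it converges to one of the two ends of its component; orienting each tree toward its end gives a Borel map $\sigma$ with out-degree $1$ at every vertex, and the Mass Transport Principle then forces the expected $\Tgraph$-degree to equal $2$. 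The contradiction comes from the two-ended structure: on a two-ended component a large finite cut separates the two ends into two infinite sides, and a locally finite one-ended tree can meet the far side of one of the ends in only finitely many vertices (else K\"onig's lemma produces a second ray, hence a second end of the tree). Tracking the oriented flow across such cuts and invoking measure preservation reproduces, on almost every component, exactly the obstruction that prevents a Borel set of edges from meeting a.e.\ bi-infinite $\Z$-line in the balanced way a one-ended splitting would require; this is impossible, so $\mu(Y)=0$.

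For the reverse direction, assume $\Ggraph$ is $\mu$-nowhere two-ended; we must construct a Borel $\mu$-a.e.\ one-ended spanning subforest. First decompose $X$ into two $\Ggraph$-invariant Borel pieces: the $\mu$-hyperfinite part $X_h$, on which $\RR_\Ggraph$ is hyperfinite, and its complement $X_{nh}$, on which $\Ggraph$ is $\mu$-nowhere hyperfinite. By Theorem~\ref{thm:Elek}, $X_{nh}$ is exactly the part where every positive-measure restriction of $\Ggraph$ has positive isoperimetric constant. I would then build the forest separately on each piece and take the union.

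On $X_{nh}$ the graph is "nonamenable," and here I would invoke Corollary~\ref{cor:maximal} to produce a Borel one-ended spanning subforest $\Tgraph_0$ whose connectedness relation $\RR_{\Tgraph_0}$ is $\mu$-maximal among the $\Ggraph$-connected $\mu$-hyperfinite subrelations of $\RR_\Ggraph$; the hypothesis of that corollary is met precisely because $\Ggraph$ is nowhere hyperfinite. The role of maximality is to guarantee spanning and one-endedness: if some tree failed to be one-ended (were finite or two-ended) or some vertex were uncovered, positive isoperimetry would let one enlarge the $\Ggraph$-connected hyperfinite subrelation while keeping the trees one-ended, contradicting maximality. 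On $X_h$ the graph is hyperfinite and $\mu$-nowhere two-ended; since hyperfinite components have at most two ends almost everywhere, every component here is in fact one-ended as a graph, and I would carve out a one-ended spanning forest directly from a hyperfinite exhaustion, orienting each finite approximant toward the unique end so that the limiting trees inherit a single end.

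The main obstacle is the construction and correctness of the maximal object in Corollary~\ref{cor:maximal}: producing, in a Borel and measurable way, a spanning subforest all of whose trees are genuinely one-ended (neither finite nor two-ended), and proving that maximality together with $\mu$-nowhere two-endedness forces it to be spanning with one-ended components. This is the measurable analogue of the Benjamini--Lyons--Peres--Schramm theorem that the wired uniform spanning forest of a nonamenable unimodular graph is one-ended, and the delicate point is that the amenable-but-one-ended regime $X_h$ cannot use nonamenability at all, so one-endedness there must be extracted purely from the one-ended geometry of the components via a controlled exhaustion. Reconciling these two regimes into a single subforest, and verifying that the two-ended case is the unique obstruction, is where the real work lies.
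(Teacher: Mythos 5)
There is a genuine gap, and it is the central step of your reverse direction: you cannot invoke Corollary~\ref{cor:maximal} when proving Theorem~\ref{thm:pmp}, because that corollary is downstream of this theorem. Its proof in the paper literally begins by asserting that, since $\Ggraph$ is \pmp{}, aperiodic and $\mu$-nowhere two-ended, it \emph{admits} a $\mu$-a.e.\ one-ended spanning subforest --- i.e.\ it applies the very statement you are trying to prove, and only afterwards upgrades that forest to one whose relation is a maximal element of $\mathrm{CH}(\Ggraph)$. So your use of it is circular. Nor does your sketch of how maximality would substitute for a construction hold up: you claim that if a tree of the (treeing of the) maximal hyperfinite subrelation were finite or two-ended, ``positive isoperimetry would let one enlarge the $\Ggraph$-connected hyperfinite subrelation,'' but enlarging a hyperfinite subrelation inside a $\mu$-nowhere hyperfinite graph typically \emph{destroys} hyperfiniteness --- that is exactly the content of Claim~\ref{claim:nowhere} in the proof of Theorem~\ref{thm:1percent}, and it points in the opposite direction from what you need. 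What actually does the work in the paper on the non-hyperfinite part is a bare-hands construction you never supply: positive isoperimetric constant plus measure preservation forces superquadratic (indeed exponential) growth, and Theorem~\ref{thm:quad} (via Lemma~\ref{abstract_clinton}: nested nets $A_n$ with $\mu(A_n)\leq 1/(cr_n^2)$ and partial functions $f_n$ following geodesics toward $A_{n+1}$, with $\sum_n\mu(\dom(f_n))<\infty$) produces the forest there.

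A second, related error is your reading of Theorem~\ref{thm:Elek}: $\mu$-nowhere hyperfiniteness does \emph{not} mean that every positive-measure restriction of $\Ggraph$ has positive isoperimetric constant (restricting to a Borel independent set always gives isoperimetric constant $0$); it means every positive-measure set contains a positive-measure subset whose restriction does. This is why the paper's proof partitions $X$ into a hyperfinite part $A$ and pieces $B_i$ containing Borel \emph{complete sections} $C_i$ on which the restriction has positive isoperimetric constant, builds the forest on each $C_i$ by Theorem~\ref{thm:quad}, and then extends it to all of $B_i$ by Lemma~\ref{lem:extend} --- an extension step entirely absent from your plan. Your remaining two pieces are aligned in spirit with the paper but are not proofs as written: on the hyperfinite part, ``orienting each finite approximant toward the unique end'' is precisely the delicate point that Lemma~\ref{lem:2in1lem} and Lemma~\ref{lem:2in1} address with the vanishing sequence of line-structured complete sections; and in the forward direction, your mass-transport discussion ends by asserting the desired contradiction rather than deriving it --- the paper's Lemma~\ref{lem:obstruct} instead extracts from the hypothetical forest a canonical finite family of cuts in each two-ended component, concluding that $\Ggraph$ would be smooth, which is impossible for an aperiodic \pmp{} graph.
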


We further conjecture the following strengthening of this theorem for
graphs which are not necessarily measure preserving. In this more general setting, the correct generalization of ($\mu$-a.e.) aperiodicity is $\mu$-nowhere smoothness of $\Ggraph$; we say that $\Ggraph$ is \define{$\mu$-nowhere smooth} if there is no positive measure Borel subset of $X$ which meets each $\Ggraph$-component in at most one point.

\begin{conj}[Non necessarily \pmp{} graphings] \label{conj:pmp}
Suppose that $\Ggraph$ is a $\mu$-nowhere smooth locally finite Borel
graph on a standard probability space $(X,\mu)$. Then $\Ggraph$ has a Borel a.e.\
one-ended spanning subforest if and only if $\Ggraph$ is $\mu$-nowhere two-ended.
\end{conj}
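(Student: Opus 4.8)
The two directions are of very different character. The forward direction is genuinely measure-free and rests only on conservativity; the backward direction is where the difficulty lies, and is (I expect) the reason the statement is only conjectured.

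\textbf{Forward direction.} Suppose $\Tgraph \subseteq \Ggraph$ is a Borel $\mu$-a.e.\ one-ended spanning subforest, and let $W \subseteq X$ be the $\RR_\Ggraph$-invariant Borel set of vertices lying in a two-ended $\Ggraph$-component; I want $\mu(W)=0$. On $W$ the connectedness relation is hyperfinite, and each component carries a canonical two-element space of ends, hence a coarse $\Z$-coordinate well defined up to bounded error. Each tree of $\Tgraph \restriction W$ is one-ended, so its unique end is carried into one of the two ends of the ambient component; recording, within each two-ended component, the bounded \emph{interface} region where the trees of $\Tgraph$ switch the end they point toward produces a Borel, uniformly locally finite, nonempty selection inside each such component. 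This is a $\mu\restriction W$-smooth structure, contradicting the nowhere-smoothness hypothesis, which restricted to the invariant set $W$ is exactly the assertion that $\RR_\Ggraph \restriction W$ is conservative. Hence $\mu(W)=0$. Note this uses only conservativity, never measure preservation or quasi-invariance.

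\textbf{Backward direction.} Here I would first reduce to a quasi-invariant measure. Since $\Ggraph$ is locally finite, $\RR_\Ggraph$ is a countable Borel equivalence relation, so by Feldman--Moore \cite{FM77} it is generated by a countable group $\{g_i\}$, and $\mu' = \sum_i 2^{-i} g_{i*}\mu$ is quasi-invariant with $\mu \ll \mu'$ and the same invariant null sets as $\mu$. Both the two-ended set and any partial transversal transform correctly, so $\Ggraph$ is $\mu'$-nowhere two-ended and $\mu'$-nowhere smooth, and since $\mu\ll\mu'$ any subforest that is $\mu'$-a.e.\ one-ended is automatically $\mu$-a.e.\ one-ended. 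Thus I may assume $\mu$ quasi-invariant, with Radon--Nikodym cocycle $\rho\colon \RR_\Ggraph \to \R^+$. Using the non measure preserving form of Elek's Theorem~\ref{thm:Elek}, write $X = X_{\mathrm{hf}} \disjointunion X_{\mathrm{nh}}$, where $X_{\mathrm{hf}}$ is the invariant maximal $\mu$-hyperfinite part and $\Ggraph \restriction X_{\mathrm{nh}}$ is $\mu$-nowhere hyperfinite, and treat the two pieces separately. On $X_{\mathrm{nh}}$ I would run the quasi-invariant analogue of Corollary~\ref{cor:maximal}: take $\Tgraph$ a $\Ggraph$-connected $\mu$-hyperfinite spanning subforest with $\RR_\Tgraph$ maximal among $\Ggraph$-connected $\mu$-hyperfinite subrelations (Zorn's lemma, hyperfiniteness detected by vanishing of the cocycle-weighted isoperimetric constant of Theorem~\ref{thm:Elek}). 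As in the proof of Theorem~\ref{thm:1percent}, nowhere-hyperfiniteness forces $\Ggraph \mysetminus \RR_\Tgraph$ to meet $\mu$-a.e.\ component of $\Tgraph$, and then maximality forces each tree to be one-ended, since a finite or at-least-two-ended tree could be enlarged along an edge of $\Ggraph \mysetminus \RR_\Tgraph$ without leaving the hyperfinite world; the selection of the enlarging edges uses the quasi-invariant ergodic decomposition in place of the p.m.p.\ one.

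\textbf{The crux and the main obstacle.} The genuine obstacle is $X_{\mathrm{hf}}$, where $\Ggraph$ is itself $\mu$-hyperfinite and $\mu$-nowhere two-ended and one must produce a one-ended spanning subforest using $\Ggraph$'s own edges; the abstract one-ended treeing of a hyperfinite relation is of no use, since it need not be a subgraph of $\Ggraph$. In the measure preserving Theorem~\ref{thm:pmp} this is handled by a mass-transport/unimodularity argument that controls the expected number of ends and guarantees that the maximal hyperfinite one-ended subforest is genuinely one-ended and spanning; Poincar\'e recurrence and the exact balance of in- and out-degrees are used essentially. In the quasi-invariant setting these must be replaced by $\rho$-weighted analogues, weighting each transported unit of mass and each degree count by the cocycle, and the difficulty is that $\rho$ can be unbounded, Poincar\'e recurrence fails, and the weighted expected number of ends need not converge or balance, so one cannot conclude that the candidate subforest avoids spawning extra ends or finite components on a positive-measure set. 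Overcoming this, presumably through a more robust cocycle-stable construction on the hyperfinite part or a new invariant replacing the mass-transport balance, is exactly the content that keeps the statement a conjecture rather than a theorem.
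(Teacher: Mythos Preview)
Your forward direction is correct and matches the paper's Lemma~\ref{lem:obstruct}: in each two-ended component, a one-ended spanning subforest forces a nonempty finite ``interface'' that serves as a Borel selector, contradicting $\mu$-nowhere smoothness. The paper formalizes this by passing to a $2$-regular graph $\Hgraph$ on disconnecting finite sets and picking out the finitely many such sets where a tree of $\Tgraph$ fails to cross, but the idea is the same.

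For the backward direction you have correctly recognized that this is the open part of the conjecture, but you have located the obstacle on the wrong side of the hyperfinite/non-hyperfinite decomposition. The paper's Lemma~\ref{lem:hyperfinite} handles the $\mu$-hyperfinite case \emph{without any measure-preservation hypothesis}: one takes a spanning subtreeing $\Tgraph_0\subseteq\Ggraph$ (available by hyperfiniteness), splits according to the number of ends of $\Tgraph_0$, applies Theorem~\ref{thm:acyclic} on the $\geq 3$-ended part, and on the two-ended part uses the purely combinatorial Lemmas~\ref{lem:2in1lem}--\ref{lem:2in1} (no mass transport, no unimodularity, no Poincar\'e recurrence) to exploit the one-endedness of $\Ggraph$ there. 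So the hyperfinite piece $X_{\mathrm{hf}}$ is already done in the quasi-invariant setting.

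The genuine obstacle is $X_{\mathrm{nh}}$. In the \pmp{} Theorem~\ref{thm:pmp} this piece is handled by finding a complete section on which the isoperimetric constant is positive and applying Theorem~\ref{thm:quad}, whose proof uses measure preservation in an essential way (the measure bound $\mu(A_n)\leq 1/(cr_n^2)$ comes from disjoint balls of uniform cardinality). Your proposed substitute --- take $\RR_{\Tgraph}$ maximal in $\mathrm{CH}(\Ggraph)$ and argue that a non-one-ended tree can be enlarged by one outgoing edge while staying hyperfinite --- has a gap: joining $\RR_{\Tgraph}$-classes by a Borel choice of one outgoing $\Ggraph$-edge per class does not in general yield a $\mu$-hyperfinite relation (there is no ``hyperfinite-by-hyperfinite'' principle here; the quotient is not even a relation on a standard Borel space, and the added edges can generate a genuinely non-amenable relation, as already happens for the two generating $\Z$-subrelations inside a free $\FF_2$-action). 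Indeed, your argument never uses the $\geq 2$-ended hypothesis on the tree, so if it worked it would show that \emph{every} maximal $\mathcal{S}\in\mathrm{CH}(\Ggraph)$ on $X_{\mathrm{nh}}$ can be properly enlarged, which is absurd. The paper's Corollary~\ref{cor:maximal}, which you invoke, is not an independent tool: its proof \emph{begins} by producing a one-ended spanning subforest via Theorem~\ref{thm:pmp}, and the remark immediately following it explains that measure preservation is necessary for its conclusion.
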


We know that the forward direction of the above conjecture is true by
Lemma~\ref{lem:obstruct} below. The reverse direction is known to be true
in the case when $\Ggraph$ is hyperfinite by Lemma~\ref{lem:hyperfinite}, and when
$\Ggraph$ is acyclic by the following theorem of \cite{CMT-D}.
\begin{thm}[Non necessarily \pmp{} treeings {\cite[Theorem 1.5]{CMT-D}}]\label{thm:acyclic}
  Suppose that $\Ggraph$ is an acyclic, aperiodic locally finite Borel graph on a standard
  probability space $(X,\mu)$. If ${\Ggraph}$ is $\mu$-nowhere two-ended, then
  $\Ggraph$ has a Borel a.e.\  one-ended spanning subforest.
\end{thm}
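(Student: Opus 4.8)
My plan is to reformulate the conclusion in terms of a \emph{successor function} and then reduce, by peeling away the easy parts of each tree, to a single hard core where the real work lies.

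Since $\Ggraph$ is acyclic, any subset of its edges is automatically a forest, so it suffices to choose, in a Borel way and off a $\mu$-null set, a successor $s \from X \to X$ with $\{x,s(x)\} \in \Ggraph$, and to take $\Tgraph$ to be the undirected graph of $s$. A component of $\Tgraph$ is a one-ended tree exactly when the functional graph of $s$ is acyclic and every vertex $x$ has only finitely many \emph{$s$-descendants} (those $y$ with $s^{n}(y)=x$ for some $n\ge 0$): acyclicity forces every forward orbit to be an infinite ray, all forward orbits in a component merge, and finiteness of the descendant sets guarantees that the finite bushes hanging below each vertex do not create a second end. Thus the entire problem becomes the production of a Borel $s$ that is a.e.\ acyclic and finite-descendant out of a nowhere-two-ended acyclic $\Ggraph$. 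The forward direction (Lemma~\ref{lem:obstruct}) confirms that nowhere-two-endedness is exactly the right hypothesis: a positive-measure two-ended locus obstructs the existence of a $\mu$-a.e.\ one-ended spanning subforest.

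First I would dispose of the one-ended components: on them $s(x)$ is the unique neighbor $w$ for which the component of $\Ggraph \setminus \{x\}$ meeting $w$ is infinite, a Borel choice with automatically finite descendant sets. For the remaining components I pass to the \emph{trunk} $\tau$, the Borel set of those $x$ for which $\Ggraph \setminus \{x\}$ has at least two infinite components; equivalently $\tau$ is the union of the bi-infinite geodesics. Every component of $\Ggraph \setminus \tau$ is finite or one-ended, so I orient each finite bush toward its attaching vertex of $\tau$ (finite descendants, absorbed into the trunk's eventual component) and each one-ended bush toward its own end as a standalone one-ended tree. Iterating this one level further — discarding the bare rays of degree-$2$ vertices that carry no further branching, and then contracting the remaining degree-$2$ segments — reduces matters to giving a Borel successor on a locally finite Borel tree $\kappa$ of minimum degree at least $3$, with the convention that trees whose branch locus is finite (such as a tripod) are already handled by finitely many arbitrary Borel choices at the canonically identified branch vertices. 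Nowhere-two-endedness is what guarantees that each non-one-ended component really has such a nonempty, nowhere-two-ended core.

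The crux, and the step I expect to be the main obstacle, is constructing the Borel finite-descendant successor on the minimum-degree-$\ge 3$ tree $\kappa$. This regime is genuinely non-hyperfinite — such trees are nonamenable — so no Borel toast or exhaustion by finite pieces is available, and one cannot simply orient toward an exhaustion. My intended mechanism is to build $s$ as the pointwise-stabilizing limit of Borel partial orientations, using a Borel proper coloring and Luzin--Novikov uniformization to make the necessary \emph{non-canonical} local choices at branch vertices. The key structural point is that the obstruction in the two-ended case is purely measure-theoretic recurrence — a positive-measure set must meet $\mu$-a.e.\ two-ended component infinitely often, so there is no Borel single cut — and that the abundance of branch vertices cofinal in every end of $\kappa$ supplies exactly the Borel anchors that defeat this recurrence, letting each descendant set be pinned down to a finite bush. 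Finally I would reassemble the successors on the one-ended part, the bushes, and the core (pulling the latter back through the degree-$2$ contraction) into a single global Borel $s$, with $\mu$ entering only to discard the null two-ended locus and any null exceptional set produced by the limiting construction.
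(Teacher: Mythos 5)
You should first be aware that the paper itself does not prove this statement: it is imported verbatim as a black box from \cite{CMT-D} (Theorem 1.5 there), and is then used repeatedly (Lemma~\ref{lem:hyperfinite}, Corollary~\ref{cor:subtree}, Theorem~\ref{thm:planar}) precisely because its proof requires genuine work. Your reductions are correct and essentially standard: the reformulation via an aperiodic Borel successor function with finite back-orbits is the same device as the paper's Lemma~\ref{abstract_clinton}; one-ended components are trivial; the trunk/bush decomposition, the smooth treatment of components with finite branch locus, and the contraction of degree-$2$ segments are all sound (modulo some sloppiness about the ``bare rays''). But these reductions only relocate the theorem. An acyclic Borel graph in which every vertex has at least three infinite branches is not a residual special case --- it is, for all purposes, the general case, and your proposal stops exactly there. ``Build $s$ as the pointwise-stabilizing limit of Borel partial orientations, using a Borel proper coloring and Luzin--Novikov uniformization'' with ``Borel anchors that defeat this recurrence'' is a statement of intent, not an argument: you never specify the partial orientations, why they stabilize $\mu$-a.e., or why the limiting back-orbits are $\mu$-a.e.\ finite --- and finiteness of back-orbits is the entire difficulty. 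This is a genuine gap; what you have written is a correct reduction of \cite[Theorem 1.5]{CMT-D} to itself.

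Moreover, the one concrete structural claim you make about the core is false, and the error matters. You assert that the minimum-degree-$\geq 3$ regime ``is genuinely non-hyperfinite --- such trees are nonamenable.'' That is \pmp{} intuition, and this theorem is explicitly not \pmp{}. Counterexample: the Schreier graph of the boundary action $\FF_2 \actson (\partial \FF_2,\mu)$ (with $\mu$ a Patterson--Sullivan or harmonic measure, which is quasi-invariant and non-atomic) is a $4$-regular acyclic Borel graph --- every component a $4$-regular tree, hence $\mu$-nowhere two-ended --- whose connectedness relation is $\mu$-hyperfinite, since the boundary action is amenable and one can apply \cite{CFW81}. So the core case you isolate includes $\mu$-hyperfinite graphs, for which the exhaustion tools you explicitly discard (``no Borel toast or exhaustion by finite pieces is available'') are in fact exactly the right ones; and on the genuinely non-hyperfinite part, the growth/mass-transport mechanism the paper uses elsewhere (Lemma~\ref{abstract_clinton} together with Theorem~\ref{thm:quad}) is unavailable, because the summability bound on $\sum_n \mu(\dom(f_n))$ there crucially uses measure preservation. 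Note also that in your core tree $\kappa$ every vertex is a branch vertex, so ``abundance of branch vertices cofinal in every end'' carries no selective content: the obstruction is not finding branch points but making uncountably many orientation choices cohere measurably, which is precisely what the actual proof in \cite{CMT-D} accomplishes and your sketch does not.
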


We will begin by proving the forward direction of
Theorem~\ref{thm:pmp} (and also Conjecture~\ref{conj:pmp}). An easy
argument shows that the graph associated to a
free measure preserving action of $\Z$ cannot have a Borel a.e.\ one-ended spanning
subforest; such a subforest must come from removing a single
edge from each connected component of the graph. This set of edges would witness the
fact that the graph $\Ggraph$ is smooth, contradicting our assumption that the
action of $\Z$ is measure-preserving. Our argument is a simple
generalization of this idea.

\begin{lemma}\label{lem:obstruct}
  Suppose that ${\Ggraph}$ is a $\mu$-nowhere smooth locally finite Borel graph on a
  standard probability space $(X,\mu)$. If there
  is a set of positive measure on which ${\Ggraph}$ is two-ended, then ${\Ggraph}$ does not
  admit a Borel a.e.\ one-ended spanning subforest.
\end{lemma}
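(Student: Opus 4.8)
The plan is to convert a hypothetical one-ended spanning subforest into a Borel transversal on the two-ended part, and then to observe that Borel smoothness of a graph with infinite classes is incompatible with $\mu$-nowhere smoothness. First I would restrict to the two-ended locus. Let $A$ be the set of $x$ whose $\Ggraph$-component is two-ended; since the number of ends is a component invariant, $A$ is Borel and $\RR_{\Ggraph}$-invariant, and by hypothesis $\mu(A)>0$. As $A$ is invariant of positive measure, $\Ggraph_{\restrict A}$ is again $\mu_{\restrict A}$-nowhere smooth, and any Borel a.e.\ one-ended spanning subforest of $\Ggraph$ restricts to one of $\Ggraph_{\restrict A}$; so I may assume $\Ggraph$ is two-ended on a conull set and, toward a contradiction, fix a Borel a.e.\ one-ended spanning subforest $\Tgraph\subseteq\Ggraph$.

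Next I would extract an orientation from $\Tgraph$. For each vertex $x$ the tree $\Tgraph_x$ is one-ended, so exactly one $\Tgraph$-neighbour of $x$ lies on its infinite side; call it $s(x)$, so that the ray $x, s(x), s^2(x),\dots$ converges to the unique end of $\Tgraph_x$, which is one of the two ends of the $\Ggraph$-component of $x$. Declaring $x\sim y$ when $x,y$ lie in the same component and their rays converge to the same end is Borel (two rays are cofinal iff they eventually lie in the same infinite piece of every ball-complement, a condition with only countable quantifiers), and since each component has exactly two ends this cuts every component into an unordered pair of Borel classes $P_1,P_2$. Let $I$ be the set of \emph{interface edges}, those edges of $\Ggraph$ with one endpoint in $P_1$ and one in $P_2$, and let $D$ be the Borel set of endpoints of edges in $I$.

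The crux is the purely combinatorial claim, proved one component $C$ at a time, that $D\cap C$ is finite and nonempty. Fix a finite separator $F$ with $C\mysetminus F$ equal to two infinite components $C_1,C_2$, one at each end $e_1,e_2$. The key observation is that a one-ended tree whose end is $e_2$ can meet $C_1$ only in a finite set: each infinite connected piece of its intersection with the one-ended graph $C_1$ would run out to $e_1$ and hence give the tree a second end. Moreover any tree with end $e_2$ meeting $C_1$ must cross $F$, and distinct trees are vertex-disjoint, so at most $|F|$ such trees exist; thus the set of vertices of $C_1$ whose ray points to $e_2$ is a finite union of finite sets, and symmetrically for $C_2$. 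Consequently all but finitely many vertices deep inside $C_i$ belong to $P_i$, so both endpoints of any interface edge lie in the closed neighbourhood of the finite set $F\cup(P_2\inters C_1)\cup(P_1\inters C_2)$; local finiteness then gives $|I\cap C|<\infty$, while connectedness of $C$ together with $P_1,P_2\neq\emptyset$ (each $C_i$ contributes infinitely many vertices to $P_i$) gives $I\cap C\neq\emptyset$. I expect this finiteness step to be the main obstacle, both in pinning down the end-combinatorics cleanly and in verifying that only the Borel definability of $P_1,P_2,I,D$ is needed; the separator $F$ is used purely inside the per-component argument, so no Borel choice of $F$ is required.

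Finally I would harvest the contradiction. Using a fixed Borel linear order, selecting the least element of $D\cap[x]_{\RR_{\Ggraph}}$ produces a Borel transversal of $\Ggraph_{\restrict A}$, so $\RR_{\Ggraph_{\restrict A}}$ is smooth. Fixing a countable group $\{\gamma_n\}$ generating the relation and letting $t_x$ be the transversal point of the class of $x$, the map $x\mapsto \min\{n : \gamma_n t_x = x\}$ is Borel and injective on each (countable, infinite) class, so $A$ is a countable disjoint union of Borel sets each meeting every class in at most one point. Since $\mu(A)>0$, one of these pieces has positive measure, contradicting $\mu_{\restrict A}$-nowhere smoothness of $\Ggraph_{\restrict A}$. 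Hence no Borel a.e.\ one-ended spanning subforest exists.
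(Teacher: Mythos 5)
Your overall route is sound and it is genuinely different from the paper's. The paper also reduces to an invariant, everywhere two-ended set and derives smoothness, but it manufactures its canonical finite set per component from a Borel-selected bi-infinite line of pairwise disjoint, \emph{equal-size} finite separators (obtained via countable colorings of an intersection graph, citing \cite{KM04} and \cite{CM16}), and then takes those separators that meet some component of $\Tgraph$ while an adjacent separator misses it. You instead read the canonical finite set directly off the forest: the partition $P_1,P_2$ of each component according to which end the $\Tgraph$-rays converge to, and the interface edges between them. This is more self-contained (no separator-selection machinery; only Borelness of the end-equivalence, which is fine), and your final harvest --- turning the transversal into a positive-measure partial transversal via Feldman--Moore --- is spelled out more carefully than in the paper, where the tension between ``smooth'' and ``$\mu$-nowhere smooth'' (a transversal may well be null) is left implicit. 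Your treatment of the a.e.\ hypothesis as if the subforest were one-ended everywhere on each relevant component is at the same level of informality as the paper's own proof, so I do not count it against you.

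There is, however, a gap at the step you yourself identified as the crux. You assert that a one-ended tree $T$ whose end is $e_2$ meets $C_1$ in a finite set, justified only by the observation that an infinite connected piece of $T\cap C_1$ would contain a ray converging to $e_1$ and so give $T$ a second end. That rules out infinite pieces, but it does not rule out \emph{infinitely many finite} pieces; as written, your ``finite union of finite sets'' is not established, because the union over the pieces of a single tree has not been shown to be finite. This matters: the finiteness claims here carry all the content of the lemma, since a two-ended locally finite graph \emph{can} admit an everywhere one-ended spanning subforest (e.g.\ when separator sizes grow along the component), so nothing can be waved through on the grounds that the configuration is impossible anyway. The claim is true, and two short patches are available. (i) Every connected piece of $T\cap C_1$ is joined to the rest of $T$ by an edge of $T$ whose other endpoint lies in $T\cap F$ (edges leaving $C_1$ can only enter $F$, never $C_2$); since $|T\cap F|\le |F|$ and $T$ is locally finite, there are only finitely many such edges, hence finitely many pieces. (ii) More directly, every $x\in C_1$ whose ray converges to $e_2$ satisfies $s^n(x)\in F$ for some $n$ (the ray must cross $F$ to reach the $e_2$-side), so $P_2\cap C_1$ is contained in the union of the $s$-back-orbits of the at most $|F|$ vertices of $F$, and back-orbits of the parent map of a one-ended locally finite forest are finite (remove a vertex from a one-ended tree: all but one of the resulting components are finite). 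Either repair closes the gap, after which the rest of your argument --- finiteness and nonemptiness of the interface, Borel selection of the least point of $D$ in each class, and the Feldman--Moore decomposition --- goes through.
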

\begin{proof}
  By restricting to and renormalizing a Borel ${\Ggraph}$-invariant subset of positive
  measure, we may assume
  that ${\Ggraph}$ is everywhere two-ended and has a Borel a.e.\ one-ended spanning
  subforest ${\Tgraph}$. We will now show ${\Ggraph}$ is smooth. Let $Y$ be the set of
  connected $C \in
  [\RR_{\Ggraph}]^{<\N}$ such that removing $C$ from ${\Ggraph}$ disconnects its connected
  component into exactly two infinite pieces. Recall that $ [\RR_{\Ggraph}]^{<\N}$ is the Borel set of finite subsets of $X$ made of $\RR_{\Ggraph}$-equivalent points.
  By taking a countable
  coloring of the intersection graph on $Y$ (see {\cite[Lemma 7.3]{KM04}} and
  {\cite[Proposition 2]{CM16}}), we may find a Borel set $Z
  \subseteq Y$ which meets every connected component of ${\Ggraph}$ and so that
  distinct $C,D \in Z$ are pairwise disjoint and if $C$ and $D$ are in
  the same connected component, then $|C| = |D|$. By discarding a smooth
  set, we may assume $Z$ meets each connected component of ${\Ggraph}$ infinitely
  many times.

  Let ${\Hgraph}$ be the graph on $Z$ where $C \mathrel{{\Hgraph}} C'$ if $C$
  and $C'$ are in the same connected component of ${\Ggraph}$ and there is no $D
  \in Z$ such that removing $D$ from ${\Ggraph}$ places $C$ and $C'$ in different
  connected components. 
  It is then easy to see that every element of $Z$ has ${\Hgraph}$-degree at most $2$ (for full details see {\cite[Sublemma 5.7]{Mil09}}).  
  As ${\Ggraph}$ is two-ended, each connected component of ${\Ggraph}$ contains at most one element of $Z$ of ${\Hgraph}$-degree $1$; we may thus assume that ${\Hgraph}$ is 2-regular by discarding another smooth set.

  Now let $Z' \subseteq Z$ be the set of $C \in Z$ such that there exists a $\Hgraph$-neighbor $D$ of $C$ and a component $F$ of $\Tgraph$, such that $C$ meets $F$, but $D$ does not meet $F$.

  It is easy to see that $Z'$ meets each connected component
  of ${\Ggraph}$ and is finite (else ${\Tgraph}$ is not a one-ended spanning subforest),
  but then ${\Ggraph}$ is smooth.
\end{proof}

Our proof of the reverse direction of Theorem \ref{thm:pmp} splits into two
cases based on Theorem \ref{thm:Elek}. In particular, it will suffice to
prove Theorem~\ref{thm:pmp} for $\mu$-hyperfinite graphs, and graphs having
positive isoperimetric constant.

\subsection{Measure preserving graphs with at least quadratic growth}\label{sec:growth}

We begin with a lemma giving a sufficient condition for a graph to possess
a one-ended spanning subforest. (In fact, this condition can be shown to be
equivalent to the existence of such a subforest)

Let $f$ be a partial function from a set $X$ into itself, and let $y\in X$. The \define{back-orbit} of $y$ under $f$ is the set of all $x\in \dom(f)$ for which there is some $n\geq 0$ with $f^n(x)=y$.

\begin{lemma}\label{abstract_clinton}
  Suppose that ${\Ggraph}$ is a locally finite Borel graph on a standard
  probability space $(X,\mu)$, and there are partial Borel functions $f_0,
  f_1, \ldots \subseteq {\Ggraph}$ such that
  \begin{enumerate}
    \item $\sum_{i} \mu(\dom(f_i)) < \infty$
    \item $\bigunion \dom(f_i) = X$
    \item Every $f_i$ is aperiodic and has finite back-orbits.
    \item For every $i$ and $x \in \dom(f_i)$ there is a $j \geq i$ such
    that $f_i(x) \in \dom(f_j)$.
  \end{enumerate}
  Then ${\Ggraph}$ has a Borel a.e.\ one-ended spanning subforest.
\end{lemma}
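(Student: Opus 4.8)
The plan is to build the subforest from a single Borel ``successor'' function obtained by following the $f_i$ with largest admissible index. First I would set $n(x)=\max\{i : x\in\dom(f_i)\}$. By hypothesis (1) and the Borel--Cantelli lemma, $\mu$-almost every $x$ lies in only finitely many of the sets $\dom(f_i)$, so $n(x)<\infty$ a.e.; by (2) the set of such $i$ is nonempty, so $n$ is a well-defined Borel function off a null set $N$. Discarding $N$, I define $g(x)=f_{n(x)}(x)$ and let $\Tgraph=\{\{x,g(x)\}: x\in X\mysetminus N\}$, a Borel subgraph of $\Ggraph$ (since each $f_i\subseteq\Ggraph$). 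The whole argument rests on one monotonicity observation: by hypothesis (4), applied with $i=n(x)$, the point $g(x)$ lies in some $\dom(f_j)$ with $j\geq n(x)$, so $n(g(x))\geq n(x)$. Thus $n$ is non-decreasing along forward $g$-orbits.

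Next I would check that $\Tgraph$ is a forest, for which it suffices to show $g$ is aperiodic: the undirected graph of an aperiodic function contains no cycle, since an undirected cycle would force each of its vertices to have its unique out-edge lie on the cycle, orienting the cycle consistently into a directed cycle. If $x_0\to x_1\to\cdots\to x_k=x_0$ were a $g$-cycle, monotonicity would force $n$ to be constant, say $\equiv m$, along it; but then $g(x_t)=f_m(x_t)$ for all $t$, exhibiting a periodic orbit of $f_m$ and contradicting aperiodicity in (3).

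The hard part is one-endedness, and this is where I would use finite back-orbits. First, $g$ has finite in-degree: if $g(x)=v$ then $n(x)\leq n(v)$ and $f_{n(x)}(x)=v$, so $g^{-1}(v)\subseteq\bigcup_{k\leq n(v)} f_k^{-1}(v)$, a finite set by (3). Now suppose some vertex $y$ had an infinite $g$-back-orbit; being finitely branching, König's lemma would yield an injective backward ray $y=x_0,x_1,x_2,\dots$ with $g(x_{t+1})=x_t$. Along it the integers $n(x_t)$ are non-increasing (by monotonicity), hence eventually equal to some constant $m$; but then the tail $x_T,x_{T+1},\dots$ satisfies $f_m(x_{t+1})=x_t$, so it is an infinite backward $f_m$-orbit, contradicting the finiteness of back-orbits for $f_m$ in (3). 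Hence every $g$-back-orbit is finite. Together with aperiodicity this forces each component of $\Tgraph$ to be one-ended: the forward $g$-orbit of any vertex is an infinite ray, and the forward orbits of two adjacent vertices eventually coincide, so (transitivity of ``eventually coincides'' on a tree) all forward orbits in a component merge to a single end; a second end would yield a bi-infinite geodesic $\dots,u_{-1},u_0,u_1,\dots$ converging forward to that common end, which forces $g(u_i)=u_{i+1}$ for every $i\in\Z$ and hence an infinite back-orbit of $u_0$ — impossible.

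Finally, $\Tgraph$ spans the conull set $X\mysetminus N$. The one remaining technical point — ensuring $\mu$-a.e.\ forward orbit does not run into the null set $N$ where $g$ is undefined, which would truncate the forward ray — I expect to be the main bookkeeping obstacle. In the measure-preserving case it dissolves: decomposing the finite-to-one map $g$ into countably many partial Borel injections, each of which preserves $\mu$, gives $\mu(g^{-1}(N))=0$ and hence $\mu\bigl(\bigcup_t g^{-t}(N)\bigr)=0$; after deleting this further null set every forward orbit stays in the domain of $g$ and is an infinite ray. Thus $\Tgraph$ is a Borel $\mu$-a.e.\ one-ended spanning subforest of $\Ggraph$.
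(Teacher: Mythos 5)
Your construction is exactly the paper's: the paper also sets $n(x)$ to be the largest $i$ with $x\in\dom(f_i)$ (defined off a Borel--Cantelli null set), puts $f(x)=f_{n(x)}(x)$, and derives acyclicity and finite back-orbits from precisely your two observations --- aperiodicity of each $f_i$ together with the monotonicity of $n$ along forward orbits coming from (4); your oriented-cycle and K\"onig's-lemma verifications just spell out what the paper asserts in two lines. The genuine gap is the point you flag yourself at the end, and it is not mere bookkeeping: the lemma as stated carries \emph{no} measure-preservation hypothesis, while your resolution of the truncation problem (splitting $g$ into countably many measure-preserving partial injections to get $\mu\bigl(\bigcup_t g^{-t}(N)\bigr)=0$) uses preservation of $\mu$ essentially. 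So what you have proved is the \pmp{} case --- enough for the paper's only application of the lemma (Theorem~\ref{thm:quad}), but not the stated lemma. The paper's fix is different: it assumes (``as usual'') that $\mu$ is $\RR_{\Ggraph}$-quasi-invariant, and then replaces the complement of the Borel--Cantelli conull set by its $\RR_{\Ggraph}$-saturation, which is still null under quasi-invariance; the conull set $A$ can therefore be taken $\Ggraph$-\emph{invariant}, and since $f\subseteq\Ggraph$ this gives $f(A)\subseteq A$ outright, so forward orbits can never run into the discarded null set. That saturation trick, which needs only quasi-invariance rather than preservation, is the move your argument is missing.

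Your suspicion that this is a real obstacle is in fact vindicated: for a completely arbitrary $\mu$ the statement fails, so some such hypothesis has to enter. For instance, let $T$ be an irrational rotation of the circle by $\alpha<1/4$, let $\Ggraph$ be the graph of $T$ (all components are lines), let $B$ be a closed arc of length $1/2$, and let $\mu$ be normalized Lebesgue measure on $B$. Take $f_0=T\restriction B$; for odd $i\geq 1$ take $f_i=T$ restricted to $\{x:x\notin B,\ Tx\notin B\}$, and for even $i\geq 2$ take $f_i=T^{-1}$ restricted to $\{x:x\notin B,\ T^{-1}x\notin B\}$. One checks (1)--(4): only $\dom(f_0)$ has positive $\mu$-measure; each $f_i$ is aperiodic with finite back-orbits because no orbit has an infinite run of consecutive points in $B$ or in its complement; and (4) holds because a point entering the complementary arc stays there for at least two steps (the arc is longer than $2\alpha$), after which the odd and even functions hand off to each other with ever larger indices, never needing to re-enter $B$. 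Yet $\Ggraph$ admits no Borel $\mu$-a.e.\ one-ended spanning subforest: $(\RR_{\Ggraph},\mu)$ is nowhere smooth (a positive-$\mu$-measure partial transversal would be a positive-Lebesgue-measure one), and in a graph whose components are lines the outermost deleted edges of such a subforest would form a Borel set meeting almost every orbit in one or two points, making the rotation relation smooth on a Lebesgue-conull invariant set --- impossible (compare Lemma~\ref{lem:obstruct}). So the lemma must be read with quasi-invariance of $\mu$ as a standing assumption (note that even the paper's ``we may assume'' is delicate, since hypothesis (1) refers to $\mu$ and need not survive replacing $\mu$ by an equivalent quasi-invariant measure); with that assumption both your argument, suitably modified via the saturation trick, and the paper's go through.
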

\begin{proof}
  As usual, we may assume $\mu$ is ${\Ggraph}$-quasi-invariant.
   By (1), (2) and a
  Borel-Cantelli argument there is a ${\Ggraph}$-invariant conull Borel set $A$
  such that for every $x \in A$ there are only finitely many $i$ such that
  $x \in \dom(f_i)$. Define $n(x)$ for $x \in A$ to be the
  largest $i$ such that $x \in \dom(f_i)$. Now define $f \from A \to A$ by
  $f(x) = f_{n(x)}(x)$. We claim that $f$ generates
  a one-ended spanning subforest of ${\Ggraph}_{\restriction A}$.
  To see this, note $f$ is aperiodic since each $f_i$ is aperiodic by (3),
  and by (4) the value of $n(x)$ is non-decreasing along orbits of $f$. We also
  see that $f$ has finite back-orbits by induction since the $f_i$ do.
\end{proof}

Now we apply this lemma to show that any measure preserving graph of at least quadratic
growth has a Borel a.e.\ one-ended spanning subforest.

\begin{theorem}\label{thm:quad}
  Suppose that ${\Ggraph}$ is a measure preserving locally finite Borel graph on a standard
  probability space $(X,\mu)$ of at least quadratic growth, so there is a $c >
  0$ such that for every $x\in X$, $|B_r(x)| \geq c r^2$ where $B_r(x)$ is the
  ball of radius $r$ around $x$ in ${\Ggraph}$. Then ${\Ggraph}$ has a Borel a.e.\ one-ended spanning subforest.
\end{theorem}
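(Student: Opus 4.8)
The plan is to verify the hypotheses of Lemma~\ref{abstract_clinton}: once we produce partial Borel functions $f_0,f_1,\ldots\subseteq{\Ggraph}$ satisfying its conditions (1)--(4), the lemma immediately yields the desired Borel a.e.\ one-ended spanning subforest. The guiding picture is a multiscale net hierarchy. At geometrically growing scales $r_k=2^k$ one selects sparse nets, connects each net point to a coarser-scale ``parent'' by a geodesic, and lets the $f_i$ push points one step along these geodesics, outward through ever-coarser scales and hence toward a single end. The superquadratic hypothesis enters at exactly one point --- the summability condition (1) --- and it is precisely the inequality that fails for the two-ended line, so the dichotomy with Lemma~\ref{lem:obstruct} is morally forced.

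Concretely, I would fix $r_k=2^k$ and, by a greedy Borel construction (Luzin--Novikov / Feldman--Moore), choose a maximal $r_k$-separated Borel set $N_k\subseteq X$, taking $N_0=X$. Since $N_k$ is $r_k$-separated, the ball $B_{r_k/2}(p)$ lies inside the Voronoi cell $V_p$ of $p\in N_k$, so $|V_p|\geq |B_{r_k/2}(p)|\geq c(r_k/2)^2$. The mass transport principle (available because $\mu$ is ${\Ggraph}$-invariant), applied to the transport sending each $x$ to its nearest net point, gives $\int_{N_k}|V_p|\,d\mu=1$, whence the key estimate
\[
  \mu(N_k)\ \leq\ \frac{4}{c\,r_k^{2}}\ =\ \frac{4}{c\,4^{k}}.
\]
For $p\in N_k$ let $P_k(p)\in N_{k+1}$ be a nearest next-scale net point (a Borel tie-break makes this measurable), so $d(p,P_k(p))\leq r_{k+1}$, and fix a Borel geodesic $g_{k,p}$ from $p$ to $P_k(p)$. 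Define $f_k$ on the interior points of the scale-$k$ geodesics by moving each such point to its successor along $g_{k,p}$ toward $P_k(p)$.

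Conditions (2)--(4) are then built in. Every $x\in X=N_0$ initiates the geodesic $g_{0,x}$, so $x\in\dom(f_0)$ unless $x\in N_1$, in which case it starts a scale-$1$ geodesic, and so on; this gives (2). Each $f_k$ advances monotonically along a single finite geodesic of length $\leq r_{k+1}$, hence is aperiodic with finite back-orbits, giving (3). For (4): if $f_k(x)$ is still interior to $g_{k,p}$ it lies in $\dom(f_k)$, while if $f_k(x)=P_k(p)$ then this endpoint \emph{starts} the scale-$(k+1)$ geodesic $g_{k+1,P_k(p)}$ and so lies in $\dom(f_{k+1})$; either way the flow continues at index $\geq k$. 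Finally, transporting unit mass from each $p\in N_k$ to every vertex of $g_{k,p}$ and using the estimate above bounds the measure of the scale-$k$ domain by the length times the net density,
\[
  \mu\!\left(\bigunion_{p\in N_k} g_{k,p}\right)\ \leq\ (r_{k+1}+1)\,\mu(N_k)\ \leq\ (2^{k+1}+1)\cdot\frac{4}{c\,4^{k}}\ \lesssim\ 2^{-k},
\]
which is summable in $k$, giving (1). This is the one place the growth is used, and it pins down both the quadratic exponent and the geometric choice of scales: the factor $4^{-k}$ from $|B_r|\geq cr^2$ must beat the geodesic length $2^{k}$, whereas linear growth (or arithmetic scales) would contribute a constant per scale and a divergent sum, exactly matching the two-ended obstruction.

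The step I expect to be the main obstacle is reconciling single-valuedness with summability: a priori a vertex can be interior to several scale-$k$ geodesics, so the prescription above defines only a relation, and the domains must stay small (we cannot simply flow every point to the next net). The overlap graph of scale-$k$ geodesics is locally finite, so a Borel proper coloring splits the scale-$k$ relation into single-valued partial functions; the delicate part is to re-index the resulting doubly-indexed family into a single sequence while preserving condition (4), i.e.\ ensuring that each one-step image still lands in a domain of index at least the current one. In the bounded-degree case the multiplicity is uniformly bounded, only finitely many colors occur per scale, and one may order all scale-$k$ pieces before all scale-$(k+1)$ pieces; for merely locally finite graphs one must instead route the scale-$k$ geodesics consistently (e.g.\ organizing all geodesics into a common scale-$(k+1)$ parent as a rooted geodesic tree, so that following the successor stays on one branch until the endpoint) so that the continuation in (4) threads cleanly from scale $k$ into scale $k+1$. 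The remaining measurability points --- maximal separated sets, nearest-parent maps, and canonical geodesics via a Borel well-ordering of finite paths --- are routine. With the $f_i$ in hand, Lemma~\ref{abstract_clinton} completes the proof.
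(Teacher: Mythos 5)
Your plan is structurally the paper's own proof: nets at scales $r_k=2^k$, shortest paths from each net point to the next-coarser net, the measure bounds $\mu(N_k)\lesssim 1/(c\,r_k^2)$ and $\mu(\dom f_k)\lesssim r_{k+1}/(c\,r_k^2)$ obtained from measure preservation by mass transport, and an appeal to Lemma~\ref{abstract_clinton}, with superquadratic growth entering only through summability. However, the obstacle you flag at the end --- single-valuedness --- is a genuine gap in the write-up, and neither of your proposed repairs closes it in the stated (merely locally finite) generality. For the coloring repair: the intersection graph of scale-$k$ geodesics is locally finite but in general of unbounded degree, so it only admits a countably infinite Borel coloring; a countably infinite family of pieces per scale cannot all be placed before the next scale's pieces in a single $\omega$-indexed sequence, and an arbitrary interleaving can violate condition (4), because a geodesic's terminal point continues into a scale-$(k{+}1)$ piece whose color (hence index) you cannot control, and a given scale-$(k{+}1)$ piece may need to come after infinitely many scale-$k$ pieces. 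For the tree-routing repair: trees rooted at distinct points of $N_{k+1}$ can still share vertices, and any rule reassigning a shared vertex to a single tree can redirect the flow off the branch you meant to follow, threatening both aperiodicity and condition (4). (You correctly note the bounded-degree case is unproblematic: there only finitely many colors occur per scale, and your ordering works.)

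The paper resolves this with one device that your passing remark about ``canonical geodesics via a Borel well-ordering'' comes close to but does not exploit: for each $x\in A_n$ take the \emph{lexicographically least} minimal-length path from $x$ to $A_{n+1}$ (with respect to a fixed Borel linear order on $X$), and let $f_n$ be the union of all consecutive pairs from all these paths. The key coherence property is that if $y$ is an intermediate vertex of the lex-least minimal path from $x$ to $A_{n+1}$, then the tail of that path starting at $y$ is itself the lex-least minimal path from $y$ to $A_{n+1}$: a shorter or lex-smaller tail could be spliced in to contradict the choice at $x$ (minimal-length walks are automatically simple, so the spliced walk is a legitimate path). Consequently any two chosen paths passing through a common vertex continue identically from that vertex onward, so the union of step-pairs is automatically a single-valued Borel partial function --- no coloring, re-indexing, or routing is needed --- while its domain remains the small set of points lying on paths launched from $A_n$, so summability is preserved. (The remaining degenerate issue, namely points lying in $A_m$ for every $m$, is a null set handled by discarding its saturation, a step both you and the paper leave implicit.) With this substitution your argument becomes exactly the proof in the paper.
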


\begin{figure}[h]
\def\svgwidth{\columnwidth}
\resizebox{67mm}{!}{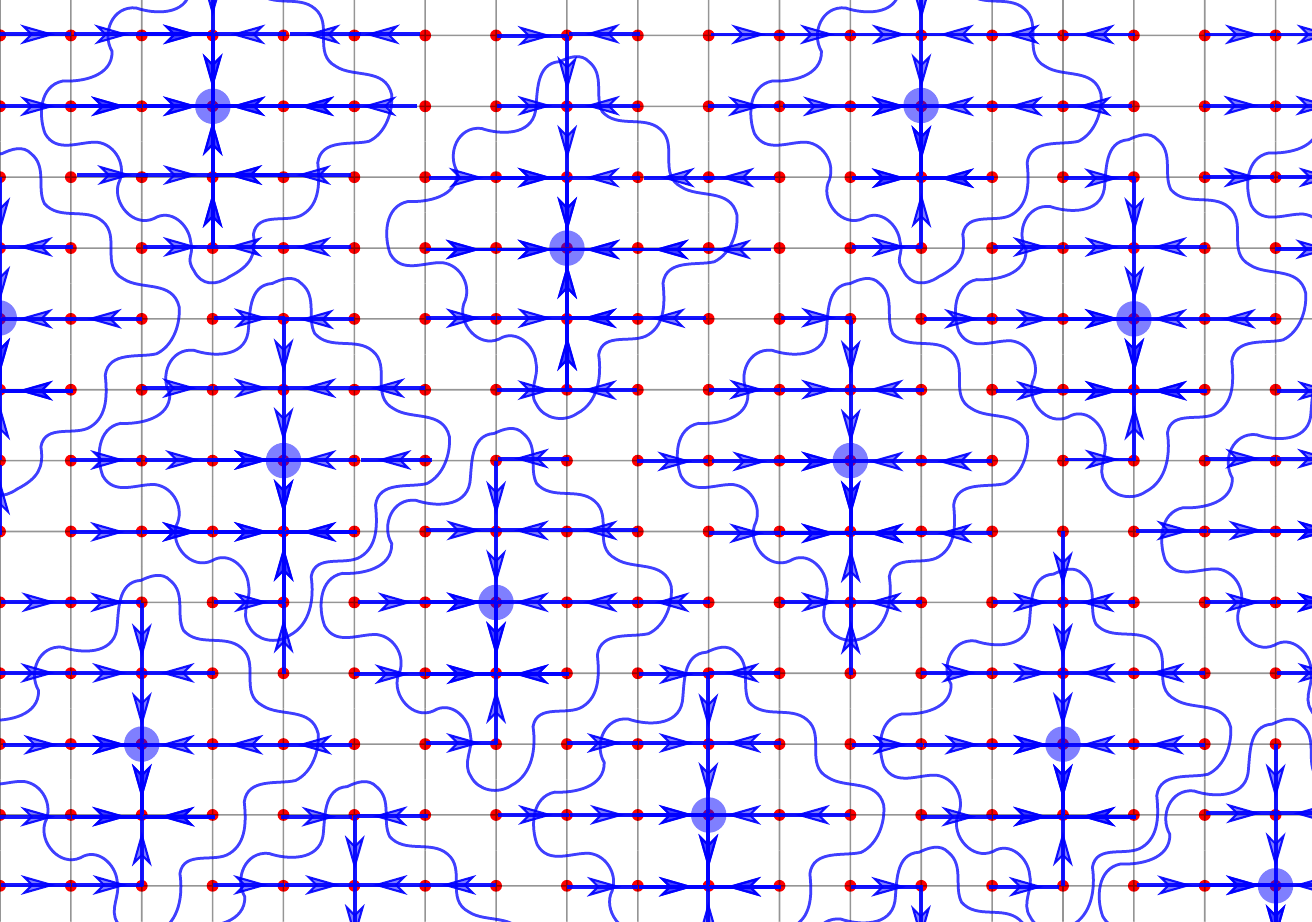}
\hskip10pt
\resizebox{67mm}{!}{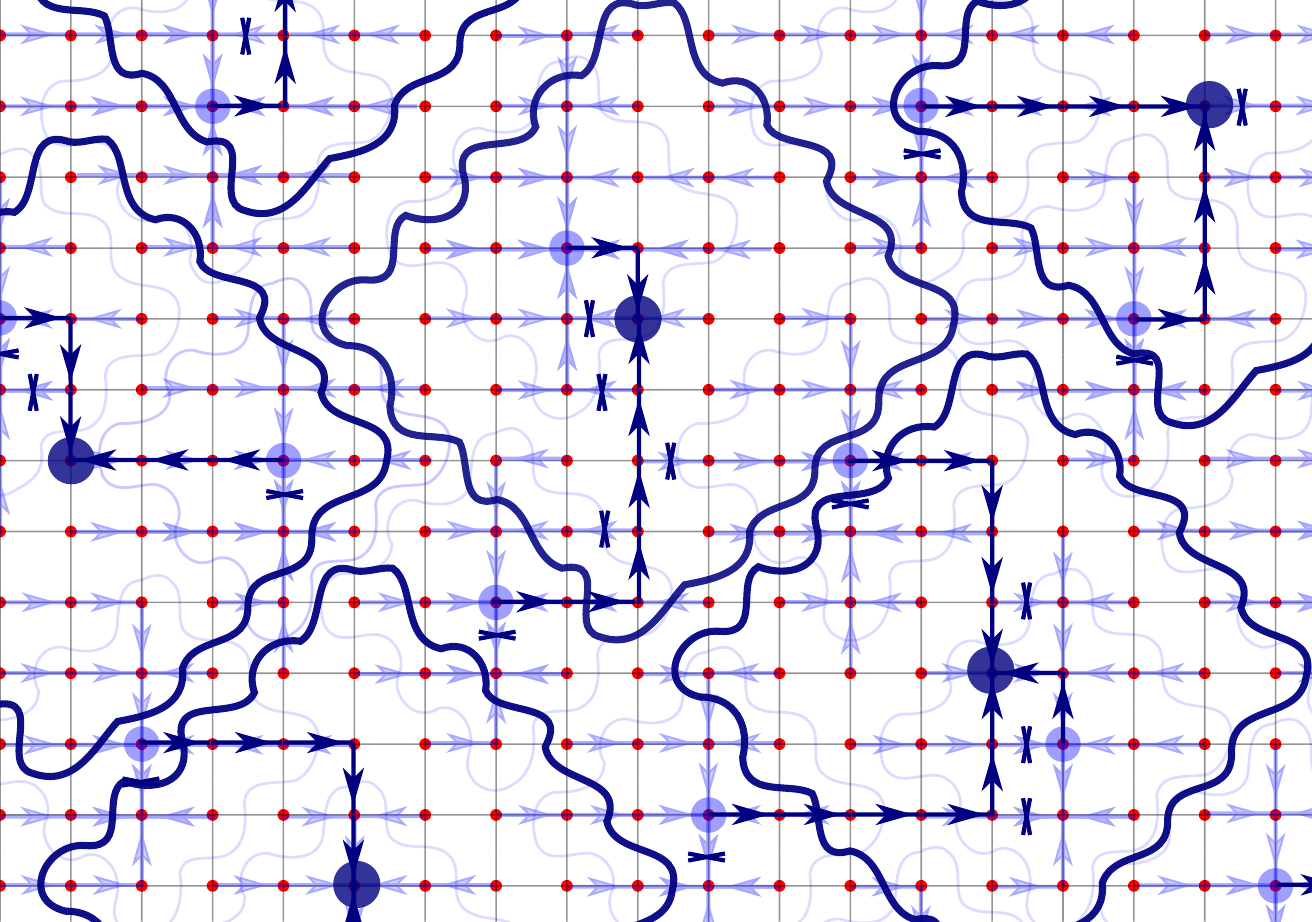}
\caption{$A_0$ (red), $A_1$ (blue) and $f_0$ (blue arrows) -- $A_2$ and $f_1$  (deep blue).
Crosses represent  "change of decision" for some edges.}
\end{figure}

Observe that any measure preserving locally finite Borel graph with positive isoperimetric constant satisfies the hypotheses of Theorem~\ref{thm:quad} (any such graph in fact has exponential growth).

\begin{proof}[Proof of Theorem~\ref{thm:quad}]
  Fix a sequence of natural numbers $r_n$, $n\geq 1$, such that $\sum _{n} 2r_{n+1}/(cr_n^2)<\infty$ (e.g., take $r_n=2^n$).  For each $n\geq 1$ let $A_n$ be a Borel subset of $X$ which is maximal with respect to the property that $B_{r_n}(x)\cap B_{r_n}(y)=\emptyset$ for all distinct $x,y\in A_n$. Then $\mu (A_n)\leq 1/(cr_n^2)$. Let $A_0=X$. Define partial Borel functions $f_n$ from $X$ to $X$ as follows. For
  each $x \in A_n$ choose the lexicographically least minimal length path $x = x_0, x_1,
  \ldots, x_k$ from $x$ to an element of $A_{n+1}$. Let $f_n$ be the function obtained by taking the union of all
  pairs $(x_i, x_{i+1})$ from these paths. See Figure 1.
  Observe that by maximality of $A_{n+1}$, the length of each such path $x_0,x_1,\dots , x_k$ is at most $2r_{n+1}$, and hence $\mu (\dom (f_n)) \leq 2r_{n+1}\mu (A_n)\leq 2r_{n+1}/(cr_n^2)$. Therefore, $\sum_n \mu(\dom(f_n))$ converges. The remaining properties
  from Lemma~\ref{abstract_clinton} are trivial to verify.
\end{proof}

\begin{remark}
We note that a slightly refined argument yields the same result as
Theorem~\ref{thm:quad} for
graphs $\Ggraph$ of \define{superlinear growth}: for every $c > 0$ there exists an $r > 0$
such that for every $x \in X$, $|B_r(x)| \geq cr$. Since we will not need this, we omit the proof.
\end{remark}

\subsection{\texorpdfstring{$\mu$-hyperfinite graphs}{mu-hyperfinite graphs}}\label{sec:hyperfinite}

We now turn to the case of $\mu$-hyperfinite graphs. Suppose ${\Ggraph}$ is
a locally finite Borel graph on a standard probability space $(X,\mu)$, and
$\Ggraph$ is $\mu$-hyperfinite. We do not assume that $\mu$ is preserved.
Then after ignoring a null set we can find
an acyclic Borel subgraph $\Tgraph \subseteq \Ggraph$ on the same vertex set
and having the same connectedness relation. This subgraph will have at
least as many ends as $\Ggraph$ in each connected component. Hence, on the
set where $\Ggraph$ has more than two ends, we can apply
Theorem~\ref{thm:acyclic} to $\Tgraph$ to find a one-ended spanning
subforest.

Thus, it will suffice to handle $\mu$-hyperfinite $\Ggraph$ that are a.e.\ one-ended.
We begin with the following lemma:

\begin{lemma}\label{lem:2in1lem}
Let ${\Ggraph}$ be a locally finite Borel graph on a standard probability
space $(X,\mu)$ in which every connected component has one end. Let
${\Tgraph}\subseteq {\Ggraph}$ be an acyclic Borel subgraph with
$\RR_{{\Tgraph}}=\RR_{{\Ggraph}}$ and assume that every connected component of
${\Tgraph}$ is a 2-regular line. Then there exists a Borel set
$X'\subseteq X$ and an acyclic Borel subgraph ${\Tgraph}'\subseteq {\Ggraph}_{\restriction  X'}$ such that
\begin{enumerate}
\item $X'$ is a complete section for $\RR_{\Ggraph}$ with $\mu (X') < 3/4$,
\item $\RR_{{\Tgraph}'} = \RR_{({\Ggraph}_{\restriction  X'})} = (\RR_{\Ggraph})_{\restriction  X'}$,
\item Every connected component of ${\Ggraph}_{\restriction  X'}$ has one end, and every connected component of ${\Tgraph}'$ is a 2-regular line,
\item For $x,y\in X'$, we have $(x,y)\in \mathcal{\Tgraph}'$ if and only if there is no $z\in X'$ in the interior of the line segment between $x$ and $y$ in $\Tgraph$.
\end{enumerate}
\end{lemma}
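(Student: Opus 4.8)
The plan is to build $X'$ by deleting, from each line, the interiors of a carefully chosen Borel family of ``chords''. Call an edge of $\Ggraph$ that is not an edge of $\Tgraph$ a \emph{chord}; since each component of $\Tgraph$ is a $2$-regular line, a chord $e=\{x,y\}$ spans a finite segment of that line, and I write $\mathrm{int}(e)$ for the set of vertices lying strictly between its endpoints. The role of condition (4) is decisive: it forces $\Tgraph'$ to be the ``consecutiveness'' graph of $X'$ along $\Tgraph$, so whenever a consecutive pair $x,y$ of $X'$ straddles a deleted segment, the edge $(x,y)$ must already be an edge of $\Ggraph$. Hence the only vertices I am allowed to delete are those lying in $\mathrm{int}(e)$ for a chord $e$ both of whose endpoints I keep. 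The first observation is that one-endedness makes \emph{every} vertex deletable in this sense: removing a single vertex $v$ from the one-ended graph $\Ggraph_{\restrict C}$ leaves the two $\Tgraph$-rays at $v$ in a common infinite component, and since the only $\Tgraph$-path between the rays passes through $v$, some chord must have $v$ in its interior. Thus the closed intervals $[e]=\mathrm{int}(e)\cup\{x,y\}$ of chords cover $X$.

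First I would protect the one end. Using one-endedness again, in each component I fix a Borel cofinal family of chords $\{p_k,q_k\}$ whose spans exhaust the line (so that, beyond any finite set, the two rays are joined by one of them), and I let $\Pi$ be the Borel set of their endpoints, chosen sparse enough that $\mu(\Pi)$ is as small as I like. Next I extract, by a Borel one-dimensional Vitali/maximal-packing argument on each line, a Borel family $\mathcal{F}$ of chords whose closed intervals $[e]$ are pairwise disjoint (equivalently, interiors disjoint and no endpoint interior to another) and avoid $\Pi$, and whose interiors cover more than a quarter of $\mu$. Then I set
\[
X' = X \mysetminus \bigunion_{e\in\mathcal{F}} \mathrm{int}(e),
\]
and let $\Tgraph'$ be defined by (4). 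Since every $\mathrm{int}(e)$ is finite and the closed intervals are disjoint, both $\Tgraph$-rays of each component survive, so $X'$ is a complete section, each $X'\inters C$ is bi-infinite, and $\mu(X')<3/4$.

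The verifications then go as follows. Condition (4) holds by definition, and each component of $\Tgraph'$ is a bi-infinite $2$-regular line (so $\Tgraph'$ is acyclic): the consecutive pair straddling a deleted $\mathrm{int}(e)$ is exactly the endpoint pair of $e$, a genuine $\Ggraph$-edge with both endpoints in $X'$, while every other consecutive pair is a surviving $\Tgraph$-edge; hence $\Tgraph'\subseteq\Ggraph_{\restrict X'}$. Because $\Tgraph'$ connects all of $X'\inters C$ and sits inside $\Ggraph_{\restrict X'}$, condition (2) is immediate: $\RR_{\Tgraph'}=(\RR_{\Ggraph})_{\restrict X'}=\RR_{\Ggraph_{\restrict X'}}$. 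For the one-endedness required in (3), given a finite $S\subseteq X'\inters C$, one-endedness of $\Ggraph_{\restrict C}$ together with the protection step supplies a protected chord $\{p_k,q_k\}$ of large span with $p_k,q_k\notin S$, both surviving in $X'$, joining the two $\Tgraph'$-rays away from $S$; hence $\Ggraph_{\restrict X'}$ is one-ended on each component.

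The main obstacle is the measure estimate, that is, producing the packing $\mathcal{F}$ with $\mu(\bigunion_{e}\mathrm{int}(e))>1/4$ Borel-uniformly while avoiding $\Pi$. Coverability reduces this to a packing problem on each line: from a cover of $X$ by the closed intervals of chords I must select a proper (endpoint-disjoint) subfamily capturing a definite fraction of the measure. When $\RR_{\Ggraph}$ is non-smooth this is a genuine Vitali-type selection; in the presence of a smooth part, where the chords over a vertex may all be long and nested, as in a half-line ``ladder'', the fibre measures concentrate on finite segments and a single chord of heavy interior already suffices. Reconciling these two regimes in one Borel construction, while keeping the protected chords intact so that one-endedness is preserved, is the delicate point; everything else is bookkeeping.
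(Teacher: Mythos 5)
Your overall architecture is the same as the paper's: delete the interiors of a Borel, pairwise-disjoint family of chords whose interiors carry measure greater than $1/4$, let $\Tgraph'$ be the consecutiveness graph forced by (4), and keep enough long chords alive to get the complete-section property and one-endedness. But the step you explicitly defer --- extracting, Borel-uniformly, a pairwise-disjoint subfamily $\mathcal{F}$ of chords with $\mu\bigl(\bigunion_{e\in\mathcal{F}}\mathrm{int}(e)\bigr)>1/4$ out of the covering family --- is not a routine Vitali lemma that can be quoted; it is the entire technical content of the paper's proof, and you leave it unproved (you say yourself it is ``the delicate point''). Note that $\mu$ is an arbitrary probability measure (possibly atomic, with no invariance or quasi-invariance assumed) and the selection must be made simultaneously and measurably over a continuum of components, so classical covering theorems do not apply off the shelf. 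The paper's solution is concrete: writing $[e]$ for the interior of a chord, set $n(e)=\max\{|[e']| : e'\in\Ggraph \text{ has an endpoint in } [e]\}$, let $\Ggraph_N=\{e : |[e]|\le N \text{ and } n(e)\le N\}$, and pick $N$ with $\mu([\Ggraph_N])>3/4$; the intersection graph of the intervals $\{[e] : e\in\Ggraph_N\}$ has bounded degree, so by \cite{KST99} it admits a Borel finite coloring, which is used color-by-color to prune $\Ggraph_N$ to a subfamily still covering $[\Ggraph_N]$ in which each surviving chord has a ``private'' point; for intervals on a line this forces intersection-graph degree at most $2$, whence a Borel $3$-coloring and the pigeonhole principle produce the desired pairwise-disjoint class $D_i$ with $\mu([D_i])>1/4$. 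Without this pruning-plus-coloring argument (or a substitute for it), your proof is incomplete at its core.

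There is a second, related gap: your protection device. You need a Borel set $\Pi$ of arbitrarily small measure containing, in \emph{every} component, the endpoints of a cofinal family of chords, and your packing must then avoid $\Pi$ while still capturing measure $>1/4$; you give no argument that such a $\Pi$ exists (again, $\mu$ is arbitrary and the selection must be Borel across components), nor that the packing and the avoidance constraint are compatible. The paper shows this extra mechanism is unnecessary: the condition $n(e)\le N$ built into $\Ggraph_N$ already guarantees that every chord $e$ with $|[e]|>N$ has both endpoints outside $[\Ggraph_N]\supseteq[D_i]$ (if an endpoint of $e$ lay in $[e']$ for some $e'\in\Ggraph_N$, then $n(e')\ge|[e]|>N$), so all sufficiently long chords automatically survive into $\Ggraph_{\restriction X'}$; these give both the complete-section property in (1) and the one-endedness in (3) for free, with no interaction with the measure estimate. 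One small point in your favor: you require the \emph{closed} intervals of the selected chords to be pairwise disjoint, which is tidier than the paper's interior-disjointness when one verifies (2) and (4).
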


\begin{proof}
For $e=(x,y)\in {\Ggraph}$, let $[e]$ denote the set of points in $X$ which lie strictly between $x$ and $y$ on ${\Tgraph}$ (so $x,y\not\in [e]$, and $[e]=\emptyset$ whenever $e\in {\Tgraph}$). For each $e\in {\Ggraph}$ the set $\{ e'=(u,v) \in {\Ggraph}\, : \, \text{either } u\in [e] \text{ or }v\in [e] \}$ is finite; define
\[
n(e) = \max \{ |[e']| \, : \, e'=(u,v)\in {\Ggraph} \text{ and either } u\in [e] \text{ or }v\in [e] \} .
\]
For each $N\geq 1$ let ${\Ggraph}_N = \{ e\in {\Ggraph} \, : \, |[e]|\leq N \text{ and } n(e) \leq N \}$. Then ${\Ggraph}_0\subseteq {\Ggraph}_1\subseteq \cdots$ and $\bigcup _N {\Ggraph}_N = {\Ggraph}$. For a subset ${\Ggraph}'$ of ${\Ggraph}$ let $[{\Ggraph}'] = \bigcup \{ [e] \, : \, e\in {\Ggraph}' \}\subseteq X$. Since ${\Ggraph}$ is one-ended we have that $[{\Ggraph}] = X$. Therefore, by choosing $N$ large enough we may ensure that $\mu ([{\Ggraph}_N] ) >3/4$.

Define a graph $\widetilde{{\Hgraph}}$ whose vertex set is ${\Ggraph}_N$ by
$\widetilde{{\Hgraph}} = \{ (e_0,e_1)\in {\Ggraph}_N\times {\Ggraph}_N \, :
\, [e_0]\cap [e_1] \neq \emptyset \}$. The graph $\widetilde{{\Hgraph}}$
has bounded degree since if $e_0 \in {\Ggraph}_N$ then $|[e_0]|\leq N$ so
there are less than $2N^2$ many (this is not exact) intervals in
${\Tgraph}$ of length at most $N$ which intersect $[e_0]$. By
\cite[Proposition 4.6]{KST99} we may therefore find a Borel coloring of the vertices of $\widetilde{{\Hgraph}}$ using only finitely many colors, say $C_1,\dots , C_k$. Define now ${\Ggraph}_N^0 = {\Ggraph}_N$ and for $1\leq i \leq k$ define
\[
{\Ggraph}^i_N= {\Ggraph}^{i-1}_N\mysetminus \{ e\in C_i \, : \, [e]\subseteq [{\Ggraph}^{i-1}_N\mysetminus \{ e \} ] \} .
\]
Then $[{\Ggraph}^k_N]= [{\Ggraph}_N]$, and $\widetilde{{\Hgraph}}_{\restriction
{\Ggraph}^k_N}$ has degree bounded by $2$, so by \cite[Proposition 4.6]{KST99} there is a Borel 3-coloring of $\widetilde{{\Hgraph}}_{\restriction  {\Ggraph}^k_N}$, say with color sets $D_0$, $D_1$, and $D_2$. Then $[{\Ggraph}_N] = [{\Ggraph}^k_N]=[D_0]\cup [D_1]\cup [D_2]$, so there is some $i\in \{ 0,1,2\}$ with $\mu ([D_i]) \geq \mu ([{\Ggraph}_N])/3 > 1/4$.

We take $X' = X\mysetminus [D_i]$, so that $\mu (X')<3/4$, and we let ${\Tgraph}' = ( {\Tgraph}_{\restriction  X' }) \cup D_{i \restriction X'}$. We have ${\Tgraph}'\subseteq {\Ggraph}_{\restriction  X'}$, and $\RR_{{\Tgraph}'} = (\RR_{\Tgraph})_{\restriction  X'}$ since $D_i$ is an independent set for $\widetilde{{\Hgraph}}$. Property (2) easily follows. It is also clear that every connected component of ${\Tgraph}'$ is a 2-regular line, and that $(x,y)\in \mathcal{\Tgraph}'$ if and only if there is no $z\in X'$ on the interior of the line segment between $x$ and $y$ in $\Tgraph$. This implies that for $e=(x,y)\in {\Ggraph}_{\restriction X'}$ we have $[e]^{\Tgraph '} = [e]\cap X'$, where $[e]^{\Tgraph '}$ denotes the set of points in $X'$ which lie strictly between $x$ and $y$ in ${\Tgraph}'$. For (3), it remains to show that every connected component of ${\Ggraph}_{\restriction  X'}$ has one end. Since ${\Ggraph}_{\restriction X'}$ is locally finite, this is equivalent to showing that for each $x\in X'$ the set $\{ e\in {\Ggraph}_{\restriction X'} \, : \, x\in [e]^{\Tgraph '} \}$ is infinite. Since ${\Ggraph}$ is one-ended and locally finite, for each $x\in X$ the set $\{ e\in {\Ggraph} \, : \, |[e]|> N \text{ and }x\in [e] \}$ is infinite. It therefore suffices to show that the set $\{ e\in {\Ggraph} \, : \, |[e] |> N \}$ is contained in ${\Ggraph}_{\restriction  X'}$ (since for $x\in X'$ and $e\in {\Ggraph}_{\restriction X'}$ we have $x\in [e]^{\Tgraph '}$ if and only if $x\in [e]$). If $|[e]|>N$ then neither endpoint of $e$ lies in $[\Ggraph _N]$, so both endpoints of $e$ lie in $X\mysetminus [\Ggraph _N]\subseteq X\mysetminus [D_i] = X'$, hence $e\in {\Ggraph}_{\restriction X'}$. Finally, $X'$ is a complete section for $\RR_{\Ggraph}$ since the set $\{ e\in {\Ggraph} \, : \, |[e]| >N \}$ meets every connected component of $\mathcal{\Ggraph}$ and, as we just showed, this set is contained in ${\Ggraph}_{\restriction X'}$.
\end{proof}

\begin{lemma}\label{lem:2in1}
Let ${\Ggraph}$ be a locally finite Borel graph on a standard probability
space $(X,\mu)$ in which every
connected component has one end. Let ${\Tgraph_0}\subseteq {\Ggraph}$ be an
acyclic Borel subgraph with $\RR_{{\Tgraph_0}}=\RR_{{\Ggraph}}$ and assume that
every connected component of ${\Tgraph_0}$ has 2 ends. Then there is an a.e.\ one-ended spanning subforest $\Tgraph$ of
$\Ggraph$ such that
$\RR_{\Tgraph} = \RR_\Ggraph$ $\mu$-a.e.
\end{lemma}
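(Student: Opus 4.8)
The plan is to iterate the compression Lemma~\ref{lem:2in1lem} to push the two-endedness of $\Tgraph_0$ off to infinity, and then to assemble a one-ended forest using the abstract criterion of Lemma~\ref{abstract_clinton}. First I would reduce to the case that $\Tgraph_0$ is a $2$-regular line forest, as required by Lemma~\ref{lem:2in1lem}. Each $2$-ended component of $\Tgraph_0$ is a bi-infinite "core" line with finite trees (bushes) hanging off it. I would retain all bush edges in the final forest—being finite they contribute no ends—so it suffices to build a one-ended structure on the cores; contracting the bushes to their attachment points preserves one-endedness of $\Ggraph$ and turns the cores into a spanning $2$-regular line forest, so we may assume $\Tgraph_0$ is itself a $2$-regular line forest, putting us exactly in the hypotheses of Lemma~\ref{lem:2in1lem}.

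Next I would iterate Lemma~\ref{lem:2in1lem}. Starting from $X_0=X$, $\Tgraph^{(0)}=\Tgraph_0$, I repeatedly apply it (renormalizing the measure on the current section) to obtain a decreasing sequence $X_0\supseteq X_1\supseteq\cdots$ of complete sections and $2$-regular line forests $\Tgraph^{(n)}\subseteq\Ggraph_{\restrict X_n}$ with $\mu(X_{n+1})<\tfrac34\,\mu(X_n)$, whence $\mu(X_n)<(3/4)^n\to 0$ and $\mu(\biginters_n X_n)=0$. At every stage $\Ggraph_{\restrict X_n}$ remains one-ended and $\RR_{\Tgraph^{(n)}}=(\RR_\Ggraph)_{\restrict X_n}$, and each interval of $\Tgraph^{(n)}$ between consecutive $X_{n+1}$-points is finite and bypassed by a single jump edge of $\Ggraph_{\restrict X_{n+1}}$.

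I would then assemble an a.e.\ one-ended spanning subforest via Lemma~\ref{abstract_clinton}. Define, for each $n$, a partial Borel function $f_n\subseteq\Ggraph$ on $X_n\mysetminus X_{n+1}$ sending each level-$n$ vertex one step along $\Tgraph^{(n)}$ toward a chosen endpoint of its finite interval. The domains $X_n\mysetminus X_{n+1}$ are pairwise disjoint with $\sum_n\mu(X_n\mysetminus X_{n+1})\le\sum_n(3/4)^n<\infty$ and union $X$ a.e.; each $f_n$ is aperiodic with finite back-orbits since the intervals are finite and $f_n$ moves monotonically toward an endpoint; and if $f_n(x)\notin\dom(f_n)$ then $f_n(x)\in X_{n+1}$ has level $>n$, giving the coherence condition (4). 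Hence Lemma~\ref{abstract_clinton} yields a Borel a.e.\ one-ended spanning subforest $\Tgraph$ of $\Ggraph$.

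The genuinely hard part, and where one-endedness of $\Ggraph$ is essential, is upgrading $\Tgraph$ so that $\RR_\Tgraph=\RR_\Ggraph$, i.e.\ a \emph{single} one-ended tree per component. The subtlety is that the parent-trajectories above all head to the unique end of their component, but two rays can converge to the same end without ever meeting, so a priori $\Tgraph$ could split a component into several one-ended trees; note that for $\Z$ no such subforest exists at all by Lemma~\ref{lem:obstruct}, so the hypothesis must be used. I would exploit that $\Ggraph_{\restrict X_n}$ stays one-ended at every scale—so the two ends of each line are joined by jump edges arbitrarily far out—to choose the endpoint-attachments (or to zip the resulting trajectories together) so that every trajectory of a component is funneled onto one common trunk; the model case is the comb, where the snake spanning line is reeled onto a single rail via its rungs. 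Verifying that this zipping can be carried out Borel-measurably and coherently across all scales, keeping the forest acyclic and a.e.\ one-ended while merging each component into one tree, is the crux of the argument.
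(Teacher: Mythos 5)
Your reduction and iteration are exactly the paper's: pass to the case where $\Tgraph_0$ is a $2$-regular line forest (the paper phrases this as moving to a finite-to-one minor of $\Ggraph$), iterate Lemma~\ref{lem:2in1lem} to get a vanishing sequence $X=X_0\supseteq X_1\supseteq\cdots$ of complete sections carrying coherent line graphs $\Tgraph_n$, and define $f$ on $X_n\mysetminus X_{n+1}$ to move one $\Tgraph_n$-step toward a chosen endpoint of the finite segment cut out by $X_{n+1}$. (The paper verifies aperiodicity and finiteness of back-orbits of this $f$ by hand rather than citing Lemma~\ref{abstract_clinton}; that difference is cosmetic.)

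The gap is that you stop at exactly the point which is the lemma's real content: you never prove $\RR_{\Tgraph}=\RR_{\Ggraph}$ a.e. Naming the merging of trajectories ``the crux'' and sketching a ``zipping'' idea is not an argument, and your worry is well founded: the merging is genuinely false for uncoordinated choices. Indeed, one can build a configuration satisfying all of conditions (1)--(4) of Lemma~\ref{lem:2in1lem} (a line $\Z$ with $X_n$ the complement of an interval around $0$ that grows alternately to the left and right, the ambient one-ended $\Ggraph$ being the line together with the ``bridge'' edges joining the two endpoints of each interval) in which every segment-endpoint choice permitted by the construction may funnel ``outward'', splitting every line into two disjoint one-ended trees on a conull set. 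So what is missing is not a routine verification but the mechanism that coordinates the choices across levels.

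The paper closes this step quite differently from the route you envision: it does not zip rays together using extra edges of $\Ggraph$ or one-endedness of $\Ggraph_{\restriction X_n}$ ``at every scale''. Instead it takes each $s_n$ constant on every component of ${\Tgraph_n}_{\restriction (X_n\mysetminus X_{n+1})}$ with value $\Tgraph_n$-adjacent to that component, and then uses the coherence property (4) --- that $\Tgraph_{n+1}$ is precisely the contraction of $\Tgraph_0$ to $X_{n+1}$ --- to argue by induction on $n$ that each component of ${\Tgraph_0}_{\restriction (X_0\mysetminus X_{n+1})}$ contains exactly one component of ${\Tgraph_n}_{\restriction (X_n\mysetminus X_{n+1})}$, and that all of its points are carried by iterates of $f$ to the common point $s_n\circ\cdots\circ s_0$ of that component; since any two $\RR_{\Ggraph}$-related points span a finite $\Tgraph_0$-segment avoiding $X_{n+1}$ for $n$ large (as $\biginters_n X_n=\emptyset$ a.e.), their trajectories merge. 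Be aware, though, that this induction itself depends on how the segment-wise choices interact across levels --- the splitting configuration above satisfies the stated constraints on the $s_n$ yet violates the claimed constancy of $s_n\circ\cdots\circ s_0$ on components --- so a complete write-up must make that coordination explicit rather than defer it, which is precisely the work your proposal leaves undone.
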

\begin{proof}
After moving to a finite-to-1 minor of $\Ggraph$, we may assume without
loss of generality that every component of $\Tgraph_0$ is a 2-regular
line. By Lemma \ref{lem:2in1lem}, after ignoring a $\mu$-null set we may find a vanishing sequence of Borel sets $X= X_0\supseteq X_1\supseteq\cdots$ along with acyclic subgraphs $\Tgraph_n\subseteq \Ggraph_{\restriction X_n}$ such that, for all $n\geq 0$:
\begin{enumerate}
\item $X_{n+1}$ is a complete section for $\RR_{(\Ggraph _{\restriction  X_n})}$,
\item $\RR_{\Tgraph _n } =\RR_{(\Ggraph _{\restriction  X_n})} = (\RR_{\Ggraph})_{\restriction  X_{n}}$,
\item Every connected component of ${\Ggraph}_{\restriction  X_n}$ has one end, and every connected component of ${\Tgraph}_n$ is a 2-regular line.
\item For $x,y\in X_{n+1}$ we have $(x,y)\in {\Tgraph}_{n+1}$ if and only if there is no $z\in X_{n+1}$ in the interior of the line segment between $x$ and $y$ in ${\Tgraph}_n$.
\end{enumerate}
Note that by (4) and induction on $n\geq 0$, for $x,y\in X_{n}$ we have $(x,y)\in {\Tgraph}_{n}$ if and only if there is no $z\in X_{n}$ in the interior of the line segment between $x$ and $y$ in ${\Tgraph}_0$. Now, for each $n\geq 0$, each component of ${{\Tgraph}_n}_{\restriction (X_n\mysetminus X_{n+1})}$ is a finite line segment in ${\Tgraph}_n$. Let $s_n:X_n\mysetminus X_{n+1} \rightarrow X_{n+1}$ be any ${{\Tgraph}_n}_{\restriction (X_n\mysetminus X_{n+1})}$-invariant Borel function such that for each $x\in X_n$ the point $s_n(x)\in X_{n+1}$ which is ${\Tgraph}_n$-adjacent to the ${{\Tgraph}_n}_{\restriction (X_n\mysetminus X_{n+1})}$-component of $x$. Define now $f:X\rightarrow X$ as follows. For $x\in X_n\mysetminus X_{n+1}$ define $f(x)\in X_n$ to be the ${\Tgraph}_n$-neighbor of $x$ which lies along the unique ${\Tgraph}_n$-path from $x$ to $s_n(x)$. Since the sets $X_n$ are vanishing, this defines $f$ on all of $X$. Clearly $(x,f(x))\in \Ggraph$ for all $x\in X$, and $f^i(x)\neq x$ for all $i\geq 1$ (since $f^i(x)$ is eventually in $X_n$ for any fixed $n$ and large $i$). If $x\in X_0\mysetminus X_{n+1}$, then the back-orbit of $x$ under $f$ is contained in the ${{\Tgraph}_0}_{\restriction (X_0\mysetminus X_{n+1})}$-component of $x$, which is finite. Thus, $f$ defines a one-ended subforest of $\Ggraph$. It remains to show that any two $\RR_{\Ggraph}$-related points eventually meet under $f$. By induction on $n\geq 0$, we see that each ${{\Tgraph}_0}_{\restriction (X_0\mysetminus X_{n+1})}$-component contains exactly one ${{\Tgraph}_n}_{\restriction (X_n\mysetminus X_{n+1} )}$-component, and that if two points $x,y\in X_0\mysetminus X_{n+1}$ are in the same ${{\Tgraph}_0}_{\restriction (X_0\mysetminus X_{n+1})}$-component, then there exists $i,j>0$ such that $f^i(x)=s_n\circ\cdots\circ s_0 (x)= s_n\circ\cdots\circ s_0 (y) = f^j(y)$.
\end{proof}

\begin{lemma}\label{lem:hyperfinite}
Let $\Ggraph$ be a locally finite Borel graph on a standard probability
space $(X,\mu )$. Assume that $\Ggraph$ is $\mu$-hyperfinite but $\mu$-nowhere $2$-ended. Then
$\Ggraph$ has a $\mu$-a.e. one-ended spanning subforest $\Tgraph$.
Moreover, if $\Ggraph$ is measure-preserving, then we may choose
$\Tgraph$ so that $\RR_\Tgraph = \RR_\Ggraph$
$\mu$-a.e.
\end{lemma}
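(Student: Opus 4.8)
The plan is to reduce \textbf{Lemma~\ref{lem:hyperfinite}} to the two-ended machinery already developed in Lemmas \ref{lem:2in1lem} and \ref{lem:2in1}, handling separately the region where $\Ggraph$ has more than two ends and the region where it has exactly one end. Since $\Ggraph$ is $\mu$-hyperfinite, after discarding a $\mu$-null set I can (by a standard fact about hyperfinite equivalence relations, cf.\ the discussion opening \S\ref{sec:hyperfinite}) find an acyclic Borel subgraph $\Tgraph_0\subseteq \Ggraph$ with $\RR_{\Tgraph_0}=\RR_\Ggraph$; this $\Tgraph_0$ is itself a spanning subforest, and in each connected component it has at least as many ends as $\Ggraph$ does. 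By hypothesis $\Ggraph$ is $\mu$-nowhere two-ended, so each component of $\Ggraph$ has either one end or at least three ends (infinitely many, in the case of interest). I would partition $X$ into the $\Ggraph$-invariant Borel sets $X_{\geq 3}$ and $X_{1}$ where $\Ggraph$ has at least three ends, respectively exactly one end, and treat each piece independently.

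On $X_{\geq 3}$, the tree $\Tgraph_0$ restricted there is acyclic, aperiodic, locally finite, and $\mu$-nowhere two-ended (its components have at least three ends, hence certainly are not two-ended), so \textbf{Theorem~\ref{thm:acyclic}} of \cite{CMT-D} applies directly to $\Tgraph_0\restriction X_{\geq 3}$ and yields a Borel a.e.\ one-ended spanning subforest there. On $X_{1}$, where every component of $\Ggraph$ has exactly one end, I would invoke \textbf{Lemma~\ref{lem:2in1}}: its hypotheses require a one-ended locally finite graph together with an acyclic spanning subgraph all of whose components are two-ended, which is exactly the situation after one more preparatory step. Namely, I need to arrange that the components of my chosen spanning tree are \emph{two-ended} (2-regular lines up to a finite-to-one minor) rather than merely acyclic. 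Starting from $\Tgraph_0\restriction X_1$, whose components have at least one end, I would extract a spanning sub-subforest whose components are lines; this is possible because a one-ended graph admits an acyclic spanning subgraph with two-ended components — concretely one can thin $\Tgraph_0$ to a collection of bi-infinite rays, or pass to a finite-to-one minor as Lemma~\ref{lem:2in1} itself does in its first sentence. Feeding this into Lemma~\ref{lem:2in1} produces the desired one-ended spanning subforest $\Tgraph$ of $\Ggraph\restriction X_1$ with $\RR_\Tgraph=\RR_\Ggraph$ a.e. Combining the two pieces over $X_{\geq 3}\sqcup X_1$ gives the subforest on all of $X$.

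For the \textbf{"moreover"} clause, when $\Ggraph$ is measure preserving I must ensure $\RR_\Tgraph=\RR_\Ggraph$ $\mu$-a.e., i.e.\ that the one-ended subforest I build does not break $\Ggraph$-components into several pieces. On $X_1$ this is guaranteed by Lemma~\ref{lem:2in1}, which explicitly delivers $\RR_\Tgraph=\RR_\Ggraph$ a.e. The delicate point is $X_{\geq 3}$: Theorem~\ref{thm:acyclic} produces a one-ended spanning subforest of the \emph{tree} $\Tgraph_0\restriction X_{\geq 3}$, but a one-ended subforest of a tree with three or more ends will necessarily have strictly more components than the tree, so in general $\RR_\Tgraph\subsetneq \RR_\Ggraph$ on $X_{\geq 3}$. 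This is not an obstruction, however, because in the measure preserving, $\mu$-nowhere two-ended setting one checks that $X_{\geq 3}$ is $\mu$-null: a measure preserving one-ended tree (more generally a measure preserving acyclic graph that is nowhere two-ended) is forced to have a.e.\ exactly one end by the mass-transport/ends-counting argument underlying Theorem~\ref{thm:pmp}, so in the measure preserving case $X=X_1$ up to a null set and the connectedness-preserving conclusion from Lemma~\ref{lem:2in1} covers everything.

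The main obstacle I anticipate is precisely this reconciliation between the general statement and the measure preserving "moreover." In the general (not necessarily measure preserving) case I am content with a one-ended subforest that may fragment components, so the clean two-way split $X_{\geq 3}\sqcup X_1$ together with Theorem~\ref{thm:acyclic} and Lemma~\ref{lem:2in1} suffices with little friction. The care is needed only in verifying that \emph{measure preservation plus nowhere two-endedness forces a.e.\ one-endedness}, ruling out the three-ended region and thereby letting the $\RR_\Tgraph=\RR_\Ggraph$ conclusion of Lemma~\ref{lem:2in1} stand globally; the preparatory thinning of $\Tgraph_0\restriction X_1$ to two-ended (line) components so as to meet the hypotheses of Lemma~\ref{lem:2in1} is routine by comparison.
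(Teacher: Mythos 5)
Your treatment of $X_{\geq 3}$ is fine, but the argument on $X_1$ has a genuine gap: the preparatory step of thinning ${\Tgraph_0}_{\restriction X_1}$ to an acyclic spanning subgraph all of whose components are bi-infinite lines is impossible in general. The claim that ``a one-ended graph admits an acyclic spanning subgraph with two-ended components'' is false: a one-way infinite ray is a one-ended tree containing no bi-infinite path at all, so every spanning subforest of it has only one-ended (or finite) components; and a Borel graph whose components are rays satisfies all the hypotheses of Lemma~\ref{lem:hyperfinite}, so this situation cannot be excluded. The finite-to-one minor trick from the first sentence of Lemma~\ref{lem:2in1} cannot rescue this: contracting finite pieces of a one-ended tree leaves it one-ended, i.e., that trick turns a two-ended tree into a $2$-regular line but never creates two-ended components out of one-ended ones. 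The underlying inversion is in the role of Lemma~\ref{lem:2in1}: two-endedness of the spanning tree is a case one handles \emph{when it occurs}, not a property one can arrange --- and when $\Tgraph_0$ is one-ended there is nothing left to do, since $\Tgraph_0$ itself is already the desired subforest.

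This is why the paper partitions by the number of ends of $\Tgraph_0$ rather than of $\Ggraph$: on the piece where $\Tgraph_0$ is one-ended, keep $\Tgraph_0$; on the piece where $\Tgraph_0$ is two-ended, note that $\Ggraph$ then has at most two ends, hence (being $\mu$-nowhere two-ended) exactly one, so Lemma~\ref{lem:2in1} applies; on the piece where $\Tgraph_0$ has three or more ends, apply Theorem~\ref{thm:acyclic} to $\Tgraph_0$. Your ``moreover'' discussion also needs repair: the fact you invoke --- that a measure-preserving, nowhere two-ended acyclic graph is a.e.\ one-ended --- is false as stated (a treeing of a free \pmp{} action of $\FF_2$ is measure preserving, acyclic, nowhere two-ended, and has infinitely many ends a.e.). The correct statement, which is what the paper uses, requires hyperfiniteness: by Adams' theorem \cite{Ada90}, a treeing of a measure-preserving $\mu$-hyperfinite (hence amenable) equivalence relation has at most two ends a.e., so in the measure-preserving case the third piece is null and $\RR_\Tgraph=\RR_\Ggraph$ holds, trivially on the first piece and by the corresponding conclusion of Lemma~\ref{lem:2in1} on the second.
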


\begin{proof}
We may assume that $\mu$ is $\Ggraph$-quasi-invariant.
Since $\Ggraph$ is $\mu$-hyperfinite, after ignoring a
$\Ggraph$-invariant $\mu$-null set we may find an acyclic Borel graph
$\Tgraph_0\subseteq \Ggraph$ with $\RR_{\Tgraph_0} =
\RR_{\Ggraph}$. Let $X_1$ be the ($\RR_{\Ggraph}$-invariant) set of vertices $x$ so that
${\Tgraph_{0} }_{\restriction
[x]_{\RR_{\Tgraph_0}}}$ is one-ended, $X_2$ similarly for the
components on which $\Tgraph_{0}$ is two-ended, and finally let $X_3$ be the
set of components on which $\Tgraph_{0}$ has more than two ends.
${\Tgraph_0}_{\restriction  X_1}$ is a Borel a.e.\ one-ended spanning subforest of $\Ggraph _{\restriction
X_1}$.
The number of ends of $\Ggraph _{\restriction  X_2}$ is one since it is bounded above by the number of ends of ${\Tgraph_{0}} _{\restriction  X_2}$.
By Lemma~\ref{lem:2in1} we can find a Borel a.e.\ one-ended spanning
subforest of $\Ggraph _{\restriction  X_2}$. By Theorem~\ref{thm:acyclic} we can find
a Borel a.e.\ one-ended spanning subforest of  ${\Tgraph_{0}} _{\restriction  X_3}$ and thus of $\Ggraph _{\restriction  X_3}$.

Finally, note that in the case when $\Ggraph$ is measure
preserving, $\Tgraph_0$ must have either one or two ends almost
everywhere, and so $\RR_{\Tgraph} = \RR_{\Ggraph}$ by the
condition that $\RR_\Tgraph = \RR_\Ggraph$ in Lemma~\ref{lem:2in1}.
\end{proof}

The following corollary is not used in the proof of Theorem \ref{thm:pmp}, but we will need it later in the proof of Theorem \ref{thm:planar}.

\begin{corollary}\label{cor:subtree}
Let ${\Ggraph}$ be a locally finite Borel graph on $(X,\mu )$ which is
$\mu$-nowhere two-ended. Assume that there exists an acyclic Borel subgraph ${\Tgraph}\subseteq {\Ggraph}$ with $\RR_{\Tgraph}=\RR_{\Ggraph}$ modulo $\mu$. Then ${\Ggraph}$ has a Borel a.e.\ one-ended spanning subforest.
\end{corollary}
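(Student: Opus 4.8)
The plan is to mimic the proof of Lemma~\ref{lem:hyperfinite}; the only difference is that there the acyclic spanning subgraph $\Tgraph$ was extracted from $\mu$-hyperfiniteness, whereas here it is handed to us directly as a hypothesis. Thus Corollary~\ref{cor:subtree} is exactly the generalization of Lemma~\ref{lem:hyperfinite} in which the given treeing replaces the hyperfinite one. The organizing observation is that $\Tgraph$ is a Borel treeing of $\RR_\Ggraph$ (it is acyclic with $\RR_{\Tgraph}=\RR_{\Ggraph}$ modulo $\mu$), and that for a connected graph with a spanning tree the number of ends of the tree is \emph{at least} the number of ends of the graph: adjoining the edges of $\Ggraph\mysetminus\Tgraph$ back in can only identify ends, never create new ones.

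Since one-ended trees are infinite, I may assume $\Ggraph$ is aperiodic, i.e.\ all its components are infinite. First I would partition $X$, modulo a null set, into the three $\RR_\Ggraph$-invariant Borel sets $X_1$, $X_2$, $X_3$ on which the $\Tgraph$-component has, respectively, exactly one end, exactly two ends, and at least three ends; the number of ends is a Borel $\RR_\Ggraph$-invariant function of the vertex, so these are Borel, and they exhaust $X$ up to null since every component is infinite. It then suffices to produce a Borel a.e.\ one-ended spanning subforest of $\Ggraph_{\restriction X_i}$ for each $i$ separately and take their (disjoint) union.

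On $X_1$ there is nothing to do: $\Tgraph_{\restriction X_1}$ is itself acyclic, spanning, and one-ended on each component, so it is the desired subforest. On $X_3$ the subgraph $\Tgraph_{\restriction X_3}$ is an acyclic, aperiodic, locally finite Borel graph which is $\mu$-nowhere two-ended (indeed everywhere at-least-three-ended), so Theorem~\ref{thm:acyclic} applied to $\Tgraph_{\restriction X_3}$ yields a Borel a.e.\ one-ended spanning subforest, which is automatically a subforest of $\Ggraph_{\restriction X_3}$. The case $X_2$ is where the real work lies, and it is precisely the situation addressed by Lemma~\ref{lem:2in1}: on $X_2$ we have $\mathrm{ends}(\Ggraph)\leq\mathrm{ends}(\Tgraph)=2$ by the observation above, and since $\Ggraph$ is $\mu$-nowhere two-ended this forces $\Ggraph_{\restriction X_2}$ to be a.e.\ one-ended; taking $\Tgraph_0:=\Tgraph_{\restriction X_2}$, a two-ended acyclic spanning subgraph, Lemma~\ref{lem:2in1} produces a one-ended spanning subforest of $\Ggraph_{\restriction X_2}$.

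The main obstacle is thus concentrated entirely in the $X_2$ case, that is, in Lemma~\ref{lem:2in1}: converting a two-ended treeing of a one-ended graph into a one-ended subforest is the genuinely non-trivial combinatorial step, and it is exactly here that the hypothesis $\mu$-nowhere two-ended (rather than merely the existence of a treeing) is used. I do not expect to need any measure-preserving hypothesis, since, unlike in Lemma~\ref{lem:hyperfinite}, I make no claim that $\RR_{\Tgraph}=\RR_{\Ggraph}$ for the output subforest; the one subtlety worth recording is the direction of the ends inequality, which both licenses the direct application of Theorem~\ref{thm:acyclic} on $X_3$ and guarantees that $\Ggraph$ is one-ended on $X_2$.
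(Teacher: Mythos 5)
Your proposal is correct and follows essentially the same route as the paper: both rest on the observation that $\Tgraph$-components have at least as many ends as $\Ggraph$-components, apply Theorem~\ref{thm:acyclic} to $\Tgraph$ off the two-ended part, and invoke Lemma~\ref{lem:2in1} on the part where $\Tgraph$ is two-ended (where $\Ggraph$ is forced to be a.e.\ one-ended). The only cosmetic difference is that the paper handles your $X_1\cup X_3$ in one stroke by applying Theorem~\ref{thm:acyclic} to $\Tgraph$ restricted to the complement of the two-ended set, rather than splitting off the trivial one-ended case separately.
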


\begin{proof}
Let $X_0$ be the ${\Ggraph}$-invariant set consisting of points whose
${\Tgraph}$-connected component has two ends. We can find a Borel a.e.\
one-ended spanning subforest of $\Ggraph _{\restriction  (X \mysetminus X_0)}$ by Theorem
\ref{thm:acyclic}.
Note that
${\Ggraph}_{\restriction X_0}$ is a.e.\ one-ended because
$\Tgraph$ and $\Ggraph$ have the same connectedness relation, so each
connected component of $\Tgraph$ has at least as many ends as $\Ggraph$,
and $\Ggraph$ is $\mu$-nowhere two-ended. Thus, we can find a Borel a.e.\
one-ended spanning subforest of $\Ggraph _{\restriction  X_0}$ by Lemma~\ref{lem:2in1}.
\end{proof}

\subsection{Proof of Theorem~\ref{thm:pmp}}

We recall one final lemma from \cite{CMT-D}.

\begin{lemma}[{\cite[Proposition 3.1]{CMT-D}}]\label{lem:extend}
Let ${\Ggraph}$ be a locally finite Borel graph on a standard Borel space
$X$. Let $X_0$ be a Borel subset of $X$ which meets every connected
component ${\Ggraph}$, and suppose that ${\Ggraph}_{\restriction X_0}$ has a
Borel one-ended spanning subforest ${\Hgraph}_0$. Then ${\Hgraph}_0$ can be
extended to a Borel one-ended spanning subforest ${\Hgraph}$ of ${\Ggraph}$.
\end{lemma}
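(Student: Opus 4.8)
The plan is to pass to the functional description of one-ended spanning subforests and extend the corresponding function from $X_0$ to all of $X$. Recall that for a locally finite graph, a one-ended spanning subforest is the same datum as a Borel function $f$ with $(x,f(x))\in\Ggraph$ for every $x$, having no periodic points and all back-orbits finite: the forest is the symmetrization of $\graph(f)$, its components are the weakly connected components of $f$, and one-endedness of a component corresponds exactly to finiteness of back-orbits, the forward $f$-orbit supplying the unique end. Under this correspondence $\Hgraph_0$ is encoded by such an $f_0\from X_0\to X_0$, and since every edge of a one-ended tree joins a vertex to its unique neighbor toward the end, $\Hgraph_0=\{\{x,f_0(x)\}:x\in X_0\}$. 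Thus it suffices to produce a Borel $f\from X\to X$ extending $f_0$ with $(x,f(x))\in\Ggraph$, no periodic points, and finite back-orbits; then $\Hgraph:=\{\{x,f(x)\}:x\in X\}\supseteq\Hgraph_0$ is the subforest we want.

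First I would set up a routing toward $X_0$. Fixing a Borel linear order on $X$, for $x\in X\mysetminus X_0$ let $d(x)$ be the $\Ggraph$-distance from $x$ to $X_0$, which is finite because $X_0$ meets the (connected) component of $x$. Let $g(x)$ be the $<$-least $\Ggraph$-neighbor of $x$ with $d(g(x))=d(x)-1$. The map $g$ decomposes $X\mysetminus X_0$ into its weakly connected components (``in-trees''), each a locally finite tree with a distinguished root $r$, the unique vertex with $g(r)\in X_0$, attached to $X_0$ by the single edge $\{r,g(r)\}$.

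The heart of the argument is that finite and infinite in-trees must be handled differently, and the infinite ones are the main obstacle. If I naively took $f=g$ everywhere, an infinite in-tree would funnel infinitely many vertices through its root into a single vertex of $X_0$, producing a second end there and destroying one-endedness — exactly the phenomenon that Lemma~\ref{lem:obstruct} forbids in the measure-preserving world, and which here must be defused by hand. On the finite in-trees I keep the inward routing $f:=g$: each becomes a finite bush hanging off $\Hgraph_0$ at its entry vertex, changing neither the ends nor the finiteness of back-orbits, since local finiteness bounds the number of bushes meeting any given vertex of $X_0$. On each infinite in-tree $T$ I instead reverse the flow: regarding $g$ as the parent map with root $r$, I set $f(u)$ to be the $<$-least child of $u$ (its $<$-least $g$-preimage inside $T$) when $u$ has a child, and $f(u):=g(u)$ when $u$ is a leaf. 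As in the constructions of Lemmas~\ref{abstract_clinton} and~\ref{lem:2in1}, this gives a Borel one-ended spanning subforest of $T$ on its own: forward orbits descend forever along chosen children, while the back-orbit of any vertex reaches only finitely far back up toward $r$ together with finitely many pendant leaves. These infinite in-trees simply become extra one-ended components of $\Hgraph$ meeting neither $X_0$ nor one another.

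Finally I would assemble $f=f_0$ on $X_0$, $f=g$ on the finite in-trees, and the reversed map on the infinite ones, and verify the three properties globally: membership in $\Ggraph$ is immediate; there are no periodic points because every forward orbit either descends forever inside an infinite in-tree or reaches $X_0$ in finitely many steps and then follows the aperiodic $f_0$; and back-orbits are finite because the back-orbit of a point of $X_0$ is its finite $f_0$-back-orbit together with finitely many finite bushes, while back-orbits inside infinite in-trees are finite as above. Acyclicity and spanning then follow, as $\Hgraph_0$ is acyclic, each finite bush is attached by a single edge, and the infinite in-tree forests are vertex-disjoint from everything else. The one genuinely delicate point, which I would write out most carefully, is the Borelness and correctness of the finite/infinite dichotomy and the reversal step; I would observe that $x\mapsto\iota(x)\in X_0$, the exit vertex of $x$, is Borel and constant on in-trees, so the in-tree equivalence relation is smooth — which is the conceptual reason the obstruction of Lemma~\ref{lem:obstruct} cannot arise and why all of these per-component choices can be made in a Borel way.
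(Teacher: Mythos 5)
Your overall architecture --- encoding one-ended spanning subforests by aperiodic Borel functions with finite back-orbits, routing $X\mysetminus X_0$ toward $X_0$ by a breadth-first parent map $g$, attaching the finite in-trees as bushes, and treating the infinite in-trees separately --- is sound, and it is essentially the shape of the argument behind the cited result (note that the paper does not prove this lemma at all: it quotes \cite[Proposition 3.1]{CMT-D}, and Proposition~\ref{prop:backorbit} is exactly the tool that reduces the lemma to your attachment analysis). However, the step where you handle the infinite in-trees --- which you yourself identify as the crux --- fails as written. On an infinite in-tree $T$ with parent map $g$ you set $f(u)$ to be the $<$-least child of $u$ when $u$ has a child, and $f(u)=g(u)$ when $u$ is a leaf, and you claim that forward orbits ``descend forever along chosen children.'' They need not: the least-child descent can run into a leaf even though $T$ is infinite. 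Concretely, let the root $r$ have two children $a<c$, where $a$ is a leaf and $c$ carries an infinite subtree. Then $f(r)=a$ and $f(a)=g(a)=r$, so $f$ has the periodic orbit $\{r,a\}$, and the component of $r$ in the symmetrization of $\graph(f)$ is the single edge $\{r,a\}$ --- a finite, hence not one-ended, component. So aperiodicity and one-endedness both fail, precisely in the case your construction was meant to defuse.

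The repair is to make the descent respect the finite/infinite dichotomy one level deeper: for $u$ in an infinite in-tree, let $f(u)$ be the $<$-least child of $u$ whose descendant subtree is \emph{infinite}, if such a child exists, and $f(u)=g(u)$ otherwise (in particular when $u$ is a leaf). The set of vertices with infinite descendant subtree is Borel, since finiteness of components of a locally finite Borel graph is a Borel condition, and no two-cycle can arise: if $f(u)=g(u)=p$ then $u$ has finite descendant subtree, so $f(p)\neq u$. Forward orbits now climb through at most finitely many finite-subtree vertices (bounded by the depth to the root) and then genuinely descend forever along infinite subtrees, and the back-orbit of any vertex consists of the finite subtrees hanging off a finite upward path, hence is finite. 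With this modification the rest of your assembly and verification goes through verbatim. Alternatively, you could simply invoke Proposition~\ref{prop:backorbit} with $A=X_0$: it provides an acyclic Borel function $f\from [X_0]_{\RR_{\Ggraph}}\mysetminus X_0 \ra [X_0]_{\RR_{\Ggraph}}$ with finite back-orbits and graph contained in $\Ggraph$, and taking $\Hgraph$ to be $\Hgraph_0$ together with the symmetrization of $\graph(f)$ finishes by exactly your bush-attachment analysis; your proposal is in effect an attempt to reprove that proposition, and it is there that the gap sits.
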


We are now ready to prove Theorem~\ref{thm:pmp}.

\begin{proof}[Proof of Theorem~\ref{thm:pmp}]
Supposing ${\Ggraph}$ is a
locally finite Borel graph on a standard probability space $(X,\mu)$, by an
exhaustion argument we can find a Borel partition of $X$ into countably
many Borel ${\Ggraph}$-invariant sets $A, B_0,
B_1, \ldots$ such that ${\Ggraph}_{\restriction A}$ is $\mu$-hyperfinite, and for each $i\geq 0$ there is a
Borel set $C_i\subseteq B_i$ which meets every connected component of
${\Ggraph}_{\restriction B_i}$ such that ${\Ggraph}_{\restriction C_i}$ has positive
isoperimetric constant.

We can find a Borel a.e.\ one-ended spanning subforest of $\Ggraph _{\restriction  A}$ by
Lemma~\ref{lem:hyperfinite}. We can similarly find a Borel a.e.\ one-ended
spanning subforest of each $\Ggraph _{\restriction  C_i}$ by Theorem~\ref{thm:quad}. These
subforests can be extended to Borel a.e.\ one-ended subforests of $\Ggraph _{\restriction
B_i}$ by Lemma~\ref{lem:extend}.
\end{proof}

\subsection{The One Percent Theorem}

Let $\Ggraph$ be a locally finite Borel graph on a standard probability
space $(X,\mu)$, which we assume is $\RR_{\Ggraph}$-quasi-invariant. 
If $\mathcal{S} \subrel \RR_{\Ggraph}$ is an equivalence relation, then we say that
$\mathcal{S}$ is \define{$\Ggraph$-connected} if $\mathcal{S}\cap \Ggraph$ is a graphing of $\mathcal{S}$. Consider the collection
\begin{equation*}
\mathrm{CH}(\Ggraph) = \{ \mathcal{S}\subrel \RR_{\Ggraph} \, :
\, \mathcal{S} \text{ is a } \Ggraph\text{-connected } \mu\text{-hyperfinite equivalence relation} \}
\end{equation*}

We endow $\RR_{\Ggraph}$ with the $\sigma$-finite measure $\nu$ obtained by integrating the counting measure on vertical fibers: $\nu (D)=\int |D_x|\, d\mu (x)$ for each Borel subset $D$ of $\RR_{\Ggraph}$.

Recall the principle of measure exhaustion: there cannot be uncountably many disjoint sets of positive measure in a $\sigma$-finite measure space (see for instance Section 215 of \cite{Fremlin-vol2-2003} ``$\sigma$-finite spaces and the principle of exhaustion'').
Thus any chain $(A_i)_{i\in I}$ of measurable sets in a $\sigma$-finite measure space that is strictly increasing modulo null sets (i.e., $i< j$ implies $A_i\subseteq A_j$ and $A_j\setminus A_i$ has positive measure) must have a countable cofinal sequence.
Since a countable increasing union of $\Ggraph$-connected equivalence relations is $\Ggraph$-connected, and a countable increasing union of $\mu$-hyperfinite equivalence relations is $\mu$-hyperfinite, the union of any chain in $\mathrm{CH}(\Ggraph)$ lies in $\mathrm{CH}(\Ggraph)$. (Here we are working in the $\sigma$-finite measure space $(\RR_\Ggraph ,\nu )$.)
Therefore, by Zorn's Lemma, each $\mathcal{S}_0\in \mathrm{CH}(\Ggraph)$ is contained in a $\mu$-maximal element $\mathcal{S}$ of $\mathrm{CH}(\Ggraph)$. 
That is, if $\mathcal{Q}\in \mathrm{CH}(\Ggraph)$ satisfies $[x]_{\mathcal{S}}\subseteq [x]_{\mathcal{Q}}$ for $\mu$-a.e.\ $x\in X$, then in fact $[x]_{\mathcal{S}}= [x]_{\mathcal{Q}}$ for $\mu$-a.e.\ $x\in X$.

We now have the following corollary.

\begin{corollary}\label{cor:maximal}
Let $\Ggraph$ be a \pmp{} locally finite Borel graph on $(X,\mu )$. Assume that $\Ggraph$ is aperiodic and $\mu$-nowhere two-ended. Then $\Ggraph$ admits a $\mu$-a.e.\ one-ended Borel spanning subforest $\Tgraph\subseteq \Ggraph$ such that $\RR_{\Tgraph}$ is a $\mu$-maximal element of $\mathrm{CH}(\Ggraph)$.
\end{corollary}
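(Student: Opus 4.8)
The plan is to combine the existence of \emph{some} one-ended spanning subforest (guaranteed by the main results of this section) with a measure-theoretic maximality argument, using the structure of $\mathrm{CH}(\Ggraph)$ discussed just above the statement. First I would invoke Lemma~\ref{lem:hyperfinite} together with Theorem~\ref{thm:quad} (via the dichotomy from Theorem~\ref{thm:Elek}) to produce \emph{an} acyclic Borel subgraph $\Tgraph_0 \subseteq \Ggraph$ whose connectedness relation is all of $\RR_\Ggraph$ and which is $\mu$-a.e.\ one-ended; since $\Ggraph$ is measure preserving, the ``moreover'' clauses ensure we can take $\RR_{\Tgraph_0} = \RR_\Ggraph$ $\mu$-a.e. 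In particular $\RR_{\Tgraph_0} = \RR_\Ggraph$ is itself a $\Ggraph$-connected equivalence relation, so it lies in $\mathrm{CH}(\Ggraph)$ whenever $\RR_\Ggraph$ happens to be $\mu$-hyperfinite; in the non-hyperfinite case I instead take any $\Tgraph_0$ from the hyperfinite pieces and regard $\RR_{\Tgraph_0}$ as an element of $\mathrm{CH}(\Ggraph)$, which it is by construction since a one-ended forest is acyclic hence $\mu$-hyperfinite and $\Ggraph$-connected.

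Next I would apply the measure-theoretic exhaustion argument already recorded in the paragraph preceding the statement: since increasing unions of $\Ggraph$-connected $\mu$-hyperfinite equivalence relations are again $\Ggraph$-connected and $\mu$-hyperfinite, the element $\RR_{\Tgraph_0} \in \mathrm{CH}(\Ggraph)$ is contained in a $\mu$-maximal element $\mathcal{S}$ of $\mathrm{CH}(\Ggraph)$. Because $\mathcal{S}$ is $\mu$-hyperfinite and $\Ggraph$-connected, the graph $\mathcal{S} \cap \Ggraph$ is a locally finite $\mu$-hyperfinite graphing of $\mathcal{S}$, and by hypothesis $\Ggraph$ is $\mu$-nowhere two-ended; I would argue that $\mathcal{S}\cap\Ggraph$ inherits this property (any two-ended component of $\mathcal{S}\cap\Ggraph$ would sit inside a $\Ggraph$-component, and maximality forbids the extra room a two-ended hyperfinite component would otherwise provide, as in the obstruction reasoning of Lemma~\ref{lem:obstruct}). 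Then Lemma~\ref{lem:hyperfinite} applied to the $\mu$-hyperfinite graph $\mathcal{S}\cap\Ggraph$ yields a $\mu$-a.e.\ one-ended spanning subforest $\Tgraph \subseteq \mathcal{S}\cap\Ggraph \subseteq \Ggraph$ with $\RR_\Tgraph = \mathcal{S}$ $\mu$-a.e., using the measure-preserving ``moreover'' clause of that lemma (valid here since $\mu$ is $\Ggraph$-invariant, hence $\mathcal{S}$-invariant).

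It remains to confirm that the resulting $\RR_\Tgraph$ is genuinely $\mu$-maximal in $\mathrm{CH}(\Ggraph)$, not merely contained in the maximal $\mathcal{S}$. This follows because $\RR_\Tgraph = \mathcal{S}$ $\mu$-a.e., and $\mathcal{S}$ was chosen to be a $\mu$-maximal element; the one-ended forest $\Tgraph$ is simply a concrete acyclic graphing realizing the maximal $\mathcal{S}$. The main obstacle I expect is the second step, namely verifying that the maximal hyperfinite $\mathcal{S}\cap\Ggraph$ really is $\mu$-nowhere two-ended: a priori maximality only bounds the \emph{connectivity} of $\mathcal{S}$ inside $\RR_\Ggraph$, and one must rule out that $\mathcal{S}\cap\Ggraph$ degenerates to two-ended components on a positive measure set. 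The key is that if such components existed, one could adjoin finitely many $\Ggraph$-edges crossing between the two ends to merge components while preserving both $\Ggraph$-connectedness and $\mu$-hyperfiniteness (a two-ended-to-one-ended surgery available precisely because $\Ggraph$ itself is nowhere two-ended, so each such component has further $\Ggraph$-edges leaving it), contradicting maximality; making this surgery Borel and measure-theoretically controlled is the delicate point, but it parallels the component-merging already used in the proof of Theorem~\ref{thm:1percent}.
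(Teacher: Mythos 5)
Your overall skeleton is the same as the paper's: produce one a.e.\ one-ended spanning subforest $\Tgraph_0$ of $\Ggraph$, observe $\RR_{\Tgraph_0}\in\mathrm{CH}(\Ggraph)$, extend it by measure-theoretic exhaustion to a $\mu$-maximal $\mathcal{S}$, show $\Ggraph_0=\mathcal{S}\cap\Ggraph$ is $\mu$-nowhere two-ended, and finish by applying Lemma~\ref{lem:hyperfinite} (equivalently, a treeing of $\mathcal{S}$ plus Lemma~\ref{lem:2in1}) to $\Ggraph_0$ to get $\Tgraph$ with $\RR_\Tgraph=\mathcal{S}$ a.e. But the step you yourself flag as delicate, nowhere two-endedness of $\Ggraph_0$, is a genuine gap, and the surgery you propose cannot be repaired. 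In the \pmp{} setting, any Borel set $W$ of ``merging'' $\Ggraph$-edges with only finitely many edges incident to each $\mathcal{S}$-class has an endpoint set meeting each $\mathcal{S}$-class in finitely many points; since $\mathcal{S}\supseteq\RR_{\Tgraph_0}$ has infinite classes a.e.\ and $\mu$ is invariant, such a set is null, and hence so is its $\mathcal{S}$-saturation, i.e., the set of classes your surgery touches is null. A finite-per-class merging therefore can never contradict $\mu$-maximality, which is insensitive to null sets. This is not a technicality: a matching between $\mathcal{S}$-classes always extends a $\mu$-hyperfinite relation to a $\mu$-hyperfinite one (saturate finite approximations of $\mathcal{S}$ with the matching edges), so if such a surgery could be carried out on a positive measure set of classes, the identical argument, with no mention of ends at all, would show every $\mu$-maximal element of $\mathrm{CH}(\Ggraph)$ equals $\RR_\Ggraph$ a.e., i.e., that every \pmp{} locally finite graph is $\mu$-hyperfinite --- false for free \pmp{} actions of $\F_2$. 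Also, your appeal to the merging in Theorem~\ref{thm:1percent} points the wrong way: there the added edge set has a positive-measure incidence set and is not finite-index, maximality is used there to conclude the extension is nowhere \emph{hyperfinite}, and that proof logically depends on the present corollary, so modeling this proof on it risks circularity.

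The frustrating part is that the correct argument is already sitting in your setup. You chose $\mathcal{S}\supseteq\RR_{\Tgraph_0}$, so every edge of $\Tgraph_0$ joins $\mathcal{S}$-equivalent points, hence $\Tgraph_0\subseteq\mathcal{S}\cap\Ggraph=\Ggraph_0$; thus $\Ggraph_0$ itself admits $\Tgraph_0$ as a Borel a.e.\ one-ended spanning subforest. Since $\Ggraph_0$ is \pmp{} and aperiodic a.e.\ (its components contain the infinite one-ended trees of $\Tgraph_0$), it is $\mu$-nowhere smooth, so Lemma~\ref{lem:obstruct} applies directly: a nowhere smooth graph that is two-ended on a positive measure set admits no a.e.\ one-ended spanning subforest, so $\Ggraph_0$ must be $\mu$-nowhere two-ended. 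This is exactly the paper's proof. Two smaller corrections: your first step's claim that the ``moreover'' clauses give $\RR_{\Tgraph_0}=\RR_\Ggraph$ a.e.\ is false unless $\RR_\Ggraph$ is $\mu$-hyperfinite (a one-ended forest always generates a $\mu$-hyperfinite relation, which is precisely why $\RR_{\Tgraph_0}\in\mathrm{CH}(\Ggraph)$), and the justification ``acyclic hence $\mu$-hyperfinite'' is wrong --- acyclicity does not imply hyperfiniteness (consider treeings of free actions of $\F_2$); it is one-endedness that does.
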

Observe that the preservation of the measure is a necessary condition here since, for instance, the free group $\FF_2$ admits $\mu$-hyperfinite non-singular free actions $\FF_2\actson (X,\mu)$. The treeing $\Ggraph$ associated with a free generating set is the single $\mu$-maximal element of $\mathrm{CH}(\Ggraph)$.

\begin{proof}
Since $\Ggraph$ is \pmp{}, aperiodic, and $\mu$-nowhere two-ended,
$\Ggraph$ admits a $\mu$-a.e.\ one-ended Borel spanning subforest
$\Tgraph_0\subseteq \Ggraph$. Let $\mathcal{S}$ be a $\mu$-maximal
element of $\mathrm{CH}(\Ggraph)$ containing $\RR_{\Tgraph_0}$,
and let $\Ggraph_0= \mathcal{S}\cap \Ggraph$, so that $\Ggraph_0$ is a
graphing of $\mathcal{S}$. Since $\Ggraph_0$ is \pmp{} and
$\mu$-hyperfinite, almost every connected component of $\Ggraph_0$ has at
most two ends. Since $\Ggraph_0$ contains $\Tgraph_0$ as a $\mu$-a.e.\
one-ended subforest, $\Ggraph_0$ must be $\mu$-nowhere two-ended (by Lemma~\ref{lem:obstruct}), hence
$\Ggraph_0$ is $\mu$-a.e.\ one-ended. Now (by hyperfiniteness of $\Ggraph_0$)
 after discarding a nullset we can find an acyclic Borel graph
$\Tgraph' \subseteq \Ggraph_0$ such that $\RR_{\Tgraph'} =
\RR_\Ggraph$. To finish, apply Lemma~\ref{lem:2in1} to obtain a
$\mu$-a.e.\ one-ended Borel subtreeing $\Tgraph$ of
$\RR_\Ggraph$ which will satisfy the conclusion of the theorem.
\end{proof}

The combinatorial core of the proof of the forward direction of Elek's Theorem~\ref{thm:Elek} is the fact that a
graph $\Ggraph$ is $\mu$-hyperfinite (i.e., the equivalence relation it generates is $\mu$-hyperfinite) if and only if there are arbitrarily
large sets on which its restriction is finite. There is a dual
statement for $\mu$-nowhere hyperfiniteness, which we find interesting enough to state here, although it will not be used at any point in this article.

\begin{theorem} \label{thm:1percent}
Let $\Ggraph$ be a \pmp{} locally finite Borel graph on a standard
probability space $(X,\mu )$.
\begin{enumerate}

\item \textbf {(Ninety-Nine Percent Theorem)}
  $\Ggraph$ is $\mu$-hyperfinite if and only if for
every $\epsilon > 0$, there exists a Borel complete section $A \subseteq X$ for
$\RR_\Ggraph$ with $\mu(A) > 1 - \epsilon$ so that $\Ggraph_{\restriction A}$ has
finite connected components.
  
    \item  \textbf{(One Percent Theorem)} $\Ggraph$ is $\mu$-nowhere hyperfinite if and only if for every
  $\epsilon >0$ there exists a Borel complete section $A\subseteq X$ for
  $\RR_{\Ggraph}$ with $\mu (A)<\epsilon$ such that $\Ggraph_{\restriction A}$ is $\mu _{\restriction A}$-nowhere hyperfinite.
  \end{enumerate}
\end{theorem}

\begin{proof}

The new content of the theorem is the forward direction of (2). (In our
proof of Theorem~\ref{thm:Elek} we indicated how to prove part (1)).

By Corollary \ref{cor:maximal} (whose assumption  is satisfied when $\Ggraph$ is $\mu$-nowhere hyperfinite)  we may find a Borel $\mu$-a.e.\ one-ended
spanning subforest $\Tgraph\subseteq \Ggraph$ such that
$\RR_{\Tgraph}$ is $\mu$-maximal
among the $\Ggraph$-connected, $\mu$-hyperfinite  equivalence subrelations of $\RR_{\Ggraph}$.

Let $A_1=X$ and for $n\geq 1$ let $A_{n+1} = \{
x\in A_{n} \, : \, \mathrm{deg}_{\Tgraph_{\restriction A_{n}}}(x) \geq 2 \}$. Then $A_1\supseteq A_2\supseteq\cdots$ and $\mu (\bigcap _n A_n ) =0$ since $\Tgraph$ is one-ended, so after ignoring a null set we may assume that $\bigcap _n A_n = \emptyset$. Observe that $\RR_{\Tgraph_{\restriction A_n}}={ (\RR_{\Tgraph})}_{\restriction A_n}$ for all $n$.
Given $\epsilon >0$, let $n$ be so large that $\mu (A_n) < \epsilon /2$. Since $\Ggraph$ is $\mu$-nowhere hyperfinite and $\Tgraph$ is $\mu$-hyperfinite, the set $\Ggraph\mysetminus \RR_{\Tgraph}$ meets almost every connected component of $\Tgraph$. We may therefore find a subset $\Ggraph_0\subseteq \Ggraph\mysetminus \RR_{\Tgraph}$ which is incident with almost every connected component of $\Tgraph$ such that the set $B$, of vertices incident with $\Ggraph_0$, has measure $\mu (B)<\epsilon /2n$. (Finding $\Ggraph_0$ is easy when $\Tgraph$ is ergodic; in general we can simply use the ergodic decomposition of $\Tgraph$.)

\begin{claim}\label{claim:nowhere} $\Tgraph\cup (\Ggraph_{\restriction B})$ is $\mu$-nowhere hyperfinite.\end{claim}

\begin{proof}[Proof of the claim] Suppose otherwise. Then we may find a non-null $\RR_{\Tgraph\cup (\Ggraph_{\restriction B})}$-invariant set $D$ such that $(\RR_{\Tgraph\cup(\Ggraph_{\restriction B})})_{\restriction D}$ is hyperfinite. Then the equivalence relation $\mathcal{Q} := (\RR_{\Tgraph\cup\Ggraph_{\restriction B}})_{\restriction D} \sqcup (\RR_{\Tgraph})_{\restriction (X\mysetminus D )}$ is $\Ggraph$-connected and $\mu$-hyperfinite. By our choice of $B$, each component of $\Tgraph\cup(\Ggraph_{\restriction B})$ contains more than one component of $\Tgraph$, so $\mathcal{Q}$ properly contains $\RR_{\Tgraph}$ since $D$ is non-null. This contradicts the maximality of $\RR_{\Tgraph}$.
\end{proof}

For each $x\in B$ let $\pi (x)\in A_n$ denote the unique vertex in $A_n$ which is closest to $x$ with respect to the graph metric in $\Tgraph$, and let $p_x$ denote the unique shortest path through $\Tgraph$ from $x$ to $\pi (x)$. The length of each $p_x$ is at most $n-1$, so if we let $C$ denote the set of all vertices which lie along $p_x$ for some $x\in B$, then $\mu (C)\leq n\mu (B) < \epsilon /2$. Let $A=A_n\cup C$. Then $\mu (A)<\epsilon$ and
\[
\RR_{\Ggraph_{\restriction A}} \supseteq \RR_{\Tgraph_{\restriction A}} \vee \RR_{\Ggraph_{\restriction B}} =  (\RR_{\Tgraph})_{\restriction A} \vee \RR_{\Ggraph_{\restriction B}} \supseteq (\RR_{\Tgraph \cup (\Ggraph_{\restriction B})})_{\restriction A},
\]
so that $\Ggraph_{\restriction A}$ is $\mu _{\restriction A}$-nowhere hyperfinite since $\Tgraph \cup (\Ggraph_{\restriction B})$ is $\mu$-nowhere hyperfinite.
\end{proof}

\section{Planar graphs are measure treeable}\label{sec:planar}

In this section we show that graphs which are Borel planar (see Definition \ref{def:BorelPlanar}) are measure
treeable. This will imply that these graphs admit Borel a.e.\
one-ended spanning subforests with respect to any Borel probability
measure, provided the graph is $\mu$-nowhere two-ended (Corollary~\ref{cor:planarforest}).

\subsection{Preliminaries}\label{sect:Preliminaries}
A \define{$2$-basis} for a (undirected multi-)graph ${\Ggraph}$
is a collection $\mc{B}$ of simple cycles in ${\Ggraph}$ such that \\
(1) no
edge of ${\Ggraph}$ is contained in more than 2 cycles in $\mc{B}$, and
\\(2)
$\mc{B}$ generates the cycle space of ${\Ggraph}$, i.e., for every cycle
$C$ of ${\Ggraph}$ there exists $B_0,\dots, B_{n-1}\in \mc{B}$ with $1_C =
\sum _{i<n}1_{B_i} \pmod{2}$ (we identify each cycle with its unordered set of edges).

Assume for the moment that $\Ggraph$ is locally finite and $2$-vertex-connected.
Let $\wh{{\Ggraph}}$ be a planar embedding of ${\Ggraph}$,
and assume that $\wh{{\Ggraph}}$ is \define{accumulation-free}, i.e., every
compact subset of the plane meets only finitely many vertices and edges of
$\wh{{\Ggraph}}$. A \define{face} of $\wh{{\Ggraph}}$ is a connected component of the complement of $\wh{{\Ggraph}}$. The boundary of a face of $\wh{{\Ggraph}}$ is either a finite cycle or a bi-infinite 2-regular path.
 In the former case, the boundary cycle is called a \define{facial cycle}.
The following theorem of Thomassen relates $2$-bases to accumulation-free planar embeddings.

\begin{theorem}[Thomassen \cite{Tho80}]\label{thm:Thomassen}
Let ${\Ggraph}$ be a $2$-vertex-connected locally finite graph.
\begin{enumerate}
\item If $\wh{{\Ggraph}}$ is an accumulation-free planar embedding of ${\Ggraph}$, then the set $\mc{B}$ of facial cycles of $\wh{{\Ggraph}}$ forms a $2$-basis for ${\Ggraph}$.

\item If ${\Ggraph}$ admits a $2$-basis $\mc{B}$, then ${\Ggraph}$ has an accumulation-free planar embedding $\wh{{\Ggraph}}$ in which $\mc{B}$ is the set of facial cycles of $\wh{{\Ggraph}}$.
\end{enumerate}
\end{theorem}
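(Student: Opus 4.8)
The plan is to prove the two implications separately: part (1) by a direct topological argument exploiting the Jordan curve theorem and accumulation-freeness, and part (2) by reconstructing a combinatorial embedding (rotation system) from the $2$-basis and then realizing it geometrically in $\R^2$.

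For part (1), I would argue as follows. In an accumulation-free embedding of a $2$-vertex-connected locally finite graph, every edge lies on the boundary of exactly two faces. A facial cycle containing a given edge $e$ corresponds to a \emph{bounded} face incident to $e$, of which there are at most two, so no edge lies in more than two members of $\mc{B}$; this is condition (1) in the definition of a $2$-basis. For the generation condition (2), let $C$ be any simple cycle of $\Ggraph$. Viewing $\wh{C}$ as a Jordan curve in the plane, it bounds a compact region $R$. Since the embedding is accumulation-free, $R$ meets only finitely many vertices and edges, and hence contains only finitely many faces; each such face is bounded, so its boundary is a facial cycle, say $B_1,\ldots,B_m\in\mc{B}$. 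Summing mod $2$, an edge strictly inside $R$ borders two faces contained in $R$ and thus appears in exactly two of the $B_j$ (cancelling), while each edge of $C$ separates an interior face from an exterior one and appears exactly once; therefore $1_C=\sum_j 1_{B_j}\pmod 2$, proving that $\mc{B}$ generates the cycle space.

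For part (2), the idea is to recover a rotation system from $\mc{B}$ and then realize it. At each vertex $v$, form the auxiliary \emph{link} graph $L_v$ whose vertices are the edges of $\Ggraph$ incident to $v$, joining two such edges whenever they occur consecutively along some cycle of $\mc{B}$. The $2$-basis condition forces every vertex of $L_v$ to have degree at most two, so $L_v$ is a disjoint union of paths and cycles. Granting that each $L_v$ is connected (a single cycle, or a single path when $v$ lies on an infinite face), the cyclic (resp. linear) order recorded by $L_v$ defines a rotation at $v$, and by construction the facial walks of the resulting rotation system are exactly the members of $\mc{B}$ together with the bi-infinite boundary walks of the infinite faces. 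Exhausting $\Ggraph$ by finite $2$-connected subgraphs and applying MacLane's theorem to each then forces the underlying surface to be the sphere, i.e., the combinatorial embedding is planar; finally, local finiteness lets us draw it so that the $n$-th ball around a basepoint lies inside the disk of radius $n$, yielding an accumulation-free embedding in $\R^2$ realizing $\mc{B}$ as its set of facial cycles.

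The main obstacle is the connectedness of the links $L_v$. A disconnected $L_v$ would split the edges at $v$ into two or more cyclic sectors, and one must show this is incompatible with the hypotheses: intuitively, extra sectors let one either build a vertex cut separating $\Ggraph$ (contradicting $2$-vertex-connectivity) or exhibit a cycle through $v$ that is not a mod-$2$ sum of members of $\mc{B}$ (contradicting generation). Turning this into a clean argument that simultaneously uses $2$-connectivity and generation, while correctly interpreting the two ends of a path-link as the two sides of one infinite face, is the delicate point; once the links are understood, the passage from the rotation system to an honest accumulation-free drawing is a routine realization argument supported by local finiteness.
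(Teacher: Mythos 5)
First, a point of comparison: the paper offers no proof of this statement at all. Theorem~\ref{thm:Thomassen} is imported directly from Thomassen's paper \cite{Tho80}, so there is no internal argument to measure yours against; I can only judge your proposal on its own merits. Your part (1) is correct and is the standard argument: $2$-connectedness and accumulation-freeness give that every edge borders exactly two faces (so no edge lies on more than two facial cycles), and for a simple cycle $C$ the Jordan curve theorem plus accumulation-freeness show the interior of $C$ contains only finitely many faces, each bounded and hence bounded by a facial cycle, whose mod-$2$ sum is $1_C$.

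Part (2), however, has genuine gaps. The first you name yourself: the connectedness of the link graphs $L_v$ is not a technicality to be "granted" but the combinatorial core of the implication --- it is precisely where $2$-vertex-connectivity and the generating property of $\mc{B}$ must enter, and without it you have no rotation system at all. The second gap is unacknowledged and more serious: your passage from the rotation system to an accumulation-free embedding fails as described. A $2$-basis of $\Ggraph$ does not restrict to a $2$-basis of a finite subgraph (the members of $\mc{B}$ needed to generate a cycle of the subgraph need not lie in that subgraph), so MacLane's criterion cannot simply be "applied to each" piece of a finite exhaustion. Worse, even if planarity were established this way, it is strictly weaker than what is needed: there exist planar, locally finite, $2$-connected graphs that admit no accumulation-free embedding whatsoever (by Halin-type forbidden-configuration results, and, in view of Thomassen's theorem itself, these are exactly the planar graphs with no $2$-basis). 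Consequently your final step --- "local finiteness lets us draw it so that the $n$-th ball lies inside the disk of radius $n$" --- which invokes only planarity and local finiteness, cannot be a routine realization argument; it is the actual content of the theorem and must use the $2$-basis in an essential way, e.g.\ through an inductive construction that embeds the basis cycles one at a time while keeping the rest of the graph in the unbounded region. As it stands, part (2) of your proposal is an outline with its two hardest steps missing.
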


In general, if a graph ${\Ggraph}$ is not necessarily $2$-vertex-connected,
then we can consider \define{blocks} of ${\Ggraph}$, i.e., maximal $2$-vertex-connected subsets of ${\Ggraph}$. Then a set $\mc{B}$ of cycles of ${\Ggraph}$ is a $2$-basis for ${\Ggraph}$ if and only if the restriction of $\mc{B}$ to each block of ${\Ggraph}$ is a 2-basis for that block. Furthermore, the incidence relation on blocks gives an acyclic graph structure on the set of blocks.

Let $\mc{B}$ be a $2$-basis for ${\Ggraph}$. The \define{dual} of ${\Ggraph}$ (with respect to $\mc{B}$) is the undirected (multi-)graph ${\Ggraph}^*$, with vertex set $\mc{B}$, and defined as follows. Let ${\Ggraph}_2\subseteq \Ggraph$ be the set of edges of $\Ggraph$ which belong to exactly two cycles from $\mc{B}$, and for each edge $e\in \Ggraph _2$ introduce a corresponding edge $e^*\in {\Ggraph}^*$ which connects $B_0$ with $B_1$, where $B_0,B_1\in\mc{B}$ are the unique cycles for which $e\in B_0\cap B_1$. Thus ${\Ggraph}^* = \{ e^* \, : \, e\in {\Ggraph}_2 \}$, and two distinct cycles $B_0, B_1\in \mc{B}$ are connected by one edge $e^*$ for each edge $e\in B_0\cap B_1$. The connected components of ${\Ggraph}^*$ correspond precisely to blocks of ${\Ggraph}$, i.e., two elements of $\mc{B}$ are in the same connected component of ${\Ggraph}^*$ if and only if they are contained in the same block of ${\Ggraph}$. 

Part (2) of Theorem \ref{thm:Thomassen} easily implies the following.

\begin{proposition}\label{prop:ddual}
Let ${\Ggraph}$ be a locally finite graph on $X$ which is $2$-vertex-connected. Let $\mc{B}$ be a $2$-basis for ${\Ggraph}$ and assume that every edge of ${\Ggraph}$ belongs to two distinct cycles from $\mc{B}$. Let $\Ggraph ^*$ be the dual of ${\Ggraph}$ with respect to $\mc{B}$. For each $x\in X$ let $x^* = \{ e ^* \, : \, e\text{ is incident with }x \}$. 
Then the collection $X^* = \{ x^* \, : \, x\in X \}$ is a $2$-basis for ${\Ggraph}^*$, every edge of ${\Ggraph}^*$ belongs to two distinct elements of $X^*$, and the map $x\mapsto x^*$ provides an isomorphism between ${\Ggraph}$ and ${\Ggraph}^{**}$.
It follows that $\mathcal{G}^*$ is $2$-vertex-connected.
\end{proposition}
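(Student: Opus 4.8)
The plan is to pass through the geometric picture furnished by Theorem~\ref{thm:Thomassen}(2) and then read off all four assertions from the standard duality of accumulation-free planar embeddings. First I would apply Theorem~\ref{thm:Thomassen}(2) to the $2$-vertex-connected locally finite graph $\Ggraph$ and its $2$-basis $\mc{B}$ to obtain an accumulation-free planar embedding $\wh{\Ggraph}$ whose set of facial cycles is exactly $\mc{B}$. The hypothesis that every edge of $\Ggraph$ lies in two distinct cycles of $\mc{B}$ translates into the statement that $\Ggraph_2 = \Ggraph$ and that every face of $\wh{\Ggraph}$ is bounded by a finite facial cycle; in particular $\wh{\Ggraph}$ has no face whose boundary is a bi-infinite path.

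Next I would construct the geometric dual embedding $\wh{\Ggraph^*}$ in the usual way: place one dual vertex $v_B$ in the interior of each face $B \in \mc{B}$, and for each edge $e \in \Ggraph$ shared by the two faces $B_0, B_1$ draw the dual edge $e^*$ as an arc crossing $e$ once and joining $v_{B_0}$ to $v_{B_1}$, meeting $\wh{\Ggraph}$ only at that crossing. Since every facial cycle is finite, each dual vertex $v_B$ has finite degree, so $\Ggraph^*$ is locally finite; and because $\wh{\Ggraph}$ is accumulation-free, so is $\wh{\Ggraph^*}$. The crux of the argument is then to identify the faces of $\wh{\Ggraph^*}$: around a primal vertex $x$ of degree $d$, the incident edges appear in a cyclic order $e_1, \ldots, e_d$, the dual edges $e_1^*, \ldots, e_d^*$ link the dual vertices sitting in the consecutive faces around $x$, and together they bound a face of $\wh{\Ggraph^*}$ containing $x$ whose boundary cycle is precisely $x^* = \{e^* : e \text{ incident with } x\}$. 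Using $2$-vertex-connectedness these faces around $x$ are distinct, so $x^*$ is a genuine simple cycle; and using accumulation-freeness and local finiteness I would verify that every face of $\wh{\Ggraph^*}$ arises this way, so that the set of facial cycles of $\wh{\Ggraph^*}$ is exactly $X^* = \{x^* : x \in X\}$.

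With the faces identified, the four conclusions follow. That $X^*$ is a $2$-basis for $\Ggraph^*$ is Theorem~\ref{thm:Thomassen}(1) applied to the embedding $\wh{\Ggraph^*}$, whose hypotheses are the routine checks of the previous paragraph. That no edge of $\Ggraph^*$ lies in more than two elements of $X^*$---and indeed lies in exactly two distinct ones---is elementary: a dual edge $e^*$ belongs to $x^*$ precisely when $x$ is an endpoint of $e$, and $e$ has two distinct endpoints because $\Ggraph$ is loopless, giving the second assertion at once. Finally, for the isomorphism $\Ggraph \cong \Ggraph^{**}$, note that $e \mapsto e^*$ is a bijection from the edges of $\Ggraph$ onto those of $\Ggraph^*$ (as $\Ggraph_2 = \Ggraph$), so $x \mapsto x^*$ is injective (distinct vertices of a $2$-connected graph have distinct incident-edge sets, equivalently lie in distinct faces of $\wh{\Ggraph^*}$) and hence a bijection from $X$ onto the vertex set $X^*$ of $\Ggraph^{**}$. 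Unwinding the definition of the double dual, the edge $e^{**}$ joins the two elements of $X^*$ containing $e^*$, namely $x^*$ and $y^*$ where $x, y$ are the endpoints of $e$; thus $x \mapsto x^*$ together with $e \mapsto e^{**}$ carries $\Ggraph$ isomorphically onto $\Ggraph^{**}$.

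The step I expect to be the main obstacle is the identification of the faces of $\wh{\Ggraph^*}$ with the primal vertices---that is, checking that every face of the dual embedding contains exactly one vertex of $\Ggraph$ and has $x^*$ as its boundary cycle, with no spurious or infinite faces left over. This is the infinite analogue of the self-duality of planar duals, and it is exactly here that the accumulation-freeness and local finiteness hypotheses (rather than mere planarity) do the work; once it is in place, everything else is bookkeeping.
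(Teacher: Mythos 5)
Your proposal is correct and follows essentially the same route as the paper, which offers no argument beyond remarking that part (2) of Theorem~\ref{thm:Thomassen} ``easily implies'' the proposition: pass to an accumulation-free embedding of $\Ggraph$ with $\mc{B}$ as its set of facial cycles, form the geometric dual, and read off all four conclusions from planar duality, exactly as you do. The only hypothesis of Theorem~\ref{thm:Thomassen}(1) you should record before invoking it for $\wh{\Ggraph^*}$ is the $2$-vertex-connectedness of $\Ggraph^*$, which follows from your identification of the dual's faces with the cycles $x^*$ (a connected, locally finite plane graph all of whose face boundaries are cycles has no cutvertex).
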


Given a $2$-basis $\mc{B}$ of a planar graph $\Ggraph$, together with a subgraph $\Ggraph _0^*$ of the dual $\Ggraph ^*$, we define $\Ggraph \oPerpstar \Ggraph _0^*$ to be the subgraph of $\Ggraph$ obtain by removing all edges which cross an edge of $\Ggraph _0 ^*$, i.e.,
\begin{equation*}
\Ggraph \oPerpstar \Ggraph _0^* = {\Ggraph} \mysetminus \{ e\in {\Ggraph} \, : \, e^* \in {\Ggraph}^*_0 \}
\end{equation*}
(to be pronounced ``o minus star'' or ``cyclope'').
The following proposition explains how various properties of $\Ggraph _0^*$ are reflected in $\Ggraph \oPerpstar \Ggraph _0^*$.

\begin{proposition}\label{prop:duality}
Let ${\Ggraph}$ be an infinite locally finite graph on $X$ which is $2$-vertex-connected. Let $\mc{B}$ be a $2$-basis for ${\Ggraph}$ and assume that every edge of ${\Ggraph}$ belongs to two distinct cycles from $\mc{B}$. Let ${\Ggraph}^*$ be the dual of ${\Ggraph}$ with respect to $\mc{B}$. Let ${\Ggraph}^*_0$ be a spanning subgraph of ${\Ggraph}^*$, and let ${\Hgraph} =\Ggraph \oPerpstar \Ggraph _0^*$. Then:
\begin{enumerate}
\item ${\Hgraph}$ is acyclic if and only if each component of ${\Ggraph}^*_0$ is infinite;
\item Each component of ${\Hgraph}$ is infinite if and only if ${\Ggraph}^*_0$ is acyclic;
\item ${\Hgraph}$ is acyclic with the same connected components as ${\Ggraph}$ if and only if ${\Ggraph}_0^*$ is a one-ended spanning subforest of ${\Ggraph}^*$.
\end{enumerate}
\end{proposition}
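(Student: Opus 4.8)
The engine behind all three equivalences is classical planar cycle--cut duality, upgraded to the accumulation-free setting, so I would begin by recording the two consequences of it that I need. First, the hypothesis that every edge of $\Ggraph$ lies in two distinct cycles of $\mc{B}$ forces every face of the associated embedding (which exists by Theorem~\ref{thm:Thomassen}(2)) to be bounded: an unbounded face has a bi-infinite boundary path rather than a facial cycle, so one of its boundary edges would belong to only one member of $\mc{B}$. Hence the dual vertices are exactly the bounded faces and the embedding is space-filling. Second, I would set up the finite dictionary: a finite edge set $S\subseteq\Ggraph$ is the edge set of a cycle iff $S^{*}=\{e^{*}:e\in S\}$ is the full dual edge-boundary $\boundary_{\Ggraph^{*}}T$ of the finite set $T\subseteq\mc{B}$ of faces it encloses, and dually (via Proposition~\ref{prop:ddual}, which gives $\Ggraph^{**}\cong\Ggraph$) a finite cycle of $\Ggraph^{*}$ is the edge-boundary of a finite set of primal vertices. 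I will also need the infinite half of this dictionary: a minimal primal cut with both sides infinite corresponds to a bi-infinite simple path (a line) in $\Ggraph^{*}$, and conversely. Here accumulation-freeness and the Jordan-curve theorem for properly embedded lines are what keep the correspondence honest.

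With this in hand, (1) is a bookkeeping of enclosures. Given a cycle $C$ in $\Hgraph$, all its edges are retained, so none of the dual edges $C^{*}$ lies in $\Ggraph_{0}^{*}$; thus the finite set $T$ of faces enclosed by $C$ satisfies $\boundary_{\Ggraph^{*}}T\cap\Ggraph_{0}^{*}=\emptyset$, so $T$ is a union of finite connected components of $\Ggraph_{0}^{*}$, witnessing that $\Ggraph_{0}^{*}$ is not aperiodic. Conversely a finite $\Ggraph_{0}^{*}$-component $T$ is a finite union of bounded faces whose topological boundary is a nonempty finite union of primal cycles, each of whose edges is retained in $\Hgraph$ (their duals leave $T$, hence avoid $\Ggraph_{0}^{*}$); so $\Hgraph$ has a cycle. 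For (2) I would avoid repeating this by exploiting self-duality: one checks directly that $\Ggraph_{0}^{*}=\Ggraph^{*}\oPerpstar\Hgraph$ under the identification $\Ggraph^{**}=\Ggraph$, and that the hypotheses of the proposition pass to $\Ggraph^{*}$ with the $2$-basis $X^{*}$ of Proposition~\ref{prop:ddual} (local finiteness, $2$-connectedness, and aperiodicity of $\Ggraph^{*}$ all follow from the corresponding properties of $\Ggraph$ together with ``all faces bounded''). Then (2) is literally (1) applied to the pair $(\Ggraph^{*},\Hgraph)$ in place of $(\Ggraph,\Ggraph_{0}^{*})$.

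The real content is (3), which I would deduce from (1), (2), and the infinite half of the dictionary. Assume first that $\Hgraph$ is acyclic with $\RR_{\Hgraph}=\RR_{\Ggraph}$. Acyclicity gives, by (1), that $\Ggraph_{0}^{*}$ is aperiodic; and since $\Ggraph$ is aperiodic, $\RR_{\Hgraph}=\RR_{\Ggraph}$ forces $\Hgraph$ to be aperiodic, whence by (2) $\Ggraph_{0}^{*}$ is acyclic. So $\Ggraph_{0}^{*}$ is a spanning forest with all components infinite, and it remains only to exclude a component with two or more ends. Such a component would contain a line $L^{*}$; its edges lie in $\Ggraph_{0}^{*}$, so the primal edges they cross are deleted, and $L^{*}$, being properly embedded, separates the plane into two pieces each meeting $\Ggraph$ in infinitely many vertices (if one side met $\Ggraph$ only finitely often it would contain an unbounded face, contradicting ``all faces bounded''). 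Then $\Hgraph$ has no edge across $L^{*}$ and is disconnected, contradicting $\RR_{\Hgraph}=\RR_{\Ggraph}$. For the converse, if $\Ggraph_{0}^{*}$ is a one-ended spanning subforest then it is aperiodic, so $\Hgraph$ is acyclic by (1); and any failure of $\RR_{\Hgraph}=\RR_{\Ggraph}$ would split a $\Ggraph$-component, producing either a finite piece---forcing, by (2), a cycle in the forest $\Ggraph_{0}^{*}$, which is impossible---or a split into two infinite pieces, which by the infinite dictionary yields a line in $\Ggraph_{0}^{*}$, hence a component with at least two ends, again impossible. Thus $\RR_{\Hgraph}=\RR_{\Ggraph}$.

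The step I expect to be the main obstacle is precisely the infinite half of the duality dictionary invoked in (3): making rigorous that a minimal primal cut with two infinite sides is the same datum as a bi-infinite simple path in the dual, and that such a path genuinely cuts $\Ggraph$ into two infinite halves. This is where the accumulation-free embedding and a Jordan-curve argument for properly embedded lines must be combined with care, and where the ``all faces bounded'' consequence of the hypothesis does the decisive work; the finite statements (1) and (2), by contrast, reduce to elementary bookkeeping with the $2$-basis.
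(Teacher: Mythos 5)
Your proposal is correct, and through the setup and parts (1)--(2) it coincides with the paper's own proof: the same reduction to connected $\Ggraph$, the same observation that the hypotheses force every face of the Thomassen embedding to be a bounded facial cycle (though note your one-line justification invokes only the ``every edge in two cycles'' hypothesis, while ruling out an unbounded face bounded by a \emph{finite} cycle also needs aperiodicity plus accumulation-freeness, as the paper makes explicit), the same enclosure bookkeeping for (1), and the same derivation of (2) from (1) via Proposition~\ref{prop:ddual} and the identity $\Ggraph^*\oPerpstar\Hgraph=\Ggraph_0^*$. The divergence is in (3). For the implication [$\Hgraph$ acyclic with the same components] $\implies$ [$\Ggraph_0^*$ one-ended spanning subforest], you work on the primal side and separate $\Ggraph$ by a line of $\Ggraph_0^*$, while the paper first dualizes via Proposition~\ref{prop:ddual} and then separates $\mc{B}$ by a line of $\Hgraph$ --- the same argument up to duality. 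The genuine difference is the reverse implication: the paper proves connectivity of $\Hgraph$ \emph{constructively}, observing that for each deleted edge $e_0$ (i.e.\ $e_0^*\in\Ggraph_0^*$) one-endedness yields a unique \emph{finite} component $\mc{B}_0$ of $\Ggraph_0^*\mysetminus\{e_0^*\}$, and that the edges $e\neq e_0$ lying in exactly one cycle of $\mc{B}_0$ (the mod-$2$ sum of those facial cycles, minus $e_0$) form a path in $\Hgraph$ joining the endpoints of $e_0$. This finite-side device is precisely what makes the ``infinite half of the dictionary'' --- the step you flag as the main obstacle --- unnecessary, and it is the one place where one-endedness, rather than mere aperiodicity plus acyclicity, enters. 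Your route does close: the dual edge set of any cut is an even subgraph of $\Ggraph^*$ (each facial cycle crosses a cut an even number of times), and a nonempty even subgraph of a forest has all degrees at least $2$ on its support, hence contains a double ray, producing the forbidden two-ended component of $\Ggraph_0^*$; but this is an extra lemma whose proof you only gesture at, whereas the paper's construction avoids it entirely. A small simplification for your separating-line arguments: you need not show that both sides of the line meet $\Ggraph$ infinitely often, since any single edge of the line is dual to a deleted primal edge whose endpoints lie on opposite sides, which already contradicts $\RR_{\Hgraph}=\RR_{\Ggraph}$.
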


\begin{proof}
By Proposition \ref{prop:ddual}, the graph ${\Ggraph}^*$ is $2$-vertex-connected. 
By Theorem \ref{thm:Thomassen}, there is an accumulation-free planar embedding $\wh{{\Ggraph}}$ of ${\Ggraph}$ such that $\mc{B}$ is the set of facial cycles of $\wh{{\Ggraph}}$. Let $F$ be a face of $\wh{{\Ggraph}}$. Then the boundary of $F$ is a finite cycle $B_F\in \mc{B}$ (as opposed to a bi-infinite line) and the closure of $F$ is compact since otherwise the complement of $F$ would be a compact set containing all of $\wh{{\Ggraph}}$, contradicting that ${\Ggraph}$ is infinite and the embedding is accumulation-free.

(1): Suppose that ${\Ggraph}_0 ^*$ is infinite. Let $C$ be a simple cycle in $\wh{{\Ggraph}}$ and let $F_0$ be a face of $\wh{{\Ggraph}}$ in the interior of $C$. Since the interior of $C$ is precompact and the embedding is accumulation-free, there must be some face $F_1$ in the exterior of $C$ such that $B_{F_0}$ and $B_{F_1}$ are connected by a path through ${\Ggraph}_0^*$. This path must contain some edge from $\{ e^*\, : \, e\in C \}$, and thus $C$ is not contained in ${\Hgraph}$. This shows that ${\Hgraph}$ is acyclic. Conversely, suppose that ${\Ggraph}_0^*$ has a finite connected component $\mc{B}_0\subseteq \mc{B}$. Then ${\Kgraph} _0 = \bigcup \mc{B}_0$ is a finite connected subgraph of ${\Ggraph}$, so the image $\wh{{\Kgraph}}_0$ of ${\Kgraph}_0$ has a unique face $F$ which contains infinitely many vertices of $\wh{{\Ggraph}}_0$. The boundary of $F$ is a cycle $C\subseteq {\Kgraph}_0$, and if $e\in C$ then $e$ is contained in only one cycle from $\mc{B}_0$ and hence $e^*\not\in {\Ggraph}_0^*$. This shows that $C$ is a cycle in ${\Hgraph}$.

(2): This follows from (1) and Proposition \ref{prop:ddual}.

(3): Suppose that ${\Ggraph}_0^*$ is a one-ended spanning subforest of ${\Ggraph}^*$. By part (1), ${\Hgraph}$ is acyclic, so it remains to show that ${\Hgraph}$ is connected. Suppose that $x,y\in X$ are connected by an edge $e_0\in {\Ggraph}$, but are not connected by an edge in ${\Hgraph}$. This means that $e_0^*\in {\Ggraph}_0^*$. Since ${\Ggraph}_0^*$ is a one-ended forest, there is a unique finite connected component of ${\Ggraph}_0^* \mysetminus \{ e_0 ^* \}$; let $\mc{B}_0$ be its set of vertices. Then the set $\{ e\in {\Ggraph} \, : \, e\neq e_0 \text{ and }e\text{ is in exactly one } B\in \mc{B}_0 \}$ is a path from $x$ to $y$ through ${\Hgraph}$, hence ${\Hgraph}$ is connected.
For the converse, by Proposition \ref{prop:ddual}, we may assume that ${\Ggraph}_0^*$ is acyclic with the same connected components as ${\Ggraph}^*$, toward the goal of showing that ${\Hgraph}$ is a one-ended spanning subforest of $\Ggraph$. If $\Hgraph$ had more than one end, then ${\Hgraph}$ would contain a bi-infinite line whose image in $\wh{{\Ggraph}}$ divides the plane into two connected components. Then there would be no path through ${\Ggraph}_0^*$ connecting cycles which lie on opposite sides of this bi-infinite line through $\Hgraph$, contrary to ${\Ggraph}_0^*$ being connected.
\end{proof}

\subsection{Borel planar graphs}
Our idea now is to use Lemma \ref{lem:extend}, along with part (3) of Proposition \ref{prop:duality} to obtain acyclic subgraphs of graphs which admit a Borel $2$-basis. To handle some technicalities, we will need the following Proposition.

\begin{proposition}[\cite{CMT-D}]\label{prop:backorbit}
Suppose that ${\Ggraph}$ is a locally finite Borel graph on $X$, and $A\subseteq X$ is Borel. Then there is an acyclic Borel function $f:[A]_{\RR_{\Ggraph}}\mysetminus A \ra [A]_{\RR_{\Ggraph}}$ whose back-orbits are all finite, and whose graph is contained in ${\Ggraph}$.
\end{proposition}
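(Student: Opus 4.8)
The plan is to first reduce to the case where $\Ggraph$ is a forest, and then to route each point of $[A]_{\RR_{\Ggraph}}\mysetminus A$ either ``outward'' toward an end of its tree-component or ``inward'' toward $A$, according to whether it sits on an infinite part of the tree or on a finite dead-end branch. The guiding principle is that if $f$ strictly increases some $\N$-valued Borel function along every edge of its graph, then its back-orbits are automatically finite, since a backward $f$-ray would force an infinite strictly decreasing sequence of natural numbers. The graph distance $d(\cdot,A)$ is the natural candidate potential: pushing $x$ to a neighbor with $d$ one larger strictly increases $d$, and one checks directly (e.g.\ on a one-ended ray with $A$ a single point) that pushing \emph{toward} $A$ instead produces infinite back-orbits. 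The only obstruction to always increasing $d$ is the presence of local maxima of $d$, which in a tree are exactly the vertices whose branch away from $A$ is finite; these must be sent inward, and the whole difficulty is to do this without creating infinite back-orbits.

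Concretely, I would first invoke the standard fact that a locally finite Borel graph admits a Borel spanning subforest $\Tgraph\subseteq\Ggraph$ with $\RR_{\Tgraph}=\RR_{\Ggraph}$; since a function whose graph lies in $\Tgraph$ also has graph in $\Ggraph$, and $[A]_{\RR_{\Tgraph}}=[A]_{\RR_{\Ggraph}}$, I may replace $\Ggraph$ by $\Tgraph$ and assume $\Ggraph$ is a forest. Fix a Borel linear order $\prec$ on $X$ and work with the Borel function $d(x)=d(x,A)$. For $x\in [A]_{\RR_{\Ggraph}}\mysetminus A$ call a neighbor $y$ with $d(y)=d(x)+1$ a \emph{child}, and let the \emph{branch beyond} $y$ be the connected component of $\Ggraph\mysetminus\{x\}$ containing $y$; the predicate ``the branch beyond $y$ is infinite'' is Borel. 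Define $f(x)$ to be the $\prec$-least child whose branch is infinite, if one exists, and otherwise the $\prec$-least neighbor $y$ with $d(y)=d(x)-1$ (which exists because $d(x)\geq 1$). By local finiteness these choices are over finite sets, so $f$ is Borel, total on $[A]_{\RR_{\Ggraph}}\mysetminus A$, and has graph contained in $\Ggraph$.

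It then remains to verify acyclicity and finiteness of back-orbits. For acyclicity: since $\mathrm{graph}(f)$ lies in a forest, any nontrivial $f$-cycle would be a $2$-cycle $x\leftrightarrow y$ with, say, $d(y)=d(x)+1$; but $f(x)=y$ is an outward step and forces the branch beyond $y$ to be infinite, while $f(y)=x$ is an inward step and forces $y$ to have no infinite child-branch, i.e.\ that same branch to be finite, a contradiction. For back-orbits, I would show there is no infinite backward ray $x_0,x_1,\dots$ with $f(x_{i+1})=x_i$. The key case analysis: whenever $f(x_{i+1})=x_i$ is an inward step, the vertex $x_{i+1}$ has finite away-subtree $S$, and every preimage of $x_{i+1}$ lies in $S$ — a child of $x_{i+1}$ cannot be reached by an outward step from $x_{i+1}$ (which stepped inward), and no vertex beyond $x_{i+1}$ can step outward into $x_{i+1}$, because $x_{i+1}$'s branch is finite; hence the whole tail of the ray is trapped in the finite set $S$, which is impossible. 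Therefore every step of the ray must be outward, so $d(x_{i+1})=d(x_i)-1$ for all $i$, forcing a strictly decreasing sequence of natural numbers bounded below by $1$, again impossible. As $\Ggraph$ is locally finite each vertex has finitely many preimages, so by K\"onig's lemma the absence of an infinite backward ray yields finite back-orbits.

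The main obstacle is precisely this back-orbit verification: getting the \emph{direction} of the construction right — pushing outward toward ends rather than inward toward $A$ — and checking that the unavoidable inward steps at finite dead-ends cannot concatenate into an infinite backward ray. The supporting technical points, namely producing a genuinely Borel (not merely $\mu$-a.e.) spanning subforest with the same connectedness relation, and the Borelness of ``the branch beyond $y$ is infinite'', are routine consequences of standard Borel combinatorics together with local finiteness.
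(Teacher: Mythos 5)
Since the paper only quotes this proposition from \cite{CMT-D} and gives no proof of it, your argument has to stand on its own; it does not, for two independent reasons, each fatal.

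First, your opening reduction rests on a false claim. A Borel spanning subforest $\Tgraph\subseteq\Ggraph$ with $\RR_{\Tgraph}=\RR_{\Ggraph}$ is by definition a Borel treeing of $\RR_{\Ggraph}$, so your ``standard fact'' asserts that the connectedness relation of \emph{every} locally finite Borel graph is Borel treeable. This fails: take $\Ggraph$ to be the graph of a free \pmp{} action of an infinite property (T) group such as $\SL_3(\Z)$, with respect to a finite generating set. By Adams--Spatzier \cite{AS90} (cited in this very paper as the first source of non-treeable groups), $\RR_{\Ggraph}$ is not treeable even after discarding a $\mu$-null set, so no such $\Tgraph$ can exist, Borel or otherwise. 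Indeed, a large part of the present paper is devoted to proving (measure) treeability for special graphs precisely because it is unavailable in general, and Proposition \ref{prop:backorbit} is needed in situations where no tree structure is present; so a reduction to forests cannot be the first step.

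Second, even in the forest case your construction is incorrect, because ``the branch beyond the child $y$ of $x$ is infinite'' is not the same as ``$y$ leads away from $A$ to infinity.'' The branch beyond $y$ (the component of $\Tgraph\mysetminus\{x\}$ containing $y$) also contains everything hanging off $y$'s \emph{other} neighbors, including those closer to $A$, so it can be infinite although every child-branch of $y$ is finite. Concretely, let $\Tgraph$ be the path $x_0-x_1-x_2-\cdots$ with a pendant vertex $a_k$ attached to each $x_{2k}$, and let $A=\{a_k : k\geq 0\}$. Then $d(x_{2k})=1$ and $d(x_{2k+1})=2$. The vertex $x_{2k}$ has children $x_{2k-1}$ and $x_{2k+1}$, and only the branch beyond $x_{2k+1}$ is infinite, so your rule forces $f(x_{2k})=x_{2k+1}$; the vertex $x_{2k+1}$ has no children at all, so it steps inward to the $\prec$-least element of $\{x_{2k},x_{2k+2}\}$. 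For any order with $x_{2k}\prec x_{2k+2}$ (e.g.\ ordering by index) this gives $f(x_{2k})=x_{2k+1}$ and $f(x_{2k+1})=x_{2k}$: a $2$-cycle, contradicting your acyclicity verification. The error there is the phrase ``i.e.\ that same branch to be finite'': the inward step at $y$ only says that $y$'s \emph{child}-branches are finite, not that the branch beyond $y$ as seen from $x$ is finite --- in the example that branch contains the entire infinite forward tail reached through $x$'s other inward neighbor of $y$, namely $x_{2k+2}$. The same conflation breaks the trapping argument for back-orbits: a preimage of $y$ may be a vertex at distance $d(y)-1$ stepping ``outward'' into $y$, which is possible exactly because the relevant branch is infinite through $y$'s inward side, so the backward ray is not confined to a finite set. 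Identifying the correct notion of ``outward'' and making the unavoidable inward steps compatible with finite back-orbits is the actual content of the proposition, and the construction as given does not achieve it.
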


\begin{definition}[Borel planar graphs] \label{def:BorelPlanar}
Let $\Ggraph$ be a locally finite Borel graph on a standard Borel space $X$. A \define{Borel $2$-basis} for $\Ggraph$ is a $2$-basis $\mathcal{B}$ for $\Ggraph$ which is Borel when viewed as a subset of the standard Borel space of all finite subsets of $\Ggraph$. We say that $\Ggraph$ is \define{Borel planar} if it admits a Borel $2$-basis.
\end{definition}

Observe that if a locally finite Borel graph ${\Ggraph}$ admits a Borel $2$-basis $\mc{B}$, then the dual graph ${\Ggraph}^*$ is a Borel graph on $\mc{B}$.
We can now prove that Borel graphs which admit a Borel $2$-basis are measure treeable.

\begin{theorem}\label{thm:planar}
Let ${\Ggraph}$ be a locally finite Borel planar graph on $X$.
Let $\mu$ be a Borel probability measure on $X$. Then there exists a $\Ggraph$-invariant $\mu$-conull Borel set $X_0\subseteq X$ and an acyclic Borel subgraph ${\Ggraph}_0\subseteq {\Ggraph}$ with $(\RR_{\Ggraph_0})_{\restriction X_0}=(\RR_{\Ggraph})_{\restriction X_0}$.
\end{theorem}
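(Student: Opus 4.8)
The plan is to transport the problem through planar duality and reduce it to the one-ended spanning subforest results of the previous sections. First I would arrange the measure-theoretic setup: since $\Ggraph$ is locally finite, $\RR_\Ggraph$ has countable classes, so (as noted in the introduction) $\mu$ is dominated by an $\RR_\Ggraph$-quasi-invariant measure with the same invariant null sets; replacing $\mu$ by it, I may assume $\mu$ is quasi-invariant, so that $\Ggraph$-saturations of null sets are null and it is meaningful to ask for a $\Ggraph$-invariant $\mu$-conull $X_0$. Fix a Borel $2$-basis $\mc{B}$ for $\Ggraph$; then the dual $\Ggraph^*$ is a locally finite Borel graph on $\mc{B}$.

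Next I would peel off the easy part. By measure-theoretic exhaustion (using Theorem~\ref{thm:Elek}) split $X$ into a $\Ggraph$-invariant $\mu$-hyperfinite piece $X_h$ and its $\mu$-nowhere hyperfinite complement $X_n$. On $X_h$ the relation is $\mu$-hyperfinite, so after discarding a null set $\Ggraph_{\restrict X_h}$ contains an acyclic Borel subgraph with the same connectedness relation, which already furnishes the desired $\Ggraph_0$ there. It therefore remains to treat $X_n$, where $\Ggraph$ is $\mu$-nowhere two-ended and $\mu$-nowhere finite (finite and two-ended components being hyperfinite). On $X_n$ I would put myself in the hypotheses of Proposition~\ref{prop:duality}: decompose $\Ggraph$ into its blocks (maximal $2$-vertex-connected pieces). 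Since every cycle lies inside a single block and the block-incidence graph is acyclic, the union of treeings of the individual blocks is again a treeing of $\Ggraph$, so it suffices to tree each block. Inside a block I would keep only the edges lying in two distinct cycles of $\mc{B}$, which is exactly the hypothesis of Proposition~\ref{prop:duality}, and defer the remaining ``boundary'' edges --- those in at most one facial cycle --- to the technical tools Proposition~\ref{prop:backorbit} and Lemma~\ref{lem:extend}, which let me build the treeing on a Borel complete section and extend it across the leftover edges.

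With these reductions, Proposition~\ref{prop:duality}(3) identifies the goal on a block with a dual problem: an acyclic $\Hgraph = \Ggraph \oPerpstar \Ggraph_0^*$ having the same components as $\Ggraph$ exists precisely when $\Ggraph_0^*$ is a Borel a.e.\ one-ended spanning subforest of $\Ggraph^*$. So everything reduces to producing such a forest in $\Ggraph^*$, which by Proposition~\ref{prop:ddual} is again a locally finite Borel planar graph and which, over $X_n$, is $\mu$-nowhere two-ended and $\mu$-nowhere hyperfinite. This is the crux, and the main obstacle: neither Theorem~\ref{thm:acyclic} (which wants an acyclic graph) nor Lemma~\ref{lem:hyperfinite} (which wants hyperfiniteness) applies to $\Ggraph^*$ as is, and running duality once more only sends me back to treeing $\Ggraph$. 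To break the loop I would first manufacture an auxiliary Borel acyclic spanning subforest of $\Ggraph^*$ directly --- following the minimal spanning forest idea of \cite{BLPS01}, by adjoining i.i.d.\ edge labels and taking a minimal spanning forest --- and then, having checked that this forest is aperiodic and $\mu$-nowhere two-ended, apply Theorem~\ref{thm:acyclic} to it to extract a genuine a.e.\ one-ended spanning subforest $\Ggraph_0^*$ of $\Ggraph^*$.

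Finally I would reassemble: translating $\Ggraph_0^*$ back through $\oPerpstar$ via Proposition~\ref{prop:duality}(3) yields an acyclic $\Hgraph$ with the same components as $\Ggraph$ on each block; gluing over blocks and over $X_h \cup X_n$, and extending across the boundary edges with Lemma~\ref{lem:extend}, gives the required acyclic $\Ggraph_0$ together with a $\Ggraph$-invariant $\mu$-conull $X_0$ on which $\RR_{\Ggraph_0} = \RR_\Ggraph$. I expect the genuinely hard step to be the one flagged above --- obtaining the a.e.\ one-ended spanning subforest of the nowhere hyperfinite dual, i.e.\ verifying that the auxiliary spanning forest is aperiodic and nowhere two-ended so that Theorem~\ref{thm:acyclic} can be invoked. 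By contrast the block and boundary-edge bookkeeping, while fiddly, is routine and is exactly where Proposition~\ref{prop:backorbit} and Lemma~\ref{lem:extend} are used.
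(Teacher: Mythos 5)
Your setup and reductions --- quasi-invariance of $\mu$, splitting off the $\mu$-hyperfinite part, working block by block, and translating the problem via Proposition~\ref{prop:duality}(3) into finding a Borel a.e.\ one-ended spanning subforest of the dual $\Ggraph^*$ --- follow the same architecture as the paper, and you correctly identify the crux. But the mechanism you propose for the crux has a genuine gap, in fact two. First, a minimal spanning forest built from i.i.d.\ edge labels lives on a class-bijective extension of $\RR_{\Ggraph^*}$ (the space of labelled configurations), not on $\mc{B}$ itself, and neither forests nor treeings are known to descend through such extensions; this particular defect is repairable (replace the i.i.d.\ labels by a Borel linear order on the edge set: the resulting free minimal spanning forest is still a Borel, acyclic, aperiodic spanning subgraph of $\Ggraph^*$). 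The irreparable gap is the second one: there is no argument that this forest is $\mu$-nowhere two-ended, and none can be expected. The known one-endedness results for minimal/uniform spanning forests (\cite{BLPS01}, \cite{LPS06}) are mass-transport arguments requiring measure preservation (unimodularity), while here $\mu$ is merely quasi-invariant; and nowhere-hyperfiniteness of $\Ggraph^*$ rules out two-ended components only for spanning forests having the \emph{same connectedness relation} as $\Ggraph^*$, which a minimal spanning forest need not have. Note also that the statement you are trying to establish for $\Ggraph^*$ is an instance of Conjecture~\ref{conj:pmp}, which the paper leaves open; its planar case, Corollary~\ref{cor:planarforest}, is deduced \emph{from} Theorem~\ref{thm:planar}, so it cannot be invoked here without circularity.

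The paper closes this gap with a mechanism your proposal has no counterpart for: a dichotomy terminating in hyperfiniteness. It fixes a hyperfinite acyclic aperiodic Borel subgraph $\Tgraph\subseteq\Ggraph$; on the saturation of the set where $\Tgraph$ is not two-ended, Theorem~\ref{thm:acyclic} applies legitimately (to the acyclic graph $\Tgraph$), and Lemma~\ref{lem:extend}, duality, and Corollary~\ref{cor:subtree} then yield the required one-ended subforest of the dual. The same step is repeated on the dual side with $\Hgraph^*=\Ggraph^*\oPerpstar\Tgraph$. In the remaining case both $\Tgraph$ and $\Hgraph^*$ are everywhere two-ended, and after pruning the facial cycles meeting only one $\Tgraph$-line, all components become bi-infinite lines whose incidence structure is itself a line; the paper then concludes that $\RR_\Ggraph$ is $2$-amenable in the sense of \cite{JKL02}, hence $\mu$-hyperfinite by \cite{JKL02} and \cite{CFW81}, and hyperfinite relations are treeable. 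In other words, precisely in the situation where no one-ended subforest can be extracted from an auxiliary acyclic graph via Theorem~\ref{thm:acyclic}, the paper shows the graph is hyperfinite; it never needs to prove that any auxiliary forest is nowhere two-ended. Your proof needs this dichotomy, or a substitute for it, to go through.
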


\begin{proof}
It suffices to produce an acyclic Borel subgraph of $\Ggraph$ whose restriction to almost every block is connected. Thus, by working block by block we may assume without loss of generality that each component of $\Ggraph$ is $2$-vertex-connected.

Fix a Borel $2$-basis $\mc{B}$ for ${\Ggraph}$ and let ${\Ggraph}^*$ be the
dual graph on $\mc{B}$. For the rest of the proof, elements of $\mc{B}$
will be referred to as \define{facial cycles}.
Let ${\Ggraph}_1$ be the set of edges of ${\Ggraph}$ which are
contained in exactly one facial cycle. Let $\mc{B}_1\subseteq \mc{B}$ be
the set of all facial cycles which are incident with an edge from
${\Ggraph}_1$. Assume first that $\Ggraph _1$ meets every connected component of ${\Ggraph}$. Then, since $\Ggraph$ is $2$-vertex-connected, $\mc{B}_1$ meets every connected component
of ${\Ggraph}^*$. Apply Proposition \ref{prop:backorbit} to the graph
${\Ggraph}^*$ and the set $\mc{B}_1$ to obtain an acyclic subgraph
${\Ggraph}_0^*$ of ${\Ggraph}^*$ in which every connected component either
is one-ended and does not meet $\mc{B}_1$, or is finite and meets
$\mc{B}_1$ in exactly one point. Let $B\mapsto e(B)$ be a Borel function
selecting one edge $e(B)\in B\cap {\Ggraph}_1$ out of each $B\in \mc{B}_1$.
Then arguing as in Proposition \ref{prop:duality}, we see that the graph
${\Ggraph} \oPerpstar  \big( {\Ggraph}_0^* \cup \{ e(B) \, : \, B\in \mc{B}_1 \} \big)$ is acyclic
with the same connected components as $\Ggraph$, so the proof is complete in this case.

We may therefore assume for the rest of the argument that
${\Ggraph}_1=\emptyset$. We may also assume that no connected component of
${\Ggraph}$ is acyclic. It follows that every edge of ${\Ggraph}$ is
contained in two distinct facial cycles. This implies that both ${\Ggraph}$
and ${\Ggraph}^*$ are everywhere one-ended.

If we can show that $\Ggraph ^*$ has a Borel a.e.\ one-ended spanning subforest $\Tgraph ^*$ then we will be done, since then by Proposition \ref{prop:duality}, the subgraph ${\Ggraph} \oPerpstar  \Tgraph ^*$ of $\Ggraph$ would have the desired properties.

Toward this goal, choose, by the discussion preceding Corollary \ref{cor:maximal}, an acyclic aperiodic hyperfinite Borel subgraph ${\Tgraph}$ of ${\Ggraph}$.

Let $Y$ denote the $\RR_{\Ggraph}$-saturation of the set where ${\Tgraph}$ does not have two ends. Theorem \ref{thm:acyclic} and Lemma \ref{lem:extend} imply that ${\Ggraph}_{\restriction Y}$ has a Borel a.e.\ one-ended spanning subforest $\Tgraph _0$. Then by Proposition \ref{prop:duality}, the graph $\mathcal{L} = ({\Ggraph}_{\restriction Y}) ^* \oPerpstar  \Tgraph _0$ is an acyclic subgraph of $({\Ggraph_{\restriction Y}})^*$ with $\RR_{\mathcal{L}} = \RR_{{(\Ggraph_{\restriction Y})}^*}$ modulo $\mu$. While $\mathcal{L}$ may have some components with two ends, every component of ${\Ggraph}^*$ is one-ended, so we can apply Corollary \ref{cor:subtree} to find a Borel a.e.\ one-ended spanning subforest of $(\Ggraph_{\restriction Y} )^*$. By restricting our attention now to the complement of $Y$, we may assume without loss of generality that every connected component of $\Tgraph$ has two ends.

Let ${\Hgraph}^*$ be the subgraph of ${\Ggraph}^*$ given by ${\Hgraph}^* = {\Ggraph}^* \oPerpstar  \Tgraph$. Then ${\Hgraph}^*$ is an acyclic aperiodic subgraph of ${\Ggraph}^*$ by Proposition \ref{prop:duality}. Let $Z^*\subseteq \mathcal{B}$ denote the $\RR_{{\Ggraph}^*}$-saturation of the set where ${\Hgraph}^*$ does not have two ends, and let $Z\subseteq X$ denote the set of vertices incident with some facial cycle in $Z^*$. Applying Theorem \ref{thm:acyclic} and Lemma \ref{lem:extend} again (but this time applied to $\Ggraph ^*$), we see that ${\Ggraph}^*_{\restriction Z^*} = (\Ggraph_{\restriction Z} )^*$ has a Borel a.e.\ one-ended spanning subforest. Thus, by restricting our attention to the complement of $Z$, we may assume without loss of generality that every connected component of ${\Hgraph}^*$ and every connected component of $\Tgraph$ has two ends.

Notice that if $B\in \mc{B}$ is a facial cycle of ${\Ggraph}$, then $B$ cannot intersect more than $2$ connected components of ${\Tgraph}$, since otherwise $B$ would correspond to a "branch vertex" of ${\Hgraph}^*$ and hence ${\Hgraph}^*$ would have a connected component with more than $2$ ends (i.e., if $B$ intersects $3$ components $C_0$, $C_1$, and $C_2$, of ${\Tgraph}$ then there is a path from $B$ to infinity through ${\Hgraph}^*$ which leaves $B$ through an edge between $C_0$ and $C_1$, and likewise for an edge between $C_1$ and $C_2$ and an edge between $C_2$ and $C_0$, so these three paths to infinity correspond to 3 different ends in the ${\Hgraph}^*$-connected component of $B$), contrary to our assumption.

Consider the set $\mc{C}\subseteq \mc{B}$ of facial cycles which intersect exactly 1 connected component of ${\Tgraph}$. Each infinite connected component of ${\Hgraph}^*_{\restriction \mc{C}}$ must have one end, since if it had more than one end then some of these facial cycles would intersect more than 1 component of ${\Tgraph}$ (indeed, each bi-infinite line in $\Hgraph ^*$ separates the plane into two components, and each facial cycle on such a line contains vertices in both components, and hence these vertices belong to distinct components of $\Tgraph$). So the $\RR_{{\Ggraph}^*}$-saturation of the set where ${\Hgraph}^*_{\restriction \mc{C}}$ is infinite admits a Borel a.e.\ one-ended subforest.

We can therefore assume without loss of generality that ${\Hgraph}^*_{\restriction\mc{C}}$ has finite connected components. From here we will now show that $\RR_{\Ggraph}$ is $\mu$-hyperfinite.

Let $\Ggraph _0$ be obtained from $\Ggraph$ by removing every edge $e\in \Ggraph \mysetminus \Tgraph$ that belongs to some facial cycle in $\mc{C}$, and observe that the graphs $\Ggraph _0$ and $\Ggraph$ have the same connected components. This process, of removing edges from $\Ggraph$ to obtain $\Ggraph _0$, merges finite collections of faces (corresponding to components of $\Hgraph ^*_{\restriction \mc{C}}$) with a unique face of $\mc{B} \mysetminus \mc{C}$, and produces a $2$-basis $\mc{B} _0$ for $\Ggraph _0$. Now every $B\in \mc{B}_0$ intersects exactly two connected components of ${\Tgraph}$. The dual graph ${\Ggraph}_0^*$, of ${\Ggraph}_0$ with respect to $\mc{B}_0$, is a minor of ${\Ggraph}^*$ and, if we define ${\Hgraph}_0^* = {\Ggraph}_0^* \oPerpstar  \Tgraph$, then every connected component of ${\Hgraph}_0^*$ is a bi-infinite line. 
Therefore, after applying the analogous argument as well to the dual, we may assume without loss of generality that every connected component of ${\Hgraph}^*$ and of ${\Tgraph}$ is a bi-infinite line.

Now, every line in ${\Tgraph}$ intersects exactly two lines in ${\Hgraph}^*$, and likewise, every line in ${\Hgraph}^*$ intersects exactly two lines in ${\Tgraph}$, so the lines of ${\Tgraph}$ within any fixed $\RR_{\Ggraph}$-class are themselves arranged in the structure of a single bi-infinite $\Z$-line (i.e., take two lines of ${\Tgraph}$ to be adjacent if they both intersect the same line in $\Hgraph ^*$). 
This implies: 
\begin{claim}\label{claim:2-amen}
$\RR_{\Ggraph}$ is 2-amenable in the sense of Jackson--Kechris--Louveau \cite{JKL02}.
\end{claim}

\begin{proof}[Proof of Claim]
For $x\in X$, let $\mathcal{C}_x$ denote the set of all $\RR_{\Tgraph}$-classes contained in $[x]_{\RR_{\Ggraph}}$, and let $\mathcal{C}= \{ (x,C): x\in X, \ C\in \mathcal{C}_x \}$.
Observe that $\mathcal{C}$ is standard Borel, being the quotient space of $\RR_{\Ggraph}$ by the smooth equivalence relation $\tilde{\RR}_{\Tgraph}$ that identifies two pairs in $\RR_{\Ggraph}$ when their left coordinates are equal and their right coordinates are $\RR_{\Tgraph}$-related. 

For each $x\in X$ and $j,k\in \N$ let $a^j_x:[x]_{\RR_{\Tgraph}}\to [0,1]$ be the uniform probability vector on the radius $j$ ball about $x$ in $\Tgraph$, and let $b^k_x :\mathcal{C}_x\to [0,1]$ be the uniform probability vector on the radius $k$ ball about $[x]_{\RR_{\Tgraph}}$ in the $\Z$-line structure on $\mathcal{C}_x$.
Then $(a^j)_{j\in \N}$ is a Borel Reiter sequence witnessing the $1$-amenability of $\RR_{\Tgraph}$, and $(b^k)_{k\in \N}$ is a Borel Reiter sequence witnessing the $1$-coamenability of $\RR_{\Tgraph}$ within $\RR_{\Ggraph}$.
The $1$-coamenability assertion means that: (1) for each $k\in \N$, the function $b^k=(b^k_x)_{x\in X}$ assigns, to each $x\in X$, a probability vector $b^k_x$ on the set $\mathcal{C}_x$; (2) for each $k\in\N$ the map $\RR_{\Ggraph}\to [0,1]$, sending $(x,y)$ to $b^k_x([y]_{\RR_{\Tgraph}})$, is Borel; and (3) $\lim_{k\to\infty} \| b^k_x-b^k_y\|_1  = 0$ for all $(x,y)\in\RR_{\Ggraph}$.

By the Lusin-Novikov Uniformization Theorem there exists a Borel function $s:\mathcal{C}\to X$ such that the map $(x,y)\mapsto (x,s(x,[y]_{\RR_\Tgraph}))$ is a selector for $\tilde{\RR}_{\Tgraph}$.   
For $j,k\in\N$ and $x\in X$ define the probability vector $c^{(k,j)}_x \in \ell ^1([x]_{\RR_{\Ggraph}})$ by
\[
c^{(k,j)}_x = \sum _{C\in\mathcal{C}_x}b^k_x(C)a^j_{s(x,C)}.
\]
For each $(x,y)\in\RR_{\Ggraph}$, a standard computation shows that
\[
\limsup_{k\to\infty}\limsup_{j\to\infty}\| c^{(k,j)}_x - c^{(k,j)}_y\|_1 = 0,
\]
and hence the sequence $(c^{(k,j)})_{(k,j)\in\N\times\N}$ witnesses that $\RR_{\Ggraph}$ is $2$-amenable.
\qedhere[Claim].
\end{proof}
By \cite{JKL02} (see Proposition 4.1 of \cite{EOSS:hyperfiniteness}), since $\RR_\Ggraph$ is $2$-amenable, it is $\mu$-amenable, and hence by \cite{CFW81} $\RR_{\Ggraph}$ is $\mu$-hyperfinite. 
The conclusion of the theorem now follows.
\end{proof}

\begin{corollary}\label{cor:planarforest}
Let ${\Ggraph}$ be a locally finite Borel planar graph on $X$. Let $\mu$ be a Borel probability measure on $X$. Assume that
${\Ggraph}$ is $\mu$-nowhere two-ended. Then $\Ggraph$ has a Borel a.e.\ one-ended spanning subforest.
\end{corollary}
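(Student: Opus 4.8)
The plan is to feed the output of Theorem~\ref{thm:planar} directly into Corollary~\ref{cor:subtree}, which was tailored for exactly this situation. First I would invoke Theorem~\ref{thm:planar}: since $\Ggraph$ is a locally finite Borel planar graph and $\mu$ is a Borel probability measure on $X$, there is a $\Ggraph$-invariant $\mu$-conull Borel set $X_0 \subseteq X$ together with an acyclic Borel subgraph $\Ggraph_0 \subseteq \Ggraph$ satisfying $(\RR_{\Ggraph_0})_{\restriction X_0} = (\RR_{\Ggraph})_{\restriction X_0}$. In other words, planarity already supplies a Borel spanning subtreeing of $\Ggraph$ whose connectedness relation agrees with that of $\Ggraph$ modulo the $\mu$-null set $X \mysetminus X_0$.

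With this acyclic subgraph in hand, all hypotheses of Corollary~\ref{cor:subtree} are in place: $\Ggraph$ is $\mu$-nowhere two-ended by assumption, and $\Ggraph_0$ is an acyclic Borel subgraph of $\Ggraph$ with $\RR_{\Ggraph_0} = \RR_{\Ggraph}$ modulo $\mu$. Applying Corollary~\ref{cor:subtree} with $\Tgraph = \Ggraph_0$ then produces a Borel a.e.\ one-ended spanning subforest of $\Ggraph$, which is the desired conclusion.

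I do not anticipate any real obstacle, since the substantive content has already been absorbed into the two cited results: Theorem~\ref{thm:planar} performs the planar dual construction that yields the spanning subtreeing, while Corollary~\ref{cor:subtree} converts such a subtreeing into a one-ended spanning subforest in the absence of two-ended components (internally routing through Lemma~\ref{lem:2in1} on the a.e.\ one-ended part and Theorem~\ref{thm:acyclic} on the part with more than two ends). The only point worth flagging is that the $\mu$-nowhere two-ended hypothesis is genuinely necessary and not merely a convenience: by Lemma~\ref{lem:obstruct}, a positive-measure set on which $\Ggraph$ is two-ended is an honest obstruction to the existence of a Borel a.e.\ one-ended spanning subforest, so this hypothesis is precisely what licenses the passage through Corollary~\ref{cor:subtree}.
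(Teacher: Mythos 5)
Your proposal is correct and matches the paper's own proof exactly: the paper also derives this corollary by feeding the spanning subtreeing produced by Theorem~\ref{thm:planar} into Corollary~\ref{cor:subtree}. Your additional remarks on the internal structure of Corollary~\ref{cor:subtree} and the necessity of the $\mu$-nowhere two-ended hypothesis (via Lemma~\ref{lem:obstruct}) are accurate elaborations of the same argument.
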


\begin{proof}
This follows from Theorem \ref{thm:planar} and Corollary \ref{cor:subtree}.
\end{proof}

We also note the following variation of Theorem \ref{thm:planar}.

\begin{theorem}\label{thm:varplanar}
Let ${\Ggraph}$ be a locally finite Borel planar graph on $(X,\mu )$, let $\mc{B}$ be a Borel $2$-basis for $\Ggraph$, and let $\Ggraph ^*$ be the associated dual
graph. Let $\Hgraph \subseteq \Ggraph$ denote the set of edges which
belong to at most one $B\in \mc{B}$. Assume that $\Ggraph ^*$ is $\mu$-nowhere two-ended. Then there exists a $\Ggraph$-invariant $\mu$-conull Borel set $X_0\subseteq X$ and an acyclic Borel graph ${\Ggraph}_0$ with $\Hgraph \subseteq \Ggraph _0 \subseteq {\Ggraph}$ with $(\RR_{\Ggraph_0})_{\restriction X_0} = \RR_{{\Ggraph}_{\restriction X_0}}$.
\end{theorem}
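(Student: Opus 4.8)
The plan is to realize the desired forest as $\Ggraph_0=\Ggraph\oPerpstar\Tgraph^*$ for a suitable one-ended spanning subforest $\Tgraph^*$ of the dual $\Ggraph^*$, using the crucial observation that an edge $e\in\Hgraph$ (lying in at most one $B\in\mc{B}$) carries no dual edge, since $e^*$ is only defined for $e$ in exactly two facial cycles. Hence $e$ is never deleted by the $\oPerpstar$ operation, so $\Hgraph\subseteq\Ggraph\oPerpstar\Ggraph_0^*$ for \emph{every} spanning subgraph $\Ggraph_0^*\subseteq\Ggraph^*$, whatever it is. (In particular $\Hgraph=\Ggraph\oPerpstar\Ggraph^*$, which is acyclic by Proposition~\ref{prop:duality}(1) once $\Ggraph^*$ is aperiodic, as we may assume; the finite blocks are degenerate.) Thus it suffices to produce a Borel $\mu$-a.e.\ one-ended spanning subforest $\Tgraph^*$ of $\Ggraph^*$ and then check that $\Ggraph_0:=\Ggraph\oPerpstar\Tgraph^*$ is acyclic with the same connectedness relation as $\Ggraph$ on a $\Ggraph$-invariant $\mu$-conull set $X_0$. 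As in the proof of Theorem~\ref{thm:planar}, I would first pass to blocks so that each component is $2$-vertex-connected; the bridges and the acyclic components lie entirely in $\Hgraph$ and belong to every spanning tree, so they reassemble over the acyclic block-incidence structure without difficulty.

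To obtain $\Tgraph^*$ I would apply Corollary~\ref{cor:planarforest} to $\Ggraph^*$, which first requires exhibiting a Borel $2$-basis witnessing that $\Ggraph^*$ is Borel planar. Following Proposition~\ref{prop:ddual}, the candidate is the family of interior vertex-stars $\{x^*:x\in X,\ x^*\text{ is a cycle in }\Ggraph^*\}$, where $x^*=\{e^*:e\text{ incident with }x,\ e\in\Ggraph_2\}$; a boundary vertex $x$ (one incident with an $\Hgraph$-edge, i.e.\ meeting an infinite face) contributes only a path and is discarded. This set is Borel, and by planar duality each edge of $\Ggraph^*$ lies in at most two of these cycles and they generate the cycle space. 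Corollary~\ref{cor:planarforest} takes a Borel probability measure on the vertex set $\mc{B}$ of $\Ggraph^*$, so I would push $\mu$ forward under a Borel selector $x\mapsto B(x)$ choosing a facial cycle through $x$; since this map sends each $\Ggraph$-component into the corresponding $\Ggraph^*$-component, invariant $\nu$-null sets correspond to invariant $\mu$-null sets. In particular the hypothesis that $\Ggraph^*$ is $\mu$-nowhere two-ended becomes $\nu$-nowhere two-endedness, so Corollary~\ref{cor:planarforest} delivers $\Tgraph^*$, and $X_0$ is taken to be the $\Ggraph$-saturation of the points whose dual cycle lies in a genuinely one-ended tree of $\Tgraph^*$.

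It remains to verify that $\Ggraph_0=\Ggraph\oPerpstar\Tgraph^*$ is acyclic with the same connected components as $\Ggraph$ on $X_0$, which is where the main work lies: this is the analogue of Proposition~\ref{prop:duality}(3), except that now not every edge of $\Ggraph$ carries a dual edge, so the infinite faces must be tracked. For acyclicity, a finite cycle $C\subseteq\Ggraph_0$ would bound a compact region in an accumulation-free embedding, containing some facial cycle $B$; as $B$ lies in a one-ended component of $\Tgraph^*$, the ray from $B$ to infinity in $\Tgraph^*$ must leave the compact interior, hence cross some edge $e\in C$, forcing $e\in\Ggraph_2$ and $e^*\in\Tgraph^*$, so that $e\notin\Ggraph_0$ --- contradicting $C\subseteq\Ggraph_0$. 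Crucially the ray never exits through an $\Hgraph$-edge, as those have no dual, so it can only cross $C$ along $\Ggraph_2$-edges. For connectedness, if $x,y$ are $\Ggraph$-adjacent via $e_0\notin\Ggraph_0$ then $e_0^*\in\Tgraph^*$, and since $\Tgraph^*$ is a one-ended forest $\Tgraph^*\mysetminus\{e_0^*\}$ has a unique finite side $\mc{B}_0$; the edges bordering exactly one cycle of $\mc{B}_0$ (other than $e_0$) form a path from $x$ to $y$, and each lies in $\Ggraph_0$ --- whether it is an $\Hgraph$-edge (kept automatically) or a $\Ggraph_2$-edge whose dual, not being $e_0^*$, is absent from $\Tgraph^*$. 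The main obstacle is precisely this adaptation of the duality dictionary to boundary edges and infinite faces; once it is in place, reassembling the block-wise forests yields the acyclic $\Ggraph_0$ with $\Hgraph\subseteq\Ggraph_0\subseteq\Ggraph$ and $(\RR_{\Ggraph_0})_{\restriction X_0}=\RR_{\Ggraph_{\restriction X_0}}$.
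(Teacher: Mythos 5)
Your proposal follows exactly the paper's own argument: exhibit the Borel $2$-basis for $\Ggraph^*$ given by the stars of vertices not incident to any $\Hgraph$-edge, apply Corollary~\ref{cor:planarforest} to get a $\mu$-a.e.\ one-ended spanning subforest $\Tgraph^*$ of the dual, and take $\Ggraph_0 = \Ggraph \oPerpstar \Tgraph^*$, which automatically contains $\Hgraph$ because $\Hgraph$-edges carry no dual edge. The paper's proof is only a few lines and leaves implicit precisely the verifications you spell out (the induced measure on $\mc{B}$, the reduction to blocks, and the adaptation of Proposition~\ref{prop:duality}(3) to edges without duals), so your write-up is essentially a detailed version of the same proof.
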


\begin{proof}
Let $Y$ denote the set of points $y\in X$ which are not incident to any edge in $\Hgraph$, and for $y\in Y$ let $y^* = \{ e^* \, : \, e\text{ is incident to }y \}$. Then the set $Y^* = \{ y^* \, : \, y\in Y \}$ is a Borel $2$-basis for $\Ggraph ^*$. Therefore, by Corollary \ref{cor:planarforest}, $\Ggraph ^*$ has a Borel a.e.\ one-ended spanning subforest $\Tgraph$. Then the graph $\Ggraph _0 = \Ggraph \oPerpstar \Tgraph$ is the desired subgraph.
\end{proof}

\section{\texorpdfstring{Measure strong treeability of $\Isom(\HH ^2)$}{Measure strong treeability of H2}}

Throughout this section, we let $G$ denote the group $\Isom(\HH ^2)$ of all
isometries of the hyperbolic plane $\HH ^2 = \{ z\in \C \, : \, \mathrm{Im}(z) > 0 \}$, and we let $K$ denote the stabilizer of $i\in \HH ^2$. Then $K$ is a compact subgroup of $G$.
The term cross-section is defined in the Appendix~\ref{sect: Treeability for locally compact groups}.

\begin{theorem}\label{thm: Isom H2 meas tree}
The group $G=\Isom(\HH ^2)$ is measure strongly treeable. That is, given any free Borel action $G\curvearrowright X$ of $G$ on a standard Borel space $X$ and a Borel probability measure $\mu$ on $X$, there exists a $G$-invariant $\mu$-conull Borel set $X_0\subseteq X$ such that $(\RR_{G})_{\restriction Y}$ is Borel treeable for any Borel cross section $Y\subseteq X_0$.
\end{theorem}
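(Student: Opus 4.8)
The plan is to reduce the measure strong treeability of $G=\Isom(\HH^2)$ to the already-solved problem of treeing a Borel planar graph (Theorem~\ref{thm:planar}), with the planarity coming directly from the hyperbolic plane $\HH^2 = G/K$. First I would recall from Appendix~\ref{sect: Treeability for locally compact groups} that treeing the relation $\RR_G$ of a free action $G \actson (X,\mu)$ reduces to treeing the restriction $(\RR_G)_{\restrict B}$ to a Borel cross section $B$, which is a countable Borel equivalence relation. Since any two cross sections restrict to stably orbit equivalent relations (Proposition~\ref{prop:covol}) and treeability of countable Borel equivalence relations is a stable orbit equivalence invariant in the measured setting, it suffices to exhibit a single lacunary cross section whose restriction is treeable on a conull set; one then takes $X_0$ to be the $G$-saturation of the good set, and the conclusion for an arbitrary Borel cross section $Y \subseteq X_0$ follows by transfer.

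Next I would build the planar graphing. Fix a left-invariant proper metric on $G$ and choose $B$ to be a lacunary cross section which is simultaneously uniformly discrete and relatively dense (a Delone cross section), with separation larger than twice the diameter of the compact stabilizer $K$; such cross sections exist by the standard theory. Working orbit by orbit, identify an orbit with $G$ via the free action and project the cross section points through the quotient $G \to G/K \cong \HH^2$. Because $K$ is compact and $B$ is sufficiently separated and relatively dense, this projection is injective with Delone image $\Lambda \subseteq \HH^2$. The relative configuration of $\Lambda$ in $\HH^2$ is canonical up to the $G$-action by isometries, so any isometry-equivariant geometric construction on $\Lambda$ descends to a Borel structure on $(\RR_G)_{\restrict B}$. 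I would take the Delaunay graph of $\Lambda$ (the one-skeleton of the dual of its Voronoi tessellation), declaring two cross section points adjacent iff their projections are Delaunay-neighbors. Since $\Lambda$ is Delone, this graph $\Ggraph$ is locally finite with an accumulation-free planar embedding into $\HH^2 \cong \R^2$, all of whose faces are bounded, and its connectedness relation is exactly $(\RR_G)_{\restrict B}$.

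Then I would verify that $\Ggraph$ is Borel planar in the sense of Definition~\ref{def:BorelPlanar}. By Thomassen's theorem (Theorem~\ref{thm:Thomassen}(1)) the facial cycles of the accumulation-free embedding form a $2$-basis; since these facial cycles are read off canonically from the geometry of $\Lambda$ in $\HH^2$, the resulting collection $\mc{B}$ is Borel as a subset of the finite subsets of $\Ggraph$, giving a Borel $2$-basis. With $\Ggraph$ now a locally finite Borel planar graph, I would push $\mu$ (replaced if necessary by an equivalent quasi-invariant measure) to the transverse measure on $B$ and apply Theorem~\ref{thm:planar}: it produces a $\Ggraph$-invariant conull Borel set together with an acyclic Borel subgraph $\Ggraph_0 \subseteq \Ggraph$ having the same connectedness relation. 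This $\Ggraph_0$ is precisely a Borel treeing of $(\RR_G)_{\restrict B}$ on a conull set, which is what we needed.

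The main obstacle is the equivariant Borel construction of this planar structure together with the verification that the induced $2$-basis is genuinely Borel. Two points need care. First, degenerate Delaunay configurations (for instance four concircular projected points) must be broken in a Borel, $G$-equivariant manner so that the adjacency relation and the facial cycles are well defined on a conull set; this is where the precise geometry of $\HH^2$ and a measure-theoretic genericity argument enter. Second, one must confirm that the relatively dense choice of cross section indeed forces $\Ggraph$ to be locally finite with only finite facial cycles, so that $\mc{B}$ is a bona fide $2$-basis and is Borel as a subset of the space of finite subsets of $\Ggraph$. Once these are settled, the appeal to Theorem~\ref{thm:planar} is immediate and carries the analytic weight of actually producing the treeing.
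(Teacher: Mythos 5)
Your proposal is essentially the paper's own proof: fix a cocompact, sufficiently separated Borel cross section, project it to the classes of $K\backslash X$ (each isometric to $\HH^2$), form the Dirichlet (Voronoi) tessellation whose dual graph is exactly your Delaunay graph, note that the facial cycles around the tessellation's vertices give a Borel $2$-basis, apply Theorem~\ref{thm:planar}, and lift the resulting acyclic graph back to a treeing of the cross-section relation. Two small corrections: the passage to an \emph{arbitrary} cross section $Y\subseteq X_0$ should go through the purely Borel proposition of Appendix~\ref{sect: Treeability for locally compact groups} (Borel treeability of $\RR_G$ is equivalent to Borel treeability of every, equivalently some, cross-section restriction) rather than Proposition~\ref{prop:covol}, which assumes an invariant measure and would in any case only yield treeability modulo a further null set depending on $Y$; and your concern about degenerate concircular configurations is moot, since in the Voronoi-dual formulation a vertex where $k\geq 3$ cells meet simply contributes a facial $k$-cycle, which is perfectly acceptable in a $2$-basis, so no tie-breaking is needed.
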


\begin{proof}
Let $G\curvearrowright X$ be a free Borel action of $G$ on a standard Borel space $X$, and let $\mu$ be a Borel probability measure on $X$, which we may assume is $\RR_G$-quasi-invariant.
$\RR_G$ descends to a Borel equivalence relation $\wh{R}_G$ on $K\backslash X$. Define $d:\wh{R}_G\ra [0,\infty)$ by $d(K\cdot x , \, Kg\cdot x) = d_{\HH ^2}(g^{-1}(i),i)$, where $d_{\HH ^2}$ denotes the hyperbolic metric on $\HH ^2$. Then $d$ is a Borel function on $\wh{R}_G$ and the restriction of $d$ to each $\wh{R}_G$-class is a metric (since $G$ acts by isometries on $\HH ^2$) which is isometrically isomorphic with $(\HH ^2 , d_{\HH ^2})$.

Let $U\subseteq G$ be an open subset of $G$ whose closure is compact and with $K\subseteq U$. Applying Theorem \ref{thm:cocomp} to the closure of $U$, we may find a Borel cross section $Y\subseteq X$ for the action which is cocompact and satisfies $U\cdot y_0 \cap U\cdot y_1 = \emptyset$ for distinct $y_0,y_1 \in Y$. Let $\nu$ be an $(\RR_G)_{\restriction Y}$-quasi-invariant Borel probability measure on $Y$ given by Proposition \ref{prop:Ymeas}, coming from the canonical $(\RR_G)_{\restriction Y}$-quasi-invariant measure class associated to $Y$ and $\mu$. We will be done once we show that $(\RR_G)_{\restriction Y}$ is treeable on a $\nu$-conull invariant Borel set $Y_0$, since then the $\RR_G$-saturation of $Y_0$ will be the $\mu$-conull Borel set $X_0$ which satisfies the conclusion of the theorem.

Let $\wh{Y}$ and $\wh{\nu}$ denote the images of $Y$ and $\nu$ respectively in $K\backslash X$. 
The intersection of $\wh{Y}$ with each $\wh{R}_G$-class forms a Delone set in that copy of $\HH ^2$. 
For each $\wh{y}\in\wh{Y}$ let $D(\wh{y})\subseteq [\wh{y}]_{\wh{R}_G}$ denote the corresponding Dirichlet--Voronoi region in the metric space $([\wh{y}]_{\wh{R}_G}, d_{\restriction [\wh{y}]_{\wh{R}_G}})$, i.e., $D(\wh{y})$ consists of all points of $[\wh{y}]_{\wh{R}_G}$ whose distance to $\wh{y}$ is strictly less than their distance to any other point of $\wh{Y}\cap [\wh{y}]_{\wh{R}_G}$. Since $G$ acts properly on $\HH ^2$ and $K\subseteq U$, there exists an $r_0 > 0$ such that $d_{\HH ^2}(g^{-1}(i),i ) > r_0$ for all $g\not\in U$. Therefore, for each $\wh{R}_G$-class $C$, the balls of radius $r_0$ about points of $\wh{Y}\cap C$ are pairwise disjoint. Moreover, since $Y$ is cocompact, there exists an $r_1>r_0$ such that the balls of radius $r_1$ about points of $\wh{Y}\cap C$ cover all of $C$.
Therefore, each Dirichlet region $D(\wh{y})$ is the interior of a bounded hyperbolic polygon, and for each $\wh{R}_G$-class $C$, we have $\bigcup _{\wh{y}\in \wh{Y}\cap C} \ol{D(\wh{y})} = C$. Then the complement of all of the Dirichlet regions naturally carves out a Borel graph ${\Hgraph}$ on the Borel set $Z\subseteq K\backslash X$, of all points which are vertices of a boundary polygon of one of the Dirichlet regions, and we have $\RR_{{\Hgraph}}=(\wh{R}_{G})_{\restriction Z}$.
Let $\wh{{\Ggraph}}$ be the Borel graph on $\wh{Y}$ obtained as the dual graph to ${\Hgraph}$, i.e., $\wh{y}_0$ and $\wh{y}_1$ are connected by an edge $e^*$ if and only if the hyperbolic polygons $\ol{D(\wh{y}_0)}$ and $\ol{D(\wh{y}_1)}$ share an edge $e$. Then $\RR_{\wh{{\Ggraph}}}=(\wh{R}_{G})_{\restriction \wh{Y}}$. For each $z\in Z$ let $B_z = \{ e^* \, : \, e\in {\Hgraph}\text{ is incident with }z \}$. Then the collection $\{ B_z \, : \, z\in Z \}$ is a Borel $2$-basis for $\wh{{\Ggraph}}$, so by Theorem \ref{thm:planar} there exists a $\wh{\nu}$-conull $\RR_{\wh{{\Ggraph}}}$-invariant set $\wh{Y}_0\subseteq \wh{Y}$ and an acyclic Borel subgraph $\wh{{\Ggraph}}_0\subseteq \wh{{\Ggraph}}$ with $(\RR_{\wh{{\Ggraph}}_0})_{\restriction \wh{Y}_0} =(\RR_{\wh{{\Ggraph}}})_{\restriction \wh{Y}_0}=(\wh{R}_{G})_{\restriction \wh{Y}_0}$.

Let $Y_0\subseteq Y$ be the preimage of $\wh{Y}_0$ under the projection to $K\backslash X$. By our choice of $Y$, each $\RR_K$-class contains at most one point from $Y$.
Then $\wh{{\Ggraph}}_0$ lifts to a treeing ${\Ggraph}_0$ of $(\RR_{G})_{\restriction Y_0}$.
\end{proof}

\begin{corollary}\label{cor:subgroup}
The groups $\Isom(\HH ^2)$, $\PSL_2(\R )$, and $\SL_2(\R )$ are all measure strongly treeable, as are all of their closed subgroups; thus, for instance, all surface groups $\pi _1 (\Sigma _g )$, $g\geq 2$, are measure strongly treeable. In particular, each of these groups has fixed price.
\end{corollary}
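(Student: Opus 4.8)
The plan is to derive the entire corollary from Theorem~\ref{thm: Isom H2 meas tree}, which already establishes that $G=\Isom(\HH^2)$ is measure strongly treeable, by invoking the permanence properties of measure strong treeability recorded in \S\ref{sec:permanence}. Two such properties are needed: first, that a closed subgroup of a measure strongly treeable lcsc group is again measure strongly treeable; and second, that measure strong treeability passes from a quotient $G/N$ back to $G$ whenever $N\triangleleft G$ is compact. Both are natural in the cross-section formulation of treeability from \S\ref{sect: Treeability for locally compact groups}, since passing to a cross-section already factors out the compact directions of an action.

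First I would treat $\PSL_2(\R)$. The orientation-preserving isometries $\Isom^+(\HH^2)$ form an open (hence closed) index-$2$ subgroup of $\Isom(\HH^2)$, and this subgroup is exactly $\PSL_2(\R)$ acting on $\HH^2$ by M\"obius transformations. Thus $\PSL_2(\R)$ is a closed subgroup of $G$ and is measure strongly treeable by closed-subgroup permanence. For $\SL_2(\R)$ I would use the central extension $1\to\{\pm I\}\to \SL_2(\R)\to\PSL_2(\R)\to 1$: here $N=\{\pm I\}\cong\Z/2\Z$ is compact, the quotient $\SL_2(\R)/N\cong\PSL_2(\R)$ is measure strongly treeable by the previous step, so the compact-extension permanence gives that $\SL_2(\R)$ is measure strongly treeable. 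A general closed subgroup then inherits the property: a closed subgroup of $\Isom(\HH^2)$ or $\PSL_2(\R)$ is covered directly by closed-subgroup permanence, while a closed subgroup $H\leq\SL_2(\R)$ maps onto a closed subgroup of $\PSL_2(\R)$ with finite kernel $H\cap\{\pm I\}$, so $H$ is measure strongly treeable by combining the two permanence results. In particular, for $g\geq 2$ a hyperbolic structure on $\Sigma_g$ realises $\pi_1(\Sigma_g)$ as a cocompact lattice, hence a discrete and therefore closed subgroup of $\PSL_2(\R)=\Isom^+(\HH^2)$; since for a countable discrete group measure strong treeability implies the classical strong treeability, the surface groups are strongly treeable.

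Finally, fixed price follows from measure strong treeability. For the countable discrete groups among these (the surface groups and the lattices) this is \cite[Prop.~VI.21]{Gab00a}. For the connected groups it follows from the cross-section cost theory: by Proposition~\ref{prop:covol} the normalised quantity $(\mathrm{cost}(\RR_{\restrict B})-1)/\mathrm{covolume}(B)$ is independent of the cross-section $B$, and measure strong treeability forces the cross-section relation to be treeable for every free action, so by Remark~\ref{rem:lcsccost} this quantity is also independent of the action, which is exactly fixed price in the sense of Definition~\ref{def:lcsccost}. The main obstacle I anticipate is the compact-extension permanence used for $\SL_2(\R)$: one must check that a free Borel action of $\SL_2(\R)$, after quotienting by the free action of the compact central $\{\pm I\}$, yields a free Borel action of $\PSL_2(\R)$ whose measure-treeable cross-section structure lifts back through the finite kernel to a treeing of a cross-section of the original $\SL_2(\R)$-action; verifying that this lift respects the ambient quasi-invariant measure class is the delicate point, but it is handled abstractly by the permanence machinery of \S\ref{sec:permanence}.
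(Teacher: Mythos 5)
Your argument for the measure strong treeability claims is correct and is essentially the paper's own: everything is deduced from Theorem~\ref{thm: Isom H2 meas tree} via the two permanence properties of Theorem~\ref{thm:closure}, namely closure under passing to closed subgroups (item (5)), which handles $\PSL_2(\R)=\Isom^+(\HH^2)$ and all closed subgroups, and closure under compact extensions (item (4)), which handles $\SL_2(\R)$ through the central extension $1\to\{\pm I\}\to \SL_2(\R)\to\PSL_2(\R)\to 1$. One simplification: your detour for closed subgroups $H\leq \SL_2(\R)$ (projecting $H$ to a closed subgroup of $\PSL_2(\R)$ and re-extending by the finite kernel) is unnecessary — once $\SL_2(\R)$ itself is measure strongly treeable, Theorem~\ref{thm:closure}(5) applies to $H$ directly.

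The genuine gap is in your fixed-price argument for the connected groups. Proposition~\ref{prop:covol} gives that the normalised cost $(\mathrm{cost}(\RR_{G\restrict B})-1)/\mathrm{covol}(B)$ is independent of the cross-section $B$ \emph{within a single free action}; fixed price is the assertion that it is moreover independent of the action, and that does not follow from each cross-section relation being treeable. Treeable \pmp{} relations of different costs abound (orbit relations of free groups of different ranks), so treeability of every cross-section relation by itself forces no equality of costs across actions. Remark~\ref{rem:lcsccost} cannot close this gap either: it is a formula for computing the cost of the group via a lattice, not a device for comparing two arbitrary free actions, so your final inference is a non sequitur. What actually proves action-independence is the comparison of two free actions through a common extension (their diagonal product), using that cost of a treeable relation is attained by treeings and that pull-backs of cross-sections under factor maps are again cross-sections — i.e., the adaptation of \cite[Proposition VI.21]{Gab00a} to the locally compact setting. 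In the paper this is packaged precisely as Proposition~\ref{prop:stronglytreeablefixedprice}, stated for all strongly treeable unimodular lcsc groups; since measure strong treeability implies strong treeability, citing that proposition covers the discrete and the connected groups uniformly, and this is exactly what the paper's proof does.
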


\begin{proof}
This follows since measure strong treeability is closed under taking closed subgroups, and a compact by measure strongly treeable group is itself measure strongly treeable (Theorem \ref{thm:closure}).
The last statement follows from Proposition \ref{prop:stronglytreeablefixedprice}.
\end{proof}

\begin{remark}\label{rem:lcsccost}
To compute the actual cost of these lcsc groups $G$ (with respect to a fixed Haar measure $\lambda$), one may use the formula $\mathrm{cost}(G,\lambda ) -1 = (\mathrm{Cost}(\Gamma )-1)/\mathrm{covol}_{\lambda}(\Gamma )$, where $\Gamma$ is any lattice of $G$.
\end{remark}

\subsection{Groups with planar Cayley graphs}
\begin{theorem}\label{thm:planargroup}
Let $\Gamma$ be a finitely generated group and suppose that $\Gamma$ has a Cayley graph which is planar. Then $\Gamma$ is measure strongly treeable.
\end{theorem}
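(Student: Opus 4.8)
The plan is to realize the orbit equivalence relation of an arbitrary free action of $\Gamma$ as the connectedness relation of a Borel planar graph, and then to invoke Theorem~\ref{thm:planar}. Fix a finite generating set $S$ of $\Gamma$ for which $\Cay(\Gamma,S)$ is planar, let $\Gamma\actson X$ be a free Borel action, and let $\mu$ be a Borel probability measure on $X$. I would form the Schreier graph $\Ggraph$ on $X$ associated with $S$, whose edges join $x$ to $s\cdot x$ for $s\in S$. Because the action is free and $S$ generates $\Gamma$, the graph $\Ggraph$ is locally finite, every connected component of $\Ggraph$ is isomorphic to $\Cay(\Gamma,S)$ (hence planar), and $\RR_{\Ggraph}=\RR_{\Gamma}$.

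By Theorem~\ref{thm:planar} it therefore suffices to prove that $\Ggraph$ is Borel planar, i.e.\ that it admits a Borel $2$-basis in the sense of Definition~\ref{def:BorelPlanar}. Granting this, Theorem~\ref{thm:planar} produces a $\Gamma$-invariant $\mu$-conull Borel set $X_0$ and an acyclic Borel subgraph $\Ggraph_0\subseteq\Ggraph$ with $(\RR_{\Ggraph_0})_{\restriction X_0}=(\RR_{\Ggraph})_{\restriction X_0}=(\RR_\Gamma)_{\restriction X_0}$, which is precisely a Borel treeing of $(\RR_\Gamma)_{\restriction X_0}$. Since $\mu$ and the action were arbitrary, this witnesses that $\Gamma$ is measure strongly treeable.

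To build the Borel $2$-basis I would transport a single combinatorial $2$-basis of $\Cay(\Gamma,S)$ to every orbit at once. The key observation is that if $\mc{B}_0$ is a $2$-basis of $\Cay(\Gamma,S)$ that is invariant under the left-translation action of $\Gamma$ on its own Cayley graph, then $\mc{B}_0$ descends to a well-defined family of $2$-bases on the orbits of $\Gamma\actson X$: freeness identifies each orbit with $\Gamma$ up to left translation, and left-invariance of $\mc{B}_0$ makes the resulting push-forward independent of the choice of base point in the orbit. Since membership of a finite cycle in this family is determined by the pattern of $\Gamma$-labels read along the cycle, it is a Borel condition, and the resulting set $\mc{B}$ is a Borel $2$-basis for $\Ggraph$.

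The heart of the matter, and the step I expect to be the main obstacle, is the existence of a $\Gamma$-invariant $2$-basis of $\Cay(\Gamma,S)$; this is where the structure theory of planar Cayley graphs must be used. When $\Cay(\Gamma,S)$ is $3$-connected, Whitney's uniqueness theorem (and its extension to locally finite graphs) shows that its accumulation-free planar embedding is unique up to reflection, so by Theorem~\ref{thm:Thomassen} its set of facial cycles---equivalently, its $2$-basis---is canonical, hence automatically invariant under every graph automorphism and in particular under $\Gamma$. In general one passes to the blocks of $\Cay(\Gamma,S)$: the block decomposition and its incidence tree are canonical and hence $\Gamma$-invariant, so by the block-wise characterization of $2$-bases recalled after Theorem~\ref{thm:Thomassen} it suffices to produce invariant $2$-bases on the $2$-vertex-connected blocks, each of which is handled through its canonical decomposition into $3$-connected components. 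Making this reduction fully rigorous---choosing facial cycles invariantly across the Whitney flips available in blocks that are merely $2$-connected---is exactly the delicate point requiring the planar structure theory (and the discussions with Georgakopoulos); once a $\Gamma$-invariant $2$-basis is in hand, the preceding paragraphs complete the argument.
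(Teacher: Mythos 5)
Your overall scheme is attractive and its outer layers are correct: for a free Borel action, the Schreier graph $\Ggraph$ of a generating set $S$ has components isomorphic to $\Cay(\Gamma,S)$ with $\RR_{\Ggraph}=\RR_{\Gamma}$; a $\Gamma$-invariant $2$-basis of $\Cay(\Gamma,S)$ does transport, via the edge-labelling, to a Borel $2$-basis of $\Ggraph$; and Theorem~\ref{thm:planar} then yields measure treeability of every free action. The genuine gap is exactly the step you flag and then defer: the existence of a $\Gamma$-invariant $2$-basis. This is not a routine strengthening of Whitney's theorem; it is the heart of the problem, and your proposal contains no argument for it. Two concrete difficulties. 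First, the hypothesis is only that $\Cay(\Gamma,S)$ is \emph{planar}, and by Theorem~\ref{thm:Thomassen} a $2$-connected locally finite graph has a $2$-basis at all only if it admits an \emph{accumulation-free} embedding, which planarity does not imply: the Cayley graph of $\Z^2 * \Z$ with standard generators is planar (every finite subgraph is) but admits no accumulation-free embedding, and a $2$-basis exists there only because the graph decomposes into blocks. So even your ``$3$-connected case'' is not self-contained---one must first pass to blocks and prove that each block is accumulation-free embeddable, which is an additional claim about quasi-transitive planar graphs. Second, and more seriously, for blocks that are $2$-connected but not $3$-connected the embedding is genuinely non-unique (Whitney flips), and producing a $2$-basis invariant under the setwise stabilizer of a block is precisely the question of whether the graph is ``consistently planar''; this is a substantive structure-theoretic problem about planar Cayley graphs, not bookkeeping over the tree of $3$-connected components. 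As it stands, your proof is incomplete at its crux.

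It is worth seeing how the paper avoids this entirely, because the contrast explains why the missing lemma is serious. The paper's proof never touches the given Cayley graph's combinatorics: by Droms \cite{Dr06} the group is finitely presented; by Dunwoody's accessibility theorem \cite{Dunwoody-1985} together with \cite{Babai-1977}, $\Gamma$ is the Bass--Serre fundamental group of a finite graph of groups with finite edge groups whose vertex groups are finite or one-ended with planar Cayley graphs; the closure properties (Theorem~\ref{thm:closure}) reduce everything to the one-ended case; and a one-ended group with a planar Cayley graph is either amenable (handled by \cite{CFW81}) or, by \cite[Section III.8]{LS77}, a discrete subgroup of $\Isom(\HH^2)$, where Corollary~\ref{cor:subgroup} applies. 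In that last step the Borel planar graph fed into Theorem~\ref{thm:planar} is not the Schreier graph of $S$ but the dual of a Dirichlet tessellation attached to a cross-section, which is Borel planar \emph{by construction}---this is how the paper sidesteps the invariant-$2$-basis problem your plan runs into. If you could actually prove that every finitely generated group with a planar Cayley graph admits (some Cayley graph with) a $\Gamma$-invariant $2$-basis, you would obtain an alternative proof; but that statement essentially encodes the same Maskit/Fuchsian structure theory that the paper's chain of citations uses, so it cannot be waved through as a ``delicate point''.
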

\begin{proof}
By \cite[Theorem 5.1]{Dr06}, $\Gamma$ is finitely presented.
It follows from Dunwoody's accessibility Theorem \cite{Dunwoody-1985}
that $\Gamma$ is the Bass-Serre fundamental group of a finite graph of groups whose edge groups are all finite and whose vertex groups are finite or one-ended with planar Cayley graphs (by \cite{Babai-1977}).
Therefore, by Theorem \ref{thm:closure}-(8), it suffices to prove the theorem when $\Gamma$ has one end.
In this case, it is $2$-vertex-connected and by Thomassen's Theorem (Theorem~\ref{thm:Thomassen}) it admits an accumulation-free planar embedding.
If $\Gamma$ is amenable then $\Gamma$ is measure strongly treeable by \cite{CFW81}.
Otherwise, by \cite[Theorem 3.1]{Bab97}, $\Gamma$ is isomorphic to a discrete subgroup of $\Isom(\HH ^2)$, so we are done by Corollary \ref{cor:subgroup}.
 \end{proof}

It follows that surface groups are measure strongly treeable.  Hjorth's theorem \cite{Hjo-cost-att} therefore implies that the orbit equivalence relation of any free \pmp{} action of a (infinite) surface group is also generated by a free action of a free group of the same cost.  Does the converse hold?  That is:
\begin{question}\label{question:Hjorthsurface}
 Can any orbit equivalence relation of a free \pmp{} action of a free group also be generated by a free action of a surface group of appropriate cost?
\end{question}

\section{Elementarily free groups and towers}\label{sec:elemfree}

A group is said to be \define{elementarily free} if it satisfies the same first-order sentences as a free group. In \cite{BTW-07},  it is shown that every finitely generated elementarily free group is treeable.
 In this section, we show that such groups are even measure strongly treeable.

\begin{thm}\label{elemfreetreeable}
  Every finitely generated elementarily free group is measure strongly treeable.
\end{thm}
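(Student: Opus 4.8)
The plan is to reduce the statement about finitely generated elementarily free groups to the results already established for planar Cayley graphs and $\Isom(\HH^2)$, by exploiting the structural description of these groups as fundamental groups of towers (in the sense of \cite{SeVI}, \cite{KhM1}, and \cite{Guirardel-Levitt-Sklinos-2020}). The key observation is that measure strong treeability is preserved under the graph-of-groups operations that build up towers. So the first step is to invoke the classification theorem (cited as Theorem~\ref{th: elem free gps as large towers} in the paper) expressing a finitely generated elementarily free group $\Gamma$ as the fundamental group of a tower whose building blocks are well-understood: these are amenable groups (abelian vertex groups), surface groups, and free groups, glued along edge groups that are cyclic or trivial.

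Second, I would establish the permanence properties needed to climb the tower. The essential closure properties are encoded in Theorem~\ref{thm:closure}, in particular the part asserting that measure strong treeability passes through Bass-Serre fundamental groups of graphs of groups with amenable (e.g. cyclic) edge groups, provided the vertex groups are measure strongly treeable. Since each building block is either amenable (measure strongly treeable by \cite{CFW81}), a surface group (measure strongly treeable by Corollary~\ref{cor:subgroup} or Theorem~\ref{thm:planargroup}), or free (trivially treeable), the permanence theorem lets us assemble these pieces. I would proceed by induction on the height of the tower: the base case is a surface group or a free group, and each successive floor attaches a new piece along a cyclic or abelian subgroup via an amalgamated product or HNN extension, which Theorem~\ref{thm:closure} handles.

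The main obstacle will be verifying that the \emph{specific} amalgamation data arising in the tower decomposition genuinely falls under the hypotheses of the permanence result. Towers involve attaching surfaces with boundary along their boundary curves, and the relevant edge groups are the cyclic groups generated by these boundary curves; one must confirm that these inclusions are the kind of (malnormal, or at least amenable-edge) amalgams for which measure strong treeability is stable. A subtlety is that surface groups with boundary are themselves free, so some floors are free products amalgamated over cyclic subgroups, which is precisely the situation addressed by the by-product Corollary~\ref{cor: a one-rel amalg product} mentioned in the introduction. The delicate point is thus ensuring the closure theorem applies uniformly across all tower constructions, including the retraction and one-relator quotient structure intrinsic to towers, rather than merely to clean free products; I would isolate this as the technical heart of the argument and check it floor by floor against the precise form of Theorem~\ref{thm:closure}.
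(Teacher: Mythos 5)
There is a genuine gap, and it is precisely at the point you flag as the ``technical heart'': the permanence result you want to invoke does not exist, and in fact cannot exist. Theorem~\ref{thm:closure} only asserts closure of measure strong treeability under graph-of-groups constructions whose edge groups are \emph{finite} (parts (8)--(10)); it says nothing about cyclic or amenable infinite edge groups. This is not an oversight of the paper but a genuine obstruction: amalgamation over an infinite cyclic subgroup destroys treeability in general, even with amenable vertex groups. The paper itself points this out in the remark following Corollary~\ref{cor: a one-rel amalg product}: the torus knot groups $\Gamma=\langle t_1,t_2 \mid t_1^{d_1}t_2^{d_2}=1\rangle = \Z *_{\Z} \Z$ have cost $1$ and are non-treeable whenever they are non-amenable. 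So the induction ``floor by floor, each floor is an amalgam or HNN extension over a cyclic group, apply the closure theorem'' collapses at the very first floor. Likewise, Corollary~\ref{cor: a one-rel amalg product} cannot serve as an independent input: it requires $r\geq 3$ factors glued by a single product relation, it does not cover general cyclic amalgams, and in the paper it is itself a \emph{consequence} of the main technical theorem you would need to prove, so using it here is circular.

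What the paper actually does is prove a new technical result, Theorem~\ref{th: pi1 V meas str. treeable over U}: if $V$ is an IFL over $U$ (a compact surface $\Sigma$ with boundary and $\chi(\Sigma)\leq -1$ glued along its boundary curves to infinite-order loops in $U$), then $\pi_1(U)$ is a measure strong free factor of $\pi_1(V)$ with treeable complement; Theorem~\ref{elemfreetreeable} then follows by iterating this up the tower of Theorem~\ref{th: elem free gps as large towers}. The proof of that theorem is not Bass-Serre algebra plus a closure property: it uses the \emph{geometry} of the surface piece. One passes to the universal cover, observes that the dual graph $\widehat{\GGG}^*$ of the induced cell structure on each copy of $\widetilde{\Sigma}$ is planar and nowhere two-ended, extracts (via Corollary~\ref{cor:planarforest}, or Theorem~\ref{thm:acyclic} since this dual is a tree) a Borel a.e.\ one-ended spanning subforest of the associated Borel graph over the probability space, and removes the edges it crosses; what remains of the surface skeleton is a.e.\ a forest connecting the boundary lines, yielding $\RR_{\pi_1(V)}=\RR_{q_*(\pi_1(U))}*\TT_0$ with $\TT_0$ a treeing. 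It is exactly the requirement that the attached piece be a surface with $\chi(\Sigma)\leq -1$ -- and not an arbitrary group with a distinguished cyclic subgroup -- that makes the argument work, which is why the cyclic-edge-group closure statement you need is false while the IFL statement is true.
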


As in \cite{BTW-07}, our argument relies crucially upon the description of the finitely generated elementarily free groups as fundamental groups of certain ``tower'' spaces defined inductively (see Theorem~\ref{th: elem free gps as large towers} below)
uncovered in \cite[Proposition 6]{SeVI} (cf. also \cite{KhM1}) and completed later in \cite{Guirardel-Levitt-Sklinos-2020}.

We do not intend to state a characterization of the elementarily free groups and we will content ourselves with the following construction (Theorem~\ref{th: elem free gps as large towers}).

Let $U$ be a path-connected CW-complex and let $\Sigma$ be a connected compact surface with $k\geq 1$ boundary components and with $\pi_1(\Sigma)\simeq \FF_d$, with $d\geq 2$ (equivalently its Euler characteristic satisfies $\chi(\Sigma)\leq -1$).
Let $V$ be the quotient of $U \disjointunion \Sigma$ by identifying each boundary component $\beta_j$ of $\Sigma$ with a loop $\gamma_j$ in $U$ corresponding to infinite order elements of $\pi_1(U)$.
We say that the space $V$ is an \define{IFL} 
\define{over} $U$.\footnote{IFL may stand for Inductive or Injective - Free or Fuchsian -  Level or Loft.}

 Let $q \colon U \disjointunion \Sigma \to V$ be the  corresponding quotient map and let $q_*$ denote the induced map on fundamental groups, using a common base point $u\in U$.  Then (by standard Bass-Serre theory) $q_*$ realizes an isomorphism $q_*(\pi_1(U))\simeq \pi_1(U)$.

If we iterate inductively this construction, via
a sequence $V_0, V_1, \cdots, V_n$ of CW-complexes where each $V_{h+1}$ is an IFL over $V_h$, we say that the
CW-complex $V_n$ is an \define{IFL tower over} $V_0$.

\begin{theorem}[{\cite{SeVI}, \cite{KhM1},\cite{Guirardel-Levitt-Sklinos-2020}}]
\label{th: elem free gps as large towers}
Every finitely generated elementarily free group can be described as the fundamental group of an
IFL tower over a graph.
\end{theorem}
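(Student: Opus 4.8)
The plan is to deduce this statement from the established classification of finitely generated elementarily free groups rather than to reprove that classification from scratch. By Sela \cite{SeVI} and Kharlampovich--Myasnikov \cite{KhM1}, a finitely generated group has the same first-order theory as $\FF_2$ if and only if it is a \emph{hyperbolic tower} (equivalently, an $\omega$-residually free tower, or the coordinate group of a regular NTQ system) over a free group, and \cite{Guirardel-Levitt-Sklinos-2020} renders this normal form precise and complete. Since the theorem asserts only that elementarily free groups \emph{can be described as} IFL towers, I only need the forward direction, and I may freely discard any extra structure carried by a hyperbolic tower that is not recorded in the IFL data. The substance of the argument is therefore a translation: to exhibit the floors of such a tower as IFL steps in the sense of the definition above, and to identify the base with the fundamental group of a graph.

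First I would treat the base. A hyperbolic tower over a free group has as its bottom level $V_0$ a group $\pi_1(V_0)$ that is free of finite rank, and realizing a finitely generated free group as the fundamental group of a finite graph (for instance a wedge of circles) supplies the required base of a ``tower over a graph.'' I would then analyze a single floor $V_{h+1}$ over $V_h$. In a hyperbolic tower each floor is built by gluing a compact surface with boundary to the previous level along its boundary curves, the defining feature being that the inclusion $\pi_1(V_h)\hookrightarrow\pi_1(V_{h+1})$ admits a retraction; this retraction is what drives Sela's shortening and test-sequence machinery and hence the elementary equivalence, but it is \emph{not} part of the IFL data, so I simply forget it. What remains matches the IFL definition exactly: the attached surface $\Sigma$ is connected and compact with $k\geq 1$ boundary components and $\pi_1(\Sigma)\simeq\FF_d$ for some $d\geq 2$, i.e.\ $\chi(\Sigma)\leq -1$; each boundary component $\beta_j$ is glued to a loop $\gamma_j$ representing an infinite-order element of $\pi_1(V_h)$; and Bass--Serre theory applied to the resulting graph of groups (vertex groups $\pi_1(V_h)$ and $\FF_d$, edge groups the boundary $\Z$'s) yields that $q_*$ restricts to an isomorphism $q_*(\pi_1(V_h))\simeq\pi_1(V_h)$, which is precisely the injectivity required of an IFL. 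Iterating over the floors then presents $\Gamma$ as the fundamental group of an IFL tower over the base graph.

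The main obstacle is guaranteeing that \emph{every} floor may be taken of this pure surface type with $\chi(\Sigma)\leq -1$, and this is exactly the point for which the cited completion is needed. Two issues require care. First, abelian floors must be excluded: an elementarily free group is commutative-transitive and CSA and in particular contains no copy of $\Z^2$, so no higher-rank abelian floor can occur, and one must check that no rank-one abelian floor survives as a separate level either---equivalently, that the relevant tower class for elementary \emph{freeness} uses only surface floors. Second, the low-complexity surfaces of Euler characteristic $-1$, such as the once-punctured torus and the thrice-punctured sphere (and their non-orientable analogues), are exactly the delicate cases where the earlier formulations of the classification were imprecise; here I would appeal to \cite{Guirardel-Levitt-Sklinos-2020} to certify that each such floor genuinely satisfies $\chi(\Sigma)\leq -1$ with $k\geq 1$ boundary components and is glued along infinite-order curves, so that it is a bona fide IFL step. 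Once this surface bookkeeping is in place, the floor-by-floor translation of the preceding paragraph completes the proof.
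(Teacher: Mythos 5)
Your overall strategy (quote the classification and translate it floor by floor, discarding the retraction data) is the right one, and your treatment of the base and of a single floor glued to a connected lower level matches the paper's intent. But there is a genuine gap at exactly the point where you delegate to \cite{Guirardel-Levitt-Sklinos-2020}: you assume that the completed classification presents every finitely generated elementarily free group as a tower in which each surface is attached to a \emph{connected} lower level. That is not what it provides. As the paper stresses, to capture \emph{all} such groups one must use \emph{extended} towers (GLS, Definition 4.26, Corollary 7.10.5), in which the boundary components of $\Sigma$ are attached to infinite-order loops lying in \emph{several distinct connected components} $U_1,\dots,U_s$ of the level below. An extended floor is not an IFL floor in the paper's sense (the definition requires the ground $U$ to be path-connected), so the floor-by-floor translation you describe does not go through as stated; this connectivity issue, which your proposal never addresses, is the actual mathematical content of the theorem.

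The paper's proof is devoted precisely to repairing this: when $s\geq 2$ it decomposes $\Sigma=\Sigma'\# S$ as a connected sum, where $S$ is a sphere minus $s+1$ disks; gluing $S$ first, one boundary circle to each $U_i$, connects the components (realizing the free product of the $\pi_1(U_i)$), and then $\Sigma'$ is glued to the resulting connected space, yielding a genuine IFL floor --- provided $\chi(\Sigma')\leq -1$, which is checked via GLS's Remark 4.25 (one may assume $g(\Sigma)\geq n_1$) together with the count $k(\Sigma')=1+\sum_p(p-1)n_p\geq 1$. By contrast, the two delicate points you do flag are not the obstacles. Abelian floors are excluded by hyperbolicity of the relevant towers, and the residual levels of the form free product with $\Z$ are simply moved down into the base $V_0$ (the paper's footnote). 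The exceptional small surfaces are harmless in the forward direction because the IFL definition is \emph{more} permissive than the floors appearing in the classification: it allows every $\chi(\Sigma)\leq -1$, including the pair of pants and the nonorientable exceptions that Sela forbids, so nothing needs to be certified there. Your proposal would therefore need to be supplemented by the connected-sum argument (or an equivalent device for disconnected grounds) before it establishes the theorem.
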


Both theorems are proved in Section~\ref{sect: back to elementarily free groups}.

\begin{remark}
  The methods of this section establish measure strong treeability for fundamental groups of a larger class of spaces than just elementarily free groups.

In fact, the characterization of elementarily free groups requires moreover the existence of a retraction, and more generally of a non-degenerate
map for extended towers \cite[Definition 4.5, Proposition 4.21]{Guirardel-Levitt-Sklinos-2020},
from the fundamental groups of $V_{h+1}$ to that of $V_h$ satisfying certain conditions in restriction to $\pi_1(\Sigma)$. Our methods do not assume anything like that.
  For us, it is enough that the space $V_{h+1}$ is an IFL over $V_{h}$: the boundary loops $\beta_j$ of $\Sigma$ are glued to elements of infinite order in $\pi_1(V_{h})$ (thus ensuring the injection of the fundamental groups in the natural associated Bass-Serre decomposition) and
  the Euler characteristic of the compact surface $\Sigma$ satisfies $\chi(\Sigma)\leq -1$ (compare \cite{SeVI} where $\chi(\Sigma)\leq -2$ or $\Sigma$ is a punctured torus).
 This allows us some exceptional surfaces $\Sigma$ that are explicitly forbidden for $\omega$-residually free tower spaces:
  the pair of pants (i.e., the sphere minus three disks) and concerning the non-orientable ones, the projective plane minus two disks and the Klein bottle minus one disk.

   Moreover our starting space $V_0$ of height $0$ may include such surfaces as the Klein bottle or the non-orientable genus 3 surface, whose fundamental groups are known to be not even $\omega$-residually free. It may also include  as $V_0$ any space with infinite amenable fundamental group.
 \end{remark}

\subsection{\texorpdfstring{Measure free factors}{Measure free factors of Fn}}
\label{sect:Meas free fact}

Several notions, first introduced in the framework of \pmp{} equivalence relations, admit direct generalizations to the Borel context (possibly with the presence of a measure).
This is the case of the notions of \define{free product decomposition} $\RR=\RR_1*\RR_2$ and \define{free product decomposition with amalgamation} introduced in \cite[D\'ef. IV.9, D\'ef. IV.6]{Gab00a}.
This is also the case for the following: a subgroup $\Lambda\leq \Gamma$ is called a \define{measure free factor} of $\Gamma$ if for some
\pmp{} free action $\Gamma\actson^{a} (X,\mu)$, there exists a subrelation $\mathcal{S}\subrel \RR$ such that  $\RR_a=\RR_{a (\Lambda)} * \mathcal{S}$ ($\mu$-a.e.) \cite[Def. 3.1]{Ga05}. See also \cite{Alonso-2014} for more results on measure free factors.

Similarly, a subgroup $\Lambda\leq \Gamma$ is called a \define{measure strong free factor}
 if for every free Borel action $\Gamma\actson^{a} X$ and for every Borel auxiliary probability measure $\mu$ on $X$,  there exists a subrelation $\mathcal{S}\subrel \RR$ such that
$\RR_a=\RR_{a (\Lambda)} * \mathcal{S}$ on a $\Gamma$-invariant subset $X_0\subseteq X$ of full $\mu$-measure.
If moreover the subrelation $ \mathcal{S}$ can always be chosen to be treeable, then
$\Lambda$ is called a \define{measure strong free factor} of $\Gamma$ \define{with treeable complement}.

The main technical result of this section is the following.
\begin{theorem}\label{th: pi1 V meas str. treeable over U}
If $V$ is an IFL  over $U$, then
$\pi_1(U)=q_*(\pi_1(U))$ is a measure strong free factor of  $\pi_1(V)$ with treeable complement.
\end{theorem}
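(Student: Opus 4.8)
The plan is to run the planar-dual machinery of Section~\ref{sec:planar} on the pieces of the universal cover coming from the surface $\Sigma$, exploiting the fact that, since $\Sigma$ is a compact surface with nonempty boundary, its universal cover $\wt{\Sigma}$ is a simply connected---hence planar---surface with boundary. First I would fix a free Borel action $\Gamma \actson^a X$ of $\Gamma = \pi_1(V)$ together with a Borel probability measure $\mu$, which (replacing $\mu$ by the quasi-invariant measure $\mu' = \sum_i 2^{-i} g_{i*}\mu$ noted in the introduction) we may assume is $\RR_a$-quasi-invariant, and set $\Lambda = q_*(\pi_1(U)) = \pi_1(U)$. By Bass--Serre theory applied to the decomposition of $V$ as an IFL over $U$, the relation $\RR_a$ is generated by the subrelation $\RR_{a(\Lambda)}$ together with the contribution of the surface vertex group $\pi_1(\Sigma) \simeq \FF_d$, the two being glued along the $k$ boundary curves, each generating an edge group $\cong \Z$; the hypothesis that each $\gamma_j$ has infinite order is precisely what makes these edge groups infinite.

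Next I would build, on a suitable Borel complete section, a locally finite Borel planar graph $\Ggraph$ whose blocks are the surface pieces: equip $\Sigma$ with a cell structure, lift it $\FF_d$-equivariantly to $\wt{\Sigma}$, and take for $\Ggraph$ the resulting $1$-skeleton, with Borel $2$-basis $\mc{B}$ the (lifts of) facial cycles. Because $\wt{\Sigma}$ is an accumulation-free planar surface (Theorem~\ref{thm:Thomassen}), each interior edge lies in two facial cycles while each boundary edge lies in exactly one; thus the subgraph $\Hgraph \subseteq \Ggraph$ of edges belonging to at most one $B \in \mc{B}$ is exactly the set of lifts of the boundary curves $\beta_j$. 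Crucially, since $\beta_j$ is identified with $\gamma_j \in \pi_1(U)$, every edge of $\Hgraph$ joins $\RR_{a(\Lambda)}$-equivalent points, so $\RR_{\Hgraph} \subseteq \RR_{a(\Lambda)}$ (and each component of $\Hgraph$ is the bi-infinite axis of a lift of some $\gamma_j$).

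The key step is to find a treeing of the surface part while keeping the boundary curves inside $\Lambda$. Since $d \geq 2$ (equivalently $\chi(\Sigma) \leq -1$), each surface piece and its dual are quasi-isometric to $\FF_d$ and hence infinitely-ended, so the dual graph $\Ggraph^*$ is $\mu$-nowhere two-ended; I would therefore apply Theorem~\ref{thm:varplanar} to produce, on a $\Gamma$-invariant $\mu$-conull set $X_0$, an acyclic Borel graph $\Ggraph_0$ with $\Hgraph \subseteq \Ggraph_0 \subseteq \Ggraph$ and $(\RR_{\Ggraph_0})_{\restrict X_0} = \RR_{\Ggraph_{\restrict X_0}}$. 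Set $\Tgraph = \Ggraph_0 \mysetminus \Hgraph$ and $\mathcal{S} = \RR_{\Tgraph}$. Then $\Tgraph$ is acyclic, being a subgraph of the forest $\Ggraph_0$, so $\mathcal{S}$ is treeable; and since $\Ggraph_0 = \Hgraph \cup \Tgraph$ generates the surface relation while $\RR_{a(\Lambda)}$ already contains $\RR_{\Hgraph}$, joining with the $U$-part gives $\RR_{a(\Lambda)} \vee \mathcal{S} = \RR_a$ on $X_0$.

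It remains to upgrade this generation statement to a genuine free product decomposition $\RR_a = \RR_{a(\Lambda)} * \mathcal{S}$ in the sense of \cite{Gab00a}, and this is the step I expect to be the main obstacle. The acyclicity of $\Ggraph_0$ together with the Bass--Serre tree---along which reduced words alternate between $\Lambda$-moves and surface-moves---should furnish the normal form witnessing free position, but one must verify carefully that splitting $\Ggraph_0$ along $\Hgraph$ introduces no hidden relation between $\RR_{a(\Lambda)}$ and $\mathcal{S}$, and that the infinite order of each $\gamma_j$ prevents the cyclic edge relations from collapsing the decomposition. A secondary, more routine difficulty is descriptive-set-theoretic: carrying out the $\FF_d$-equivariant cell structure, the Borel $2$-basis, and the reduction to $2$-vertex-connected blocks uniformly and measurably across all surface pieces of the action. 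Granting these, the treeable complement $\mathcal{S}$ and the conull set $X_0$ witness that $\Lambda$ is a measure strong free factor of $\Gamma$ with treeable complement, completing the proof.
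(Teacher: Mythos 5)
Your construction is, in substance, the paper's own: the paper also cellulates $\Sigma$ (a single $2$-cell, with edges the $\alpha_i$, $t_j$, $\beta_j$ and all vertices on the boundary), forms the Borel graph $\Ggraph=\Gamma\backslash(X\times p^{-1}(\GGG))$ and its dual $\Ggraph^*$, observes that $\Ggraph^*$ is Borel planar and nowhere two-ended (being quasi-isometric to $\pi_1(\Sigma)$, free of rank $\geq 2$), extracts a Borel a.e.\ one-ended spanning subforest of $\Ggraph^*$ by Corollary~\ref{cor:planarforest}, and deletes from $\Ggraph$ the edges crossing it together with the boundary ($\beta_j$) edges. Your appeal to Theorem~\ref{thm:varplanar} merely packages these steps, and your secondary worry about Borel uniformity is resolved exactly by this quotient-of-the-diagonal-action device, so that part is indeed routine.

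The genuine gap is the step you yourself flag: freeness. It is a real gap, not a formality, because the evidence you cite does not bear on it. Acyclicity of $\Ggraph_0=\Hgraph\cup\Tgraph$ cannot yield $\RR_a=\RR_{a(\Lambda)}*\mathcal{S}$, since $\RR_{a(\Lambda)}$ is far larger than $\RR_{\Hgraph}$: a reduced alternating cycle may use a $\Lambda$-move joining two \emph{different} boundary lines of the same $\widehat{U}$-component of the universal cover (two distinct components of $\Hgraph$), and such a cycle is not a cycle of $\Ggraph_0$ at all, so acyclicity of $\Ggraph_0$ says nothing about it; nor does a Bass--Serre normal form apply directly, since $\mathcal{S}$ is not the relation of the vertex group $\pi_1(\Sigma)$ but of a subforest of its graphing. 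What the paper proves at exactly this point is the missing statement: in the tree-of-spaces $\widetilde{V}^t$ obtained by replacing each $\widehat{\Sigma}$-piece by the tree $\TTT$ (which contains its boundary lines), contracting every $\widehat{U}$-component to a point yields a graph $\Psi$ that is a \emph{tree}. The proof plays three tree structures against each other: the Bass--Serre tree of the splitting (so any closed alternating walk must backtrack through some edge space, i.e.\ some boundary line, and in particular each $\widehat{\Sigma}$-piece meets each $\widehat{U}$-component in at most one line), and the tree $\TTT$ inside each $\widehat{\Sigma}$-piece (so a walk cannot nontrivially enter and leave a $\widehat{\Sigma}$-piece through the same boundary line: a $\TTT_0$-path between two points of that line, together with the segment of the line, would close a cycle in $\TTT$). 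Acyclicity of $\Psi$ is precisely the absence of reduced alternating cycles, i.e.\ the freeness of $\RR_{a(\Lambda)}*\mathcal{S}$. With this claim and its proof supplied, your argument goes through; without it, the proposal establishes only $\RR_a=\RR_{a(\Lambda)}\vee\mathcal{S}$ with $\mathcal{S}$ treeable, which is not enough even to deduce treeability of $\RR_a$ when $\pi_1(U)$ is treeable, let alone the free-factor statement.
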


\begin{corollary}\label{cor: pi1 U some-treeab implies same for pi1 V}
If moreover $\pi_1(U)$ is treeable, strongly treeable, or measure strongly treeable, then the same holds for $\pi_1(V)$.
\end{corollary}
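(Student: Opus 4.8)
Write $\Gamma = \pi_1(V)$ and $\Lambda = q_*(\pi_1(U)) \cong \pi_1(U)$. The plan is to feed the free product decomposition supplied by Theorem~\ref{th: pi1 V meas str. treeable over U} into the additivity of treeability over free products: if an equivalence relation splits as $\RR = \RR_1 * \RR_2$ on a conull Borel set and both $\RR_1$ and $\RR_2$ are Borel treeable there, then $\RR$ is Borel treeable, since the union of a treeing of $\RR_1$ with a treeing of $\RR_2$ remains spanning (it generates $\RR_1 \vee \RR_2 = \RR$) and acyclic (the free-product condition forbids cycles mixing the two factors), cf.\ \cite{Gab00a}. Thus in each of the three cases it will suffice to show that the $\Lambda$-factor $\RR_{a(\Lambda)}$ is Borel treeable on a $\Gamma$-invariant conull set, because Theorem~\ref{th: pi1 V meas str. treeable over U} already makes the complement $\mathcal{S}$ treeable.

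For the \emph{measure strongly treeable} case, suppose $\pi_1(U)$ is $\MSTreeable$ and let $\Gamma \actson X$ be an arbitrary free Borel action with an arbitrary Borel probability measure $\mu$, which we may take $\RR_a$-quasi-invariant. Theorem~\ref{th: pi1 V meas str. treeable over U} gives a $\Gamma$-invariant $\mu$-conull $X_0$ with $(\RR_a)_{\restriction X_0} = (\RR_{a(\Lambda)})_{\restriction X_0} * \mathcal{S}$ and $\mathcal{S}$ treeable. Since $a\restriction\Lambda$ is a free Borel action of $\Lambda$, measure strong treeability makes $\RR_{a(\Lambda)}$ measure treeable, so for our $\mu$ there is an $\RR_{a(\Lambda)}$-invariant $\mu$-conull set $X_1$ on which it is Borel treeable (we may take $X_1$ invariant by saturating, using quasi-invariance). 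Passing to the $\Gamma$-invariant $\mu$-conull set $X_2 = X_0 \setminus [X_0 \setminus X_1]_{\RR_a}$, which is contained in $X_1$, both factors of $(\RR_a)_{\restriction X_2} = (\RR_{a(\Lambda)})_{\restriction X_2} * \mathcal{S}_{\restriction X_2}$ are Borel treeable, hence so is $(\RR_a)_{\restriction X_2}$. As the action and measure were arbitrary, $\Gamma$ is $\MSTreeable$. The \emph{strongly treeable} case is identical but restricted to free \pmp{} actions: taking $\mu$ to be $\Gamma$-invariant, the restriction $a\restriction\Lambda$ is a free \pmp{} action of $\Lambda$, so strong treeability of $\Lambda$ renders $\RR_{a(\Lambda)}$ treeable and the free-product union produces a treeing of $\RR_a$, showing every free \pmp{} action of $\Gamma$ is treeable.

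The plain \emph{treeable} case is the main obstacle, and I expect the decompose-every-action strategy above to fail here: treeability only asserts that \emph{some} single free \pmp{} action of $\Lambda$ is treeable, while an arbitrary free action of $\Gamma$ restricts to a possibly non-treeable action of $\Lambda$; moreover one cannot in general manufacture a $\Gamma$-action whose restriction to $\Lambda$ is the chosen treeable action, since coinduction — the universal way to extend a $\Lambda$-action to $\Gamma$ — produces a restriction that is a \emph{different} (typically non-treeable) action of $\Lambda$. To circumvent this I would instead observe that a measure strong free factor with treeable complement is in particular a measure free factor with treeable complement in the sense of \cite[Def.~3.1]{Ga05} (specialize the hypothesis of Theorem~\ref{th: pi1 V meas str. treeable over U} to any single free \pmp{} action carrying its invariant measure), and then invoke the theorem of \cite{Ga05} that treeability passes from such a factor to the ambient group. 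The underlying mechanism is that, by Hjorth's characterization \cite{Hjo-cost-att}, $\Lambda$ treeable means $\Lambda$ is measure equivalent to a free group, and the free-product structure $\RR = \RR_{a(\Lambda)} * \mathcal{S}$ with $\mathcal{S}$ treeable realizes $\Gamma$ as measure equivalent to a free-type group, hence treeable. This disposes of the treeable case and completes the proof of the corollary.
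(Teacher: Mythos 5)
Your handling of the \emph{strongly treeable} and \emph{measure strongly treeable} cases is correct and is essentially the paper's own (implicit) argument: restrict the given action to $q_*(\pi_1(U))$, tree that factor using the hypothesis on $\pi_1(U)$, tree the complement using Theorem~\ref{th: pi1 V meas str. treeable over U}, and take the union of the two treeings over the free product decomposition.

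The \emph{treeable} case, however, has a genuine gap, and it occurs exactly where you dismiss co-induction --- which is in fact the paper's proof (``using co-induction when $\pi_1(U)$ is only treeable''). Your claim that co-induction ``produces a restriction that is a different (typically non-treeable) action of $\Lambda$'' is false. Write $\Lambda = q_*(\pi_1(U))$ and let $\Lambda\actson (Y,\nu)$ be a treeable free \pmp{} action. The co-induced action $\Gamma\actson (X,\mu)$ of $\Gamma=\pi_1(V)$ is free and \pmp{}, and evaluation at the identity coset gives a $\Lambda$-equivariant factor map $\pi\from X\to Y$ onto the original action. Any equivariant factor map between two free actions of the same group is automatically class-bijective (freeness downstairs forces $\lambda=\lambda'$ whenever $\lambda\pi(x)=\lambda'\pi(x)$), and a treeing of $\RR_{\Lambda\actson Y}$ pulls back through a class-bijective map to a treeing of $\RR_{\Lambda\actson X}$: declare $(x,\lambda x)$ an edge iff $(\pi(x),\lambda\pi(x))$ is one downstairs. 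So the restriction of the co-induced action to $\Lambda$, while indeed a different action, is \emph{always} treeable when the base action is. Now apply Theorem~\ref{th: pi1 V meas str. treeable over U} to the co-induced action --- legitimate precisely because its conclusion (measure \emph{strong} free factor) covers \emph{every} free Borel action of $\pi_1(V)$, not just some --- to get $\RR_{a}=\RR_{a(\Lambda)}*\mathcal{S}$ with $\mathcal{S}$ treeable on a conull invariant set; the union of the two treeings is then a treeing, so $\pi_1(V)$ is treeable.

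Your proposed substitute does not repair this. Invoking ``the theorem of \cite{Ga05} that treeability passes from such a factor to the ambient group'' runs into the very difficulty you yourself identified: a measure free factor in the sense of \cite{Ga05} is witnessed by a single action, whose restriction to $\Lambda$ need not be treeable, so any such transfer statement requires proof, and the proof is the co-induction argument above (this is also how such statements are obtained in \cite{Ga05}). Likewise, the closing appeal to Hjorth's theorem and ``measure equivalence to a free-type group'' is not an argument: producing that measure equivalence requires exhibiting a treeable free \pmp{} action of $\Gamma$, which is exactly what is to be proved. In short, the first two cases stand, but the treeable case is broken as written and is fixed by the tool you rejected.
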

This  corollary follows immediately from Theorem~\ref{th: pi1 V meas str. treeable over U} (using co-induction when $\pi_1(U)$ is only treeable).

\begin{example}\label{ex: meas str treeable non-elemen free}
Take a pair of pants and glue each of its boundary components to a single extra circle with indices $d_1, d_2, d_3\in \Z\setminus \{0\}$ (and similarly for the two other  exceptional allowed surfaces, the projective plane minus two disks or the Klein bottle minus one disk, as soon as $\sum_i |d_i|\geq 3$).
The resulting group is not elementarily free by \cite[Lemma 10.20]{Guirardel-Levitt-Sklinos-2020} but it is measure strongly treeable by Theorem~\ref{th: pi1 V meas str. treeable over U}.
\end{example}
We also obtain an extension of \cite[Th. 3.3]{Ga05} to the non \pmp{} case and for every free action.
\begin{corollary}
Let $\Gamma$ be the free group on $a_1,\dots , a_n, b_1,\dots , b_n$.
The product of commutators $\prod[a_i,b_i]$ is a measure strong free factor in $\Gamma$.
The analogous statement holds in the free group on $a_1,\dots , a_n$ ($n\geq 2$) for the product of the squares: $a_1^2\cdots a_n^2$.
\end{corollary}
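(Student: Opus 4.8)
The plan is to realize $\Gamma$ as the fundamental group of a compact surface with a single boundary component whose boundary loop is exactly the distinguished word, and then to recognize that surface-with-boundary as an IFL over a circle, so that Theorem~\ref{th: pi1 V meas str. treeable over U} applies verbatim.

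First I would treat the product of commutators. Let $\Sigma$ be the compact orientable surface of genus $n$ with one boundary component. Its fundamental group is free of rank $2n$, and one may choose free generators $a_1,b_1,\dots ,a_n,b_n$ so that the boundary curve $\beta$ represents $w:=\prod_{i=1}^n[a_i,b_i]$; moreover $\chi(\Sigma)=1-2n\leq -1$ and $d=2n\geq 2$, as required in the definition of an IFL. Next I would take $U=S^1$, so that $\pi_1(U)=\Z=\langle t\rangle$ with generating loop $\gamma$ of infinite order, and form $V$ by gluing $\beta$ to $\gamma$. By construction $V$ is an IFL over $U$. A van Kampen computation gives $\pi_1(V)=\pi_1(U)*_{\langle\gamma\rangle=\langle\beta\rangle}\pi_1(\Sigma)$; since the amalgamated subgroup $\langle\gamma\rangle$ is all of $\pi_1(U)$, the amalgam collapses and $\pi_1(V)\cong\pi_1(\Sigma)=\FF_{2n}=\Gamma$, under which the distinguished subgroup $q_*(\pi_1(U))$ is carried precisely onto the infinite cyclic subgroup $\langle w\rangle$. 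Theorem~\ref{th: pi1 V meas str. treeable over U} then asserts that $q_*(\pi_1(U))=\langle w\rangle$ is a measure strong free factor of $\pi_1(V)=\Gamma$ (in fact with treeable complement), which is the first assertion. For the product of squares I would run the identical argument with $\Sigma$ the compact non-orientable surface of genus $n$ with one boundary component: then $\pi_1(\Sigma)\cong\FF_n$ with boundary word $a_1^2\cdots a_n^2$ and $\chi(\Sigma)=1-n\leq -1$ for $n\geq 2$ (for $n=2$ this $\Sigma$ is the Klein bottle minus a disk, one of the exceptional surfaces explicitly permitted by the construction).

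The heavy lifting is entirely contained in Theorem~\ref{th: pi1 V meas str. treeable over U}, so there is no genuine analytic obstacle; the only point requiring care is the topological bookkeeping. Concretely, I must confirm that the boundary words have infinite order in the free group (so that the gluing is a legitimate IFL and $\langle\gamma\rangle=\langle\beta\rangle\cong\Z$), and that under the collapse of the amalgam the image $q_*(\pi_1(U))$ is identified with the cyclic subgroup generated by the \emph{full} product of commutators (resp.\ squares), rather than a conjugate or a proper power of it. Both are standard facts about surface groups, and the Euler-characteristic and rank hypotheses of the IFL definition hold uniformly in every case, including the small non-orientable examples, so the argument goes through without modification.
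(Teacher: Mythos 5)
Your proof is correct and is exactly the argument the paper has in mind: the corollary is stated as an immediate consequence of Theorem~\ref{th: pi1 V meas str. treeable over U}, obtained by viewing the (orientable, resp.\ non-orientable) genus-$n$ surface with one boundary component glued to a circle as an IFL over $U=S^1$, so that the amalgam collapses and $q_*(\pi_1(U))$ becomes the cyclic subgroup generated by the boundary word. Your bookkeeping (Euler characteristic, rank $\geq 2$, infinite order of the gluing loop, and the explicitly permitted exceptional case of the Klein bottle minus a disk for $n=2$) matches the paper's hypotheses, so there is nothing to add.
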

It follows that fundamental groups of "branched surfaces" from \cite{Ga05} are measure strongly treeable, and in particular strongly treeable.

Corollaries \ref{cor:gtht^{-1}} and \ref{cor:g0g1} in \S\ref{sec:cyclicmff} below provide further measure strong free factors of free groups.

Similarly to \cite[Question 3.10]{Ga05}, we ask the question:
\begin{question}
 What are all the measure strong free factors of the free group $\FF_r$?
\end{question}

\subsection{Extended IFL towers - Non connected grounds}
\label{sect: extended large towers}

Let $\Sigma$ be a connected compact surface with $k> 0$ boundary components $\{\beta_j \suchthat 1\leq j \leq k\}$ such that $\pi_1(\Sigma)$ is a non-cyclic free group. Consider a collection of disjoint CW-complexes $U_1, U_2, \cdots, U_r$ and a collection  $\{\gamma_j \suchthat 1\leq j \leq k\}$ of loops of infinite order in $\pi_1(U_{f(j)})$ for some surjective map $f:\{1\leq j \leq k\}\to \{1\leq i\leq r\}$.
Let $W$ be the space obtained by gluing $\Sigma$ to the $\sqcup U_i$ by attaching its $j$-th boundary component to $\gamma_j$.
This connected space $W$ is an \define{extended IFL over} the $U_i$'s.
\\
Let $U^*$ be the connected space obtained by attaching for each $i=2, 3, \cdots, r$ an arc $\xi_i$ from (the base point of) $U_1$ to (the base point of)  $U_i$.
\\
Let $V$ be the space obtained from $W$ by adding the arcs $\xi_i$.
\\
On the one hand, Theorem~\ref{th: pi1 V meas str. treeable over U} ensures that $\pi_1(U^*)=\pi_1(U_1)*\pi_1(U_2)*\cdots *\pi_1(U_r)$ is a measure strong free factor of $\pi_1(V)$ with treeable complement and, on the other hand, $\pi_1(V)=\pi_1(W)*\FF_{r-1}$.
Iterating the construction would of course produce \define{extended IFL towers over} the first space.

\begin{corollary}\label{cor: extended etage-treeability}
Let $W$ be an extended IFL over $U_1, U_2, \cdots, U_r$.
If the $\pi_1(U_i)$ are all treeable, strongly treeable, or measure strongly treeable, then the same holds for  $\pi_1(W)$.
\end{corollary}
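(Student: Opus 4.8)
The plan is to reduce the statement to the already-settled connected case by routing everything through the auxiliary space $V$ built above, exploiting the two structural facts recorded there: that $\pi_1(U^*) = \pi_1(U_1)\ast\cdots\ast\pi_1(U_r)$ is a measure strong free factor of $\pi_1(V)$ with treeable complement, and that $\pi_1(V) = \pi_1(W)\ast\FF_{r-1}$. The first thing I would check is that $V$ is literally an IFL over the \emph{connected} ground $U^*$: by construction $V$ is obtained by gluing the surface $\Sigma$ (with $\pi_1(\Sigma)$ non-cyclic free, hence $\simeq\FF_d$ with $d\geq 2$) to $U^* = (\sqcup_i U_i)\cup\{\xi_2,\dots,\xi_r\}$ along the boundary loops $\beta_j$, each attached to $\gamma_j\in\pi_1(U_{f(j)})\leq\pi_1(U^*)$, an element of infinite order in $\pi_1(U^*)$ (infinite order in the free factor $\pi_1(U_{f(j)})$, hence in the free product). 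So $V$ genuinely fits the hypotheses of Theorem~\ref{th: pi1 V meas str. treeable over U}.

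Now suppose each $\pi_1(U_i)$ is treeable (resp.\ strongly treeable, resp.\ measure strongly treeable). Since $\pi_1(U^*)=\pi_1(U_1)\ast\cdots\ast\pi_1(U_r)$ is the fundamental group of a finite graph of groups with trivial edge groups, permanence of these three classes under free products (Theorem~\ref{thm:closure}, and classically for the treeable and strongly treeable cases) shows $\pi_1(U^*)$ has the same property. Applying Corollary~\ref{cor: pi1 U some-treeab implies same for pi1 V} to the IFL $V$ over $U^*$ — using co-induction in the treeable case exactly as there — then yields that $\pi_1(V)$ is treeable (resp.\ strongly treeable, resp.\ measure strongly treeable).

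It remains to transport the conclusion from $\pi_1(V)$ down to its free factor $\pi_1(W)$ via $\pi_1(V)=\pi_1(W)\ast\FF_{r-1}$, i.e.\ to see that each property is inherited by free factors. For treeability this is immediate: a subgroup of a treeable group is treeable, since the restriction of a treeable orbit relation to the subgroup is a subrelation, hence treeable. For strong treeability and measure strong treeability it is the closed-subgroup permanence of Theorem~\ref{thm:closure} ($\pi_1(W)$ is a subgroup, hence a closed subgroup, of the discrete group $\pi_1(V)$). Concretely, the mechanism is to manufacture an ambient free action of $\pi_1(V)$ containing the given $\pi_1(W)$-action: starting from a free (Borel) action $\pi_1(W)\actson Z$, let $\pi_1(V)=\pi_1(W)\ast\FF_{r-1}$ act on $Z\times\{0,1\}^{\pi_1(V)}$ diagonally on the $Z$-coordinate (through the $\pi_1(W)$-action) and by the Bernoulli shift on the second coordinate for both free factors; this action is free since it projects onto the free shift action, its orbit relation is treeable by the property of $\pi_1(V)$, and its restriction to $\pi_1(W)$ is a treeable subrelation which is a \emph{class-bijective} extension of $\RR_{\pi_1(W)\actson Z}$ (the projection to $Z$ is injective on each $\pi_1(W)$-orbit because the action on $Z$ is free). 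Treeability descends along class-bijective factor maps, so $\RR_{\pi_1(W)\actson Z}$ is treeable, and as the action was arbitrary this gives the desired property for $\pi_1(W)$.

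The only genuinely delicate point is this last, backward, step: whereas treeability passes freely to subgroups, strong and measure strong treeability resist naive descent, and the key is to realize the $\pi_1(W)$-action not merely as a factor but as a \emph{class-bijective} factor of a free $\pi_1(V)$-action, so that a treeing can be transported class by class. Everything else — identifying $V$ as an IFL over the connected ground $U^*$, and the forward free-product and IFL permanence — is bookkeeping on top of Corollary~\ref{cor: pi1 U some-treeab implies same for pi1 V} and Theorem~\ref{thm:closure}. Finally, iterating this argument handles extended IFL towers over the bottom space with no additional input.
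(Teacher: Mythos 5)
Your overall route is the same as the paper's: free-product permanence (Theorem~\ref{thm:closure}-(8)) gives the property for $\pi_1(U^*)$, Corollary~\ref{cor: pi1 U some-treeab implies same for pi1 V} transfers it to $\pi_1(V)$, and one then passes to the free factor $\pi_1(W)$ via $\pi_1(V)=\pi_1(W)*\FF_{r-1}$. The gap is in your justification of this last step in the strongly treeable case. First, a misquotation: Theorem~\ref{thm:closure}-(5) asserts closure under closed subgroups only for $\BSTreeable$, $\MSTreeable$ and $\Treeable$; for $\STreeable$ it asserts only the free-factor statement (if $G=G_1*G_2$ is countable and lies in $\STreeable$, then so does $G_1$), and that is exactly the part whose proof requires T\"ornquist's theorem \cite{Tor06}. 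Second, and more seriously, the "concrete mechanism" you substitute for it fails: you build a free $\pi_1(V)$-action on $Z\times\{0,1\}^{\pi_1(V)}$, note that its restriction to $\pi_1(W)$ is treeable (fine, by treeability of subrelations \cite[Proposition 3.3]{JKL02}) and is a class-bijective extension of the given action $\pi_1(W)\actson Z$, and then invoke the principle that treeability descends along class-bijective factor maps. That descent principle is not available; it is essentially the paper's motivating open problem. Indeed, any factor map between free \pmp{} actions of the same group is automatically class-bijective, so if such descent held, then for any treeable group $\Gamma$ with treeable free action $\Gamma\actson Y$ and any free \pmp{} action $\Gamma\actson X$, the diagonal action on $X\times Y$ would be treeable (pulling the treeing back from $Y$ -- the legitimate, upward direction) and descent along the projection to $X$ would make $X$ treeable; that is, treeable would imply strongly treeable, which is \cite[Question VI.2]{Gab00a}, open for twenty years and the reason surface groups are a theorem of this paper rather than a remark.

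The correct mechanism, built into Theorem~\ref{thm:closure}-(5), goes the other way around: by T\"ornquist \cite{Tor06}, any free \pmp{} action of $\pi_1(W)$ on $(Z,\mu)$ can be realized, after conjugating the $\FF_{r-1}$-action by an element of $\Aut(Z,\mu)$, as the restriction of a free \pmp{} action of $\pi_1(W)*\FF_{r-1}$ on the \emph{same} space $(Z,\mu)$. Then $\RR_{\pi_1(W)\actson Z}$ is literally a subrelation of the treeable relation generated by $\pi_1(V)$, and \cite[Proposition 3.3]{JKL02} finishes; no descent along a factor map is needed. Your treatment of the other two classes is sound: subgroups inherit $\Treeable$ via subrelations, and $\MSTreeable$ does enjoy closed-subgroup permanence -- though note that its proof in the paper again realizes the given relation as a genuine restriction (induction followed by restriction to the identity slice), not as a factor of one.
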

\begin{remark}\label{rem:no measure strong free factor for extended tower}
Observe that there is not too much to expect in terms of measure strong free factor involving all of the
$\pi_1(U_i)$, as exemplified by Corollary~\ref{cor: a one-rel amalg product}.
\end{remark}

\begin{proof}[Proof of Corollary~\ref{cor: extended etage-treeability}]
When the $\pi_1(U_i)$ all satisfy one of the properties, then their free product also does (Theorem~\ref{thm:closure}-(8)).
By Corollary~\ref{cor: pi1 U some-treeab implies same for pi1 V},
 the same then holds for $\pi_1(V)$.
It follows from Theorem~\ref{thm:closure}-(5) that
treeability,  strong treeability, and  measure strong treeability respectively
pass from $\pi_1(V)$ to its subgroup $\pi_1(W)$.
\end{proof}

An immediate application of Corollary~\ref{cor: extended etage-treeability} gives the following.
\begin{corollary}\label{cor: a one-rel amalg product}
Let $r\geq 3$ and $\Gamma_1, \Gamma_2, \cdots, \Gamma_r$ be countable groups and let $\gamma_i\in \Gamma_i$ be an infinite order element for each $i=1, 2, \cdots, r$.
If the $\Gamma_i$ are treeable, strongly treeable, or measure strongly treeable, then the same holds for
\[\Gamma=\left\langle \Gamma_1, \Gamma_2, \cdots, \Gamma_r \;\middle\vert\;  \prod_{i=1}^r \gamma_i=1\right\rangle=\left(\Gamma_1*\Gamma_2*\cdots *\Gamma_r\right)/\langle\langle \, \prod_{i=1}^r \gamma_i\, \rangle\rangle.\]
\end{corollary}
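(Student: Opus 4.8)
The plan is to exhibit $\Gamma$ as the fundamental group of an extended IFL over spaces whose fundamental groups are $\Gamma_1,\dots,\Gamma_r$, and then to quote Corollary~\ref{cor: extended etage-treeability}, which transfers treeability, strong treeability, and measure strong treeability from the $\pi_1(U_i)$ to $\pi_1(W)$. So the entire content of the corollary is topological: the right extended IFL must have $\Gamma$ as its fundamental group.

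To set this up I would first choose, for each $i$, a pairwise-disjoint path-connected CW-complex $U_i$ with $\pi_1(U_i)\cong\Gamma_i$ (for instance a presentation complex, or a $K(\Gamma_i,1)$), and represent the prescribed infinite-order element $\gamma_i$ by a loop of infinite order in $\pi_1(U_i)$. For the surface I would take $\Sigma$ to be the sphere with $r$ boundary components $\beta_1,\dots,\beta_r$ (an $r$-holed sphere). Since $r\geq 3$ its Euler characteristic is $2-r\leq -1$, so $\pi_1(\Sigma)$ is free of rank $r-1\geq 2$; concretely it has the presentation $\pi_1(\Sigma)=\langle \beta_1,\dots,\beta_r \mid \beta_1\beta_2\cdots\beta_r=1\rangle$ with the $\beta_j$ the suitably oriented boundary loops. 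In particular $\pi_1(\Sigma)$ is a non-cyclic free group, exactly as the definition of an extended IFL demands. I then form the extended IFL $W$ over $U_1,\dots,U_r$ with $k=r$ boundary components, the identity map $f$, gluing each boundary component $\beta_i$ of $\Sigma$ to the infinite-order loop $\gamma_i$ in $U_i$; this is legitimate precisely because each $\gamma_i$ has infinite order.

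The one step requiring genuine care — and the place where the choice of the $r$-holed sphere is essential — is the identification $\pi_1(W)=\Gamma$. Here $W$ carries the structure of a graph of spaces whose underlying graph is a star: the central vertex is $\Sigma$, the peripheral vertices are the $U_i$, and for each $i$ there is a single edge, with circle edge-space, joining $\Sigma$ to $U_i$ via $\beta_i$ on one side and $\gamma_i$ on the other. This graph has $r+1$ vertices and $r$ edges, hence is a tree, so van Kampen's theorem (equivalently Bass--Serre theory) presents $\pi_1(W)$ as the iterated amalgam of $\pi_1(\Sigma)$ with the $\Gamma_i$ along the cyclic subgroups $\langle\beta_i\rangle\cong\langle\gamma_i\rangle\cong\Z$, with no extra free factor since the tree has no cycles. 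Eliminating each generator $\beta_i$ through the gluing relation $\beta_i=\gamma_i$ converts the single surface relation $\beta_1\cdots\beta_r=1$ into $\gamma_1\cdots\gamma_r=1$, giving
\[
\pi_1(W)=\big(\Gamma_1*\Gamma_2*\cdots*\Gamma_r\big)/\langle\langle\,\textstyle\prod_{i=1}^{r}\gamma_i\,\rangle\rangle=\Gamma.
\]
With this identification in hand, Corollary~\ref{cor: extended etage-treeability} applies verbatim: since each $\pi_1(U_i)=\Gamma_i$ is treeable (resp.\ strongly treeable, resp.\ measure strongly treeable), so is $\pi_1(W)=\Gamma$, which is the assertion. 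The main obstacle is thus not any analysis but the bookkeeping in the preceding display, namely verifying that the boundary-curve product relation of the planar surface reproduces exactly the single relator $\prod_i\gamma_i=1$ and that the star graph contributes no spurious free generators.
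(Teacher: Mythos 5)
Your proposal is correct and is essentially the paper's own proof: the paper likewise realizes $\Gamma$ as $\pi_1(W)$ for an extended IFL $W$ over spaces $U_i$ with $\pi_1(U_i)\simeq\Gamma_i$, taking $\Sigma$ to be the sphere minus $r$ disks, and then invokes Corollary~\ref{cor: extended etage-treeability}. The only difference is that you spell out the van Kampen/Bass--Serre bookkeeping identifying $\pi_1(W)$ with $\Gamma$, which the paper leaves implicit.
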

As a particular case, we recover $\MSTreeable$ for the group $\Gamma=\langle a_1, a_2, s \,\vert\, a_1^2a_2^2s^d=1\rangle$ of Example~\ref{ex: meas str treeable non-elemen free} with the punctured Klein bottle.

\begin{remark}
Observe that the analogous statement fails for $r=2$ as witnessed by the group $\Gamma=\langle t_1, t_2 \,\vert\,  t_1^{d_1}t_2^{d_2}=1\rangle $
which has cost $1$ (as an amalgamated product $\Z*_{\Z} \Z$) and is non-treeable when non-amenable \cite[Corollary VI.22]{Gab00a}, e.g.,
when the indices $|d_i|$ are $\geq 3$.
\end{remark}

\begin{proof}This is exactly the fundamental group of an extended IFL over spaces $U_i$ with $\pi_1(U_i)\simeq \Gamma_i$ when $\Sigma$ is a sphere minus $r$ disks.\end{proof}

\subsection{Proof of Theorem~\ref{th: pi1 V meas str. treeable over U}}\label{subsection:mstproof}

\begin{proof}[Proof of Theorem~\ref{th: pi1 V meas str. treeable over U}]
Fix an enumeration $\{\beta_j \suchthat 1\leq j \leq k\}$ of  the (disjoint) boundary circles of $\Sigma$ based at $u_j\in \beta_j$.
Fix also for all $j\geq 2$ (if there are any) simple paths $t_j$ from $u_j$ to $u_1$ whose interiors are mutually disjoint and disjoint from the $\beta_j$.
For sake of notation, set $t_1$ to be the trivial path at $u_1$.
Let $b_j$ be the corresponding loops based at $u_1$ (and their class in $\pi_1(\Sigma,u_1)$), i.e.,
 $b_j=t_j^{-1} \beta_j t_j$ (for  $j=1, \cdots, k$).
Let $\{a_i : i \leq l$\} enumerate the other generators of a standard presentation of $\pi_1(\Sigma, u_1)$ as a surface with boundary with a single relation $R(a_i,b_j)$. More precisely:

\begin{itemize}
\item
If $\Sigma$ is orientable of genus $g$ then
 $\chi(\Sigma)=-2g-k+2$ and
the standard presentation is given by
$\pi_1(\Sigma)=\langle \{a_i\}_{i=1}^{2g},\{b_j\}_{j=1}^{k} \,\vert\, R((a_i)_i,(b_j)_j) \rangle$
with $R((a_i)_i,(b_j)_j)=\prod_{i=1}^{g}[a_{2i-1},a_{2i}]\prod_{j=1}^{k} b_j$. We set $l=2g$.

\item
If $\Sigma$ is non-orientable of genus $g$ then
$\chi(\Sigma)=-g-k+2$ and the standard presentation is given by
$ \pi_1(\Sigma)= \langle \{a_i\}_{i=1}^{g},\{b_j\}_{j=1}^{k} \,\vert\, R((a_i)_i,(b_j)_j)\rangle$
with
 $R((a_i)_i,(b_j)_j)=  \prod_{i=1}^{g}a_i^2 \prod_{j=1}^{k} b_j$. We set $l=g$.
\end{itemize}
Notice that $\pi_1(\Sigma)$ is isomorphic to a free group on $l+k-1$ generators since $k\geq 1$.

We fix a base point $u\in U$ which we identify with its image in $V$.
\begin{claim} The group
$\pi_1(V,u)$ splits as a graph-of-groups on two vertices, with vertex groups $\pi_1(U,u)$ and $\pi_1(\Sigma, u_1)$, and with $k$ edges joining the two vertices with edge groups isomorphic to $\langle b_j\rangle$.
This leads to the presentation:
  $$\pi_1(V,u)=\langle \pi_1(U,u), a_i,t_j \,\vert\,  R((a_i)_i,(t_j^{-1}\gamma_j t_j)_j)= t_1=1 \rangle.$$
\end{claim}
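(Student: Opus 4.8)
The plan is to read the splitting off directly from the defining gluing of $V$ via the Seifert--van Kampen theorem in its graph-of-spaces formulation, and then to convert the resulting graph-of-groups presentation into the stated one by Tietze transformations. First I would present $V$ as the total space of a graph of spaces over the graph $Y$ with two vertices $P,Q$ and $k$ parallel edges $e_1,\dots,e_k$ joining them: the vertex space over $P$ is $U$, the vertex space over $Q$ is $\Sigma$, and the edge space over $e_j$ is a circle, attached to $\beta_j\subseteq\partial\Sigma$ on the $Q$-side and to $\gamma_j\subseteq U$ on the $P$-side via the defining identification $\beta_j\cong\gamma_j$. Standard Bass--Serre theory (the theorem on fundamental groups of graphs of spaces) then identifies $\pi_1(V,u)$ with the fundamental group of the graph of groups $\mathbb{G}$ over $Y$ whose vertex groups are $G_P=\pi_1(U,u)$ and $G_Q=\pi_1(\Sigma,u_1)$, whose edge groups are all $\Z$, and whose edge monomorphisms send the generator of the $j$-th edge group to $\gamma_j\in G_P$ on the $P$-side and to $b_j=t_j^{-1}\beta_j t_j\in G_Q$ on the $Q$-side. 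Here the connecting path $t_j$ is precisely the datum needed to transport the based loop $\beta_j$, based at $u_j$, to the base point $u_1$ of $\Sigma$.

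These edge maps are injective because $\gamma_j$ has infinite order in $\pi_1(U)$ by hypothesis and $b_j$ is conjugate to $[\beta_j]$, which has infinite order in the free, hence torsion-free, group $\pi_1(\Sigma)$. This is what makes $\mathbb{G}$ a genuine graph of groups with edge groups $\langle b_j\rangle\cong\Z$, in which $\pi_1(U,u)$ and $\pi_1(\Sigma,u)$ inject; it also recovers the isomorphism $q_*(\pi_1(U))\simeq\pi_1(U)$ already noted in the definition of an IFL.

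Next I would write down the Bass--Serre presentation of $\pi_1(\mathbb{G})$ relative to the maximal tree $T=\{e_1\}$. Its generators are $G_P$, $G_Q$, and one stable letter $s_j$ for each non-tree edge $e_j$ with $j\geq 2$; its relations are those of $G_P$ and $G_Q$, the tree relation $\gamma_1=b_1$, which reads $\gamma_1=\beta_1$ since $t_1$ is trivial, and the relations $s_j\gamma_j s_j^{-1}=b_j$. Reconciling orientations, the stable letter $s_j$ is the class of $t_j$ up to inversion, so in every case the edge relation becomes $b_j=t_j^{-1}\gamma_j t_j$; geometrically this just says that, once $\beta_j$ is identified with $\gamma_j$ and each $u_j$ with $u$, the paths $t_j$ become loops at $u$ in $V$ conjugating $\gamma_j$ to $b_j$. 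Substituting the standard surface presentation $\pi_1(\Sigma,u_1)=\langle a_i,b_j\mid R(a_i,b_j)\rangle$ and using the relations $b_j=t_j^{-1}\gamma_j t_j$ to Tietze-eliminate the generators $b_j$ turns the single surface relator $R(a_i,b_j)$ into $R(a_i,t_j^{-1}\gamma_j t_j)$ and leaves generators $\pi_1(U,u)$, $a_i$, $t_j$ together with the relation $t_1=1$, which is exactly the asserted presentation.

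The bulk of the work, and the only place where genuine care is needed, is the bookkeeping of base points and connecting paths: verifying that the edge monomorphism into $\pi_1(\Sigma,u_1)$ really is $z_j\mapsto t_j^{-1}\beta_j t_j=b_j$ and that the associated stable letter matches $t_j$ with the correct orientation, so that the conjugation in the edge relation lands on precisely the generators $b_j$ occurring in the surface relator $R$. Everything else, namely the injectivity of the edge maps and the Tietze elimination of the $b_j$, is routine once the gluing has been set up as a graph of spaces.
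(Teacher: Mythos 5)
Your proof is correct and takes essentially the same route as the paper: the paper also realizes $V$ as a graph of spaces with vertex spaces $U$ and $\Sigma$ and $k$ circle edge spaces, obtaining the splitting by ``first gluing $\beta_1$ (an amalgam) and then the other $\beta_j$ in succession (HNN extensions with free letters $t_j$)'', which is exactly your Bass--Serre presentation relative to the maximal tree $\{e_1\}$, and likewise uses the infinite order of the $\gamma_j$ (and of the $b_j$ in the free group $\pi_1(\Sigma)$) for injectivity of the edge groups. The only difference is cosmetic: the paper's proof is terser on the Tietze bookkeeping you spell out, and spends most of its length introducing the auxiliary cell structure and dual graph $\GGG^*$ on $\Sigma$, which is not needed for the claim itself but is set up there for use in the remainder of the proof of Theorem~\ref{th: pi1 V meas str. treeable over U}.
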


\begin{remark}
Conversely, a group admitting such a graph-of-groups splitting  with $\gamma_j$ of infinite order in $\pi_1(U,u)$ satisfies the assumptions of Theorem~\ref{th: pi1 V meas str. treeable over U}.
\end{remark}
\begin{proof}[Proof of the claim] The claim is quite clear, but we
take advantage of this part to introduce our notations.
Let $\alpha_i$ be simple loops based at $u_1$ representing $a_i$ which are pairwise disjoint and disjoint from each $t_j$ and $b_j$
 (see Figure 2).
\begin{center}
\begin{figure}[h!]
\resizebox{75mm}{!}{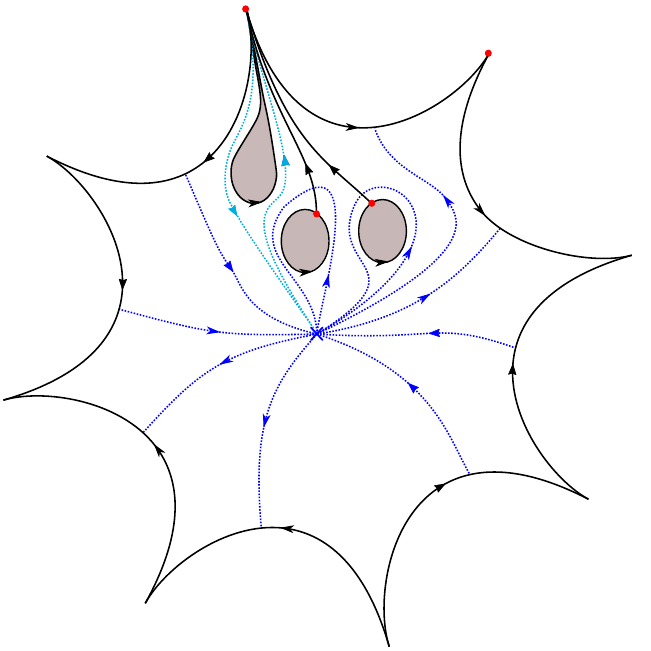} \hskip 2mm
\resizebox{60mm}{!}{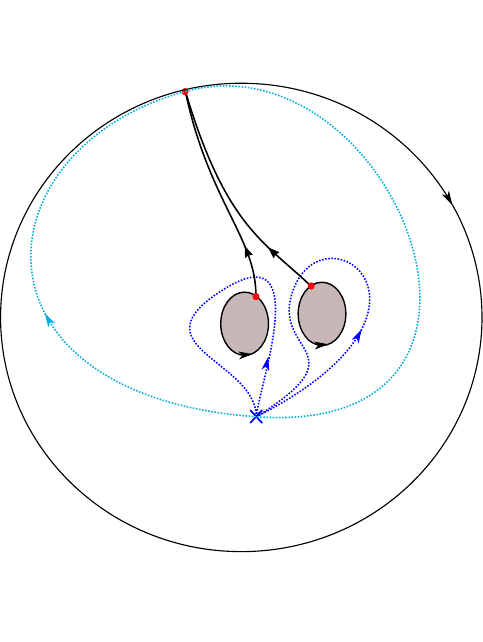}
\label{pict: hyperbolic-octogon-3-holes}
\caption{The graphs $\GGG_1$ and $\GGG^*$ for a genus two surface minus three disks and for a sphere minus three disks}
\end{figure}
\end{center}
The vertices $\{u_j\}$ and the paths $\{\alpha_i, \beta_j, t_j\}$ draw a connected graph $\GGG_1$ on $\Sigma$.
Its complement is a connected $2$-cell $\sigma$, homotopically equivalent to a disk. This defines a cellular decomposition $\CC$ of $\Sigma$.

The polyhedral dual graph has one vertex $\sigma^*$ (in the interior of $\sigma$).  Its edges have two forms: $\tau_j$ is a loop turning around $\beta_j$ and crossing $t_j$, while $\nu_i$  is a loop crossing $\alpha_i$. Observe that they do not cross the $\beta_j$. The loop $\tau_1$ has a particular status since $t_1\equiv u_1$ being trivial, $\tau_1$ contains $u_1$. Removing $\tau_1$, the family
$\nu_1, \nu_2, \cdots, \nu_{l}, \tau_2, \cdots, \tau_k$
 freely  generates the fundamental group $\pi_1(\Sigma, \sigma^*)$.
 We let $\GGG^*$ denote the dual graph with the loop $\tau_1$ removed.

Up to an homotopy equivalence, we may glue each boundary component $\beta_j$ to the loop $\gamma_j$ based at the base point $u$ of $U$ in such a way that $u_j$ is glued to $u$. Then all the $u_j$ become identified with $u$ in $V$ and each $t_j$ (for  $j=2, \cdots, k$) becomes a loop.
 Performing this identification in $\Sigma$ produces the graph $\GGG$ obtained  from $\GGG_1$ by identifying the $u_j$ all together.
 The graph $\GGG^*$ remains unchanged.

Turning to the fundamental group based at $u$ and seeing the quotient space $V=(U\disjointunion\Sigma)/_{\{\beta_j=\gamma_j\}} $ as a graph-of-spaces obtained by first gluing $\beta_1$ (furnishing an amalgam) and then in succession the other $\beta_j$ (iteratively furnishing HNN-extensions with free letter $t_j$), it follows that $\pi_1(V,u)$ is described as a graph-of-groups on two vertices, with vertex groups $\pi_1(U,u)$ and $\pi_1(\Sigma, u_1)$, and with $k$ edges joining the two vertices with edge groups isomorphic to $\langle b_j\rangle$.

By assumption, each $q_*(\gamma_j) = q_*(\beta_j)$ has infinite order in $\pi_1(U,u)$, hence the edge groups inject into the vertex groups and thus both vertex groups inject into $\pi_1(V,u)$ by \cite[Theorem 11, Corollary 1]{Serre}.
Then $\pi_1(V)$ is generated above $ q_*\pi_1(U)$ by adding $\{a_i : i \leq l\}$ and $\{t_j : 2\leq  j \leq k\}$ with a unique relation
$R(a_i,t_j^{-1}\gamma_j t_j)$.
\end{proof}

We now discuss the relevant geometry of the Cayley graph of $\pi_1(V)$ obtained by adding generators $a_i, t_j$ as above to $q_*\pi_1(U)$, with the goal of eventually applying this reasoning measurably across orbits of an action of $\pi_1(V)$ on a standard probability space to prove the proposition.

On each connected component of the pull back  of $U$ and the interior $\Sigma^{\circ}$ of $\Sigma$, under the universal covering
 $p:\widetilde{V}\to V$, the map $p$ coincides with a regular covering $p_{U}:\widehat{U}\to U$ and $p_{\Sigma^{\circ}}:\widehat{\Sigma^{\circ}}\to\Sigma^{\circ}$ respectively.

 Indeed, by injection of the $\pi_1$'s and of the $\langle  t_j\rangle $ ($2\leq j\leq k$)
  in $\pi_1(V)$, the coverings $p_{U}$ and $p_{\Sigma^{\circ}}$ are universal coverings
  and
   $p^{-1}(U)$ and $p^{-1}(\Sigma^{\circ})$ is precisely a disjoint union of copies of $\widehat{U}$ and $\widehat{\Sigma^{\circ}}$ respectively.
These copies form the vertex spaces of the bipartite tree-of-spaces decomposition of $\widetilde{V}$ (mimicking the Bass-Serre tree of the graph-of-groups).
 Observe that on each connected component of $p^{-1}(\Sigma^{\circ})$ the map $p_{\Sigma^{\circ}}$ extends to a copy $p_{\Sigma}:\widehat{\Sigma}\to \Sigma$ of the universal covering.

Again by injectivity, the boundary components $\beta_j$ pull-back to disjoint families of bi-infinite paths in the closure of each component of $p^{-1}(\Sigma^{\circ})\subseteq \tilde{V}$.
Observe that the pull-back graph $\widehat{\GGG}=p_{\Sigma}^{-1}(\GGG)$ in $\widehat{\Sigma}$ connects the various boundary components $p_{\Sigma}^{-1}(\{\beta_1, \cdots, \beta_k\})$, but in a non unique way as witnessed by the relation $R(a_i,t_j^{-1}\gamma_j t_j)$, i.e.,
by the boundary cycle of each of the pull-backs of the $2$-cell $\sigma$ under $p_{\Sigma}$.
These boundary cycles form a $2$-basis of the planar graph
$\widehat{\GGG}=p_{\Sigma}^{-1}(\GGG)$ in $\widehat{\Sigma}$, and the dual graph $\widehat{\GGG}^*=p_{\Sigma}^{-1}(\GGG^*)$ is its dual with respect to that $2$-basis.
As in previous arguments, selecting a one-ended spanning subforest $\TTT^*$ of $\widehat{\GGG}^*$ and removing all the edges of $\widehat{\GGG}$ that cross an edge of $\TTT^*$ resolves this non-uniqueness issue since the resulting subgraph $\TTT$ of $\widehat{\GGG}$ remains connected but becomes acyclic. It contains the boundary lines $p_{\Sigma}^{-1}(\{\beta_1, \cdots, \beta_k\})$.
What we are really after is the complement $$\TTT_0:=\TTT\mysetminus p_{\Sigma}^{-1}(\{\beta_1, \cdots, \beta_k\}),$$
since this is the part of $\TTT$ that lies outside of $p^{-1}(U)$.

Observe that even though some boundary components $\beta_i, \beta_j$ may be identified in $V$ for instance when $\gamma_i=\gamma_j$, their pull-backs in each $\widehat{\Sigma}$ are different (again by injectivity of $\pi_1(\Sigma)$ in $\pi_1(V)$ and $\widehat{\Sigma}=\widetilde{\Sigma}$).

Selecting a one-ended spanning subforest in $p^{-1}(\GGG^*)$ (that is, in each copy of $p_{\Sigma}^{-1}(\GGG^*)$)
and removing the edges it crosses as above in each connected component $\widehat{\Sigma}$ of $p^{-1}(\Sigma)$
leads to a subspace $\widetilde{V}^t$ of $\widetilde{V}$ where each vertex space of type $\widehat{\Sigma}$ is now replaced by a tree connecting its boundary components.

We claim that ``$\widetilde{V}^t$ is a tree above the $p^{-1}({U})$.''
More precisely, contracting each component of type $\widehat{U}$
to a point
produces a graph $\Psi$  which we claim is a tree.   Indeed, if $\theta$ is a cycle without backtracking in the resulting graph $\Psi$, we pull it back to the tree-of-spaces $\widetilde{V}^t$ and select there a backtracking point (entering a vertex space and leaving it by the same edge space). This cannot happen in a vertex space of type $\widehat{U}$ since  $\theta$ has no backtracking when this space is contracted to a point. This cannot happen  in a vertex space of type $\widehat{\Sigma}$ since these are now forests. It follows that $\theta$ is trivial.

We collect some useful observations about the dual graph $\widehat{\GGG}^*=p_{\Sigma}^{-1}(\GGG^*)$:
\begin{enumerate}
\item
  $\widehat{\GGG}^*$ is dual to the pull-back $\widehat{\CC}=p_{\Sigma}^{-1}(\CC)$ of the cellulation $\CC$ of $\Sigma$,
\item
  $\widehat{\GGG}^*$ is planar like $\widehat{\Sigma}$ (in fact it is a tree), \label{G-star planar}
\item
  $\widehat{\GGG}^*$ is connected (since $\GGG^*$ in $\Sigma$ generates the fundamental group of $\Sigma$), and
\item
  $\widehat{\GGG}^*$ is quasi-isometric with the covering group $\pi_1(\Sigma)$ of $\widehat{\Sigma}$.
In particular, it is not 2-ended. \label{G-star superquadratic}
\end{enumerate}

Finally, towards proving Theorem~\ref{th: pi1 V meas str. treeable over U},  choose a  base point $\tilde{u}$ in $\widetilde{V}$ above $u$ and suppose that $\Gamma = \pi_1(V)$ acts in a free, Borel fashion on the standard probability space $(X,\mu)$.  We will find a $\mu$-a.e.~ acyclic subgraph $\TT_0$ of the Borel graph given by the $a_i,t_j$ such that
$\RR_{\pi_1(V)}=\RR_{q_*(\pi_1(U))}*\TT_0$.  Consider the quotient by the diagonal action $\mathcal{V}={\Gamma}\backslash (X\times \widetilde{V})$.
Using the isomorphism $X\simeq  X\times \{\tilde{u}\}$, we obtain on $X$ the Borel graph $\Ggraph= {\Gamma}\backslash (X\times p^{-1}(\GGG))$ which corresponds to the graphing $\Phi$ given by the action of the elements $\{a_i\}_{i\in [1\mathrel{{.}\,{.}}l]}  ,\{\gamma_j\}_{j\in [1\mathrel{{.}\,{.}}k]}, \{t_j\}_{j\in [2\mathrel{{.}\,{.}}k]}$ of $\Gamma$.
The dual graph  $p^{-1}(\GGG^*)\subseteq \widetilde{V}$ delivers a Borel graph $\Ggraph^*$ with the natural Borel probability measure $\mu^*$ on its vertices $X^{*}={\Gamma}\backslash (X\times p^{-1}(\sigma^*))$.  The pull-back from $\tilde{u}$ in $\widetilde{V}$ of a simple path in $\sigma\subseteq \Sigma\subseteq V$ from $u$ to $\sigma^*$ entails a Borel isomorphism $X\overset{\sim}{\to} X^*$ used to push forward the measure $\mu$ to $\mu^*$.
By condition \eqref{G-star superquadratic} above $\Ggraph^*$ is nowhere $2$-ended and it comes with a Borel $2$-basis by \eqref{G-star planar}.
By Corollary~\ref{cor:planarforest} (or Theorem~\ref{thm:acyclic} since $\Ggraph^*$ is in fact acyclic),  $\Ggraph^*$ has a Borel $\mu^*$-a.e.\ one-ended spanning subforest $\TT^*$.

Applying the by now standard procedure above of removing from $\GG$ the edges that cross $\TT^*$  and those associated with the generators $\{\gamma_j\}_{j\in [1\mathrel{{.}\,{.}}k]}$
leads to an acyclic Borel graph $\TT_0$ above the equivalence relation given by the restriction of the action $\Gamma\actson X$ to the subgroup $q_*(\pi_1(U))\leq \Gamma$, giving
$$\RR_{\pi_1(V)}=\RR_{q_*(\pi_1(U))}*\TT_0$$
 $\mu$-almost everywhere.
 This completes the proof of Theorem~\ref{th: pi1 V meas str. treeable over U}.
\end{proof}

\begin{remark}
Observe in the proof of Theorem~\ref{th: pi1 V meas str. treeable over U} that the treeing $\TT_0$ eventually created is obtained by restricting the graphing given by the elements   $a_i,t_j\in \pi_1(V,u)=\langle \pi_1(U,u), a_i,t_j \,\vert\,  R(a_i,t_j^{-1}\gamma_j t_j)= t_1=1 \rangle$.

\end{remark}

\subsection{Back to elementarily free groups}
\label{sect: back to elementarily free groups}

\begin{proof}[Proof of Theorem~\ref{th: elem free gps as large towers}]
This Theorem follows mainly from the description (by hyperbolic $\omega$-residually free tower spaces) in \cite[Proposition 6]{SeVI}, except that in order to include all the elementarily free groups, one also needs to consider extended towers (as explained in \cite[Section 10.3, Definition 4.26, Corollary 7.10.5]{Guirardel-Levitt-Sklinos-2020}), i.e., a generalization of the IFL construction\footnote{Technically, there is another type of level (étage) in \cite{Guirardel-Levitt-Sklinos-2020} consisting in taking free products of $\pi_1(V_j)$ with $\Z$, but this kind of level can be moved down to $V_0$.}
 $V$ where $U$ is not necessarily connected:
The boundary components of $\Sigma$  are attached to (infinite order loops in) the connected components $U_1, U_2, \cdots, U_s$ of $U$
(see also Section~\ref{sect: extended large towers}).

Indeed,
 when $s\geq 2$, we can ``force'' $U$ to be connected by decomposing $\Sigma=\Sigma'\# S$ as a connected sum where $S$ is a sphere minus $s+1$ disks.
The space $V$ has a similar decomposition where we first attach one boundary component of $S$ to each $U_i$ and then attach
$\Sigma'$ via the connected sum to $S$ and along its remaining boundary components to the $U_i$ so as to recover our generalized IFL construction
(see \cite[Proposition 5.1 and its pictures]{Guirardel-Levitt-Sklinos-2020}).
Thus attaching $S$ amounts to connecting the $U_i$'s, with the result of taking the free product of the $\pi_1(U_i)$. The only problem would be if $\Sigma'$ would not satisfy the condition that $\chi(\Sigma')\leq -1$ (compare \cite[Remark 10.3]{Guirardel-Levitt-Sklinos-2020}
 where $\Sigma'$ must be non-exceptional).
This problem is ruled out for us by \cite[Remark 4.25]{Guirardel-Levitt-Sklinos-2020} which claims that (for extended towers producing elementarily free groups one can assume that
$g(\Sigma)\geq n_1$,
where $n_p$ is the number of the $U_i$'s to which $p$ boundary components of $\Sigma$ are attached and where $g$ denotes the genus.
Denoting by $k$ the number of boundary components, we have $k(\Sigma)=\sum_p p n_p$.
Concerning $\Sigma'$, we have
$g(\Sigma')=g(\Sigma)\geq n_1$ and $k(\Sigma')=k(\Sigma)+1-\sum_p n_p=1+\sum_p(p-1)n_p\geq 1$. It follows that $g(\Sigma')+k(\Sigma')\geq 3$ and $\chi(\Sigma')\leq -1$.
\end{proof}

\begin{proof}[Proof of Theorem \ref{elemfreetreeable}]
  Thanks to Theorem~\ref{th: elem free gps as large towers}, simply iterate Theorem~\ref{th: pi1 V meas str. treeable over U} through the levels of an IFL tower with desired fundamental group.
\end{proof}

\begin{remark}
Observe that we could  have used another characterization of the finitely generated elementarily free groups from
\cite[Corollary 7.10.6 or 7.10.7] {Guirardel-Levitt-Sklinos-2020}
as those groups $\Gamma$ for which there exists a free group $\FF$ such that $\Gamma*\FF$ is the fundamental group of a simple (= non-extended) hyperbolic $\omega$-residually free tower space in the sense of Sela \cite{SeVI} with ground spaces $V_0$ given by compact connected graphs with non-zero Euler characteristics.
Then applying Theorem~\ref{th: pi1 V meas str. treeable over U} for such a simple tower
and Theorem~\ref{thm:closure}-(5) to deduce the treeability properties of $\Gamma$ from those of $\Gamma*\FF$ would have delivered an alternative proof of  Theorem \ref{elemfreetreeable}.
However, the approach using Theorem~\ref{th: elem free gps as large towers}  with Theorem~\ref{th: pi1 V meas str. treeable over U} gives a little bit more: it gives the additional structure of successive measure strong free factors with treeable complements.
Compare Section~\ref{sect: extended large towers} and Remark~\ref{rem:no measure strong free factor for extended tower}.
\end{remark}

As in Question \ref{question:Hjorthsurface}, we ask whether a version of Hjorth's theorem \cite{Hjo-cost-att} holds for elementarily free groups:
\begin{question}
  Given a free group and an elementarily free group with the same cost, can every orbit equivalence relation of a free \pmp{} action of the former also be generated by a free action of the latter?
\end{question}

\subsection{Some cyclic measure strong free factors}\label{sec:cyclicmff}

We highlight two further applications of Theorem \ref{th: pi1 V meas str. treeable over U} for obtaining measure strong free factors of free products. One benefit of working in the setting of general Borel probability measures (instead of just the invariant ones) is that it provides access to the induced action construction even for subgroups of infinite index. We use this critically in the proof of the following lemma.

\begin{lemma}\label{lem:ABC}
Let $A$, $B$, and $C$ be countable groups with $A\leq B\leq C$. Suppose that $A$ is a measure strong free factor of $C$. Then $A$ is a measure strong free factor of $B$.
\end{lemma}

\begin{proof}
Let $B\curvearrowright X$ be a free Borel action of $B$ and let $\mu$ be a Borel probability measure on $X$. Consider the induced action of $C$, which is a free Borel action of $C$ on the space $Y=X\times C/B$, and let $\nu$ be a Borel probability measure on $X\times C/B$ in the same measure class as the product of $\mu$ with counting measure on $C/B$. This induced action has the property that the set $X\times \{ 1B \}$ is $B$-invariant and is a complete section for the equivalence relation $\mathcal{R}_{C}$ generated by $C$, and the projection $X\times \{ 1B\} \rightarrow X$ gives an isomorphism of the action $B\curvearrowright X\times \{ 1B \}$ with the original action of $B$ on $X$.

By hypothesis, after discarding a $\nu$-null set from $Y$, the equivalence relation $\mathcal{R}_A$ is a free factor of $\mathcal{R}_C$, and hence $\mathcal{R}_A$ is a free factor of $\mathcal{R}_B$ by \cite[Th{\'e}or{\`e}me~1]{Alvarez} (or \cite[Theorem~3.2]{Alv-Gab}). Since $X\times \{ 1B\}$ is a $B$-invariant subset of $Y$, this restricts to the desired decomposition of $(\mathcal{R}_B)_{\restriction (X\times \{ 1B\} )}$ with $(\mathcal{R}_A)_{\restriction (X\times \{ 1B \} )}$ as a free factor.
\end{proof}

Employing the notation from the previous sections, suppose that we perform a slightly more general gluing than the IFL construction in which we allow for some boundary components of the surface $\Sigma$ to not be glued to $U$. That is, we have pairwise disjoint boundary circles $\beta _1,\dots , \beta _k, \beta _{k+1},\dots , \beta _r$ of $\Sigma$ (where $k\geq 1$, $\pi _1(\Sigma )$ is a free group on at least two generators, and $\beta _j$ is based at $u_j\in \Sigma$) and loops $\gamma _1,\dots ,\gamma _k$ based at $u\in U$ which represent infinite order elements in $\pi _1 (U)$; for $j=1,\dots , k$ we glue $\beta _j$ to $\gamma _j$ so that $u_j$ is identified with $u$, and we do no gluing for $\beta _{k+1},\dots , \beta _r$. Let $V$ be the resulting space obtained after gluing.

Let $b_1,\dots , b_r\in \pi _1(\Sigma , u_1)$ be the loops based at $u_1$ corresponding to the composition of paths $b_j=t_j\beta _jt_j^{-1}$, where $t_j$ is a path from $u_1$ to $u_j$ and $t_1$ is trivial.

\begin{proposition}\label{prop:msff} For each $k+1\leq j\leq r$, the subgroup $\langle b_j\rangle$ is a measure strong free factor of $\pi _1(V,u)$.
\end{proposition}

\begin{proof} Consider the space $U'$ obtained as the wedge product of $(U,u)$ with $(r-k)$-many auxiliary circles $\gamma_{k+1},\dots, \gamma _r$. Let $V'$ be the space obtained by identifying $\beta _j$ with $\gamma _j$ for all $1\leq j\leq r$. Then by Theorem \ref{th: pi1 V meas str. treeable over U}, $\pi _1(U',u)$ is a measure strong free factor of $\pi _1 (V',u )$ with treeable complement. For each $k+1\leq j\leq r$, the subgroup $\langle \gamma _j \rangle$ is a free factor of $\pi _1(U',u)$, and hence $\langle \gamma _j\rangle$ is a measure strong free factor of $\pi _1 (V',u)$. Since $\gamma _j$ is conjugate to $b_j$ in $\pi _1 (V',u)$, it follows that $\langle b_j\rangle$ is a measure strong free factor of $\pi _1 (V',u)$.

The inclusion map $V\hookrightarrow V'$ is injective on $\pi _1$ (the group $\pi _1 (V')$ is obtained as an iterated HNN-extension of $\pi _1(V)\ast \FF _{r-k}$). Thus, for each $k+1\leq j \leq r$ we have that $\langle b_j\rangle \leq \pi _1(V,u)\leq \pi (V',u)$, and since $\langle b_j\rangle$ is a measure strong free factor of $\pi _1 (V',u)$, it is a measure strong free factor of $\pi _1(V,u)$ by Lemma \ref{lem:ABC}.
\end{proof}

\begin{corollary}\label{cor:gtht^{-1}}
Let $\Gamma \ast \langle t \rangle$ be the free product of a countable group $\Gamma$ and an infinite cyclic group $\langle t\rangle$. Let $\gamma$ and $\delta$ be elements of $\Gamma$ of infinite order. Then $\gamma t\delta t^{-1}$ generates a measure strong free factor of $\Gamma \ast\langle t\rangle$.
\end{corollary}

\begin{proof}
Let $(U,u)$ be a pointed CW complex with $\pi _1(U,u)=\Gamma$, and identify $\gamma ,\delta \in \Gamma$ with loops based at $u\in U$. Let $\Sigma$ be a sphere with three disjoint boundary circles $\beta _1,\beta _2,\beta _3$, with $\beta _j$ based at $u_j$. Let $t_j$ and $b_j$ be as above, so that $b_3=b_1b_2$ in $\pi _1(\Sigma ,u_1)$. Glue $\beta _1$ to $\gamma$, glue $\beta _2$ to $\delta$, and leave $\beta _3$ unglued; denote the resulting space by $V$. Let $t$ be the image of $t_2$ in $V$ so that $t$ is a loop based at $u$. Then $\pi _1 (V,u)=\Gamma \ast \langle t\rangle$, and in $\pi _1(V,u)$ we have the identities $b_1=\gamma$, $b_2=t\delta t^{-1}$, and $b_3=b_1b_2=\gamma t\delta t^{-1}$. Proposition \ref{prop:msff} then shows that $\langle b_3\rangle =\langle \gamma t\delta t^{-1}\rangle$ is a measure strong free factor of $\pi _1(V,u)=\Gamma \ast \langle t\rangle$.
\end{proof}

\begin{corollary}\label{cor:g0g1}
Let $\Gamma _0$ and $\Gamma _1$ be countable groups, and let $\gamma _0 \in \Gamma _0$ and $\gamma _1\in \Gamma _1$ be elements of infinite order. Then $\gamma _0\gamma _1$ generates a measure strong free factor of $\Gamma _0\ast \Gamma _1$.
\end{corollary}

\begin{proof}
Let $\langle t\rangle $ be an infinite cyclic group, and let $\varphi$ be the automorphism of the free product $\Gamma _0\ast \Gamma _1\ast \langle t\rangle$ with $\varphi (\delta )=t^{-1}\delta t$ for all $\delta \in \Gamma _1$ and which is the identity on both $\Gamma _0$ and $\langle t\rangle$. Corollary \ref{cor:gtht^{-1}} implies that $\gamma _0t\gamma _1t^{-1}$ generates a measure strong free factor of $\Gamma _0\ast \Gamma _1 \ast \langle t\rangle$, and hence $\varphi (\gamma _0t\gamma _1t^{-1}) = \gamma _0\gamma _1$ generates a measure strong free factor of $\Gamma _0\ast \Gamma _1\ast \langle t \rangle$. Therefore, $\gamma _0\gamma _1$ generates a measure strong free factor of $\Gamma _0\ast \Gamma _1$ by Lemma \ref{lem:ABC}.
\end{proof}

Specializing to free groups, Corollaries \ref{cor:gtht^{-1}} and \ref{cor:g0g1} provide strengthenings of the measure free factor results obtained in \cite{Alonso-2014}.

\begin{corollary}\label{cor:freegroupmsff}
If $\F_2=\langle a,b\rangle$ is a free group, then for $p\not=0, n\not=0$ any element of the form $a^p b^n$ or of the form $a^p b a^n b^{-1}$ generates a measure strong free factor of $\F _2$. More generally, in $\F_r=\F_{r-1}*\langle t\rangle$, every element of the form $vtwt^{-1}$, with $v,w \in \F _{r-1}$ nontrivial, generates a measure strong free factor of $\F _r$, and in $\F _r = \F _{r-k}*\F _k$, every element of the form $vw$ with $v\in \F _{r-k}$ and $w\in \F _k$ both nontrivial, generates a measure strong free factor of $\F _r$.
\end{corollary}

\section{\texorpdfstring{Ergodic dimension of aspherical $n$-manifolds}{Ergodic dimension of aspherical n-manifolds}}

The \define{ergodic dimension of a group $\Gamma$} \cite[D\'ef. 6.4]{Gab02} is the smallest geometric dimension of the orbit-equivalence relations $\RR_{\alpha}$ among all of its free \pmp{} actions  $\Gamma\actson ^{\alpha}(X,\mu)$ (see \cite{Gab-erg-dim} for more on this notion).
The goal of this section is to prove the following theorems:
\begin{thm}\label{erg dim d-mfld}
  Suppose $\Gamma $ is the fundamental group of a compact aspherical connected manifold
  $M$ (possibly with boundary) of
  dimension at least $2$. Then all free \pmp{} actions of $\Gamma$ have
  ergodic dimension at most $\dim(M) - 1$.
\end{thm}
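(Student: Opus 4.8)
The plan is to split the argument according to whether $M$ has nonempty boundary, and to reduce the closed case to the existence of a one-ended spanning subforest in a higher-dimensional analogue of the planar dual graph. Write $n=\dim(M)$, let $\Gamma=\pi_1(M)$, and let $\widetilde{M}$ be the universal cover, a contractible $n$-manifold carrying a free, cocompact, cellular $\Gamma$-action.

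First I would dispose of the case $\partial M\neq\emptyset$, which needs no measure-theoretic input. Here $\partial\widetilde{M}\neq\emptyset$, so each $(n-1)$-cell in the boundary is a face of a single $n$-cell; thus $\widetilde{M}$ admits a $\Gamma$-equivariant collapse from its boundary onto a $\Gamma$-invariant spine $Y$ of dimension $\leq n-1$. As $Y$ is an equivariant deformation retract of the contractible $\widetilde{M}$ it is contractible, $\Gamma$ acts freely on it, and we obtain an explicit free contractible $(n-1)$-complex, so $\Gamma$ has geometric dimension $\leq n-1$ in the sense of \cite{Gab02}. Any free \pmp{} action then inherits a measurable field of copies of $Y$ on which its orbit equivalence relation acts smoothly, so the geometric dimension of its orbit relation is at most $n-1$.

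The crux is the closed case $\partial M=\emptyset$, where there are no free faces and $\mathrm{cd}(\Gamma)=n$, so the dimension drop must be produced by the measure. Fix a free \pmp{} action $\Gamma\actson(X,\mu)$ and the field over $X$ whose fiber over the class of $x$ is $\widetilde{M}$, triangulated $\Gamma$-equivariantly; this realizes $\RR_\alpha$ acting smoothly on contractible $n$-complexes, witnessing geometric dimension $\leq n$. I would then form the \emph{dual graph of top cells} $\Ggraph^{*}$: its vertices are the $n$-cells, with one edge for each shared $(n-1)$-cell. Since $M$ is closed, every $(n-1)$-cell borders exactly two $n$-cells, so $\Ggraph^{*}$ is a genuine locally finite graph, and—built from the \pmp{} action on the finite measure-preserving extension indexing the $n$-cells—it is a measure preserving, aperiodic (as $\Gamma$ is infinite), locally finite Borel graph. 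Each component of $\Ggraph^{*}$ is quasi-isometric to $\Gamma$, which is torsion-free with $\mathrm{cd}(\Gamma)=n\geq 2$, hence not infinite cyclic and therefore not two-ended; so $\Ggraph^{*}$ is $\mu$-nowhere two-ended. By Theorem~\ref{thm:pmp}, $\Ggraph^{*}$ carries a Borel $\mu$-a.e.\ one-ended spanning subforest $\Tgraph^{*}$.

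Finally I would convert $\Tgraph^{*}$ into the dimension reduction. Orienting each one-ended tree toward its end assigns to every $n$-cell $\sigma$ the unique edge of $\Tgraph^{*}$ pointing toward the end, i.e.\ a distinct incident $(n-1)$-cell $e(\sigma)$; this is an injective, acyclic discrete Morse matching whose gradient paths run off to the single end while all back-orbits are finite—the same finiteness phenomenon exploited in Lemma~\ref{abstract_clinton}. Collapsing every matched pair $(\sigma,e(\sigma))$ should therefore deformation retract each fiber $\widetilde{M}$ onto a contractible subcomplex of dimension $\leq n-1$, and, the matching being Borel, this produces a measurable field of contractible $(n-1)$-complexes on which $\RR_\alpha$ acts smoothly, giving geometric dimension $\leq n-1$ as required. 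I expect this last step to be the main obstacle: one must verify carefully that a one-ended spanning subforest of the dual graph yields a \emph{valid} equivariant collapse removing all top cells while preserving contractibility of the fibers—the precise higher-dimensional analogue of part~(3) of Proposition~\ref{prop:duality}—where the lack of free faces in a closed manifold is overcome by pushing the collapse toward the end furnished by one-endedness.
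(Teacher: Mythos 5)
Your overall strategy in the closed case coincides with the paper's: triangulate, form the dual graph on top-dimensional cells, observe that it is a measure-preserving, aperiodic, locally finite Borel graph whose components are quasi-isometric to $\Gamma$ (hence $\mu$-nowhere two-ended), invoke Theorem~\ref{thm:pmp} to obtain a Borel a.e.\ one-ended spanning subforest, and then delete every $n$-cell together with the $(n-1)$-cell dual to its outgoing forest edge. The paper, however, does not split off the boundary case: it declares boundary $(n-1)$-cells \emph{singular}, builds the dual graph using only \emph{regular} $(n-1)$-cells, and runs one uniform argument (handling the finite and two-ended possibilities, which can occur when $\partial M\neq\emptyset$, by finiteness and Ornstein--Weiss respectively). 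Your spine-collapse reduction for $\partial M\neq\emptyset$ is a correct alternative--indeed it gives the stronger conclusion that the group itself has geometric dimension at most $n-1$--provided you cite the classical fact that a compact PL manifold with nonempty boundary collapses to a spine of dimension at most $n-1$; free faces alone do not instantly produce a full collapse.

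The genuine gap is the step you yourself flag: proving that the complex left after the deletions is contractible. As written, ``collapsing every matched pair'' is not a valid argument for this infinite complex, because there are no free faces to start from: in the universal cover of a closed manifold every $(n-1)$-cell is a face of exactly two $n$-cells, and to collapse the pair $(\sigma,e(\sigma))$ one must first have removed the parent of $\sigma$ (the other $n$-cell containing $e(\sigma)$), hence recursively the entire infinite ray of ancestors of $\sigma$ running out to the end. So no well-ordered sequence of elementary collapses realizes the retraction, and finite discrete Morse theory does not apply; the precedence order on collapses is not well-founded. The paper fills exactly this gap with Proposition~\ref{Sigma-F is contractible}: contractibility of $\Sigma\oPerpstar\mathtt{F}^{*}$ is verified via Whitehead's theorem, by showing that every compact subcomplex is contained in a compact \emph{$\mathtt{F}^{*}$-back-orbit saturated} subcomplex $K$ which retracts onto $K\cap(\Sigma\oPerpstar\mathtt{F}^{*})$. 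Inside such a $K$ the collapses \emph{can} be ordered: choose the $n$-cell of $K$ lying furthest along its forest ray toward the end; its outgoing $(n-1)$-face is then free in $K$ because its parent lies outside $K$, and one inducts on the number of $n$-cells of $K$. Your observation that gradient paths descend along back-orbits and are therefore finite is the right ingredient, but it must be converted into this compact-exhaustion argument (or into an appeal to a genuinely infinite version of discrete Morse theory for rayless acyclic matchings) before the last sentence of your proof is justified.
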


Recall that $M$ is \define{aspherical} if its universal cover $\tilde{M}$ is contractible.
A manifold is \define{closed} if it is compact without boundary.

\begin{thm}\label{erg dim 3-dim mfld}
  Suppose $\Gamma$ is the fundamental group of a closed aspherical connected manifold of
  dimension $3$. Then either
  \begin{enumerate}
  \item $\Gamma$ is amenable, or
  \item $\Gamma$ has strong ergodic dimension $2$.
  \end{enumerate}
\end{thm}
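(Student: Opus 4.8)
The plan is to read the theorem as the conjunction of a \emph{uniform} upper bound and a \emph{uniform} lower bound on the geometric dimension of orbit equivalence relations, and to obtain the dichotomy by splitting on amenability. Note first that since $M$ is a closed aspherical $3$-manifold its universal cover is contractible and noncompact, so $\Gamma=\pi_1(M)$ is infinite; in particular $\beta_0^{(2)}(\Gamma)=0$. Theorem~\ref{erg dim d-mfld}, applied with $\dim(M)=3$, already supplies the upper bound: every free \pmp{} action of $\Gamma$ has geometric dimension at most $2$. Thus the whole content is to show that when $\Gamma$ is \emph{not} amenable, no free \pmp{} action of $\Gamma$ has geometric dimension $\le 1$; equivalently, that $\Gamma$ is not treeable. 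Since a free \pmp{} action of an infinite group is aperiodic, its geometric dimension is $\ge 1$, and it equals $1$ exactly when the relation is treeable; hence non-treeability of $\Gamma$ says precisely that every free \pmp{} action has geometric dimension $\ge 2$. Combined with the upper bound this forces geometric dimension exactly $2$ for all free \pmp{} actions, which is strong ergodic dimension $2$. (If instead $\Gamma$ is amenable we are in case (1); the two cases are genuinely exclusive, since amenable groups have strong ergodic dimension $1$ by Ornstein--Weiss \cite{OW80}.)

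The heart of the argument is therefore the implication ``$\Gamma$ treeable $\implies$ $\Gamma$ amenable''. Suppose $\Gamma$ is treeable. By Hjorth's theorem \cite{Hjo-cost-att}, $\Gamma$ is measure equivalent to some free group $\FF_n$. Since the $\ell^2$-Betti numbers are measure equivalence invariants up to a positive multiplicative constant \cite{Gab02} and $\beta_i^{(2)}(\FF_n)=0$ for all $i\ge 2$, we deduce $\beta_2^{(2)}(\Gamma)=0$. Now I invoke Poincaré duality: the fundamental group of a closed aspherical $3$-manifold is a Poincaré duality group of dimension $3$, so $\beta_i^{(2)}(\Gamma)=\beta_{3-i}^{(2)}(\Gamma)$, and in particular $\beta_1^{(2)}(\Gamma)=\beta_2^{(2)}(\Gamma)=0$. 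Feeding this back through the measure equivalence with $\FF_n$ and the scaling behaviour of $\beta_1^{(2)}$ forces $\beta_1^{(2)}(\FF_n)=n-1=0$, i.e.\ $n\le 1$; as $\Gamma$ is infinite it is therefore measure equivalent to $\FF_1=\Z$, hence infinite amenable by the Ornstein--Weiss/Connes--Feldman--Weiss theorem \cite{OW80,CFW81}. The contrapositive is exactly the lower bound.

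For the non-orientable case I would avoid appealing to $\ell^2$-Poincaré duality for $\Gamma$ itself (where the orientation character intervenes) and instead pass to the orientation double cover, whose fundamental group $\Gamma_0\le\Gamma$ has index $2$ and is the fundamental group of a closed \emph{oriented} aspherical $3$-manifold. Treeability passes to subgroups, so $\Gamma_0$ is treeable, and $\beta_i^{(2)}(\Gamma_0)=2\,\beta_i^{(2)}(\Gamma)$, so all the vanishing statements transfer verbatim; running the orientable argument for $\Gamma_0$ shows $\Gamma_0$ is amenable, whence $\Gamma$ is amenable since it contains the amenable subgroup $\Gamma_0$ of finite index.

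I expect the main obstacle to be conceptual rather than computational: one must resist trying to prove the lower bound by exhibiting a nonzero $\ell^2$-Betti number, which is impossible in general (for instance all $\ell^2$-Betti numbers of a closed hyperbolic $3$-manifold group vanish, since nonvanishing of $\beta_{n/2}^{(2)}$ would require even dimension). The point of the argument above is that treeability \emph{itself} forces $\beta_2^{(2)}=0$, and it is Poincaré duality that then propagates this to $\beta_1^{(2)}=0$ and collapses the measure equivalence class onto that of $\Z$. In this way non-treeability is obtained uniformly, treating the hyperbolic case on exactly the same footing as the rest, and crucially without invoking the geometrization theorem.
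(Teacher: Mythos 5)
Your proposal is correct and is essentially the paper's own argument: the upper bound is Theorem~\ref{erg dim d-mfld}, and the dichotomy reduces to showing that a treeable (equivalently, ergodic-dimension-one) $\Gamma$ must be amenable, which both you and the paper deduce from the vanishing $\beta_2^{(2)}(\Gamma)=0$ forced by one-dimensionality, then $\ell^2$-Poincar\'e duality \cite{CG86} to get $\beta_1^{(2)}(\Gamma)=0$, with the orientation double cover and multiplicativity of $\ell^2$-Betti numbers handling the non-orientable case exactly as in the paper. The only divergence is in packaging: where the paper invokes \cite[Cor.~3.17]{Gab02} (non-vanishing of $\beta_i^{(2)}$ bounds ergodic dimension below) and \cite[Prop.~6.10]{Gab02} (a treeable group with $\beta_1^{(2)}=0$ is amenable), you route both implications through Hjorth's theorem \cite{Hjo-cost-att} that treeable groups are ME to free groups together with the ME-invariance of $\ell^2$-Betti numbers \cite{Gab02} --- equivalent inputs, slightly heavier machinery, same proof.
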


\subsection{Removing a dual one-ended subforest}
\label{subsect:remove dual of F}
We start by considering the effect of removing from a contractible complex the cells associated with a one-ended subforest of its dual complex.

Let $M$ be a smooth compact connected manifold of dimension $d$, possibly with boundary. Let $\bar\Sigma$ be a triangulation of $M$. It induces a triangulation of its boundary $\partial M$.
Pick a base point $u$ among the vertices of $\bar\Sigma$.
The induced  triangulation ${\Sigma}$ of the universal cover $\tilde{M}$ is invariant under the covering group $\pi_1(M,u)$, once a pull-back $\tilde{u}$ of $u$ is chosen.

A $(d-1)$-dimensional cell is called \define{singular} if it belongs to the boundary of $\tilde{M}$, i.e., if it is a face of a single $d$-dimensional cell.
A $(d-1)$-dimensional cell is called \define{regular} otherwise, i.e., if it is a face of exactly two $d$-dimensional cells.

We define the \define{dual graph} ${{\GGG}^*}$ of ${\Sigma}$.
Its set of vertices $\VVV({{\GGG}^*})$ is in bijection with the set ${\Sigma}^{(d)}$ of $d$-dimensional cells of ${\Sigma}$.
We denote the natural bijection by $$\Dual_d:\VVV({{\GGG}^*})\to {\Sigma}^{(d)}.$$
Its set of edges $\EEE({{\GGG}^*})$ is in bijection with the set ${\Sigma} ^{(d-1)}_r$ of regular $(d-1)$-dimensional cells of ${\Sigma}$ via
$$\Dual_{d-1}:  \EEE({{\GGG}^*})\to {\Sigma} ^{(d-1)}_r$$ where an edge $e$  joins two vertices $v_1, v_2$
 if and only if $\Dual_{d-1}(e)$  is a common face of the $d$-dimensional cells $\Dual_{d}(v_1), \Dual_{d}(v_2)$.

Let ${\mathtt{F}^{*}}$ be a one-ended spanning subforest of the dual graph ${{\GGG}^*}$ of ${\Sigma}$. Using the same notation as in Section~\ref{sect:Preliminaries}, we define the simplicial subcomplex ${\Sigma} \oPerpstar {{\mathtt{F}^{*}}}$ of ${\Sigma}$
 by removing all the $d$-dimensional cells from ${\Sigma}$ and all the $(d-1)$-dimensional cells
 corresponding to an edge in ${\mathtt{F}^{*}}$, i.e., \[{\Sigma} \oPerpstar {\mathtt{F}^{*}}:={\Sigma}\mysetminus \left(\Dual_{d}(\VVV({\mathtt{F}^{*}}))\cup \Dual_{d-1}(\EEE({\mathtt{F}^{*}}))\right).\]
Observe that ${\Sigma}$ and ${\Sigma} \oPerpstar {\mathtt{F}^{*}}$ have the same $k$-skeleton for $k\leq d-2$.

\begin{proposition}\label{Sigma-F is contractible}
If ${\Sigma} $ is contractible then the $(d-1)$-dimensional complex ${\Sigma} \oPerpstar {\mathtt{F}^{*}}$ is contractible.
\end{proposition}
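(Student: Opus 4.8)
The plan is to exhibit $K:=\Sigma\oPerpstar\mathtt{F}^{*}$ as a deformation retract of $\Sigma$; since $\Sigma$ is contractible, this immediately yields contractibility of $K$. This is the $d$-dimensional analogue of Proposition~\ref{prop:duality}(3): removing a $d$-cell $\Dual_d(v)$ together with the regular $(d-1)$-cell dual to one incident edge of $\mathtt{F}^{*}$ is an elementary collapse, and the one-ended spanning subforest $\mathtt{F}^{*}$ prescribes a coherent, global choice of these edges. Since $\mathtt{F}^{*}$ spans $\VVV({\GGG}^*)$ and every component is one-ended (hence an infinite, locally finite tree, as ${\GGG}^*$ has degree $\leq d+1$), passing to $K$ removes \emph{all} $d$-cells and exactly the $(d-1)$-cells dual to the edges of $\mathtt{F}^{*}$, while keeping the full $(d-2)$-skeleton, all singular $(d-1)$-cells, and all regular $(d-1)$-cells not in $\mathtt{F}^{*}$.

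First I would set up the matching. In each component tree of $\mathtt{F}^{*}$ fix its unique end; this orients every edge toward that end and equips each vertex $v$ with a unique parent edge $e(v)$, giving a bijection $\Dual_d(v)\leftrightarrow\Dual_{d-1}(e(v))$ between the $d$-cells and the forest $(d-1)$-cells removed in $K$ (each edge $e$ is matched to its child endpoint). The crucial input from one-endedness is that the descendant subtree of any vertex is finite: deleting $v$ from a one-ended locally finite tree leaves exactly one infinite component (the one containing the end), so all the child-components are finite. Hence each $v$ has a finite \emph{height} $h(v)$, the length of the longest descending path below it. I would then filter $K=K_0\subseteq K_1\subseteq\cdots$ with $\bigcup_n K_n=\Sigma$, where $K_{n}$ adjoins to $K$ the matched pairs $\{\Dual_{d-1}(e(v)),\Dual_d(v)\}$ for all $v$ with $h(v)<n$.

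Next I would check that each inclusion $K_n\hookrightarrow K_{n+1}$ is a deformation retract via a simultaneous elementary collapse of the height-$n$ pairs. For $h(v)=n$, the face $\Dual_{d-1}(e(v))$ is a face of the two $d$-cells $\Dual_d(v)$ and $\Dual_d(\phi(v))$, but the parent cell $\Dual_d(\phi(v))$ has height $>n$ and is therefore \emph{absent} from $K_{n+1}$; thus $\Dual_{d-1}(e(v))$ is a free face of $\Dual_d(v)$ inside $K_{n+1}$, and the collapse pushes $\Dual_d(v)\cup\Dual_{d-1}(e(v))$ onto the remaining faces of $\Dual_d(v)$, i.e.\ its child-edge faces (present, of smaller height) together with the non-forest and singular faces (present, lying in $K$). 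Distinct height-$n$ pairs have distinct free faces and top cells (no two height-$n$ vertices are parent/child), so these collapses are independent and may be performed in disjoint collars. The retraction flows \emph{toward the finite ends (leaves)}, which is exactly why it terminates; geometrically this is the direction opposite to the end, and it is essential that one does not attempt to collapse $\Sigma$ directly, which has no free faces at all.

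Finally I would assemble the global retraction $\rho\colon\Sigma\to K$ as the composite $\rho\restriction K_m=R_0\circ R_1\circ\cdots\circ R_{m-1}$ of the stagewise retractions $R_j\colon K_{j+1}\to K_j$. Because each $R_j$ fixes $K_j$ pointwise, a point of $K_m$ is moved only by $R_{m-1},\dots,R_0$, so $\rho$ is well defined, restricts to the identity on $K$, and is continuous for the weak topology on $\Sigma=\mathrm{colim}_n K_n$; concatenating the stagewise homotopies on a shrinking time schedule produces a homotopy $\mathrm{id}_\Sigma\simeq\rho$, continuous by local finiteness (each point stops moving after finitely many stages). The main obstacle, and the only genuinely delicate point, is precisely this passage to the infinite limit: one must confirm that the collapse cannot be initiated globally on $\Sigma$ and instead realize $K$ as the target of a \emph{proper}, locally finite gradient flow, for which the finiteness of descendant subtrees — equivalently, one-endedness of $\mathtt{F}^{*}$ — is exactly what guarantees convergence. (As a cross-check, one can run the same bookkeeping homologically: the relative complex $C_{*}(\Sigma,K)$ is concentrated in degrees $d,d-1$ and is the cochain complex of the forest $\mathtt{F}^{*}$, whose coboundary has trivial kernel and is onto on finitely supported chains precisely because deleting an edge of a one-ended tree leaves one finite and one infinite piece; hence $H_{*}(\Sigma,K)=0$ and $\widetilde H_{*}(K)=0$, with simple connectivity of $K$ coming from its agreeing with $\Sigma$ through the relevant low skeleton.)
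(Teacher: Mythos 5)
Your argument is correct, and it packages the proof differently from the paper even though both run on the same combinatorial engine: the matching of each $d$-cell $\Dual_{d}(v)$ with the $(d-1)$-cell $\Dual_{d-1}(o(v))$ dual to its outgoing edge, the elementary collapse of each such pair, and finiteness of $\mathtt{F}^{*}$-back-orbits (i.e.\ one-endedness) as the reason the collapsing is well founded. The paper never builds a global retraction of $\Sigma$: it invokes Whitehead's theorem, traps the image of a null-homotopy in a compact subcomplex, saturates that subcomplex under $\mathtt{F}^{*}$-back-orbits to a compact complex $L$, and retracts $L$ onto $L\cap(\Sigma\oPerpstar\mathtt{F}^{*})$ by \emph{finitely many} collapses, each time collapsing a pair whose vertex lies furthest toward the end among those remaining in $L$; all limiting processes are thus avoided, at the price of invoking Whitehead. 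You instead globalize: the height function $h$ (finite exactly because back-orbits are finite) organizes all pairs into a filtration $K=K_{0}\subseteq K_{1}\subseteq\cdots$ with union $\Sigma$, each inclusion $K_{n}\subseteq K_{n+1}$ is a simultaneous elementary collapse (the free-face condition holds because the parent $d$-cell, having larger height, is absent from $K_{n+1}$), and an infinite concatenation on a shrinking time schedule gives a strong deformation retraction of $\Sigma$ onto $K$, whence $K$ is contractible as a retract of a contractible space. Your route yields a stronger conclusion (a genuine deformation retract, not just vanishing homotopy groups) and dispenses with Whitehead, at the cost of the point-set care in the infinite limit, which you correctly isolate and which does go through by local finiteness, since every point of $\Sigma$ has a neighborhood inside some $K_{m}$ and is moved during only finitely many stages. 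Two peripheral caveats: the aside that $\Sigma$ itself "has no free faces at all" is false when $\partial\tilde{M}\neq\emptyset$ (singular $(d-1)$-cells are free), and the homological cross-check in your final parenthesis does not establish simple connectivity of $K$ when $d=3$, since sharing only the $1$-skeleton with $\Sigma$ controls nothing about $\pi_{1}(K)$; neither remark is load-bearing for your main argument, but the cross-check should not be relied upon as a substitute for it.
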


\begin{proof}[Proof of Proposition \ref{Sigma-F is contractible}]
By Whitehead's theorem \cite{Whitehead-1949-1}, it is enough to show that for all integers $k\geq 0$, the $\pi_k({\Sigma}\oPerpstar {{\mathtt{F}^{*}}}, x)$ are trivial.

Let $I^k:=[0,1]^k$ be the $k$-dimensional cube and $\partial I^k$ its boundary.

Consider a continuous map $f:(I^k,\partial I^k)\to ({\Sigma}\oPerpstar {{\mathtt{F}^{*}}},x)$ (sending $\partial I^k$ to $x$) and consider its composition $j\circ f$ with the inclusion $j: {\Sigma}\oPerpstar {{\mathtt{F}^{*}}}\hookrightarrow {\Sigma} $.
Since $ {\Sigma} $ has trivial $\pi_k$, there is a homotopy $\tilde{H}: (I^k\times I, (\partial I^k)\times I) \to ({\Sigma} ,x)$
between $j\circ f(\cdot )=\tilde{H}(\cdot ,1)$ and the constant map $\tilde{H}(\cdot ,0): I^k\times\{0\}\to \{x\}$.

We define the compact subcomplex $K_0$ as the smallest subcomplex of ${\Sigma} $ that contains the image $\tilde{H}(I^k\times I)$.
We will show that $K_0$ is contained in a compact subcomplex $K\subseteq {\Sigma} $ which admits a retraction $U: K\to K\cap  ({\Sigma}\oPerpstar {{\mathtt{F}^{*}}})$, i.e., a continuous map whose restriction to $K\cap  ({\Sigma}\oPerpstar {{\mathtt{F}^{*}}})$ is the identity.
The composition $U\circ \tilde{H}: (I^k\times I, (\partial I^k)\times I) \to ({\Sigma}\oPerpstar {{\mathtt{F}^{*}}},x)$ will provide a homotopy in $({\Sigma}\oPerpstar {{\mathtt{F}^{*}}},x)$ connecting $f$ to the trivial map $I^k\to \{x\}$, thus proving that $\pi_k({\Sigma}\oPerpstar {{\mathtt{F}^{*}}},x)$ is trivial.

We think of the one-ended forest ${\mathtt{F}^{*}}$ as an oriented graph with the orientation pointing toward the single infinite end in each connected component.
Every vertex $v\in \VVV({\mathtt{F}^{*}})=\VVV({{\GGG}^*})$ has thus exactly one outgoing edge; let's call it $o(v)$.
The ${\mathtt{F}^{*}}$-back-orbit of $v$ is the unique finite component of ${\mathtt{F}^{*}}\mysetminus o(v)$. It contains $v$.
An edge $e\in \EEE({\mathtt{F}^{*}})$ is the out-going edge of a single vertex $\iota(e)$; the ${\mathtt{F}^{*}}$-back-orbit of $e$ is the ${\mathtt{F}^{*}}$-back-orbit of $\iota(e)$.
If $e$ is an edge in $\EEE({{\GGG}^*})\mysetminus \EEE({\mathtt{F}^{*}})$, its ${\mathtt{F}^{*}}$-back-orbit is set to be empty.
The \define{${\mathtt{F}^{*}}$-back-orbit saturation} of a finite set $A \subseteq \VVV({\mathtt{F}^{*}})\cup \EEE({\mathtt{F}^{*}})$ is the union of $A$ with the ${\mathtt{F}^{*}}$-back-orbit of all its elements.
It is also finite. A finite set is \define{${\mathtt{F}^{*}}$-back-orbit saturated} if it coincides with its ${\mathtt{F}^{*}}$-back-orbit saturation

We transfer these notions to the subcomplexes of ${\Sigma}$ via the maps $\Dual_{d}, \Dual_{d-1}$.
The ${\mathtt{F}^{*}}$-back-orbit of a $(d-1)$-cell $\tau$ of ${\Sigma}$ is the image under $\Dual_{d}$ and $\Dual_{d-1}$ of the ${\mathtt{F}^{*}}$-back-orbit saturation of $\Dual_{d-1}^{-1}(\tau)$.
The \define{${\mathtt{F}^{*}}$-back-orbit saturation} of a finite subcomplex $K_0\subseteq {\Sigma}$ is the union of $K_0$ with the ${\mathtt{F}^{*}}$-back-orbit of all its $(d-1)$-cells.
A finite subcomplex $K\subseteq {\Sigma}$ is \define{${\mathtt{F}^{*}}$-back-orbit saturated} if it coincides with its ${\mathtt{F}^{*}}$-back-orbit saturation.

\begin{lemma}\label{lem:back-orbit_retraction}
If $L$ is a compact ${\mathtt{F}^{*}}$-back-orbit saturated subcomplex of ${\Sigma} $, then it admits a retraction to $L\cap ({\Sigma}\oPerpstar {{\mathtt{F}^{*}}})$.
\end{lemma}

\begin{proof}
The key point is that any simplex admits a retraction to its boundary minus any one of its faces.
We then argue by a decreasing induction on the number of $d$-dimensional cells of $L$:
\\
Observe if $L$ has no $d$-dimensional cells then it has to be contained in ${\Sigma}\oPerpstar {{\mathtt{F}^{*}}}$.
\\
Otherwise, pick $\epsilon$ one of the $n\geq 1$ cells of dimension $d$ of $L$.
Consider in ${\mathtt{F}^{*}}$ the unique path $p$ from $\Dual_{d}^{-1}(\epsilon)$ to the infinite end.
By finiteness of $L$ there is a least vertex $v\in p\cap \Dual_{d}^{-1}(L)$.
By definition $\sigma:=\Dual_{d}(v)$ and $\tau:=\Dual_{d-1}(o(v))$ are cells of $L$.

Claim:
Removing $\sigma$ and $\tau$ from $L$ delivers a ${\mathtt{F}^{*}}$-back-orbit saturated subcomplex $L_{\sigma}\subseteq L$ of ${\Sigma} $ with one $d$-dimensional cell less, together with a retraction $U_{\sigma}: L\to L_{\sigma}$. 
This is because, by construction, the cell $\sigma$ doesn't belong to the back-orbit of any other $(d-1)$-dimensional cell of $L$ than $\tau$.
The cell $\sigma$ admits a retraction $V_{\sigma}$ to $(\partial \sigma)\mysetminus \tau$ (its boundary minus $\tau$), which extends (by the identity) on the rest of $L$.

Now, a decreasing induction gives successive retractions $U_{\sigma_1}, U_{\sigma_2}, \cdots, U_{\sigma_n}$ whose composition is a retraction from $L$ to a subcomplex contained in ${\Sigma}\oPerpstar {{\mathtt{F}^{*}}}$. This completes the proof of the lemma.\end{proof}

Applying Lemma \ref{lem:back-orbit_retraction} to the ${\mathtt{F}^{*}}$-back-orbit saturation $K$ of $K_0$ completes the proof of Proposition~\ref{Sigma-F is contractible}.
\end{proof}

\begin{lemma}\label{lem:growth cond for dual graph}
If $M$ is a compact connected smooth $d$-dimensional manifold (possibly with boundary) and if ${\Sigma} $ is a triangulation of its universal cover $\tilde{M}$ that is the pull-back of a (finite)  triangulation of $M$, then the dual graph ${{\GGG}^*}$ of ${\Sigma} $ is quasi-isometric to the fundamental group of $M$.
\end{lemma}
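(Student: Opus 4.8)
The plan is to deduce the statement from the \v{S}varc--Milnor lemma applied to the natural action of $\pi_1(M,u)$ on the dual graph ${{\GGG}^*}$. First I would equip ${{\GGG}^*}$ with its graph metric (equivalently, realize it as a metric graph whose edges have length one), so that it becomes a geodesic metric space. Since ${\Sigma}$ is the pull-back of a triangulation of $M$, the deck group $\pi_1(M,u)$ acts simplicially on ${\Sigma}$; this action permutes the $d$-dimensional cells and the regular $(d-1)$-dimensional cells while preserving incidence, and hence, via the identifications $\Dual_d$ and $\Dual_{d-1}$, it acts on ${{\GGG}^*}$ by graph automorphisms, i.e.\ by isometries of the graph metric.

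Next I would verify the hypotheses of \v{S}varc--Milnor. Local finiteness, and hence properness of the metric graph, is immediate: each $d$-cell of ${\Sigma}$ is a $d$-simplex with exactly $d+1$ codimension-one faces, so every vertex of ${{\GGG}^*}$ has degree at most $d+1$. Cocompactness follows from the finiteness of the triangulation of $M$: the covering $\tilde{M}\to M$ identifies the $\pi_1(M,u)$-orbits of cells of ${\Sigma}$ with the finitely many cells of the fixed finite triangulation, so there are only finitely many orbits of vertices and of edges of ${{\GGG}^*}$. For proper discontinuity it suffices to observe that the action is free: a nontrivial deck transformation acts on $\tilde{M}$ without fixed points, yet any simplicial automorphism carrying a simplex to itself would fix its barycenter; hence no nontrivial element can stabilize a $d$-cell, and freeness together with local finiteness and cocompactness makes $\{\,g : gB\cap B\neq\emptyset\,\}$ finite for every ball $B$.

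The one point requiring genuine care --- and the main obstacle --- is the connectedness of ${{\GGG}^*}$, which is needed for it to be a geodesic space to which \v{S}varc--Milnor applies. Here I would argue geometrically: the interior of the connected manifold $\tilde{M}$ is connected, so any two $d$-cells can be joined by a path between their barycenters lying in $\mathrm{int}(\tilde{M})$; after a small transverse perturbation this path avoids the codimension-two skeleton and crosses only interior codimension-one faces, i.e.\ regular $(d-1)$-cells, thereby producing an edge-path in ${{\GGG}^*}$ between the corresponding vertices. The presence of boundary is handled by pushing the path into the interior using a collar of $\partial\tilde{M}$. With ${{\GGG}^*}$ connected, locally finite, and carrying a free, cocompact, isometric $\pi_1(M,u)$-action, the \v{S}varc--Milnor lemma yields that $\pi_1(M,u)$ is finitely generated and quasi-isometric to ${{\GGG}^*}$, which is exactly the assertion of the lemma.
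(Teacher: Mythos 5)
Your proposal is correct and follows essentially the same route as the paper: the paper likewise observes that $\pi_1(M)$ acts freely and cocompactly on ${\GGG^*}$ (so that, by the \v{S}varc--Milnor lemma, it only remains to check connectedness), and then proves connectedness by exactly your argument — joining interior points of two $d$-cells by a path in the interior of $\tilde{M}$, perturbed to avoid the cells of dimension $\leq d-2$ and to cross only regular $(d-1)$-cells, which yields an edge-path in ${\GGG^*}$. Your write-up merely makes explicit the \v{S}varc--Milnor hypotheses (local finiteness, freeness, cocompactness) that the paper leaves implicit.
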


\begin{proof}
Indeed, ${{\GGG}^*}$ (as well as ${\Sigma} $) is equipped with a co-compact free action of $\pi_1(M)$.
It is enough to check that ${{\GGG}^*}$ is connected.
Given any two vertices of ${{\GGG}^*}$, they correspond to two $d$-dimensional cells $\sigma$ and $\tau$ of ${\Sigma} $.
Any path in $\tilde{M}$ joining two points $z_{\sigma}$ and $z_{\tau}$ of their interior (the interior of a connected manifold with boundary is path-connected) can be deformed (with fixed extremities) to a path in $\tilde{M}\mysetminus \partial\tilde{M}$  that avoids the cells of dimension $\leq d-2$
 and that  crosses the $(d-1)$-dimensional regular cells in finitely many points). Such a path delivers a path in ${{\GGG}^*}$ joining the two vertices.
\end{proof}

\subsection{Proof of Theorems~\ref{erg dim 3-dim mfld} and \ref{erg dim d-mfld}}

Let $\Gamma$ be the fundamental group of a compact aspherical smooth manifold $M$ of dimension $d\geq 2$, possibly with boundary, and let $\Gamma \actson^a (X,\mu)$ be a free \pmp{} action.

Consider  a triangulation ${\Sigma} $ of its universal cover $\tilde{M}$ which is the pull-back of a (finite) triangulation of $M$ and let ${{\GGG}^*}$ be its dual graph.
When equipped with the diagonal action, $\widetilde{{\Sigma} }:=X\times {\Sigma} $
gives a contractible field of complexes on which the equivalence
relation $\RR_a$ acts smoothly. Similarly, $\Gamma$ and $\RR_a$ act smoothly on the dual field of graphs $X\times {{\GGG}^*}$.
The quotient $\GG:=\Gamma\backslash (X\times {{\GGG}^*})$ gives a measure preserving Borel graph on the finite measure space \[\Gamma\backslash((X,\mu)\times \VVV({{\GGG}^*}))\simeq\Gamma\backslash\left((X,\mu)\times {\Sigma} ^{(d)}\right)\simeq (X,\mu)\times \Gamma\backslash{\Sigma} ^{(d)}.\]
Since the actions are free, almost every connected component of $\GG$ is isomorphic to ${{\GGG}^*}$.

If ${{\GGG}^*}$ is finite, so is $\Gamma$ and it has strong ergodic dimension $=0$.
If ${{\GGG}^*}$ is two-ended, then $\Gamma$ is  two-ended as well (by Lemma~\ref{lem:growth cond for dual graph}) and thus amenable, and we are done by Ornstein-Weiss Theorem \cite{OW80}: $\Gamma$ has strong ergodic dimension $=1$.

Otherwise, Theorem~\ref{thm:pmp} ensures the existence of an a.e.\  one-ended subforest ${\mathcal{F}}$ of $\GG$.

Pulling back ${\mathcal{F}}$ under the quotient map $X\times {{\GGG}^*} \longrightarrow \GG$ delivers a smooth $\RR_a$-invariant field ${\mathcal{F}^{*}}$ of one-ended subforests of ${\GG^{*}}:=X\times {{\GGG}^*}$.

Then consider the smooth $\RR_a$-invariant field of simplicial subcomplexes $\widetilde{{\Sigma} }\oPerpstar {{\mathcal{F}^{*}}}$ obtained by removing from $\widetilde{{\Sigma} }$ all the $d$-dimensional cells and all the $(d-1)$-dimensional cells associated with an edge from the forest ${\mathcal{F}^{*}}$. This construction is made fiber-wise and the bijections $\Dual_{d}$ and $\Dual_{d-1}$ extend naturally to the field of complexes framework. The resulting complex $(\widetilde{{\Sigma} }\oPerpstar {{\mathcal{F}^{*}}})_x$ above a.e.\  $x\in X$ is precisely the complex $\widetilde{{\Sigma} }_{x}\oPerpstar{{\mathcal{F}^{*}}_{x}}$, of the kind (${\Sigma} $ minus cells dual to a one-ended subforest) that has been considered in Section~\ref{subsect:remove dual of F}. Applying Proposition \ref{Sigma-F is contractible} shows that a.e.\  complex $(\widetilde{{\Sigma} }\oPerpstar{{\mathcal{F}^{*}}})_x$ is contractible (and of course $(d-1)$-dimensional).
We have proved Theorem~\ref{erg dim d-mfld}. \hfill$\square$

\bigskip
As for Theorem~\ref{erg dim 3-dim mfld} where $M$ is $3$-dimensional, we already know by Theorem~\ref{erg dim d-mfld} that $\Gamma$ has ergodic dimension $\leq 2$.\\
In case $\Gamma$ has ergodic dimension $=1$, then (by \cite[Cor. 3.17]{Gab02}) the $2$nd $\ell^2$-Betti number $\beta_{2}^{(2)}(\Gamma)=0$.
By Poincar\'e duality \cite[Sect. 5]{CG86} we deduce $\beta_1(\Gamma)=0$.
In case $M$ is non-orientable, the orientation cover $\bar{M}\to M$ has fundamental group $\bar{\Gamma}$, an index $2$ subgroup of $\Gamma$, and we get
 $2 \beta_{1}^{(2)}(\Gamma)= \beta_{1}^{(2)}(\bar{\Gamma})=\beta_{2}^{(2)}(\bar{\Gamma})=2 \beta_{2}^{(2)}(\Gamma)=0$.
By \cite[Prop. 6.10]{Gab02}, a group of ergodic dimension $=1$ with $\beta_1^{(2)}=0$ is amenable.
In case $\Gamma$ has ergodic dimension $=0$, then $\Gamma$ is finite, thus amenable.
Thus, if $\Gamma$ admits a \pmp{} free action of geometric dimension $<2$, then it is amenable.
Theorem~\ref{erg dim 3-dim mfld} is proved.
\hfill $\square$

\newpage
\begin{appendices}

\section{Treeability for locally compact groups}\label{sec:lcsc}
\label{sect: Treeability for locally compact groups}

\begin{definition}\label{def:BorelTreeable}
Let $E$ be a Borel equivalence relation on a standard Borel space $X$. We say that $E$ is \define{Borel treeable} if there is an acyclic Borel graph $\Tgraph$ on $X$ with $E_{\Tgraph} = E$. 
Given a Borel probability measure $\mu$ on $X$, we say that $E$ is \define{$\mu$-treeable} if there is a $\mu$-conull Borel subset $X_0$ of $X$ such that the restriction $E_{\restriction X_0}$ is Borel treeable. We say that $E$ is \define{measure treeable} if $E$ is $\mu$-treeable for every Borel probability measure $\mu$ on $X$.
\end{definition}

Let $G$ be a locally compact second countable (lcsc) group, and let
$G\curvearrowright X$ be a free Borel action of $G$ on a standard Borel
space $X$. A subset $Y\subseteq X$ is called a \define{cross section} of the action $G\curvearrowright X$ if $Y$ is a complete section for the action and if there exists a neighborhood $U$ of $1_G$ such that the map $U\times Y \rightarrow X$, $(g,y)\mapsto g\cdot y$, is injective.

\begin{theorem}[\cite{Ke92}]\label{thm:cross}
Let $G\curvearrowright X$ be a Borel action of an lcsc group $G$ on a standard Borel space $X$. Then there exists a Borel cross section for the action.
\end{theorem}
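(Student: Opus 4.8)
The plan is to reduce the existence of a cross section to the construction of a single Borel set that is \emph{maximally separated} for the induced metric on orbits. Fix a left-invariant metric $d$ on $G$ compatible with its topology (Birkhoff--Kakutani), and use local compactness to pick $r_0>0$ with $\overline{B(1_G,r_0)}$ compact. Since the action is free, the map $(g,x)\mapsto (g\cdot x,x)$ from $G\times X$ to $X\times X$ is an injective Borel map, so by the Lusin--Souslin theorem its image $\RR_G$ is Borel and there is a Borel cocycle $\alpha\colon \RR_G\to G$ with $\alpha(y,x)\cdot x=y$. Call a set $Y\subseteq X$ \emph{$r_0$-separated} if any two distinct $y,y'\in Y$ lying in a common orbit satisfy $d(1_G,\alpha(y',y))\ge r_0$. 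I claim that any Borel set $Y$ that is \emph{maximal} among $r_0$-separated sets is already a cross section. Indeed, maximality forces $Y$ to meet every orbit (otherwise a point of a missed orbit could be adjoined without violating separation), so $Y$ is a complete section; and taking $U=B(1_G,r_0/2)$ one has $U^{-1}U\subseteq B(1_G,r_0)$, whence $g_0\cdot y_0=g_1\cdot y_1$ with $g_i\in U$ and $y_i\in Y$ forces $\alpha(y_1,y_0)=g_1^{-1}g_0\in U^{-1}U$, hence $y_0=y_1$ by separation and then $g_0=g_1$ by freeness. Thus $U\times Y\to X$ is injective, and $Y$ is a cross section.

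So the whole statement reduces to producing a \emph{Borel} maximal $r_0$-separated set. First I would replace $G\actson X$ by a Borel-isomorphic \emph{continuous} action on a Polish space via the Becker--Kechris theorem, so that the orbit maps and the cocycle $\alpha$ are well behaved; this is a convenient but inessential preliminary.

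\textbf{Main obstacle.}
The hard part will be the Borel construction of the maximal separated set, and the central difficulty is that the relevant proximity relation is not locally countable: when $G$ is non-discrete each orbit is a continuum, and the set of orbit-points within $d$-distance $r_0$ of a given point is uncountable, so one cannot simply take a Borel proper coloring of a locally countable ``conflict graph.'' The resolution exploits local compactness. Because $\overline{B(1_G,r_0)}$ is compact, any $r_0$-separated subset of a single orbit meets each translate $B(1_G,r_0)\cdot x$ in at most $N$ points, where $N$ bounds the cardinality of an $r_0$-separated subset of $\overline{B(1_G,r_0)}$ and depends only on $(G,d)$. This uniform local finiteness is exactly what powers a greedy construction: fixing a countable dense set $\{g_n\}\subseteq G$ to render all selections Borel, one builds $Y$ as an increasing transfinite union of Borel $r_0$-separated sets, adjoining at each stage the Borel set of points that may be safely added; the uniform bound $N$ together with a reflection argument shows that this process stabilizes at a Borel stage, yielding a Borel set that is maximal among all $r_0$-separated subsets of $X$. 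This step is precisely the technical content established by Kechris \cite{Ke92}, to which I would appeal for the details; combined with the reduction of the first paragraph it produces the desired Borel cross section.
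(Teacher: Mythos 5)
The paper never actually proves this theorem: it is quoted verbatim from Kechris \cite{Ke92}, so the only meaningful comparison is against what \cite{Ke92} really provides. Your reduction is sound as far as it goes, but note that you silently assume the action is free --- you need freeness both to define the cocycle $\alpha$ and to pass from $r_0$-separation to injectivity of $U\times Y\to X$. In context this is the right reading (the paper defines cross sections only for free actions, and with that definition the statement is in fact false for non-free actions, e.g.\ for the trivial action of $\R$, since $(g,y)\mapsto g\cdot y$ can never be injective on $U\times Y$), but since the statement you were handed says ``Borel action,'' you should have flagged the discrepancy rather than absorbed it without comment.

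The genuine gap is in how you dispose of your ``main obstacle.'' The entire content of the theorem sits in the step you defer --- producing a \emph{Borel} set that is maximal among $r_0$-separated sets --- and your sketch of that step (transfinite greedy union plus ``a reflection argument'' that stabilizes at a Borel stage) is not an argument: the conflict relation is not locally countable, so no standard Borel greedy or coloring technique applies, and the uniform bound $N$ does not by itself force a transfinite construction to terminate in a Borel set; making such Delone-type constructions Borel is precisely the hard part. Worse, the reference you then invoke for the details is the wrong one. What \cite{Ke92} proves is the existence of a Borel \emph{complete lacunary} section: a complete section $Y$ together with a neighborhood $U$ of $1_G$ such that $U\cdot y\cap Y=\{y\}$ for every $y\in Y$, with no maximality claim whatsoever. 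Maximal (equivalently, cocompact) cross sections are strictly stronger; in this paper that refinement is Theorem~\ref{thm:cocomp}, credited to Slutsky \cite{Sl17}, not to \cite{Ke92}. So as written you have routed the easier statement through a harder one and attributed the harder one to a paper that does not contain it. The repair is to drop maximality altogether, since it was only a device to get completeness: lacunarity plus completeness is exactly the theorem of \cite{Ke92}, and it already yields a cross section for a free action --- take symmetric $U$ with $U^{-1}U$ inside the lacunarity neighborhood; if $g_0\cdot y_0=g_1\cdot y_1$ with $g_i\in U$ and $y_i\in Y$, then $y_1\in (U^{-1}U)\cdot y_0\cap Y=\{y_0\}$, and freeness gives $g_0=g_1$, which is your own final computation with maximality never used.
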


A cross section $Y\subseteq X$ is called \define{cocompact} if there exists a compact subset $K\subseteq G$ such that $K\cdot Y = X$.

\begin{theorem}[See {\cite[\S 2]{Sl17}}]
\label{thm:cocomp}
Let $G\curvearrowright X$ be a free Borel action of an lcsc group $G$ on a standard Borel space $X$. Then there exists a Borel cross section for the action which is cocompact. Moreover, given any compact set $L\subseteq G$, we may find a cocompact Borel cross section $Y\subseteq X$ such that $L\cdot y_0 \cap L\cdot y_1 = \emptyset$ for distinct $y_0,y_1\in Y$.
\end{theorem}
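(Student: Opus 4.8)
The plan is to reduce both conclusions to the single task of producing one Borel set that is simultaneously \emph{separated} and \emph{maximal}, and then to build such a set by a greedy construction over a countable candidate family. Fix once and for all an open, symmetric, relatively compact neighborhood $V$ of $1_G$ with $L^{-1}L\subseteq V$; this is possible since $L^{-1}L$ is compact. For $x,x'$ in a common $G$-orbit let $\rho(x,x')\in G$ be the unique element with $\rho(x,x')\cdot x=x'$ (well defined by freeness, and Borel by Luzin--Novikov uniformization of the Borel action). Call a set $Y$ \emph{$V$-lacunary} if the translates $\{Vy:y\in Y\}$ are pairwise disjoint along each orbit, equivalently if $\rho(y_0,y_1)\notin V^{-1}V=V^2$ for distinct $y_0,y_1\in Y$ in the same orbit. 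The point of the reduction is that a $V$-lacunary $Y$ is automatically a cross section (take $U=V$: if $u_0y_0=u_1y_1$ with $u_i\in V$ then $\rho(y_0,y_1)\in V^{-1}V$, forcing $y_0=y_1$ and then $u_0=u_1$ by freeness), and moreover $L y_0\cap Ly_1=\emptyset$ for distinct points, because $L^{-1}L\subseteq V\subseteq V^2$. Thus it remains only to arrange \emph{cocompactness}, which we will extract from \emph{maximality} of a $V$-lacunary set.

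First I would build a countable, dense candidate family. By Theorem~\ref{thm:cross} fix a Borel complete section $C$, and using second countability fix a countable dense set $\{g_n\}\subseteq G$; put $P=\bigunion_n g_n\cdot C$. Then $P$ is Borel, meets each orbit in a countable set, and is \emph{$V$-dense}, i.e.\ $V\cdot P=X$: given $x$, write $x=h\cdot c$ with $c\in C$, choose $g_n\in Vh$ (possible since $Vh$ is open and $\{g_n\}$ is dense), and observe $x=h\cdot c\in Vg_n\cdot c\subseteq V\cdot P$. Now form the Borel \emph{proximity graph} $\mc{G}$ on $P$ by declaring $p\sim p'$ iff $p\neq p'$, they lie in a common orbit, and $\rho(p,p')\in V^2$; this is Borel since $\rho$ is Borel and $V^2$ is symmetric and relatively compact, and it is locally countable because $P$ is countable per orbit. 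Since locally countable Borel graphs have countable Borel chromatic number \cite{KST99}, a standard greedy construction along the colour classes produces a Borel \emph{maximal independent set} $Y\subseteq P$ for $\mc{G}$.

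Finally I would verify the three properties. Independence of $Y$ in $\mc{G}$ says exactly that $Y$ is $V$-lacunary, so by the reduction $Y$ is a Borel cross section with $Ly_0\cap Ly_1=\emptyset$ for distinct $y_0,y_1\in Y$. For cocompactness: given any $x\in X$, $V$-density gives $p\in P$ with $x\in Vp$, and maximality of $Y$ gives either $p\in Y$ or a $\mc{G}$-neighbor $y\in Y$ with $\rho(p,y)\in V^2$; in either case $p\in V^2 Y$, whence $x\in V\cdot V^2 Y=V^3 Y$. Therefore $X=V^3Y$ with $\overline{V^3}$ compact, so $Y$ is cocompact, completing the argument. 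The main obstacle is precisely this last point, the passage from separation to genuine cocompactness: a maximal $V$-lacunary subset of a mere complete section need not cover $X$ boundedly, and the device that overcomes this is the replacement of $C$ by the countable yet \emph{dense} family $P=\bigunion_n g_n\cdot C$, which makes maximality in $P$ translate into $V^3$-syndeticity in all of $X$. (If one prefers to stay within the locally finite coloring statement of \cite[Proposition 4.6]{KST99}, one may note that along each orbit the $\mc{G}$-degree of a lacunary refinement is bounded, via left Haar measure, by $\lambda(V^2 V)/\lambda(V)$; but the locally countable version above avoids needing a cocompact section as input.)
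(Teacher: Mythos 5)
Your proof is correct, and it is worth noting at the outset that the paper itself contains no proof of Theorem~\ref{thm:cocomp}: the statement is imported from Slutsky \cite{Sl17}, where the construction is carried out in the setting of multidimensional Borel flows. What you have written is in effect a self-contained adaptation of that standard construction to general lcsc $G$, and its scheme is the expected one: start from Kechris's cross section (Theorem~\ref{thm:cross}), thicken it to a countable-per-orbit, $V$-dense Borel family $P$ by translating along a countable dense subset of $G$, and extract a Borel \emph{maximal} $V$-lacunary subset via a countable Borel coloring of the locally countable proximity graph \cite{KST99}, so that maximality plus density yields $X=V^3Y$, giving completeness, cocompactness, and (via $L^{-1}L\subseteq V$) the disjointness of the $L$-translates all at once. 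Two small points of hygiene, neither affecting correctness: (i) the Borelness of the cocycle $\rho$ follows from the Luzin--Suslin theorem (the injective Borel map $(g,x)\mapsto(x,g\cdot x)$ has Borel image with Borel inverse), not from Luzin--Novikov as you write --- Luzin--Novikov is, however, exactly what justifies the Borelness of the projections in your greedy recursion over colour classes; (ii) a $V$-lacunary set is not by itself a cross section in the paper's sense, since completeness is part of the definition, but this is harmless because your final equality $X=V^3Y$ supplies completeness together with cocompactness. Your parenthetical Haar-measure degree bound is also slightly off for non-unimodular $G$ (right rather than left Haar measure is what bounds the number of disjoint right translates $Vg$ inside $V^3$), but as you say, the locally countable route makes it unnecessary.
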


The analogous result was known before in the measure-theoretic setting \cite{Forrest-1974}.

\begin{proposition}\label{prop:qinv}
Let $G\curvearrowright X$ be a free Borel action of an lcsc group $G$ on a standard Borel space $X$ and let $\mu$ be a Borel probability measure on $X$. Then there exists a quasi-invariant Borel probability measure $\mu '$ such that $\mu$ and $\mu '$ have the same $G$-invariant null sets.
\end{proposition}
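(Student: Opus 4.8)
The plan is to mimic the countable-group construction recalled in the introduction, where one sets $\mu' = \sum_i 2^{-i}(g_i)_*\mu$, but to replace the summation over a countable generating set by an integration against a Haar-equivalent probability measure. Concretely, I would fix a left Haar measure $\lambda$ on $G$ and, using that a lcsc group is $\sigma$-compact, choose a Borel probability measure $\theta$ on $G$ equivalent to $\lambda$ (for instance $\theta = (f/\lVert f\rVert_{1})\,\lambda$ for a strictly positive $f\in L^1(\lambda)$). I would then define the averaged measure $\mu'$ by $\mu'(A) = \int_G (g_*\mu)(A)\, d\theta(g) = \int_G \mu(g^{-1}A)\, d\theta(g)$ for Borel $A\subseteq X$. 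Since $\theta$ and each $g_*\mu$ are probability measures, $\mu'(X) = \theta(G) = 1$, so $\mu'$ is a Borel probability measure once it is shown to be well defined.

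There are then two things to check. First, quasi-invariance: for $h\in G$ a direct computation gives $(h_*\mu')(A) = \int_G \mu((hg)^{-1}A)\, d\theta(g) = \int_G (g_*\mu)(A)\, d((L_h)_*\theta)(g)$, where $L_h$ denotes left translation by $h$. Because $\lambda$ is left invariant we have $(L_h)_*\lambda = \lambda$, hence $(L_h)_*\theta \sim (L_h)_*\lambda = \lambda \sim \theta$; viewing $g\mapsto (g_*\mu)(A)$ as a fixed nonnegative Borel function and integrating it against the two equivalent measures $\theta$ and $(L_h)_*\theta$ yields two integrals that vanish simultaneously, so $(h_*\mu')(A)=0$ iff $\mu'(A)=0$. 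This shows $h_*\mu' \sim \mu'$ for every $h$, i.e. $\mu'$ is quasi-invariant. Second, agreement on invariant sets: if $N$ is a $G$-invariant Borel set then $g^{-1}N = N$ for all $g$, so $\mu'(N) = \int_G \mu(N)\, d\theta(g) = \mu(N)$; in particular a $G$-invariant Borel set is $\mu$-null iff it is $\mu'$-null, which is exactly the assertion that $\mu$ and $\mu'$ have the same $G$-invariant null sets. (Note one should not expect the stronger $\mu \ll \mu'$: e.g. for $G=\R$ acting on itself by translation with $\mu$ a point mass, $\mu'$ is equivalent to Lebesgue measure.)

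The main technical point, and the one place where the passage from the countable case requires genuine care, is the well-definedness of $\mu'$ as a Borel measure, i.e. that $g\mapsto (g_*\mu)(A)$ is $\theta$-measurable for each Borel $A$ and that the resulting set function is countably additive. For this I would use that the action map $\alpha\colon G\times X\to X$, $(g,x)\mapsto g\cdot x$, is Borel, so $\alpha^{-1}(A)$ is Borel in $G\times X$ and $g\mapsto \mu(\{x : g\cdot x\in A\}) = (g_*\mu)(A)$ is Borel by the standard measurability of the $x$-section integral of a bounded jointly Borel function; countable additivity of $\mu'$ then follows from the monotone convergence theorem. A secondary point to keep in mind is the exact reading of ``quasi-invariant'': here it means $g_*\mu' \sim \mu'$ for each individual $g\in G$, which is precisely what the argument above establishes, and the existence of a jointly measurable Radon--Nikodym cocycle is then automatic from the general theory of quasi-invariant measures for Borel actions of lcsc groups.
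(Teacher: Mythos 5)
Your construction is exactly the paper's: the paper takes a probability measure $m$ on $G$ equivalent to Haar measure and sets $\mu' = m \ast \mu$, i.e.\ $\int_X f\, d\mu' = \int_G \int_X g\cdot f\, d\mu\, dm$, which is precisely your $\mu'(A) = \int_G \mu(g^{-1}A)\, d\theta(g)$. The paper states this in two lines and omits all verification, so your checks of measurability, quasi-invariance via left-invariance of Haar, and equality (not just mutual nullity) on $G$-invariant sets are correct fillings-in of details the paper treats as routine.
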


\begin{proof}
Let $m$ be a probability measure on $G$ which is equivalent to Haar measure. Define $\mu ' = m\ast \mu$, i.e., $\int _X f\, d\mu ' = \int _G \int _X g\cdot f \, d\mu \, dm$.
\end{proof}

\begin{definition}\label{def: B, mu, measure treeable}
Let $G\curvearrowright X$ be a free Borel action of an lcsc group $G$ on a standard Borel space $X$.
\begin{enumerate}
\item The action is called \define{Borel treeable} if the equivalence relation $\RR_G$ is Borel treeable.
\item Let $\mu$ be a Borel probability measure on $X$. The action is called
\define{$\mu$-treeable} if there exists a $G$-invariant $\mu$-conull Borel set $X_0\subseteq X$ such that $G\curvearrowright X_0$ is Borel treeable.
\item\label{def: measure treeable action} The action is called \define{measure treeable} if it is $\mu$-treeable for every Borel probability measure $\mu$.
\end{enumerate}
\end{definition}

\begin{proposition}
Let $G\curvearrowright X$ be a Borel action of an lcsc group $G$ on a standard Borel space $X$. Then the following are equivalent:
\begin{enumerate}
\item The equivalence relation $\RR_G$ is Borel treeable.
\item There exists a Borel cross section $Y$ such that $(\RR_{G})_{\restriction Y}$ is Borel treeable.
\item For every Borel cross section $Y$ the restriction $(\RR_{G})_{\restriction Y}$ is Borel treeable.
\end{enumerate}
\end{proposition}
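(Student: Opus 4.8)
The plan is to prove the cycle of implications $(3)\Rightarrow(2)\Rightarrow(1)\Rightarrow(3)$. The implication $(3)\Rightarrow(2)$ is immediate: by Theorem~\ref{thm:cross} the action admits at least one Borel cross section $Y$, and $(3)$ applied to this $Y$ gives $(2)$. The two substantial implications are $(2)\Rightarrow(1)$, where I must promote a treeing of a cross-section relation to a treeing of the full orbit relation $\RR_G$ (whose classes are uncountable copies of $G$), and $(1)\Rightarrow(3)$, where I must descend an arbitrary treeing of $\RR_G$ to one of $(\RR_G)_{\restriction Y}$ for a given cross section. Throughout I use that $Y$ being a cross section forces $(\RR_G)_{\restriction Y}$ to be a countable Borel equivalence relation, and I freely invoke that a Borel acyclic graph (forest) has unique finite geodesics, so that by Lusin--Novikov uniformization its connectedness relation, its geodesic map $(x,x')\mapsto[x,x']$, and the connected components of any Borel subforest are all Borel, even when vertex degrees are uncountable.

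For $(2)\Rightarrow(1)$, fix a cross section $Y$ and a treeing $\Tgraph_Y$ of $(\RR_G)_{\restriction Y}$. I would first build a Borel ``Voronoi projection'' $\pi\from X\to Y$ as follows. Fix a proper right-invariant metric $d_G$ on the lcsc group $G$; since the action is free, for $x\in X$ and $y\in Y$ in the orbit of $x$ there is a unique $g=g(x,y)$ with $g\cdot x=y$, and I set $\rho(x,y)=d_G(1_G,g(x,y))$. Let $\pi(x)$ be the point of $Y\cap[x]_{\RR_G}$ minimizing $\rho(x,\cdot)$ (breaking ties by a fixed Borel linear order on $X$); this is Borel, satisfies $(x,\pi(x))\in\RR_G$, and restricts to the identity on $Y$ because the cross-section property makes every $y\in Y$ strictly $\rho$-closest to itself. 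Now define $\Tgraph\subseteq\RR_G$ to consist of the radial edges $\{x,\pi(x)\}$ for $x\in X\mysetminus Y$ together with the edges of $\Tgraph_Y$. Every $x\notin Y$ is a leaf of $\Tgraph$, so any cycle would lie in $Y$ and use only $\Tgraph_Y$-edges, whence $\Tgraph$ is acyclic; and since the cells $\pi^{-1}(y)$ partition each orbit with all of $Y\cap[x]_{\RR_G}$ connected through $\Tgraph_Y$, the connectedness relation of $\Tgraph$ is exactly $\RR_G$. Thus $\Tgraph$ witnesses Borel treeability of $\RR_G$ in the sense of Definition~\ref{def:BorelTreeable}.

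For $(1)\Rightarrow(3)$, fix a treeing $\Tgraph$ of $\RR_G$ and an arbitrary cross section $Y$. The naive idea of joining $y,y'\in Y$ exactly when the $\Tgraph$-geodesic $[y,y']$ has no interior vertex in $Y$ fails: a single high-degree vertex of $\Tgraph$ lying in $X\mysetminus Y$ and adjacent to three points of $Y$ produces a triangle, so this rule yields cliques rather than a forest. The remedy is to break these cliques using the tree structure. Consider the Borel forest $\Tgraph_{\restriction X\mysetminus Y}$ and its (Borel) connected components $C$; for each $C$ let $\partial C\subseteq Y$ be the countable set of $\Tgraph$-neighbors of $C$ lying in $Y$. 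Since $\Tgraph$ is acyclic, each $y\in\partial C$ has exactly one edge into the connected set $C$, so contracting every component $C$ to a single node turns $\Tgraph$ into a tree $\widehat{\Tgraph}$ on the disjoint union of $Y$ with the set of components, in which the component-nodes form an independent set. For each $C$ I would choose, Borel-measurably from a fixed linear order and the tree structure of $C$, an honest spanning tree on the countable set $\partial C$; letting $\Tgraph_Y$ be the union of all these spanning trees with the direct $Y$-to-$Y$ edges of $\Tgraph$ amounts to ``smoothing out'' each (pairwise non-adjacent) component-node of $\widehat{\Tgraph}$, replacing its star by a spanning tree on its neighbors. Each such local move preserves connectivity and acyclicity, so $\Tgraph_Y$ is a Borel treeing of $(\RR_G)_{\restriction Y}$. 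Borelness of the edge set is clean once phrased via geodesics: $(y,y')\in\Tgraph_Y$ iff $[y,y']$ has no interior $Y$-vertex and $\{y,y'\}$ was selected in the spanning tree of the unique component carrying its interior.

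The main obstacle is the implication $(1)\Rightarrow(3)$: the correct, clique-free descent of a treeing to a complete section, together with the measurability bookkeeping in the non-locally-countable regime. The key enabling facts are, first, that acyclicity forces unique geodesics, which restores Borelness of geodesics, components and boundaries via uniformization despite the uncountable vertex degrees that any treeing of $\RR_G$ is forced to have, and second, the elementary but essential observation that contracting the components of $\Tgraph_{\restriction X\mysetminus Y}$ exhibits an independent set of nodes whose stars may be independently re-spanned, which is what guarantees the output is a genuine forest rather than a clique complex.
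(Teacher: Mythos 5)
Your cycle $(3)\Rightarrow(2)\Rightarrow(1)\Rightarrow(3)$ matches the paper's architecture, and two of the three legs are fine. For $(2)\Rightarrow(1)$ the paper does something simpler than your Voronoi construction: it takes \emph{any} Lusin--Novikov uniformization $f\colon X\mysetminus Y\to Y$ of the Borel set $\{(x,y)\in\RR_G : x\notin Y,\ y\in Y\}$ (which has countable sections) and attaches each $x$ as a leaf to $f(x)$. Your metric selection also works, but it needs freeness (not literally assumed in the statement) and an argument, which you omit, that the infimum of $\rho(x,\cdot)$ over $Y\cap[x]_{\RR_G}$ is attained; this follows from properness together with the $U$-separatedness of $\{g : g\cdot x\in Y\}$ coming from the cross-section property. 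Your enabling Borelness facts for forests with uncountable degrees (distances, geodesics, components of subforests) are correct: uniqueness of simple paths makes the projection from simple paths to endpoint pairs injective, so Lusin--Novikov/Lusin--Souslin applies.

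The genuine gap is in $(1)\Rightarrow(3)$, at the step ``choose, Borel-measurably from a fixed linear order and the tree structure of $C$, an honest spanning tree on the countable set $\partial C$.'' This is not a routine selection; it is where the entire difficulty of the implication sits. A fixed Borel linear order cannot do it: a countable subset of a standard Borel space need not have a least element, and, more fundamentally, the abstract problem ``given a Borel family of countable cliques, pairwise meeting in at most one point, Borel-select a spanning tree on each'' is unsolvable in general --- taking the cliques to be the classes of a countable Borel equivalence relation $E$, such a selection is exactly a Borel treeing of $E$, and non-treeable $E$ exist. So any correct completion must genuinely exploit the tree $C$ behind the clique $\partial C$, e.g.\ by forming the Steiner subtree of $C$ spanned by the attachment points $z(y,C)$, $y\in\partial C$ (a locally countable Borel forest, again by uniqueness of geodesics), and then descending to $\partial C$ by the standard locally countable argument --- but that construction is precisely the content you have deferred, and it is essentially the paper's proof in disguise. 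The paper avoids per-component choices altogether: Lusin--Novikov picks, for each $x\notin Y$, a $\Tgraph$-neighbor strictly closer to $Y$; iterating gives a Borel retraction $\pi\colon X\to Y$ with connected fibers, and one sets $\Tgraph_Y=\{(\pi(x),\pi(z)) : (x,z)\in\Tgraph,\ \pi(x)\neq\pi(z)\}$, which is Borel, acyclic and complete because contracting connected subtrees of a forest yields a forest and the edge-projection is injective. In short, your contract-and-respan picture is sound graph theory, but the one step you declare easy is the only step that is not soft, and as written it is asserted rather than proved.
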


\begin{proof}
(1)$\Rightarrow$(3): Let $\Tgraph$ be a Borel treeing of $\RR_G$ and let $Y$ be a Borel cross section for the action. For each $x\in X$ the set $[x]_{\RR_G} \cap Y$ is countable, so the set of points along a path from $x$ to $Y$ of minimal length is also countable. Therefore, the Borel set $\{ (x,z)\in \Tgraph \, : \, d_{\Tgraph}(z,Y)< d_{\Tgraph}(x,Y) \}$ has countable sections, so we may find a Borel function $f:X\mysetminus Y \ra X$ with $d_{\Tgraph}(f(x),Y) < d_{\Tgraph}(x,Y)$ for all $x\in X\mysetminus Y$. For each $x\in X$ let $n$ be least with $f^n(x)\in Y$, and let $\pi (x) = f^n(x)$. The set $A = \{ (x,z)\in \Tgraph \, : \, \pi (x)\neq \pi (z) \}$ is Borel, and the map $A\ra (\RR_{G})_{\restriction Y}$, $(x,z) \mapsto (\pi (x),\pi (z))$ is injective since $\Tgraph$ is a tree. Thus, the graph $\Tgraph _Y = \{ (\pi (x), \pi (z)) \, : \, (x,z)\in A \}$ is Borel, and $\Tgraph _Y$ is acyclic with $\RR_{\Tgraph _Y}=(\RR_{G})_{\restriction Y}$ since $\Tgraph$ is acyclic with $\RR_{\Tgraph} = \RR_G$.

(3)$\Rightarrow$(2) is immediate from Theorem \ref{thm:cross}. (2)$\Rightarrow$(1): Suppose that (2) holds and let $\Tgraph _Y$ be a Borel treeing of $(\RR_{G})_{\restriction Y}$. Since the set $\{ (x,y)\in \RR_G \, : \, x\in X\mysetminus Y  , \ y\in Y \}$ has countable sections we may find a Borel map $f: X\mysetminus Y \ra Y$. Then the graph $\Tgraph = \Tgraph _Y \cup \{ (x,f(x)) \, : \, x\in X\mysetminus Y \} \cup \{ (f(x),x) \, : \, x\in X\mysetminus Y \}$ is a Borel treeing of $\RR_G$.
\end{proof}

\begin{proposition}\label{prop:Ymeas}
Let $G\curvearrowright X$ be a Borel action of an lcsc group $G$ on a standard Borel space $X$ and let $\mu$ be a $G$-quasi-invariant Borel probability measure on $X$. Let $Y$ be a Borel cross section for the action. Then there exists a Borel probability measure $\nu$ on $Y$ which is $(\RR_{G})_{\restriction Y}$-quasi-invariant, and satisfies $\nu (A) = 0$ if and only if $\mu ([A]_{\RR_G}) = 0$, for all Borel $A\subseteq Y$. Moreover, if $\nu '$ is any other $(\RR_{G})_{\restriction Y}$-quasi-invariant Borel probability measure on $Y$ with this property then $\nu \sim \nu '$.
\end{proposition}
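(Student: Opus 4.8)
The plan is to build $\nu$ directly from $\mu$ by ``pushing $\mu$ transversally'' onto $Y$ through the injectivity tube of the cross section, and then to observe that both the quasi-invariance of $\nu$ and the uniqueness of its measure class are formal consequences of a single null-set identity.

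First I would fix a left Haar measure on $G$ and, using the definition of a cross section, choose an open neighborhood $U$ of $1_G$ (shrinking it if necessary, and taking it relatively compact) such that the action map $(g,y)\mapsto g\cdot y$ is injective on $U\times Y$. Then for Borel $A\subseteq Y$ the set $U\cdot A$ is the image of the Borel set $U\times A$ under a Borel injection, hence Borel by Lusin--Souslin, and disjointness of a family of $A$'s transfers to disjointness of the tubes $U\cdot A$ precisely because of this injectivity. This makes $A\mapsto \mu(U\cdot A)$ a genuine finite Borel measure on $Y$, of total mass $\mu(U\cdot Y)\in(0,1]$ (the mass is positive, since otherwise the Lindel\"of argument below would force $\mu(X)=\mu(G\cdot Y)=0$). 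I then normalize this to total mass one and call it $\nu$.

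The heart of the argument, and the step I expect to be the main obstacle, is the identity
\[
\nu(A)=0 \iff \mu(U\cdot A)=0 \iff \mu([A]_{\RR_G})=0 .
\]
For the nontrivial implication I would use that $G$ is second countable, hence Lindel\"of, so the open cover $\{g\,U : g\in G\}$ admits a countable subcover $G=\bigcup_n g_n U$; then $[A]_{\RR_G}=G\cdot A=\bigcup_n g_n(U\cdot A)$, and since $\mu$ is $G$-quasi-invariant each $g_n$ preserves null sets, so $\mu(U\cdot A)=0$ forces $\mu([A]_{\RR_G})=0$. The reverse implication is immediate from $U\cdot A\subseteq [A]_{\RR_G}$. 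The genuinely delicate bookkeeping here is the measurability and countable additivity of the tubes $U\cdot A$ together with this Lindel\"of reduction; everything else is formal.

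Once this identity is in hand the remaining clauses follow with no further obstacle. Quasi-invariance of $\nu$ under the \emph{countable} equivalence relation $(\RR_G)_{\restriction Y}$ holds because replacing $A$ by its $(\RR_G)_{\restriction Y}$-saturation inside $Y$ does not change its full $G$-orbit saturation $[A]_{\RR_G}$; so the displayed identity shows that $(\RR_G)_{\restriction Y}$-saturations of $\nu$-null sets are $\nu$-null, which is exactly quasi-invariance for a countable Borel equivalence relation. Likewise, any other $(\RR_G)_{\restriction Y}$-quasi-invariant probability measure $\nu'$ satisfying $\nu'(A)=0\iff\mu([A]_{\RR_G})=0$ has literally the same null sets as $\nu$, whence $\nu\sim\nu'$; the same observation shows the construction is independent, up to equivalence, of the choices of $U$ and of Haar measure, which is what justifies speaking of the canonical $(\RR_G)_{\restriction Y}$-quasi-invariant measure class associated to $Y$ and $\mu$.
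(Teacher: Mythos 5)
Your proof is correct and is essentially the paper's own argument: both choose an injectivity neighborhood $U$ of $1_G$, define $\nu(A)=\mu(U\cdot A)/\mu(U\cdot Y)$, and prove the null-set identity $\nu(A)=0\iff\mu([A]_{\RR_G})=0$ via a countable cover $G=\bigcup_n g_nU$ together with $G$-quasi-invariance of $\mu$, deducing quasi-invariance of $\nu$ and uniqueness of its class from that identity. The only differences are expository: you make explicit the Lusin--Souslin Borelness of the tubes $U\cdot A$ and the saturation formulation of quasi-invariance for the countable relation $(\RR_G)_{\restriction Y}$, which the paper leaves implicit.
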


\begin{proof}
Fix an open precompact neighborhood $U$ of $1_G$ such that the map $U\times Y \rightarrow X$, $(g,y)\mapsto g\cdot y$, is injective. Since $G$ is lcsc we may find a sequence $(g_n)_{n\in \N}$ in $G$ such that $G=\bigcup _n g_nU$. Then $1= \mu (X) = \mu (\bigcup _n g_n U\cdot Y )$, so $\mu (g_nU \cdot Y)>0$ for some $n\in \N$ and hence $\mu (U\cdot Y)>0$ since $\mu$ is quasi-invariant. Let $\mu _0$ denote the normalized restriction of $\mu$ to $U\cdot Y$, and let $\nu$ be the Borel probability measure on $Y$ given by $\nu (A) = \mu _0(U\cdot A)$. Suppose that $\nu (A) =0$, so that $\mu (U\cdot A) =0$. Since $\mu$ is $G$-quasi-invariant we have $\mu (g_nU\cdot A)= 0$ for all $n\in \N$ and hence $\mu ([A]_{\RR_G} )\leq \sum _n \mu (g_nU\cdot A ) =0$. Conversely, if $A\subseteq Y$ is a Borel set with $\mu ([A]_G ) =0$, then $\nu (A) = \mu (U \cdot A)/ \mu (U\cdot Y ) \leq \mu ([A]_{\RR_G})/\mu (U\cdot Y ) = 0$. It follows that $\nu$ is $(\RR_{G})_{\restriction Y}$-quasi-invariant. The last statement is clear.
\end{proof}

In the case of a free \pmp{} action of a unimodular lcsc $G$, the following is well known (see \cite{Kyed-Petersen-Vaes-L2} for a detailed treatment).

\begin{proposition}\label{prop:covol}
Let $G\curvearrowright X$ be a free Borel action of a unimodular lcsc group $G$ on a standard Borel space $X$ and let $\mu$ be a $G$-invariant Borel probability measure on $X$. Fix a Haar measure $\lambda$ on $G$. Let $Y$ be a Borel cross section for the action. Then there is a unique $(\RR_{G})_{\restriction Y}$-invariant probability measure $\nu _Y$ on $Y$, and a unique value $0<\mathrm{covol}(Y)<\infty$ such that, for any neighborhood $U$ of $1_G$ as in the definition of cross section, the pushforward of $(\lambda_{\restriction U} )\times \nu _Y$ under the map $(g,y)\mapsto gy$ is equal to $\mathrm{covol}(Y)(\mu_{\restriction UY} )$.

Moreover, if $Y'$ is any other cross section for the action, then there exist cross sections $Y_0\subseteq Y$ and $Y_0'\subseteq Y'$, together with a measure preserving bijection $\varphi : (Y_0, \nu _{Y_0})\rightarrow (Y_0', \nu _{Y_0'})$ taking $\RR_{\restriction Y_0}$ to $\RR_{\restriction Y_0'}$, such that
\[
\frac{\nu _Y (Y_0)}{\nu _{Y'}(Y_0')} = \frac{\mathrm{covol}(Y)}{\mathrm{covol}(Y')}.
\]
\end{proposition}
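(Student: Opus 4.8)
The plan is to construct $\nu_Y$ as a \emph{transverse measure} obtained by pulling $\mu$ back through the local product structure supplied by $U$, to read off $\mathrm{covol}(Y)$ from its total mass, and then to deduce the comparison statement by realizing $(\RR_G)_{\restriction Y}$ and $(\RR_G)_{\restriction Y'}$ as transversals of one and the same countable Borel equivalence relation.

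Fix an admissible neighborhood $U$, i.e.\ one for which $(g,y)\mapsto gy$ is injective on $U\times Y$. By Lusin--Souslin the map $\beta\colon U\times Y\to UY$, $\beta(g,y)=gy$, is a Borel isomorphism onto the Borel set $UY$, and $\mu(UY)>0$ because countably many left translates of $UY$ cover $X$ while $\mu$ is $G$-invariant. Let $\hat\mu=\beta^{-1}_*(\mu_{\restriction UY})$ be the pulled-back finite measure on $U\times Y$. The crucial observation is that $\hat\mu$ is invariant under those left translations of the $G$-coordinate staying inside $U$: if $E,g_0E\subseteq U$ and $A\subseteq Y$, then $\hat\mu(E\times A)=\mu(EA)=\mu(g_0(EA))=\mu((g_0E)A)=\hat\mu((g_0E)\times A)$ by invariance of $\mu$. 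Disintegrating $\hat\mu$ over the projection to $Y$ and invoking uniqueness of left Haar measure, the conditional measures are forced to be $\lambda_{\restriction U}/\lambda(U)$ for a.e.\ $y$; hence $\hat\mu=\tfrac{1}{\lambda(U)}\,\lambda_{\restriction U}\times m_U$, where $m_U(A):=\mu(UA)$. I would then set the unnormalized transverse measure $\theta_Y:=m_U/\lambda(U)$.

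Next I would check $\theta_Y$ is independent of $U$: for admissible $U_0\subseteq U$, injectivity of $\beta$ gives $a^{-1}(U_0A)\cap(U\times Y)=U_0\times A$, whence $m_{U_0}(A)=\mu(U_0A)=\tfrac{\lambda(U_0)}{\lambda(U)}m_U(A)$, so $m_{U_0}/\lambda(U_0)=m_U/\lambda(U)$, and arbitrary $U,V$ are compared through $U\cap V$. Putting $\nu_Y:=\theta_Y/\theta_Y(Y)$ and $\mathrm{covol}(Y):=1/\theta_Y(Y)\in(0,\infty)$ yields a probability measure with $a_*(\lambda_{\restriction U}\times\nu_Y)=\mathrm{covol}(Y)\,\mu_{\restriction UY}$; the same restriction computation shows any pair $(\nu,c)$ satisfying this identity forces $\nu=c\,\theta_Y$, giving uniqueness of both $\nu_Y$ and $\mathrm{covol}(Y)$. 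Invariance of $\theta_Y$ under $(\RR_G)_{\restriction Y}$ is exactly where \emph{unimodularity} enters: for a partial translation $A\ni y\mapsto gy\in Y$, invariance of $\mu$ gives $\mu(UgA)=\mu((g^{-1}Ug)A)$, and $\lambda(g^{-1}Ug)=\lambda(U)$ precisely because $G$ is unimodular, so $\theta_Y(gA)=\theta_Y(A)$ once $U$ is shrunk so that $g^{-1}Ug$ is also admissible.

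For the ``moreover'' part, the very same computation shows partial translations preserve the unnormalized transverse measure even \emph{between} two sections: if $A\subseteq Y$ and $gA\subseteq Y'$ then $\theta_{Y'}(gA)=\theta_Y(A)$. I expect the only genuinely delicate step to be producing the matching $\varphi$ Borel-measurably, and I would resolve it by descending to the countable setting: the restriction $E:=(\RR_G)_{\restriction Y\sqcup Y'}$ is a countable Borel equivalence relation on the standard Borel space $Y\sqcup Y'$, for which both $Y$ and $Y'$ are complete sections. A standard Borel matching argument for countable Borel equivalence relations then yields complete $E$-sections $Y_0\subseteq Y$ and $Y_0'\subseteq Y'$ — automatically cross sections of the action, since they meet every orbit — together with a Borel bijection $\varphi\colon Y_0\to Y_0'$ satisfying $\varphi(y)\mathrel{E}y$ (hence $\varphi(y)\in[y]_{\RR_G}$) and carrying $(\RR_G)_{\restriction Y_0}$ to $(\RR_G)_{\restriction Y_0'}$. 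Decomposing $Y_0$ into countably many Borel pieces on which $\varphi$ is a single partial translation and applying the invariance just noted gives $\varphi_*\theta_{Y_0}=\theta_{Y_0'}$. Since $\theta_{Y_0}$ is the restriction of $\theta_Y$ to $Y_0$ (the defining formula $m_U(A)=\mu(UA)$ depends only on $A$), and likewise for $Y_0'$, comparing total masses yields $\mathrm{covol}(Y_0)=\mathrm{covol}(Y_0')$. Finally, rewriting $\nu_Y(Y_0)=\mathrm{covol}(Y)/\mathrm{covol}(Y_0)$ and $\nu_{Y'}(Y_0')=\mathrm{covol}(Y')/\mathrm{covol}(Y_0')$ produces the claimed ratio, while $\varphi_*\nu_{Y_0}=\nu_{Y_0'}$ shows $\varphi$ is measure preserving; everything apart from the matching is bookkeeping around the disintegration identity and unimodularity.
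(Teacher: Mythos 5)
The paper itself offers no proof of this proposition (it is stated as well known, with a reference to \cite{Kyed-Petersen-Vaes-L2}), so your argument must stand on its own. Your construction is the right one and most of it is sound: the flow-box isomorphism $\beta$, positivity of $\mu(UY)$, the identification $\hat\mu=\tfrac{1}{\lambda(U)}\lambda_{\restriction U}\times m_U$ (the appeal to ``uniqueness of Haar measure'' here needs a small local-to-global patching argument, since your invariance is only under translations staying inside $U$, but this is standard), independence of $U$, and the uniqueness of $(\nu_Y,\mathrm{covol}(Y))$ from the rectangle computation. The genuine gap is in the invariance claims, which is precisely where the content of the proposition and the role of unimodularity lie.

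Concretely, you verify that $\theta_Y$ is preserved only by partial translations $y\mapsto gy$ with a \emph{fixed} $g\in G$, and in the ``moreover'' part you assert that $Y_0$ decomposes into countably many Borel pieces on which $\varphi$ is a single partial translation. That decomposition does not exist in general: by freeness, $\varphi$ determines a Borel cocycle $\gamma\colon Y_0\to G$ with $\varphi(y)=\gamma(y)y$, and $\gamma$ typically has uncountable range. For example, when $G=\R$ and $X$ is a suspension over an irrational rotation with a non-constant ceiling function, the first-return map of the canonical cross section has a return-time cocycle taking a continuum of values; worse, each fixed-$g$ partial translation is then defined only on a Lebesgue-null subset of the section, so \emph{every} atomless measure on the section is invariant under all fixed-$g$ translations while only Lebesgue measure is $(\RR_{G})_{\restriction Y}$-invariant. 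Thus fixed-$g$ invariance genuinely does not imply the invariance asserted in the proposition, and $\varphi_*\theta_{Y_0}=\theta_{Y_0'}$ is unproved. The standard repair is to upgrade your rectangle identity to a Fubini-type flow-box formula: for every Borel $\Omega\subseteq G\times Y$ on which $\beta(g,y)=gy$ is injective, one has $\mu(\beta(\Omega))=\int_Y\lambda(\Omega^y)\,d\theta_Y(y)$ (prove it inside $U\times Y$ by a $\pi$-system argument, then globalize using left-invariance of $\mu$ and $\lambda$). Applying this to $\Omega=\{(u\gamma(y),y)\,:\,u\in U,\ y\in A\}$, whose image is $U\varphi(A)$ and whose fibers are the right translates $U\gamma(y)$, unimodularity gives $\lambda(U\gamma(y))=\lambda(U)$ and hence $\theta_Y(\varphi(A))=\theta_Y(A)$ for an arbitrary cocycle $\gamma$ at one stroke. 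With that lemma in place, the rest of your ``moreover'' argument (the Lusin--Novikov matching, restriction of $\theta$, and the covolume ratio) is correct bookkeeping.
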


Recall the \define{cost} of a \pmp{} countable Borel equivalence relation $\RR$ on $(Y,\nu )$ is defined to be the infimum of the $\nu _{\RR}$-measures of generating sets for $\RR$, where $\nu _{\RR}$ is the natural Borel measure on $\RR$ \cite{Gab00a}. The following definition also appears in \cite[Definition 3.1]{carderi2018asymptotic}.

\begin{definition}\label{def:lcsccost}
Let $G$ be a unimodular lcsc group, and fix a Haar measure $\lambda$ on $G$. Let $G\curvearrowright X$ be a free Borel action of $G$ on a standard Borel space $X$ and let $\mu$ be a $G$-invariant Borel probability measure on $X$. The \define{cost} of $\RR_G$ is defined by
\[
\mathrm{cost}(\RR_G) - 1 = \frac{\mathrm{cost}(({\RR_G})_{\restriction Y}) - 1}{\mathrm{covol}(Y)},
\]
where $Y$ is any cross section for the action.  Proposition~\ref{prop:covol} together with the induction formula for cost  \cite[Proposition II.6]{Gab00a} ensure that this value does not depend on the choice of cross section; it does depend on the choice of Haar measure, namely 
$\mathrm{cost}(\RR_G,a\lambda ) -1 = \frac{1}{a}(\mathrm{cost}(\RR_G,\lambda )-1)$,
for $a>0$.

The \define{cost} of $(G,\lambda )$ is defined to be the infimum of costs of orbit equivalence relations generated by free \pmp{} actions of $G$. 
We say $G$ has \define{fixed price} if all such orbit equivalence relations have the same cost (this is independent of the choice of Haar measure).
\end{definition}

\begin{example}
Let $G=\mathrm{Aut}(T_n)$ be the automorphism group of the $n$-regular tree $T_n$. Fix a vertex of $T_n$, let $K$ denote its stabilizer in $G$, and let $\lambda$ be the Haar measure on $G$ giving $K$ measure $1$. For any free Borel action $G\curvearrowright X$, modding out by $K$ delivers a treeable countable Borel equivalence relation $\RR_0$. A Borel transversal $Y$ for the action of $K$ on $X$ is then a cross section for the action of $G$, and $(\RR_{G})_{\restriction Y}$ is isomorphic to $\RR_0$, and thus treeable. Moreover, if $\mu$ is any $G$-invariant Borel probability measure on $X$, then the relations $\RR_0$ and $(\RR_{G})_{\restriction Y}$ are \pmp{} treeable of cost $n/2$, and $\mathrm{covol}(Y)=1$.  We conclude that $\mathrm{Aut}(T_n)$ is Borel strongly treeable, and that $(\mathrm{Aut}(T_n),\lambda )$ has fixed price $n/2$.
\end{example}

\begin{proposition}\label{prop:stronglytreeablefixedprice}
Let $G$ be a unimodular lcsc group that is strongly treeable. Then $G$ has fixed price.
\end{proposition}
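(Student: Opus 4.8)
The plan is to show that any two free \pmp{} actions $G\actson^a (X_a,\mu_a)$ and $G\actson^b (X_b,\mu_b)$ generate orbit equivalence relations of equal cost, by comparing both to the diagonal action. Consider $G\actson X_a\times X_b$ diagonally; since $a$ and $b$ are free this action is free and preserves $\mu_a\times\mu_b$, and the coordinate projection $\pi_a\colon X_a\times X_b\to X_a$ is $G$-equivariant and \emph{class-bijective}: freeness of $a$ forces the stabiliser of $(x,y)$ to be trivial, so $\pi_a$ restricts to a bijection from each diagonal orbit onto the corresponding $a$-orbit (and symmetrically for $\pi_b$). It therefore suffices to prove $\mathrm{cost}(\RR_{a\times b})=\mathrm{cost}(\RR_a)$, since the symmetric argument yields $\mathrm{cost}(\RR_{a\times b})=\mathrm{cost}(\RR_b)$ and hence $\mathrm{cost}(\RR_a)=\mathrm{cost}(\RR_b)$.

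First I would transport the comparison to cross sections. Fix a Borel cross section $Y_a\subseteq X_a$ for $a$ with associated neighborhood $U$ of $1_G$. Then $Y_a\times X_b$ is a cross section for the diagonal action: the same $U$ witnesses injectivity of $U\times(Y_a\times X_b)\to X_a\times X_b$ (the second coordinate being determined by the first through $g$), and it is complete because $Y_a$ is. A short computation using $G$-invariance of $\mu_b$ shows that its invariant probability measure is $\nu_{Y_a}\times\mu_b$ and that $\mathrm{covol}(Y_a\times X_b)=\mathrm{covol}(Y_a)$: pushing $(\lambda_{\restriction U})\times(\nu_{Y_a}\times\mu_b)$ forward along $(g,(y,x))\mapsto(gy,gx)$ and integrating out the ($G$-invariant) $X_b$-coordinate reduces to the defining pushforward for $Y_a$, while $U\cdot(Y_a\times X_b)=(U\cdot Y_a)\times X_b$. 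Moreover $\pi_a$ descends to a class-bijective, measure-preserving map of countable \pmp{} relations $(\RR_{a\times b})_{\restriction Y_a\times X_b}\to(\RR_a)_{\restriction Y_a}$, sending $(y,x)\mapsto y$; here the unique $g$ with $gy=y'$ furnished by freeness produces the unique lift $(y',gx)$ of each neighbour.

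The heart of the argument is then cost-invariance under this class-bijective extension, which is where strong treeability enters. By hypothesis $(\RR_a)_{\restriction Y_a}$ is Borel treeable, so it admits a treeing $T_a$; pulling $T_a$ back along $\pi_a$ gives an acyclic, generating Borel graph $\tilde T_a$ on $Y_a\times X_b$, i.e.\ a treeing of $(\RR_{a\times b})_{\restriction Y_a\times X_b}$, and since each vertex has the same degree upstairs as its image and the extension is measure-preserving one gets $\mathrm{cost}(\tilde T_a)=\mathrm{cost}(T_a)$. Now invoke Gaboriau's theorem that any treeing of a treeable \pmp{} relation computes its cost \cite[Th\'eor\`eme 1]{Gab00a}: it gives $\mathrm{cost}((\RR_{a\times b})_{\restriction Y_a\times X_b})=\mathrm{cost}(\tilde T_a)$ and $\mathrm{cost}((\RR_a)_{\restriction Y_a})=\mathrm{cost}(T_a)$, whence the two cross-section costs agree. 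Dividing by the common covolume and using Definition~\ref{def:lcsccost} yields $\mathrm{cost}(\RR_{a\times b})-1=\mathrm{cost}(\RR_a)-1$, completing this half.

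The main obstacle, and the only place requiring genuine input, is this cost-rigidity step: one must know both that the upstairs relation is again treeable (automatic here, since the pulled-back treeing $\tilde T_a$ exhibits it) and that treeings compute cost, so that the class-bijective extension neither raises nor lowers cost. For a general non-treeable relation such an extension could only be shown not to raise cost, which is exactly why treeability is essential and why the conclusion is \emph{fixed} price rather than merely a one-sided bound.
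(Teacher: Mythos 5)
Your proposal is correct and is essentially the paper's own proof in expanded form: the paper simply says to use cross sections (noting that the pull-back of a cross section under a factor map is again a cross section) and to follow Gaboriau's argument for countable groups \cite[Prop. VI.21]{Gab00a}, which is exactly your scheme of comparing both actions to the diagonal one via class-bijective projections, pulling back treeings, and invoking that treeings compute cost. The only cosmetic caveat is that strong treeability gives Borel treeability of $(\RR_a)_{\restriction Y_a}$ only after discarding a $\nu_{Y_a}$-null set, which is harmless for cost computations.
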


\begin{proof}
Using cross sections (together with the fact that the pull back of a cross section of a free action under a factor map is a cross section) the proof follows as in \cite[Proposition VI.21]{Gab00a}.
\end{proof}

The cost of some lcsc groups is also considered in \cite{AM-21}.

\section{Treeability of groups and permanence properties}\label{sec:permanence}

\begin{definition}
\label{def:various treeab, groups}
Let $G$ be a locally compact second countable group.
\begin{enumerate}
\item We say that $G$ is \define{Borel strongly treeable} if every free Borel action of $G$ is Borel treeable.
\item We say that $G$ is \define{measure strongly treeable} if every free Borel action of $G$ is measure treeable.
\item We say that $G$ is \define{strongly treeable} if every free \pmp{} action $G\curvearrowright (X,\mu )$ of $G$ is $\mu$-treeable.
\item We say that $G$ is \define{treeable} if there exists some free \pmp{} action $G\curvearrowright (X,\mu )$ of $G$ which is $\mu$-treeable.
\end{enumerate}
\end{definition}
We renew our warning that "arborable" and "anti-arborable" in \cite{Gab00a} stands for what we call strongly treeable and non-treeable respectively here.

Let $\BSTreeable$, ${\MSTreeable}$, $\STreeable$, and $\Treeable$ denote the classes of Borel strongly treeable, measure strongly treeable, strongly treeable, and treeable lcsc groups respectively. Then clearly
\begin{equation*}\label{eqn:contain}
\BSTreeable\subseteq {\MSTreeable}\subseteq {\STreeable}\subseteq \Treeable .
\end{equation*}

\begin{question}\label{q:contain}
Which of these containments, if any, are strict?
\end{question}

\begin{theorem}\label{thm:closure} Let $\BSTreeable$, ${\MSTreeable}$, $\STreeable$, and $\Treeable$ be as above. Then:
\begin{enumerate}
\item $\BSTreeable$ contains all countable locally nilpotent groups and all locally compact compactly generated groups of polynomial growth.

\item ${\MSTreeable}$ contains all lcsc amenable groups.

\item ${\MSTreeable}$ contains $\Isom(\HH ^2)$, $\SL_2(\R )$, and $\PSL_2(\R )$.

\item Let $\ClassTreeable$ be one of $\BSTreeable$, ${\MSTreeable}$, or ${\STreeable}$. Suppose that
\[
1\ra K\ra G \ra L \ra 1
\]
is a short exact sequence where $K$ is compact and $L\in \ClassTreeable$. Then $G\in \ClassTreeable$.

\item $\BSTreeable$, ${\MSTreeable}$, and $\Treeable$ are closed under taking closed subgroups.
\\
If $G=G_1*G_2$ is countable and belongs to $\STreeable$ then so does $G_1$.

\item ${\STreeable}$ is closed under taking closed co-finite subgroups. In particular, if $G\in {\STreeable}$ then every lattice of $G$ is in ${\STreeable}$.

\item If $G$ has a closed co-finite subgroup in $\Treeable$, then $G\in \Treeable$. In particular, if $G$ has a lattice in $\Treeable$, then $G\in \Treeable$.

\item $\BSTreeable$, ${\MSTreeable}$, ${\STreeable}$ and $\Treeable$ are each closed under taking free products (of countable groups) with amalgamation over a finite subgroup.

\item $\BSTreeable$, ${\MSTreeable}$, and ${\STreeable}$ are each closed under taking HNN extensions (of countable groups) with respect to an isomorphism between two finite subgroups.

\item Let $\ClassTreeable$ be one of $\BSTreeable$, ${\MSTreeable}$, or ${\STreeable}$. Suppose that $G$ is the Bass-Serre fundamental group of a graph of countable groups in which each vertex group is in $\ClassTreeable$, and each edge group is finite. Then $G\in \ClassTreeable$.
\end{enumerate}
\end{theorem}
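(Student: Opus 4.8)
The plan is to prove the items of Theorem~\ref{thm:closure} by a single unifying device: use the cross-section technology of Appendix~\ref{sect: Treeability for locally compact groups} to translate every assertion about free Borel actions of lcsc groups into one about countable Borel equivalence relations, and then assemble treeings by Bass--Serre theory. Items (2) and (3) furnish the base cases. For (2), a free Borel action of an lcsc amenable group is, on a cross-section, generated by an \emph{amenable} countable equivalence relation, hence $\mu'$-hyperfinite and therefore $\mu'$-treeable for every quasi-invariant $\mu'$ by \cite{CFW81}; replacing an arbitrary $\mu$ by the quasi-invariant $\mu'$ of Proposition~\ref{prop:qinv} (which has the same invariant null sets) yields measure treeability. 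Item (3) is exactly Corollary~\ref{cor:subgroup}. For (1), I would first invoke the structure theory of compactly generated groups of polynomial growth to realize such a group as a compact extension of a (virtually) nilpotent group, reduce to the quotient by item (4), and for countable locally nilpotent groups produce genuine Borel treeings of \emph{all} actions from the polynomial-growth geometry (passing to the f.g. nilpotent case and taking an increasing union).

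For the extension and subgroup items I would argue entirely on cross-sections. In (4), given $1\to K\to G\to L\to 1$ with $K$ compact and a free Borel action $G\actson X$, the quotient $X/K$ carries a \emph{free} action of $L=G/K$ (freeness of the $G$-action forces trivial $L$-stabilizers), and a Borel transversal $Y\subseteq X$ for the $K$-action is a cocompact cross-section (Theorem~\ref{thm:cocomp}) realizing $(\RR_G)_{\restrict Y}\cong \RR_L$. A treeing of the $L$-action, available since $L\in\ClassTreeable$, pulls back to $Y$ and extends to all of $X$, compatibly with each of the three regimes. Items (5)--(7) are the dual manipulations: a free action of a closed subgroup $H\le G$ is co-induced to a free $G$-action in which $\RR_H$ reappears on a cross-section, giving closure under closed subgroups, while the cofinite and lattice statements compare cross-sections of finite covolume through Proposition~\ref{prop:covol}; the free-product clause uses that a free action of $G_1$ extends to one of $G_1*G_2$.

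The technical heart is (8) and (9): amalgamated free products $A*_C B$ and HNN extensions over $C$ with $C$ \emph{finite}, which I would treat simultaneously via a measurable Bass--Serre tree $T$. Given a free action of $G$ on $X$, I form the diagonal actions on $X\times V(T)$ and $X\times E(T)$; freeness on $X$ makes these free, and the quotients $G\backslash(X\times V(T))$ and $G\backslash(X\times E(T))$ split along orbit type into pieces Borel-isomorphic to $A\backslash X$, $B\backslash X$ (carrying $\RR_A,\RR_B$) and to $C\backslash X$ (carrying the \emph{finite} relation $\RR_C$). The incidence of $T$ descends to a Borel bundle of trees over these quotients; I then build a treeing of $\RR_G$ by grafting treeings of the vertex pieces, which exist by hypothesis, onto this ambient skeleton, using finiteness of $C$ to ensure the skeleton contributes only finitely-branching, acyclic, Borel data, so that the assembled graph is acyclic with the correct connectedness relation. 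The main obstacle is carrying out this assembly uniformly across all three classes: for $\STreeable$ one has a single quasi-invariant measure against which to discard null sets, but for $\MSTreeable$ and especially $\BSTreeable$ every choice in the Bass--Serre and cross-section constructions must be made in a genuinely Borel, measure-free manner valid for all $\mu$ at once.

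Finally, item (10) is a synthesis of (4), (8) and (9). I would fix a spanning tree of the underlying graph and construct the fundamental group of the graph of groups by iteration: each spanning-tree edge contributes an amalgamated free product over its finite edge group, handled by (8), while each remaining edge contributes an HNN extension over a finite subgroup, handled by (9). For a finite graph this terminates; for an infinite graph I would pass to the direct limit over finite subgraphs, using that each class is closed under increasing unions of open subgroups (a free action of the limit restricts to free actions of each finite stage, and the corresponding treeings may be taken increasing). This reduces (10) entirely to the amalgam and HNN cases, completing the argument modulo the Borel assembly flagged above.
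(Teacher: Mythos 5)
Your overall architecture---cross sections to reduce to countable relations, smoothness of compact-group actions for (4), transfer along an auxiliary $G$-action for the subgroup items, and a measurable Bass--Serre tree for (8)--(10)---parallels the paper's proof, which settles (1)--(3) by citation (\cite{SS13}, \cite{JKL02}, \cite{CFW81}, Corollary~\ref{cor:subgroup}) and treats the rest as generalizations of \cite[Th.~5, Th.~VI.19, Prop.~VI.10]{Gab00a} together with \cite{Tor06}. However, two of your steps contain genuine gaps. The first is the increasing-union device: in item (1) you handle countable locally nilpotent groups by ``passing to the f.g.\ nilpotent case and taking an increasing union,'' and the same move reappears in item (10) for infinite graphs of groups (``the corresponding treeings may be taken increasing''). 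This step is not available: a countable increasing union of hyperfinite (or Borel treeable) countable Borel equivalence relations is not known to be hyperfinite, nor even Borel treeable---this is the open union problem---and there is no known way to choose the treeings of the stages coherently. This is exactly why the paper cites Schneider--Seward \cite{SS13}, whose theorem (hyperfiniteness of all Borel actions of locally nilpotent groups) is proved by other means. In (10) the gap is also unnecessary: your own Bass--Serre bundle argument from (8)--(9) applies in one step to the tree of the whole, possibly infinite, graph of groups. Note also that the locally compact half of (1) does not reduce to the countable half via (4): the quotient $L$ there is a connected, virtually nilpotent Lie group, and amenability of $L$ only yields $\MSTreeable$ through \cite{CFW81}; Borel treeability requires a measure-free hyperfiniteness input (in the paper, the polynomial-growth theorem of \cite{JKL02} applied to a graphing of a cross-section relation).

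The second gap is in item (5). You say a free action of a closed subgroup $H\leq G$ is ``co-induced to a free $G$-action in which $\RR_H$ reappears on a cross-section.'' Co-induction is the wrong functor: for non-discrete closed $H$ it is not even a Borel construction (the coset space $H\backslash G$ is uncountable), and when it is defined the co-induced action of a free action need not be free, while the original action is only a (class-bijective) factor of its restriction. What literally places $\RR_H$ on a slice is induction, $G\actson H\backslash(X\times G)$, restricted to the identity slice---the paper's mechanism. The distinction is not cosmetic, because induction from an infinite-covolume subgroup carries no invariant probability measure, only a quasi-invariant one; this is precisely why closure under closed subgroups is asserted for $\BSTreeable$, $\MSTreeable$, $\Treeable$ but, for $\STreeable$, only the free-factor statement is made. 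Your justification of that clause---``a free action of $G_1$ extends to one of $G_1*G_2$''---is exactly T\"{o}rnquist's theorem \cite{Tor06}: one must produce a companion free \pmp{} action of $G_2$ on the \emph{same} space so that every reduced word acts freely, a genuine Baire-category argument rather than a formality, and it must be cited or proved. Relatedly, your grafting in (8) uses treeings of the restrictions of the given $G$-action to the factors, which ``exist by hypothesis'' only for the strong classes; for $\Treeable$ the hypothesis provides a treeable action of each factor but says nothing about the restrictions at hand, so one must first manufacture a suitable $G$-action---the paper does this by co-induction of the given treeable actions followed by the diagonal action, which is where co-induction genuinely belongs.
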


(1) follows from \cite{SS13} and \cite{JKL02}. Part (2) follows from \cite{CFW81}. Part (3) is Corollary \ref{cor:subgroup}. The proofs of the remaining facts are routine generalizations of results appearing in the literature: (4) follows immediately from the fact that Borel actions of compact groups are smooth, (5) generalizes \cite[Th. 5]
{Gab00a} and follows also from \cite[Proposition 3.3]{JKL02} (treeability for subrelations,
together with a standard induction $ H\backslash (X\times G)\curvearrowleft G$ -- choosing of a finite equivalent measure when necessary -- and restriction to the identity slice for $\BSTreeable$ and $\MSTreeable$);
 the "$\STreeable$ part" requires the use of \cite{Tor06} (any two free \pmp{} actions of $G_1$ and $G_2$ on $(X,\mu)$ can be realized by the restrictions of a free \pmp{} action of $G_1*G_2$ up to a conjugation of the $G_2$-action by an element of $\Aut(X,\mu)$).
(6) generalizes \cite[Th. VI.19 (i)]{Gab00a}, (7) follows from a standard induction argument, and (8), (9) and (10) generalize \cite[Prop. VI.10]{Gab00a} from the context of countable groups and \pmp{} actions (the proofs extend immediately to the Borel context; for $\Treeable$ in (8) use the diagonal action of the free product obtained from co-induction of treeable actions of the factors).

\end{appendices}

\bibliographystyle{alpha}

\noindent Clinton T.~Conley, Department of Mathematical Sciences, Carnegie Mellon University, 5000 Forbes Ave., Pittsburgh, PA 15213-3890.  \email{clintonc@andrew.cmu.edu}
\medskip

\noindent Damien Gaboriau, Université de Lyon, CNRS, UMPA ENS de Lyon, 46,allée d’Italie 69364 Lyon Cedex 07, FRANCE \email{damien.gaboriau@ens-lyon.fr}
\medskip

\noindent Andrew S.~Marks, Department of Mathematics, 970 Evans Hall MC 3840,
Berkeley CA 94720. \email{marks@math.berkeley.edu}
\medskip

\noindent Robin D.~Tucker-Drob, Department of Mathematics, University of Florida, Gainesville, FL 32611 \email{r.tuckerdrob@ufl.edu}

\end{document}